\tikzstyle{vertex}=[circle, draw, inner sep=0pt, minimum size=6pt]
\newcommand{\uq}{U_q(\mathfrak{sl}_n)} %Uqsln macro -- doesn't have input
\newcommand{\dyk}{Dyck_{\vec{b}_1 \Delta \vec{b}_2}} %macro for the dyk path function -- doesn't have inputs
\newcommand{\flex}{flex_{\vec{b}_1, \vec{b}_2}} % flex function
\newtheorem{theorem}{Theorem}
\newtheorem{definition}[theorem]{Definition}
\newtheorem{lemma}[theorem]{Lemma}
\newtheorem{remark}[theorem]{Remark}
\newtheorem{example}[theorem]{Example}
\newtheorem{proposition}[theorem]{Proposition}
\newtheorem{corollary}[theorem]{Corollary}
\title{Stranding $\mathfrak{sl}_n$ webs}
\date{}
\author{Heather M. Russell}
\address{University of Richmond, Jepson Hall, Richmond VA 23173}
\email{hrussell@richmond.edu}
\author{Julianna Tymoczko}
\address{Smith College, Northampton MA 01063}
\email{jtymoczko@smith.edu}
\begin{document}
\begin{abstract} 
Webs are planar directed graphs that encode invariant vectors for tensor products of fundamental $U_q(\mathfrak{sl}_n)$-representations. For $\mathfrak{sl}_2$ and $\mathfrak{sl}_3$, web calculus is governed by effective reduction rules and well-understood reduced-web bases, but these features break down in higher rank. In this paper we develop a global combinatorial framework for untagged $\mathfrak{sl}_n$ webs, called \emph{strandings}, which organizes local labeling data into systems of colored directed paths on the web graph.

Our main result is an explicit state-sum formula for the invariant vector of an untagged web as a weighted sum over its valid strandings.  The usual labeling-based contruction requires a choice of decomposition into elementary web pieces and a vertex-by-vertex coefficient calculation.  Our global reformulation avoids both.
We prove that the vectors produced by strandings are $U_q(\mathfrak{sl}_n)$-invariant, compare the untagged theory with the tagged-web framework of Cautis, Kamnitzer, and Morrison, and use this comparison to obtain a complete set of relations for untagged web graphs. We also give applications of strandings to nonvanishing results, basis constructions from tableaux, and connections with Springer-theoretic combinatorics. \end{abstract}

\maketitle

 \section{Introduction}\label{section: introduction}
 
Webs are plane graphs that arise naturally in the representation theory of quantum groups, with important connections to knot theory, algebraic geometry, and categorification. Given a semisimple Lie algebra $\mathfrak{g}$, a web graph for $\mathfrak{g}$ encodes a $U_q(\mathfrak{g})$-equivariant map between tensor products of fundamental $U_q(\mathfrak{g})$-representations (or equivalently, a $U_q(\mathfrak{g})$-invariant vector). Webs are not only diagrammatic models for representation theory but also concrete computational tools: one can identify web bases for invariant spaces, compare these to other well-known bases, study diagrammatic actions on web vectors, and relate combinatorial operations on web graphs to algebraic or geometric structures. This is best developed in the low-rank setting, especially for $\mathfrak{sl}_2$ and $\mathfrak{sl}_3$, but becomes substantially more difficult beyond it.
 
The main challenge in analyzing $\mathfrak{sl}_n$ webs for $n\geq 4$ is that they do not share two central features of $\mathfrak{sl}_2$ and $\mathfrak{sl}_3$ webs that are fundamental to web calculations in the literature.
\begin{itemize}
    \item First, in $\mathfrak{sl}_2$ and $\mathfrak{sl}_3$, the relations can be arranged so that they become reduction rules that always simplify the web graph by virtually any natural combinatorial measure. This yields a straightforward deterministic algorithm to decompose an arbitrary web graph into its fundamental building blocks, 
    together with transparent characterizations of when a web is reduced.
    \item Second, in $\mathfrak{sl}_2$ and $\mathfrak{sl}_3$, the basis of invariant vectors coming from reduced webs happens to satisfy 
    an upper-triangularity property with respect to a commonly-used 
    ordering on terms. Moreover, the leading term of a reduced web's vector can be identified directly from the structure of the web graph. These features provide a powerful combinatorial handle on linear independence, decomposition, rotation, and change-of-basis questions.
\end{itemize}
Beginning already at $n=4$, neither feature persists in any obvious general form: the known relations no longer behave like a straightforward reduction procedure, and there is no similarly-transparent reduced-web basis. A natural question is therefore: 
\begin{tcolorbox}[boxrule=0.4pt, colback=white, colframe=black]
\begin{center}
    \emph{What combinatorial structure remains once the low-rank simplicity breaks down?}
\end{center}
\end{tcolorbox}

This paper introduces a new global combinatorial framework on untagged $\mathfrak{sl}_n$ webs, called \emph{strandings}, designed to address this question. A \emph{strand} is, roughly speaking, a colored directed path on the web graph that either connects boundary vertices or forms a closed curve. A \emph{stranding} is a compatible collection of such strands; a single web graph supports many valid strandings. Strands reinterpret local labeling data as a global structure on the graph, and the key observation is that the resulting strandings collectively encode the web vector: their orientations (clockwise or counterclockwise) determine the coefficient of each term. Our main result is an explicit formula expressing the web vector of a web graph $G$ as a weighted sum over its valid strandings,
\[
    f(G) \;=\; \sum_{S \,\in\, \mathcal{S}tr(G)} c(S).
\]
This formula is completely global and intrinsically an invariant of the combinatorial graph, meaning it does not require one to fix a decomposition of the web into cups, caps, and trivalent vertices and track local coefficient contributions via that decomposition.
 
\begin{figure}[ht]
\centering
\begin{tikzpicture}[scale=.9, yscale=-1]
\draw[style=dashed, <->] (.5,.25)--(8.5,.25);
\begin{scope}[thick,decoration={
    markings,
    mark=at position 0.5 with {\arrow{>}}}
    ]
   \draw[blue,postaction={decorate}] (1,.25)--(2,1);
\draw[blue,postaction={decorate}] (1.97,1)--(1.97,.25);
\draw[red,postaction={decorate}] (2.03,.25)--(2.03,1);
 \draw[teal, postaction={decorate}] (2.97,.25)--(3.22,1);
  \draw[red, postaction={decorate}] (3.28,1)--(3.03,.25);
      \draw[red, postaction={decorate}] (3.97,.25)--(3.72,1);
           \draw[blue, postaction={decorate}] (3.78,1)--(4.03,.25);
   \draw[blue, postaction={decorate}] (3.25,2)--(3.25,1);
    \draw[red, postaction={decorate}] (3.25,2.04)--(4.5,2.54);
        \draw[blue, postaction={decorate}] (4.5,2.47)--(3.25,1.97);
        \draw[teal, postaction={decorate}] (4.5,2.5)to[out=0, in=120](8,.25);
     \draw[teal, postaction={decorate}] (3.75,1)--(4.5, 1.5);
     \draw[red,postaction={decorate}] (4.5, 1.45)--(5,.95);
          \draw[blue,postaction={decorate}] (5, 1.05)--(4.5,1.55);
       \draw[teal, postaction={decorate}] (5.03, .25)--(5.03,1);
         \draw[red, postaction={decorate}] (4.97, 1)--(4.97,.25);
         \draw[teal, postaction={decorate}] (6.5, 1)--(6,.25);
           \draw[blue, postaction={decorate}] (7, .25)--(6.5,1);
           \end{scope}
 
\begin{scope}[decoration={
    markings,
    mark=at position 0.5 with {\arrow{>}}}
    ]
           \draw[red, thick, postaction={decorate}] (2,1)--(3.25,2);
   \draw[blue, thick, postaction={decorate}] (3.25, 1.07)--(3.75,1.07);
      \draw[teal, thick, postaction={decorate}] (3.25, 1)--(3.75,1);
         \draw[red, thick, postaction={decorate}] (3.75,.93)--(3.25, .93);
       \draw[blue, thick, postaction={decorate}](6.5, 1.03)--(5,1.03);
    \draw[teal, thick, postaction={decorate}](5, .97)--(6.5,.97);
    \draw[blue, thick, postaction={decorate}] (4.45,1.5)--(4.45,2.5);
      \draw[red, thick, postaction={decorate}] (4.5,2.5)--(4.5,1.5);
        \draw[teal, thick, postaction={decorate}] (4.55,1.5)--(4.55,2.5);
         \end{scope}

\draw[radius=.08, fill=black](1,.25)circle;
\draw[radius=.08, fill=black](2,.25)circle;
\draw[radius=.08, fill=black](3,.25)circle;
\draw[radius=.08, fill=black](4,.25)circle;
\draw[radius=.08, fill=black](5,.25)circle;
\draw[radius=.08, fill=black](6,.25)circle;
\draw[radius=.08, fill=black](7,.25)circle;
\draw[radius=.08, fill=black](8,.25)circle;
\draw[radius=.08, fill=black](2,1)circle;
\draw[radius=.08, fill=black](3.25,1)circle;
\draw[radius=.08, fill=black](3.75,1)circle;
\draw[radius=.08, fill=black](5,1)circle;
\draw[radius=.08, fill=black](6.5,1)circle;
\draw[radius=.08, fill=black](4.5, 1.5)circle;
\draw[radius=.08, fill=black](3.25, 2)circle;
\draw[radius=.08, fill=black](4.5, 2.5)circle;
\end{tikzpicture}
\caption{A web together with a valid stranding.
Strandings reorganize the usual local labeling data into colored directed
paths on the graph.}
\label{fig:intro-stranding}
\end{figure}
 
\begin{figure}[ht]
\centering
\begin{tikzpicture}[>=stealth, node distance=2.5cm, font=\small]
  \tikzstyle{box} = [draw, rounded corners=4pt, minimum height=1.3cm,
                     text width=3.2cm, align=center, inner sep=6pt]
  \node[box] (G)  {Web graph $G$\\[3pt]\footnotesize(directed, edge-weighted plane graph)};
  \node[box, right=of G] (S)
        {Valid strandings $\sigma \in \mathcal{S}(G)$\\[3pt]\footnotesize(global colored directed paths)};
  \node[box, right=of S] (f)
        {$\displaystyle f(G)$\\[3pt]\footnotesize($U_q(\mathfrak{sl}_n)$-invariant)};
  \draw[->, thick] (G) -- (S)
        node[midway, above, font=\footnotesize] {stranding};
  \draw[->, thick] (S) -- (f)
        node[midway, above, font=\footnotesize] {sum};
\end{tikzpicture}
\caption{Overview of the stranding construction. Each untagged web graph $G$ is associated to a collection of valid strandings---systems of colored directed paths drawn globally on the graph---whose weighted sum yields an explicit $U_q(\mathfrak{sl}_n)$-invariant vector $f(G)$.}
\label{fig:stranding-overview}
\end{figure}
 
It is fruitful to compare our construction
with existing state-sum formulas for web vectors. In the standard description, a \emph{state} is a labeling of web edges by subsets of $[n]$ (or equivalently, binary vectors), subject to certain compatibility conditions at vertices. Each state contributes a term to the web vector, and the corresponding coefficient is obtained by multiplying \emph{local} contributions associated to  each vertex as well as other elementary pieces of the web. Strandings provide another way of organizing the same underlying algebraic data of terms in an invariant vector. The difference is that strandings convert this data into a global system of colored directed paths on the graph, making visible certain large-scale combinatorial features that are difficult to see in the usual local formulation.
 
We view stranding as part of a thirty-year project to identify the correct 
global combinatorial structures hidden inside local labeling models for webs. In one early construction for colored plane graphs \cite{MOY}, Murakami--Ohtsuki--Yamada evaluate a closed web graph via a state sum over admissible labelings, converting those labelings into oriented curves running along the graph. Robert's reframing of MOY's formula \cite{Robert} is closer in spirit to our perspective, especially in its choice of curves arising from labelings. Both are restricted to closed graphs and though they eliminate the need to analyze certain elementary web pieces, they retain local coefficient contributions at each vertex.
 
For $\mathfrak{sl}_3$ webs with boundary, Khovanov and Kuperberg have a state sum formula that translates labeling data into oriented curves, but once again with local coefficient contributions \cite{KK}. Bigelow's work on a new approach to the $SL_n$ spider uses root- and weight-theoretic combinatorics that overlaps conceptually with some aspects of our construction, but does not provide a formula for computing web vectors \cite{Bigelow}. In each case the essential distinction is \emph{locality}: all prior formulas require local contributions at individual vertices or edges. Our formula is purely global.
 
We work with \emph{untagged} $\mathfrak{sl}_n$ web graphs, essentially those used by Kuperberg in his original paper on webs \cite{KuperbergWebs} and by Fontaine and Westbury in their work on higher-rank webs and bases \cite{FontaineThesis, FON, WestburyBases}. These are directed, edge-weighted plane graphs satisfying a flow condition mod $n$ at each trivalent interior vertex. There are several closely related models of $\mathfrak{sl}_n$ webs in the literature, most notably the tagged webs of Cautis, Kamnitzer, and Morrison (CKM) \cite{CKM}. The tagged setting is important for certain categorical and sign-sensitive purposes, but tags also complicate explicit combinatorial calculations and obscure some features that are familiar from the classical $\mathfrak{sl}_3$ picture. One ancillary aim of this paper is therefore to give a precise and complete framework for the untagged setting together with tools that are well adapted to direct computation.
 
\subsection*{Main results}
 
Our main construction associates to each untagged web graph $G$ a vector $f(G)$ given by a weighted sum over the valid strandings of $G$. More precisely, Theorem~\ref{thm:invariant vector flow formula} gives one term of $f(G)$ for each valid stranding of $G$, with coefficients computed from associated strand data. This is a global reformulation of the usual labeling-based state sum. We prove that $f(G)$ is $\uq$-invariant, and in Section~\ref{section:tagless vs tagged} we use a comparison with the tagged-web framework of CKM \cite{CKM} to show that the resulting map is surjective and to identify an explicit set of generating relations for its kernel. In particular, we obtain a complete set of relations for untagged web graphs parallel to the tagged CKM relations.
 
The body of the paper uses strandings to address problems that are difficult in higher rank precisely because reduction is no longer available:
 
\begin{enumerate}
    \item \textbf{Nonvanishing} (Theorem~\ref{thm: nonzero coefficient}): a nonvanishing result for $f(G)$, proving that the stranding formula is always nontrivial with an argument that follows almost trivially from the global structure.
 
    \item \textbf{Distinguished strandings and the nontriviality criterion} (Theorem~\ref{thm: every web has a stranding}): constructing a canonical base stranding for any web graph, from which we recover the well-known numerical criterion for nontriviality of the invariant space.
 
    \item \textbf{Web bases from tableaux} (Section~\ref{section: constructing basis webs from tableaux}, Theorem~\ref{theorem: using Fontaine to prove basis}):  algorithmically building a family of web graphs built  rectangular standard Young tableaux whose web vectors form a basis of $\textup{Inv}(\vec{1})$, that avoids the recursive construction in (and slightly generalizes) work of Fontaine \cite{FON} and Westbury \cite{WestburyBases}.

    \item \textbf{Complete relations} (Theorem~\ref{thm:Fontaine relations}): producing a complete set of relations for untagged $\mathfrak{sl}_n$ web graphs.
\end{enumerate}
In addition, we sketch applications proven in forthcoming papers:
\begin{enumerate} \setcounter{enumi}{4}
    \item \textbf{Tableau combinatorics for $\mathfrak{sl}_3$} (Section~\ref{section: stranding tableau combo}): interaction of strandings with classical tableau operations (depth and evacuation), extending familiar low-rank structure into the stranding framework.
 
    \item \textbf{Springer fibers} (Section~\ref{subsec:springer}): a connection between stranded webs and Springer-theoretic combinatorics.
 
\end{enumerate}
 
These applications show that strandings are not merely an alternative encoding of the usual state sum, but a useful framework for organizing higher-rank web combinatorics.
 
\subsection*{Broader context}
 
This work fits into a broader literature seeking effective combinatorial models for webs. For $\mathfrak{sl}_2$, webs appear as Temperley--Lieb diagrams, which are also noncrossing matchings of longstanding combinatorial interest \cite{RumerTellerWeyl, TemperleyLieb}. Kuperberg introduced the term ``webs'' and developed an explicit framework for rank-two Lie algebras, including type $A$ webs \cite{KuperbergWebs}. In higher rank, Kim and Morrison constructed $\mathfrak{sl}_4$ and general $\mathfrak{sl}_n$ web theories and identified large families of necessary relations \cite{Kim, MorrisonThesis}; Fontaine and Westbury studied basis constructions in the untagged setting \cite{FON, WestburyBases}; and CKM established a complete set of relations in the tagged setting \cite[Theorem 3.3.1]{CKM}.
 
More recently, webs have been studied via \emph{plabic} (planar bicolored) graphs. Postnikov introduced plabic graphs to encode combinatorial structure on the Grassmannian, including parametrizations of points via dimer covers \cite{Postnikov, PostnikovSW2009} and the cluster-algebra structure on the coordinate ring of the Grassmannian \cite{FP}. Fraser, Lam, and Le \cite{FraserLamLe} relate plabic graphs to classical $\mathfrak{sl}_n$ webs by superimposing almost-perfect matchings: each superposition of dimer covers yields a linear combination of web graphs. Plabic web graphs can be viewed as a quotient of what are classicaly called web graphs by relations that suppress the order in which a sequence of merges or splits occur---a distinction the classical framework used in this paper retains. Web graphs in combinatorics tend to appear as plabic graphs, especially in questions about bases 
(see, e.g., \cite{GaetzetalRotation}), while representation-theoretic and knot-theoretic work tends to use classical webs.
 
The case of $\mathfrak{sl}_4$ already shows both the possibilities and the difficulties. Hagemeyer observed that certain edges in an $\mathfrak{sl}_4$ web graph can be contracted \cite{Hagemeyer}, and this was used to great effect by Gaetz, Pechenik, Pfannerer, Striker, and Swanson in the construction of an $\mathfrak{sl}_4$ web basis with desirable properties such as rotational invariance \cite{GaetzetalRotation}. This is an important success beyond low rank. At the same time, the amount of additional structure needed even in rank four underscores the difficulty of the problem in higher rank, and at present, they do not believe their methods extend to all $n$. The present paper takes a different approach: rather than seeking a replacement for low-rank reduction or passing to a different planar model, we define a global combinatorial structure directly on the web graph itself.
 
Strandings for webs with boundary are also closely connected to the combinatorics of tableaux and noncrossing matchings. In the $\mathfrak{sl}_2$ and $\mathfrak{sl}_3$ literature, one often identifies a web with a single matching or tableau. Strandings reveal a richer picture: each web graph naturally carries a \emph{family} of multicolored noncrossing matchings, one for each valid stranding; these interact naturally with the tableau combinatorics used to construct basis webs in low rank. In this sense, strandings extend familiar low-rank combinatorics while also revealing structure that is invisible from the perspective of a single leading term. 

\section{Acknowledgements}
The first author was supported by an AMS-Simons Research Enhancement Grant for PUI Faculty and a University of Richmond sabbatical fellowship.  The second author was supported by NSF-DMS grants 2349088 and 1800773, as well as an AWM-MERP fellowship. The authors also gratefully acknowledge the support of the Budapest Semesters in Mathematics Director's Mathematician in Residence program and the Smith College SURF program, as well as Charlie Frohman, Christian Gaetz, Iva Halacheva, Mee Seong Im, Mikhail Khovanov, Brendan Rhoades, Anne Schilling, and for many helpful discussions with Vikhyat Agarwal, Lucas Adams Cowan, Brett Barnes, Michael Bo, Madeline Burns, Jennie Campbell, Junyang Chen, Ioana-Andreea Cristescu, Erica Duke,  Ronja Eilfort, Zoey Fan, Felicia Flores, Eleanor Gallay, Zacharie Georges, Emily Hafken, Annabelle Hendrickson, Michael Kee, Veronica Lang, Grace Mabli, Blaise Marsho, Jacob Martin, Jade Mawn, Bella Mohren, Eric Neuhaus, Miriam Poe, Natali Sabri, Caitlin Sales, Kerry Seekamp, Orit Tashman, Beatrice Tauer, Weiyi Wan, Michael Wang, Veronica Wang, Ava Weninger, Caroline White, Miah Wilson, and Lance Wong.  

\section{Preliminaries}\label{section: preliminaries}

We begin by fixing notation and background for untagged web spaces and spaces of $\uq$-invariants, which will serve as the target of our construction. 

Fix $n\ge 2$. An $\mathfrak{sl}_n$ web is a directed, edge-weighted plane graph with boundary. Boundary vertices are univalent and lie on a single horizontal axis, and the graph is embedded in the half-plane below this axis. Edge weights lie in $\{1,\ldots,n-1\}$. Webs are considered up to planar isotopy relative to the boundary.

For a vertex $v$ and an edge $e$ incident to $v$, define
\begin{equation}\label{eq: define sigma_v}
\sigma_v(e)=
\begin{cases}
1 & \textup{if $e$ is directed into $v$},\\
-1 & \textup{if $e$ is directed out of $v$}.
\end{cases}
\end{equation}

\begin{remark}
Some authors draw webs above a horizontal boundary axis or within a disk with boundary vertices on a circle rather than a line. These are combinatorially equivalent conventions. We use a horizontal axis because it is well suited to invariant vectors.

More generally, one may consider webs between two horizontal boundary axes, as in \cite{CKM, MorrisonThesis}. That setting emphasizes the categorical structure in which webs are morphisms that are composed via stacking. The two contexts are related by the standard isomorphism
$$
Hom(U,V)\cong U^*\otimes V.
$$

The web vector formula in Section~\ref{section:main construction} extends symmetrically to webs between two horizontal boundary axes; see Remark~\ref{rem:symmetry} for the precise form. We work with the single-axis convention in the body of the paper because our applications use boundary-face combinatorics specific to that setting.
\end{remark}

\subsection{Untagged webs}

We work primarily with the untagged web model of Fontaine and Westbury \cite{FontaineThesis,FON, WestburyBases}. This generalizes Temperley--Lieb diagrams and Kuperberg's $\mathfrak{sl}_3$ webs \cite{KuperbergWebs, TemperleyLieb}.

\begin{definition}[Untagged webs]\label{definition: untagged web}
An \emph{untagged $\mathfrak{sl}_n$ web} is an $\mathfrak{sl}_n$ web graph whose interior vertices are trivalent and satisfy the following \emph{conservation of flow modulo $n$} condition: if $v$ is a trivalent vertex with incident edges $e_1,e_2,e_3$ of weights $\ell_1,\ell_2,\ell_3$, then
$$
\sum_{i=1}^3 \sigma_v(e_i)\ell_i \equiv 0 \pmod n.
$$

Let $G$ be an untagged web with boundary vertices $v_1,\ldots,v_m$ read from left to right. Let $e_i$ be the edge incident to $v_i$, and let $\ell_i$ be its weight. Define
$$
k_i=
\begin{cases}
\ell_i & \textup{if } \sigma_{v_i}(e_i)=1,\\
n-\ell_i & \textup{if } \sigma_{v_i}(e_i)=-1.
\end{cases}
$$
The vector $\vec{k}=(k_1,\ldots,k_m)$ is the \emph{boundary weight vector} of $G$.

Let $F(\vec{k})$ be the set of all untagged $\mathfrak{sl}_n$ webs with boundary weight vector $\vec{k}$, and let $\mathcal{F}(\vec{k})$ be the free $\mathbb{C}(q)$-vector space generated by $F(\vec{k})$.
\end{definition}

\begin{example}\label{ex:untagged web}
Let $n=4$ and $\vec{k}=(1,1,3,3)$. The web in Figure~\ref{fig:ex untagged web} is an element of $F(\vec{k})$. We use it as a running example throughout the paper.
\begin{figure}[h]
 \begin{center}
\raisebox{3pt}{\begin{tikzpicture}[scale=1]

\draw[style=dashed, <->] (0,0)--(7,0);

\begin{scope}[thick,decoration={
    markings,
    mark=at position 0.5 with {\arrow{<}}}
    ] 

\draw[postaction={decorate},style=thick] (1,0) to[out=270,in=180] (3,-2);
\draw[postaction={decorate},style=thick] (3,-2)--(3,-1);
\draw[postaction={decorate},style=thick] (3,-2)--(4,-2);
\draw[postaction={decorate},style=thick] (5,0) to[out=270,in=0] (4,-1);
\draw[postaction={decorate},style=thick] (4,-1)--(3,-1);
\draw[postaction={decorate},style=thick] (3,-1) to[out=180,in=270] (2,0);
\end{scope}

\begin{scope}[thick,decoration={
    markings,
    mark=at position 0.5 with {\arrow{>}}}
    ] 

\draw[postaction={decorate},style=thick] (6,0) to[out=270,in=0] (4,-2);
\draw[postaction={decorate},style=thick] (4,-2)--(4,-1);
\end{scope}

\draw[radius=.05, fill=black](1,0)circle;
\draw[radius=.05, fill=black](3,-1)circle;
\draw[radius=.05, fill=black](4,-1)circle;
\draw[radius=.05, fill=black](3,-2)circle;
\draw[radius=.05, fill=black](4,-2)circle;
\draw[radius=.05, fill=black](2,0)circle;
\draw[radius=.05, fill=black](5,0)circle;
\draw[radius=.05, fill=black](6,0)circle;

\node at (.75,-.25) {\tiny{$1$}};
\node at (1.75,-.25) {\tiny{$3$}};
\node at (3.5,-.75) {\tiny{$1$}};
\node at (3.5,-2.25) {\tiny{$3$}};
\node at (2.75, -1.5) {\tiny{$2$}};
\node at (4.25, -1.5) {\tiny{$2$}};
\node at (6.25, -.25) {\tiny{$1$}};
\node at (5.25, -.25) {\tiny{$3$}};

    \end{tikzpicture}}
    \end{center}

    \caption{An untagged $\mathfrak{sl}_4$ web graph}\label{fig:ex untagged web}
\end{figure}
\end{example}

\begin{remark}
Fontaine labels edges by fundamental representations rather than by integers \cite{FontaineThesis,FON}. In that language, the interior vertex compatibility condition says that the tensor product of the three edge labels around each trivalent vertex must have a nonzero $\uq$-invariant subspace. This is equivalent to Definition~\ref{definition: untagged web}.
\end{remark}

\begin{remark}
If an untagged web has no boundary vertices, its boundary weight vector is $\vec{k}=\emptyset$.

If two boundary vertices $v_{i_1}$ and $v_{i_2}$ are joined by an edge $e:v_{i_1}\stackrel{\ell}{\mapsto}v_{i_2}$, then $e=e_{i_1}=e_{i_2}$ and
     $k_{i_1}=n-\ell$ while $k_{i_2}=\ell$. 
\end{remark}

\subsection{Edge flips and vertex types}

A basic operation on untagged webs is to reverse an edge while replacing its weight by a complementary weight.

\begin{definition}\label{definition: flip edge in web graph}
Let $G\in F(\vec{k})$. For an edge $u\stackrel{\ell}{\mapsto}v$ of $G$, define its \emph{edge flip} by
$$
\varphi\!\left(u\stackrel{\ell}{\mapsto}v\right)=v\stackrel{n-\ell}{\longmapsto}u.
$$
Thus $\varphi$ reverses orientation and replaces weight $\ell$ by $n-\ell$.

If $\mathcal{E}\subseteq E(G)$, let $G_{\varphi(\mathcal{E})}$ be the graph obtained from $G$ by flipping every edge in $\mathcal{E}$ and leaving all other data unchanged.
\end{definition}

\begin{example}\label{ex:edge flip}
Figure~\ref{fig:ex edge flip} shows the web from Example~\ref{ex:untagged web}, together with a chosen set of edges in violet and the result of flipping those edges.

\begin{figure}[h]
 \begin{center}
\raisebox{3pt}{\begin{tikzpicture}[scale=.75]

\draw[style=dashed, <->] (0,0)--(7,0);

\begin{scope}[thick,decoration={
    markings,
    mark=at position 0.5 with {\arrow{<}}}
    ] 

\draw[postaction={decorate},style=thick] (1,0) to[out=270,in=180] (3,-2);
\draw[postaction={decorate},style=thick] (3,-2)--(3,-1);
\draw[postaction={decorate},style=thick] (5,0) to[out=270,in=0] (4,-1);

\draw[postaction={decorate},style=thick, violet] (3,-2)--(4,-2);
\draw[postaction={decorate},style=thick, violet] (4,-1)--(3,-1);
\draw[postaction={decorate},style=thick, violet] (3,-1) to[out=180,in=270] (2,0);
\end{scope}

\begin{scope}[thick,decoration={
    markings,
    mark=at position 0.5 with {\arrow{>}}}
    ] 

\draw[postaction={decorate},style=thick, violet] (6,0) to[out=270,in=0] (4,-2);
\draw[postaction={decorate},style=thick, violet] (4,-2)--(4,-1);
\end{scope}

\draw[radius=.05, fill=black](1,0)circle;
\draw[radius=.05, fill=black](3,-1)circle;
\draw[radius=.05, fill=black](4,-1)circle;
\draw[radius=.05, fill=black](3,-2)circle;
\draw[radius=.05, fill=black](4,-2)circle;
\draw[radius=.05, fill=black](2,0)circle;
\draw[radius=.05, fill=black](5,0)circle;
\draw[radius=.05, fill=black](6,0)circle;

\node at (.75,-.25) {\tiny{$1$}};
\node at (1.75,-.25) {\textcolor{violet}{\tiny{$3$}}};
\node at (3.5,-.75) {\textcolor{violet}{\tiny{$1$}}};
\node at (3.5,-2.25) {\textcolor{violet}{\tiny{$3$}}};
\node at (2.75, -1.5) {\tiny{$2$}};
\node at (4.25, -1.5) {\textcolor{violet}{\tiny{$2$}}};
\node at (6.25, -.25) {\textcolor{violet}{\tiny{$1$}}};
\node at (5.25, -.25) {\tiny{$3$}};

    \end{tikzpicture}}
    \hspace{.25in}
\raisebox{3pt}{\begin{tikzpicture}[scale=.75]

\draw[style=dashed, <->] (0,0)--(7,0);

\begin{scope}[thick,decoration={
    markings,
    mark=at position 0.5 with {\arrow{<}}}
    ] 

\draw[postaction={decorate},style=thick] (1,0) to[out=270,in=180] (3,-2);
\draw[postaction={decorate},style=thick] (3,-2)--(3,-1);
\draw[postaction={decorate},style=thick] (5,0) to[out=270,in=0] (4,-1);
\draw[postaction={decorate},style=thick, violet] (6,0) to[out=270,in=0] (4,-2);
\draw[postaction={decorate},style=thick, violet] (4,-2)--(4,-1);

\end{scope}

\begin{scope}[thick,decoration={
    markings,
    mark=at position 0.5 with {\arrow{>}}}
    ] 
\draw[postaction={decorate},style=thick, violet] (3,-2)--(4,-2);
\draw[postaction={decorate},style=thick, violet] (4,-1)--(3,-1);
\draw[postaction={decorate},style=thick, violet] (3,-1) to[out=180,in=270] (2,0);
\end{scope}

\draw[radius=.05, fill=black](1,0)circle;
\draw[radius=.05, fill=black](3,-1)circle;
\draw[radius=.05, fill=black](4,-1)circle;
\draw[radius=.05, fill=black](3,-2)circle;
\draw[radius=.05, fill=black](4,-2)circle;
\draw[radius=.05, fill=black](2,0)circle;
\draw[radius=.05, fill=black](5,0)circle;
\draw[radius=.05, fill=black](6,0)circle;

\node at (.75,-.25) {\tiny{$1$}};
\node at (1.75,-.25) {\textcolor{violet}{\tiny{$1$}}};
\node at (3.5,-.75) {\textcolor{violet}{\tiny{$3$}}};
\node at (3.5,-2.25) {\textcolor{violet}{\tiny{$1$}}};
\node at (2.75, -1.5) {\tiny{$2$}};
\node at (4.25, -1.5) {\textcolor{violet}{\tiny{$2$}}};
\node at (6.25, -.25) {\textcolor{violet}{\tiny{$3$}}};
\node at (5.25, -.25) {\tiny{$3$}};

    \end{tikzpicture}}
    \end{center}

    \caption{Flipping edges in an untagged web graph}\label{fig:ex edge flip}
\end{figure}
\end{example}

\begin{lemma}\label{lemma: flipping edges gives another web graph}
If $G\in F(\vec{k})$ and $\mathcal{E}\subseteq E(G)$, then $G_{\varphi(\mathcal{E})}\in F(\vec{k})$.
\end{lemma}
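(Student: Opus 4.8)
The plan is to verify directly that $G_{\varphi(\mathcal{E})}$ satisfies every clause of Definition~\ref{definition: fontaine web} and has the same boundary weight vector $\vec{k}$. Since an edge flip leaves the underlying plane graph, the set of boundary vertices, and all vertex valences untouched, $G_{\varphi(\mathcal{E})}$ is again a plane graph with univalent boundary vertices and trivalent interior vertices; and because $1\le \ell\le n-1$ forces $1\le n-\ell\le n-1$, all edge weights of $G_{\varphi(\mathcal{E})}$ still lie in $\{1,\dots,n-1\}$. So only two things remain to check: conservation of flow modulo $n$ at each interior vertex, and the claim about the boundary weight vector.

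The key computation is that flipping a single edge changes its contribution to the flow sum at either of its endpoints by a multiple of $n$. Concretely, if $e$ is incident to a vertex $v$ with weight $\ell$, then $\varphi(e)$ is incident to $v$ with weight $n-\ell$ and reversed orientation, so $\sigma_v(\varphi(e))\,(n-\ell) = -\sigma_v(e)(n-\ell) = -n\,\sigma_v(e) + \sigma_v(e)\,\ell \equiv \sigma_v(e)\,\ell \pmod n$. Summing this over the (at most three) edges incident to an interior vertex $v$ — an edge not in $\mathcal{E}$ contributing exactly its old term, one in $\mathcal{E}$ contributing a term congruent mod $n$ to its old one — shows $\sum_i \sigma_v(\varphi(e_i))\,\ell'_i \equiv \sum_i \sigma_v(e_i)\,\ell_i \equiv 0 \pmod n$ in $G_{\varphi(\mathcal{E})}$. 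This argument is purely local at each vertex, hence insensitive to whether the other endpoint of a flipped edge is interior or on the boundary, and to whether that endpoint's incident edges are themselves flipped.

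For the boundary weight vector, fix a boundary vertex $v_i$ with incident edge $e_i$ of weight $\ell_i$. If $e_i\notin\mathcal{E}$, neither its weight nor its orientation changes, so the $i$-th coordinate $k_i$ is unchanged. If $e_i\in\mathcal{E}$, then in $G_{\varphi(\mathcal{E})}$ the edge at $v_i$ has weight $n-\ell_i$ and orientation $\sigma_{v_i}(\varphi(e_i))=-\sigma_{v_i}(e_i)$; running through the two-case definition of $k_i$: if $\sigma_{v_i}(e_i)=1$ then originally $k_i=\ell_i$ and the new value is $n-(n-\ell_i)=\ell_i$; if $\sigma_{v_i}(e_i)=-1$ then originally $k_i=n-\ell_i$ and the new value is $n-\ell_i$. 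Either way $k_i$ is unchanged (and since the argument is applied vertex-by-vertex, an edge joining two boundary vertices is handled correctly at each of its two endpoints). Hence $G_{\varphi(\mathcal{E})}$ has boundary weight vector $\vec{k}$, and therefore $G_{\varphi(\mathcal{E})}\in F(\vec{k})$.

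I expect no genuine obstacle: the entire content is the observation that flipping an edge alters each of its vertex-contributions by a multiple of $n$, which makes conservation of flow modulo $n$ manifestly invariant, and everything else is bookkeeping with the two-case definitions of $k_i$ and $\varphi$. If a cleaner exposition is preferred, one can first reduce to the case $|\mathcal{E}|=1$ and iterate, but the direct argument above already treats the general case at no extra cost.
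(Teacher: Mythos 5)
Your proof is correct and follows essentially the same route as the paper's: the same local computation showing that flipping an edge changes its contribution to each endpoint's flow sum by a multiple of $n$, followed by the same two-case check that the boundary weight vector is unchanged. No gaps.
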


\begin{proof}
If $e=u\stackrel{\ell}{\mapsto}v$ is flipped, then its contribution to the flow at $u$ changes from $-\ell$ to $n-\ell$, and its contribution at $v$ changes from $\ell$ to $-(n-\ell)$. In each case the change is a multiple of $n$, so conservation of flow modulo $n$ at interior vertices is unchanged.

For a boundary edge $e_i$ of weight $\ell_i$, flipping reverses the sign of $\sigma_{v_i}(e_i)$ and replaces $\ell_i$ by $n-\ell_i$. These changes preserve the corresponding entry $k_i$ of the boundary weight vector. Hence $G_{\varphi(\mathcal{E})}\in F(\vec{k})$.
\end{proof}

This gives a classification of trivalent vertices into two types, a distinction that will be useful later when comparing untagged webs with the tagged CKM model.

\begin{definition}\label{definition: split and merge vertices}
Let $v$ be an interior vertex of an untagged web graph $G$ with incident edges $e_1,e_2,e_3$ of weights $\ell_1,\ell_2,\ell_3$.

If
$$
\sum_{i=1}^3 \sigma_v(e_i)\ell_i=0,
$$
then at least one edge is directed into $v$ and at least one is directed out of $v$. If exactly one edge is directed into $v$, we call $v$ a \emph{split vertex}. If exactly two edges are directed into $v$, we call $v$ a \emph{merge vertex}.

More generally, an interior vertex $v$ of $G$ is called a split vertex, respectively a merge vertex, if it has that form in $G_{\varphi(\mathcal{E})}$ for some $\mathcal{E}\subseteq E(G)$.
\end{definition}

For instance, the web in Example \ref{ex:untagged web} has two split vertices on the left and two merge vertices on the right. Note that every interior vertex is exactly one of these two types.

\begin{remark}
If all three edges at an interior vertex are directed out of the vertex, then the vertex is a split vertex exactly when the three edge weights sum to $n$, and a merge vertex exactly when they sum to $2n$.
\end{remark}

\subsection{Tripod decompositions}
Later arguments use convenient isotopic representatives of a web. In particular, we use a tripod decomposition similar to the one in \cite{KK}, splitting a web into a disjoint union of the three local pieces in Figure~\ref{fig:basic pieces}: cups, caps, and tripods.
 
\begin{figure}[h]
\begin{subfigure}[t]{0.2\textwidth}
\centering
\begin{tikzpicture}[scale=.5]
    \draw[red, dashed] (0,0)--(3,0);
    \draw[thick] (.5,0) to[out=270, in=180] (1.5,-1) to[out=0, in=270] (2.5,0);
\end{tikzpicture}
\subcaption{cup}
\end{subfigure}
\begin{subfigure}[t]{0.2\textwidth}
\centering
\begin{tikzpicture}[scale=.5]
    \draw[red, dashed] (0,0)--(3,0);
    \draw[thick] (.5,0) to[out=90, in=180] (1.5,1) to[out=0, in=90] (2.5,0);
\end{tikzpicture}
\subcaption{cap}
\end{subfigure}
\begin{subfigure}[t]{0.2\textwidth}
\centering
\begin{tikzpicture}[scale=.4]
    \draw[dashed, red] (0,0)--(5,0);
    \draw[radius=.12, fill=black](2.5,-1.5)circle;
    \draw[thick] (.5,0) to[out=270, in=180] (2.5,-1.5) to[out=0, in=270] (4.5,0);
    \draw[thick] (2.5,-1.5)--(2.5,0);
\end{tikzpicture}
\subcaption{tripod}
\end{subfigure}
\caption{Local pieces of a tripod decomposition}\label{fig:basic pieces}
\end{figure}

\begin{definition}\label{def:tripod decomp}
Let $G$ be a plane graph with univalent boundary vertices and trivalent interior vertices, embedded below its boundary axis. A graph $G'$ is a \emph{tripod decomposition} of $G$ if $G'$ is isotopic to $G$ relative to the boundary and there exists a horizontal axis, called the \emph{internal axis}, such that:
\begin{enumerate}
    \item below the internal axis, $G'$ is a disjoint union of cups and tripods with endpoints on the internal axis,
    \item above the internal axis, $G'$ is a disjoint union of caps with endpoints on the internal axis, and
    \item between the internal and boundary axes, $G'$ consists only of vertical segments.
\end{enumerate}
We draw the internal axis as a red dashed line. The intersection points of $G'$ with the internal axis are not vertices of $G'$.
\end{definition}

\begin{example}\label{ex:websplit}
Figure~\ref{fig: ex websplit} shows the underlying trivalent graph of Example~\ref{ex:untagged web} together with one choice of tripod decomposition.

\begin{figure}[ht]
 \begin{center}
\raisebox{3pt}{\begin{tikzpicture}[scale=.75]
  \draw[style=dashed, <->] (0,0)--(7,0);
   \draw[style=thick] (1,0) to[out=270,in=180] (3,-2)--(3,-1) to[out=180,in=270] (2,0);
      \draw[style=thick] (6,0) to[out=270,in=0] (4,-2)--(3,-2);
      \draw[style=thick] (5,0) to[out=270,in=0] (4,-1);
      \draw[style=thick] (4,-2)--(4,-1)--(3,-1);

\draw[radius=.05, fill=black](1,0)circle;
\draw[radius=.05, fill=black](3,-1)circle;
\draw[radius=.05, fill=black](4,-1)circle;
\draw[radius=.05, fill=black](3,-2)circle;
\draw[radius=.05, fill=black](4,-2)circle;
\draw[radius=.05, fill=black](2,0)circle;
\draw[radius=.05, fill=black](5,0)circle;
\draw[radius=.05, fill=black](6,0)circle;

    \end{tikzpicture}}
\hspace{.25in}
\raisebox{-15pt}{\scalebox{.6}{\begin{tikzpicture}[scale=.65]
  \draw[style=dashed, <->] (0,4)--(13,4);
  \draw[style=dashed, red] (0,0)--(13,0);
  \draw[style=thick] (2,0) to[out=90,in=180] (5.5, 2.5) to[out=0,in=90] (9,0);
 \draw[style=thick] (3,0) to[out=90,in=180] (3.5, 1) to[out=0,in=90] (4,0);
  \draw[style=thick] (5,0) to[out=90,in=180] (5.5, 1) to[out=0,in=90] (6,0);
    \draw[style=thick] (7,0) to[out=90,in=180] (7.5, 1) to[out=0,in=90] (8,0);
    \draw[style=thick] (1,4)--(1,0) to[out=270,in=180] (2,-1)--(2,0);
    \draw[style=thick] (2,-1) to[out=0,in=270] (3,0);
   \draw[style=thick] (4,0) to[out=270,in=180] (5,-3)--(5,0);
      \draw[style=thick] (5,-3) to[out=0,in=270] (12,0)--(12,4);
       \draw[style=thick] (6,0) to[out=270,in=180] (7,-2)--(7,0);
      \draw[style=thick] (7,-2) to[out=0,in=270] (11,0)--(11,4);
        \draw[style=thick] (8,0) to[out=270,in=180] (9,-1)--(9,0);
      \draw[style=thick] (9,-1) to[out=0,in=270] (10,0)--(10,4);

\draw[radius=.08, fill=black](1,4)circle;
\draw[radius=.08, fill=black](2,-1)circle;
\draw[radius=.08, fill=black](5,-3)circle;
\draw[radius=.08, fill=black](7,-2)circle;
\draw[radius=.08, fill=black](9,-1)circle;
\draw[radius=.08, fill=black](10,4)circle;
\draw[radius=.08, fill=black](11,4)circle;
\draw[radius=.08, fill=black](12,4)circle;

    \end{tikzpicture}}}
    \caption{A tripod decomposition for a trivalent graph with boundary}\label{fig: ex websplit}
    \end{center}
\end{figure}
\end{example}

\subsection{Representation theory of $\uq$}\label{sub:repn theory}

We now fix conventions for $U_q(\mathfrak{sl}_n)$ and its fundamental representations. For ease of later comparison with CKM, we adopt conventions compatible with theirs.

Given integers $k$ and $l$, define the quantum integer  $[k]_q = \frac{q^k-q^{-k}}{q-q^{-1}}.$ This definition gives $[0]_q=0$, $[-k]_q=-[k]_q$, and $$[k]_q=q^{k-1} + q^{k-3} + \cdots + q^{-k+3} + q^{-k+1}$$ for $k>0$. We define the quantum binomial coefficient by $\genfrac[]{0pt}{2}{k}{0}_q=[1]_q=1$ and, for a positive integer $l$,  $$\genfrac[]{0pt}{0}{k}{l}_q = \frac{[k]_q\cdots [k-l+1]_q}{[l]_q\cdots[1]_q}.$$  

The quantum group $\uq$ is the Hopf algebra over $\mathbb{C}(q)$ generated by $E_i,F_i,K_i$ for $1\le i<n$, with relations
$$
K_iK_j=K_jK_i,\qquad K_jE_iK_j^{-1}=q^{\langle i,j\rangle}E_i,\qquad K_jF_iK_j^{-1}=q^{-\langle i,j\rangle}F_i,
$$
$$
[E_i,F_j]=\delta_{ij}\frac{K_i-K_i^{-1}}{q-q^{-1}},
$$
$$
[2]_qE_iE_jE_i=E_i^2E_j+E_jE_i^2 \textup{ if } |i-j|=1,\qquad [E_i,E_j]=0 \textup{ if } |i-j|>1,
$$
where
$$
\langle i,j\rangle=
\begin{cases}
2 & \textup{if } i=j,\\
-1 & \textup{if } |i-j|=1,\\
0 & \textup{otherwise.}
\end{cases}
$$

The coproduct is
$$
\Delta(E_i)=E_i\otimes K_i+1\otimes E_i,\qquad
\Delta(F_i)=F_i\otimes 1+K_i^{-1}\otimes F_i,\qquad
\Delta(K_i)=K_i\otimes K_i.
$$
The antipode is
$$
S(E_i)=-E_iK_i^{-1},\qquad S(F_i)=-K_iF_i,\qquad S(K_i)=K_i^{-1},
$$
and the counit is
$$
\epsilon(E_i)=\epsilon(F_i)=0,\qquad \epsilon(K_i)=1.
$$

Let $\mathbb{C}^n_q$ be the $n$-dimensional vector space over $\mathbb{C}(q)$ with standard basis $x_1,\ldots,x_n$. Write $\wedge_q$ for the quantum wedge product, characterized by
$$
x_i\wedge_q x_j + qx_j\wedge_q x_i=0 \textup{ for } i<j,
\qquad
x_i\wedge_q x_i=0.
$$
For $1\le k<n$, the $k$th fundamental representation is
$$
V_k:={\bigwedge}_q^k \mathbb{C}^n_q.
$$
Let $V_k^*=Hom(V_k,\mathbb{C}(q))$ be its dual.

If $\vec{k}=(k_1,\ldots,k_m)$ with each $k_j\in\{\pm1,\ldots,\pm(n-1)\}$, define $V(\vec{k})$
to be the tensor product whose $j$th factor is $V_{k_j}$ if $k_j>0$ and $V_{|k_j|}^*$ if $k_j<0$.

For a subset $T=\{t_1>\cdots>t_k\}\subset\{1,\ldots,n\}$, define
$$
x_T:=x_{t_1}\wedge_q \cdots \wedge_q x_{t_k}\in V_k.
$$
Then
$$
\{x_T: T\subset\{1,\ldots,n\},\ |T|=k\}
$$
is a basis of $V_k$.

\begin{remark}
If $\vec{b}=b_1\cdots b_n$ is a binary vector, let
$$
T(\vec{b})=\{i: b_i=1\}.
$$
We write $|\vec{b}|=|T(\vec{b})|$ and $x_{\vec{b}}:=x_{T(\vec{b})}$. Thus the basis of $V_k$ can be written as
$$
\{x_{\vec{b}}: |\vec{b}|=k\}.
$$
For example, we have $|1010|=2$, and $x_{1010}=x_3\wedge_q x_1\in V_2$.
\end{remark}

The action of $\uq$ on $V_1=\mathbb{C}^n_q$ is given by
$$
E_i(x_j)=
\begin{cases}
x_{j-1} & \textup{if } j=i+1,\\
0 & \textup{otherwise,}
\end{cases}
\qquad
F_i(x_j)=
\begin{cases}
x_{j+1} & \textup{if } j=i,\\
0 & \textup{otherwise,}
\end{cases}
$$
and
$$
K_i(x_j)=
\begin{cases}
qx_j & \textup{if } j=i,\\
q^{-1}x_j & \textup{if } j=i+1,\\
x_j & \textup{otherwise.}
\end{cases}
$$

Using the coproduct, this induces a $\uq$-action on tensor products that descends to quantum exterior powers. For $T=\{t_1,\dots, t_k\}\subset\{1,\ldots,n\}$ and $1\leq i<n$, we write $s_iT=\{s_it_1,\ldots, s_it_k\}$, where $s_i$ is the simple transposition swapping $i$ and $i+1$.  With this notation, we describe the $\uq$-action on $V_k$ as follows:
$$
E_i(x_T)=
\begin{cases}
x_{s_iT} & \textup{if } i\notin T \textup{ and } i+1\in T,\\
0 & \textup{otherwise,}
\end{cases}
$$
$$
F_i(x_T)=
\begin{cases}
x_{s_iT} & \textup{if } i\in T \textup{ and } i+1\notin T,\\
0 & \textup{otherwise,}
\end{cases}
$$
$$
K_i(x_T)=
\begin{cases}
qx_T & \textup{if } i\in T \textup{ and } i+1\notin T,\\
q^{-1}x_T & \textup{if } i\notin T \textup{ and } i+1\in T,\\
x_T & \textup{otherwise.}
\end{cases}
$$

Using the antipode, one obtains the action on $V_k^*$ given by
$$
E_i(x_T^*)=
\begin{cases}
-qx_{s_iT}^* & \textup{if } i\in T \textup{ and } i+1\notin T,\\
0 & \textup{otherwise,}
\end{cases}
$$
$$
F_i(x_T^*)=
\begin{cases}
-q^{-1}x_{s_iT}^* & \textup{if } i\notin T \textup{ and } i+1\in T,\\
0 & \textup{otherwise,}
\end{cases}
$$
$$
K_i(x_T^*)=
\begin{cases}
q^{-1}x_T^* & \textup{if } i\in T \textup{ and } i+1\notin T,\\
qx_T^* & \textup{if } i\notin T \textup{ and } i+1\in T,\\
x_T^* & \textup{otherwise.}
\end{cases}
$$

\begin{definition}
Let $W$ be a $\uq$-representation. A vector $w\in W$ is a \emph{$\uq$-invariant vector} if
$$
E_i(w)=F_i(w)=0
\qquad\textup{and}\qquad
K_i(w)=w
$$
for all $1\le i<n$.
We write $Inv(W)$ for the subspace of all $\uq$-invariant vectors in $W$, and in the case that $W=V(\vec{k})$ we abbreviate this as $Inv(\vec{k})$.
\end{definition}

Since $\uq$ acts trivially on $\mathbb{C}(q)$, evaluation at $1$ gives an isomorphism
$$
Hom_{\uq}(\mathbb{C}(q),W)\cong Inv(W).
$$
We use this identification freely.

\begin{theorem}\label{thm: hom space dimension}
Let $\vec{k}=(k_1,\ldots,k_m)$ be a vector of nonnegative integers. Then $Inv(\vec{k})$ has $\mathbb{C}(q)$-dimension equal to the number of row-strict Young tableaux of shape
$$
n\times \left(\frac{\sum_j k_j}{n}\right)
$$
and content $\{1^{k_1},2^{k_2},\ldots,m^{k_m}\}$ when $\frac{\sum_j k_j}{n}\in \mathbb{Z}$, and dimension $0$ otherwise \cite{MR1321638}.

In particular, for $\vec{k}=(1,\ldots,1)$, the dimension of $Inv(\vec{k})$ is the number of standard Young tableaux of shape $n\times \frac{m}{n}$ if $n$ divides $m$, and is $0$ otherwise.
\end{theorem}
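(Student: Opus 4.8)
The plan is to reduce the statement to the analogous fact for the classical enveloping algebra $U(\mathfrak{sl}_n)$ (equivalently, for $GL_n(\mathbb C)$), and then to evaluate the resulting tensor‑product multiplicity using the Pieri rule for elementary symmetric functions.

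First I would observe that every module occurring here---each fundamental representation $V_k=\bigwedge^k_q\mathbb C^n_q$ and every tensor product $V(\vec k)$ with $\vec k$ a tuple of nonnegative integers---is a finite‑dimensional type‑$1$ $\uq$‑module over $\mathbb C(q)$: the formulas of Section~\ref{sub:repn theory} show that each $K_i$ acts semisimply with eigenvalues that are integer powers of $q$. Over the field $\mathbb C(q)$ (generic $q$) the category of such modules is semisimple, its simple objects $L_q(\lambda)$ are indexed by dominant integral weights, and $\operatorname{ch}L_q(\lambda)=\operatorname{ch}L(\lambda)$; hence all tensor‑product multiplicities, and in particular $\dim_{\mathbb C(q)}\textup{Inv}(\vec k)$, agree with their classical counterparts. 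This is the route taken in \cite{FontaineThesis}, and it is the step where I would be most careful, since it rests on the structure theory of $\uq$ at generic $q$ rather than on the web constructions of this paper. Alternatively one obtains the inequality $\dim\textup{Inv}_q(\vec k)\le\dim\textup{Inv}_{q=1}(\vec k)$ by an upper‑semicontinuity argument over $\mathbb C[q^{\pm1}]$, and the reverse inequality then follows a posteriori once the web vectors are shown to span in Theorem~\ref{theorem: using Fontaine to prove basis}.

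Next I would run the classical computation inside $GL_n(\mathbb C)$. Writing $V_k=\bigwedge^k\mathbb C^n=S^{(1^k)}\mathbb C^n$ and $N=\sum_j k_j$, the tensor product $\bigotimes_j V_{k_j}$ is a polynomial $GL_n$‑representation, homogeneous of degree $N$. Under restriction to $\mathfrak{sl}_n$ a Schur module $S^\lambda\mathbb C^n$ becomes trivial precisely when $\lambda$ is the $n$‑row rectangle $(d^n)$, i.e.\ $S^\lambda=\det^{\otimes d}$; among such rectangles the degree constraint $|\lambda|=N$ forces $d=N/n$. Therefore $\textup{Inv}(\vec k)=0$ unless $n\mid N$, and when $n\mid N$, setting $d=N/n$,
$$\dim\textup{Inv}(\vec k)=\big[\,S^{(d^n)}\mathbb C^n:\textstyle\bigotimes_j S^{(1^{k_j})}\mathbb C^n\,\big]=\big\langle s_{(d^n)},\,e_{k_1}e_{k_2}\cdots e_{k_m}\big\rangle,$$
where I have passed through the characteristic isomorphism to symmetric functions ($S^{(1^k)}\mapsto e_k$, $S^\lambda\mapsto s_\lambda$, with $\langle\,,\,\rangle$ the Hall inner product).

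Finally I would unwind this inner product. Iterating the Pieri rule $s_\mu\cdot e_k=\sum_\lambda s_\lambda$ (sum over $\lambda$ with $\lambda/\mu$ a vertical strip of size $k$) shows that $\langle s_{(d^n)},e_{k_1}\cdots e_{k_m}\rangle$ counts the chains $\varnothing=\mu^{(0)}\subset\mu^{(1)}\subset\cdots\subset\mu^{(m)}=(d^n)$ for which each $\mu^{(j)}/\mu^{(j-1)}$ is a vertical strip of size $k_j$. Labelling each box of the $n\times d$ rectangle by the index $j$ of the step at which it is added turns such a chain into a filling that is strictly increasing along rows (two boxes in one row never receive the same label, by the vertical‑strip condition) and weakly increasing down columns---that is, a row‑strict Young tableau of shape $n\times d$ with content $\{1^{k_1},\dots,m^{k_m}\}$---and this correspondence is a bijection; equivalently $\langle s_{(d^n)},e_{\vec k}\rangle=\langle s_{(n^d)},h_{\vec k}\rangle=K_{(n^d),\vec k}$, a Kostka number. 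This gives the stated formula. For $\vec k=(1,\dots,1)$ the content is $\{1,2,\dots,m\}$ with distinct entries, so any row‑strict filling is automatically column‑strict as well, hence a standard Young tableau of shape $n\times(m/n)$, which is the last assertion.
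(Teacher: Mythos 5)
The paper does not actually prove Theorem~\ref{thm: hom space dimension}: it is recalled as a known fact with a citation to \cite{FontaineThesis} (Theorem 4.1.7 and Lemma 4.6.3 there), so there is no in-paper argument to compare against. Your proposal supplies a correct, self-contained proof along the standard lines, and the two nontrivial inputs are handled appropriately. The reduction to the classical case via semisimplicity of type-$1$ finite-dimensional $\uq$-modules over $\mathbb{C}(q)$ and the coincidence of characters with the $q=1$ case is indeed the delicate external ingredient, and you are right to flag it as such; it is standard (Lusztig, Rosso) but is not something this paper's web machinery provides, and your fallback via semicontinuity plus the spanning statement of Theorem~\ref{theorem: using Fontaine to prove basis} would be circular here since that theorem uses the dimension count, so the generic-$q$ structure theory is really what you must invoke. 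The classical computation is correct: invariants of a polynomial $GL_n$-module under $\mathfrak{sl}_n$ live only in the determinant powers $S^{(d^n)}$, the degree constraint forces $d=\left(\sum_j k_j\right)/n$, the multiplicity is $\langle s_{(d^n)}, e_{k_1}\cdots e_{k_m}\rangle$, and the iterated Pieri rule identifies this with chains of vertical strips, equivalently with fillings of the $n\times d$ rectangle that are strictly increasing along rows and weakly increasing down columns --- exactly the row-strict convention stated just before the theorem. The specialization to $\vec{k}=(1,\dots,1)$ is also right, since distinct entries upgrade weak column-increase to strict. In short: the proof is correct and is most likely essentially the argument underlying Fontaine's cited result, so it fills in a proof the paper deliberately omits rather than diverging from one it gives.
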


\subsection{Tagged webs}

CKM use a tagged web model in their work \cite{CKM}; this model includes bivalent tagged vertices and imposes an integer, rather than modulo $n$, flow condition. They define a surjective map from spaces of tagged webs to their corresponding invariant spaces and identify a generating set of relations for the kernel of this map. In Section~\ref{section:tagless vs tagged}, we compare our untagged model to the CKM tagged model via explicit maps and use that comparison to transfer relations.

\section{Strandings and binary labelings}\label{section: strandings and binary labelings results}

We now introduce the combinatorial structure that underlies our construction of web vectors. A \emph{stranding} of a web is a system of directed colored paths running along its edges. After developing this global picture, we show that the same data is encoded locally by binary labelings. The equivalence between these two perspectives will be fundamental in the next section.

\begin{remark}
We use the term \emph{simple strand} for the basic colored directed paths appearing in a stranding. Later in the section we will also consider associated $(i,j)$-strands. This terminology distinguishes the primary paths in a stranding from the additional strand data built from them.
\end{remark}

\subsection{Valid strandings}

\begin{definition}\label{definition: combinatorial stranding}
Let $G$ be an $\mathfrak{sl}_n$ web graph. A \emph{stranding} $S$ of $G$ is a collection of colored directed paths in $G$ such that:
\begin{itemize}
    \item each path is assigned a color $c\in\{1,\ldots,n-1\}$;
    \item each path is either closed or has both endpoints on the boundary of $G$;
    \item paths of the same color are pairwise disjoint.
\end{itemize}
The individual paths in $S$ are called \emph{simple strands}.
\end{definition}

Let $e$ be an edge of $G$. Write
$$
S(e)=S_+(e)\sqcup S_-(e),
$$
where $S_+(e)$, respectively $S_-(e)$, is the set of colors of simple strands directed with, respectively against, the orientation of $e$. Define
$$
\alpha_c^\vee(e)=
\begin{cases}
1 & \textup{if } c\in S_+(e),\\
-1 & \textup{if } c\in S_-(e),\\
0 & \textup{if } c\notin S(e).
\end{cases}
$$
While the quantity $\alpha_c^\vee(e)$ depends on a choice of stranding,
we suppress this in the notation; see Remark~\ref{rem:two alphas agree}
for more on the notation $\alpha_c^\vee$.

\begin{definition}
A stranding $S$ is \emph{valid} if for every edge $e$ of weight $\ell$ the following two conditions hold:
\begin{enumerate}
    \item the nonzero values $\alpha_c^\vee(e)$ alternate in sign as $c$ increases;
    \item if $c_{\max}$ is the largest color in $S(e)$, then
    $$
    \sum_{c=1}^{n-1}\alpha_c^\vee(e)c=
    \begin{cases}
        \ell & \textup{if } c_{\max}\in S_+(e),\\
        \ell-n & \textup{if } c_{\max}\in S_-(e).
    \end{cases}
    $$
\end{enumerate}
We write $\mathcal{S}tr(G)$ for the set of valid strandings of $G$.
\end{definition}

\begin{remark}
If $S$ satisfies condition \textup{(1)} above and $S(e)=\{c_1<\cdots<c_r\}$, then condition \textup{(2)} can be rewritten as
$$
c_r-c_{r-1}+\cdots+(-1)^{r-1}c_1=
\begin{cases}
\ell & \textup{if } c_r\in S_+(e),\\
n-\ell & \textup{if } c_r\in S_-(e).
\end{cases}
$$
\end{remark}

\begin{example}\label{ex:stranding example}
Figure~\ref{fig: ex stranding example} shows the untagged $\mathfrak{sl}_4$ web $G$ from Example~\ref{ex:untagged web} together with a valid stranding. We depict simple strands of colors $1,2,3$ in blue, red, and green respectively.

\begin{figure}[h]
 \begin{center}
\raisebox{3pt}{\begin{tikzpicture}[scale=.75]

\draw[style=dashed, <->] (0,0)--(7,0);

\begin{scope}[thick,decoration={
    markings,
    mark=at position 0.5 with {\arrow{<}}}
    ] 

\draw[postaction={decorate},style=thick] (1,0) to[out=270,in=180] (3,-2);
\draw[postaction={decorate},style=thick] (3,-2)--(3,-1);
\draw[postaction={decorate},style=thick] (3,-2)--(4,-2);
\draw[postaction={decorate},style=thick] (5,0) to[out=270,in=0] (4,-1);
\draw[postaction={decorate},style=thick] (4,-1)--(3,-1);
\draw[postaction={decorate},style=thick] (3,-1) to[out=180,in=270] (2,0);
\end{scope}

\begin{scope}[thick,decoration={
    markings,
    mark=at position 0.5 with {\arrow{>}}}
    ] 
\draw[postaction={decorate},style=thick] (6,0) to[out=270,in=0] (4,-2);
\draw[postaction={decorate},style=thick] (4,-2)--(4,-1);
\end{scope}

\draw[radius=.08, fill=black](1,0)circle;
\draw[radius=.08, fill=black](3,-1)circle;
\draw[radius=.08, fill=black](4,-1)circle;
\draw[radius=.08, fill=black](3,-2)circle;
\draw[radius=.08, fill=black](4,-2)circle;
\draw[radius=.08, fill=black](2,0)circle;
\draw[radius=.08, fill=black](5,0)circle;
\draw[radius=.08, fill=black](6,0)circle;

\node at (.75,-.25) {\tiny{$1$}};
\node at (1.75,-.25) {\tiny{$3$}};
\node at (3.5,-.75) {\tiny{$1$}};
\node at (3.5,-2.25) {\tiny{$3$}};
\node at (2.75, -1.5) {\tiny{$2$}};
\node at (4.25, -1.5) {\tiny{$2$}};
\node at (6.25, -.25) {\tiny{$1$}};
\node at (5.25, -.25) {\tiny{$3$}};

    \end{tikzpicture}}
    \hspace{.25in}
 \raisebox{10pt}{\begin{tikzpicture}[scale=.75]

\draw[style=dashed, <->] (0,0)--(7,0);

\begin{scope}[thick,decoration={
    markings,
    mark=at position 0.6 with {\arrow{<}}}
    ] 

\draw[postaction={decorate},style=thick, blue] (3.05,-2)--(3.05,-1);
\draw[postaction={decorate},style=thick, green] (4,-1.05)--(3,-1.05);
\draw[postaction={decorate},style=thick, blue] (3,-1.05) to[out=180,in=270] (1.95,0);
\draw[postaction={decorate},style=thick, blue] (6.05,0) to[out=270,in=0] (4,-2.05);
\draw[postaction={decorate},style=thick, green] (5,0) to[out=270,in=0] (4,-1);

\end{scope}

\begin{scope}[thick,decoration={
    markings,
    mark=at position 0.4 with {\arrow{>}}}
    ] 
\draw[postaction={decorate},style=thick, green] (1,0) to[out=270,in=180] (3,-2);
\draw[postaction={decorate},style=thick, blue] (3,-2)--(4,-2);
\draw[postaction={decorate},style=thick, red] (4,-2)--(4,-1);
\draw[postaction={decorate},style=thick, red] (5.95,0) to[out=270,in=0] (4,-1.95);
\draw[postaction={decorate},style=thick, green] (2.95,-2)--(2.95,-1);
\draw[postaction={decorate},style=thick, red] (4,-.95)--(3,-.95);
\draw[postaction={decorate},style=thick, red] (3,-.95) to[out=180,in=270] (2.05,0);

\end{scope}

\draw[radius=.08, fill=black](1,0)circle;
\draw[radius=.08, fill=black](3,-1)circle;
\draw[radius=.08, fill=black](4,-1)circle;
\draw[radius=.08, fill=black](3,-2)circle;
\draw[radius=.08, fill=black](4,-2)circle;
\draw[radius=.08, fill=black](2,0)circle;
\draw[radius=.08, fill=black](5,0)circle;
\draw[radius=.08, fill=black](6,0)circle;

    \end{tikzpicture}}   
    \end{center}

    \caption{An untagged web graph and a valid stranding}\label{fig: ex stranding example}
\end{figure}
\end{example}

\begin{remark}
Gaetz et al.\ define \emph{trip strands} for $\mathfrak{sl}_4$ webs \cite[Definition~2.12]{GaetzetalRotation}, but these are different from our simple strands. For instance, in their setup every edge carries a trip strand of every type, whereas in ours an edge may carry only some simple strand colors.
\end{remark}

The next result translates the path-based definition of a stranding into a compatibility condition for edgewise strand data at interior vertices. This reformulation will play a central role in the arguments that follow.

\begin{theorem}\label{thm:net zero strand contribution at each vertex}
Let $G$ be a web graph. For each edge $e\in E(G)$, choose valid edgewise simple strand data $S(e)=S_+(e)\sqcup S_-(e)$. These choices yield a valid stranding of $G$ if and only if for each interior vertex $v$ with incident edges $e_1,e_2,e_3$ and each color $1\le c\le n-1$,
$$
\sum_{i=1}^3 \sigma_v(e_i)\alpha_c^\vee(e_i)=0.
$$
\end{theorem}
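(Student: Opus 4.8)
The plan is to interpret the quantity $\sigma_v(e)\,\alpha^\vee_c(e)$ as recording the direction, relative to $v$, of the color-$c$ strand fragment on an edge $e$ incident to $v$, and thereby to reduce the global problem of assembling the fragments to a local count handled one color at a time. Unwinding the sign conventions, a direct check of the definitions of $\sigma_v$ and $\alpha^\vee_c$ shows that for an interior vertex $v$ and an incident edge $e$, the number $\sigma_v(e)\,\alpha^\vee_c(e)\in\{-1,0,1\}$ equals $+1$ if the color-$c$ fragment on $e$ points toward $v$, equals $-1$ if it points away from $v$, and equals $0$ if $c\notin S(e)$. Hence $\sum_{i=1}^3 \sigma_v(e_i)\,\alpha^\vee_c(e_i)$ is the number of color-$c$ fragments directed into $v$ minus the number directed out of $v$. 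Since a sum of three elements of $\{-1,0,1\}$ cannot vanish unless all three are $0$ or exactly two are nonzero with opposite signs, the vanishing condition at $v$ for color $c$ is equivalent to the statement that either $c$ appears on none of $e_1,e_2,e_3$, or $c$ appears on exactly two of them with its fragment pointing toward $v$ on one and away from $v$ on the other.

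For the ``if'' direction, assume the vertex condition holds at every interior vertex for every color. Fix a color $c$ and let $G_c$ be the subgraph of $G$ consisting of the edges $e$ with $c\in S(e)$, each oriented in the direction the color-$c$ fragment on $e$ travels. By the discussion above, every interior vertex of $G$ has degree $0$ or $2$ in $G_c$, the two incident $G_c$-edges (when present) forming an in/out pair; every boundary vertex, being univalent in $G$, has degree $0$ or $1$ in $G_c$. A graph in which every vertex has degree at most $2$ is a disjoint union of simple paths and simple cycles; here the path endpoints are forced to be boundary vertices, and the in/out pairing at interior vertices makes each path and each cycle consistently directed. These directed paths and cycles are the color-$c$ strands. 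They are pairwise vertex-disjoint because no vertex lies on two of them, and by construction the colors and directions they induce on each edge $e$ are exactly the prescribed data $S_+(e)\sqcup S_-(e)$, which is a valid stranding of $e$ by hypothesis. Taking the union over all $c$ produces a valid stranding of $G$ realizing the given fragments.

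For the ``only if'' direction, suppose the fragments come from a valid stranding $S$, and fix an interior vertex $v$ and color $c$. Each color-$c$ strand of $S$ is a directed path or closed curve, and interior vertices are never endpoints of strands, so any color-$c$ strand meeting $v$ passes through it, using exactly two of the three edges at $v$ — one entering, one leaving — and at most one color-$c$ strand meets $v$ because such strands are disjoint. Moreover, if $c\in S(e_i)$ for some $i$, then the color-$c$ strand containing that fragment must pass through $v$. Hence the color-$c$ fragments at $v$ are either absent or form one in/out pair, and $\sum_{i=1}^3 \sigma_v(e_i)\,\alpha^\vee_c(e_i)=0$.

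I expect the main obstacle to be the ``if'' direction: passing from the local degree bookkeeping to an honest decomposition of $G_c$ into boundary-to-boundary paths and closed cycles, and confirming that the assembled strands are disjoint within each color and restrict on every edge to exactly the prescribed fragment. None of this is deep, but it is where the write-up needs care; the sign-convention check and the ``only if'' direction are routine once the reinterpretation in the first paragraph is in place.
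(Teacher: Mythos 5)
Your proposal is correct and follows essentially the same route as the paper: reinterpret $\sigma_v(e_i)\alpha^\vee_c(e_i)\in\{-1,0,1\}$ as the direction of the color-$c$ fragment relative to $v$, and observe that the sum vanishes precisely when the fragments at $v$ are either absent or form one in/out pair for each color. The only difference is that you spell out the reassembly step (decomposing the color-$c$ subgraph into directed paths and cycles) in more detail than the paper, which simply asserts that the fragments fit together if and only if the local in/out condition holds at every interior vertex; your extra care there is harmless and arguably an improvement.
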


\begin{proof}
The chosen edgewise data define colored directed strand fragments along the edges of $G$. These glue to a global stranding precisely when, for every interior vertex $v$ and every color $c$, either no strand of color $c$ is incident to $v$, or exactly one such strand enters $v$ and exactly one leaves $v$.

For an incident edge $e_i$, the quantity $\sigma_v(e_i)\alpha_c^\vee(e_i)$ is $0$ if no simple strand of color $c$ runs along $e_i$, is $1$ if such a strand is directed into $v$, and is $-1$ if it is directed out of $v$. Since each term lies in $\{-1,0,1\}$, the sum
$$
\sum_{i=1}^3 \sigma_v(e_i)\alpha_c^\vee(e_i)
$$
vanishes if and only if the three terms are either all $0$ or are $1,-1,0$ in some order. This is exactly the required gluing condition.
\end{proof}

\subsection{Induced \texorpdfstring{$(i,j)$}{(i,j)}-strands}

A valid stranding determines a larger family of induced directed paths indexed by pairs $(i,j)$.

\begin{definition}
Let $G$ be an $\mathfrak{sl}_n$ web graph, let $S\in\mathcal{S}tr(G)$, and let $e\in E(G)$. Define
$$
L(e)=\left\{(i,j): 1\le i<j\le n,\ \sum_{c=i}^{j-1}\alpha_c^\vee(e)\neq 0\right\}.
$$
Write $L(e)=L_+(e)\sqcup L_-(e)$, where
$$
L_+(e)=\left\{(i,j): \sum_{c=i}^{j-1}\alpha_c^\vee(e)=1\right\},
\qquad
L_-(e)=\left\{(i,j): \sum_{c=i}^{j-1}\alpha_c^\vee(e)=-1\right\}.
$$

Define a directed graph $L_{(i,j)}(S)$ on the vertex set $V(G)$ by declaring that for an edge $e=u\mapsto v$ of $G$,
\begin{itemize}
    \item if $(i,j)\in L_+(e)$, then $u\mapsto v$ is an edge of $L_{(i,j)}(S)$;
    \item if $(i,j)\in L_-(e)$, then $v\mapsto u$ is an edge of $L_{(i,j)}(S)$;
    \item if $(i,j)\notin L(e)$, then no edge between $u$ and $v$ appears in $L_{(i,j)}(S)$.
\end{itemize}
We refer to each connected component of $L_{(i,j)}(S)$ as an \emph{$(i,j)$-strand} of $S$.
\end{definition}

\begin{remark}
The simple strands are exactly the $(c,c+1)$-strands. More generally, an edge carries an $(i,j)$-strand precisely when the numbers of simple strands of colors in the interval $[i,j-1]$ directed with and against the edge are unequal.
\end{remark}

\begin{lemma}\label{lem: strand graph}
Let $G$ be an $\mathfrak{sl}_n$ web graph, let $S\in\mathcal{S}tr(G)$, and let $1\le i<j\le n$. 
Each $(i,j)$-strand of $S$ is either a directed closed loop or a directed path with endpoints on the boundary.
\end{lemma}

\begin{proof}
Fix an interior vertex $v$ with incident edges $e_1,e_2,e_3$. By Theorem~\ref{thm:net zero strand contribution at each vertex},
$$
\sum_{r=1}^3 \sigma_v(e_r)\alpha_c^\vee(e_r)=0
\qquad\textup{for each } c.
$$
Summing over $c=i,\ldots,j-1$ gives
$$
\sum_{r=1}^3 \sigma_v(e_r)\left(\sum_{c=i}^{j-1}\alpha_c^\vee(e_r)\right)=0.
$$
Each summand lies in $\{-1,0,1\}$, so the same argument as in the proof of Theorem~\ref{thm:net zero strand contribution at each vertex} shows that at $v$ there are either no incident $(i,j)$-strands, or exactly one entering and one leaving. This proves the claim.
\end{proof}

\begin{example}\label{ex: strands}
Figure~\ref{fig: ex strands} shows the stranding from Example~\ref{ex:stranding example} together with the corresponding $(2,4)$-strands.

\begin{figure}[h]
   \begin{center}  
\raisebox{12pt}{\begin{tikzpicture}[scale=.75]

\draw[style=dashed, <->] (0,0)--(7,0);

\begin{scope}[thick,decoration={
    markings,
    mark=at position 0.6 with {\arrow{<}}}
    ] 

\draw[postaction={decorate},style=thick, blue] (3.05,-2)--(3.05,-1);
\draw[postaction={decorate},style=thick, green] (4,-1.05)--(3,-1.05);
\draw[postaction={decorate},style=thick, blue] (3,-1.05) to[out=180,in=270] (1.95,0);
\draw[postaction={decorate},style=thick, blue] (6.05,0) to[out=270,in=0] (4,-2.05);
\draw[postaction={decorate},style=thick, green] (5,0) to[out=270,in=0] (4,-1);

\end{scope}

\begin{scope}[thick,decoration={
    markings,
    mark=at position 0.4 with {\arrow{>}}}
    ] 
\draw[postaction={decorate},style=thick, green] (1,0) to[out=270,in=180] (3,-2);
\draw[postaction={decorate},style=thick, blue] (3,-2)--(4,-2);
\draw[postaction={decorate},style=thick, red] (4,-2)--(4,-1);
\draw[postaction={decorate},style=thick, red] (5.95,0) to[out=270,in=0] (4,-1.95);
\draw[postaction={decorate},style=thick, green] (2.95,-2)--(2.95,-1);
\draw[postaction={decorate},style=thick, red] (4,-.95)--(3,-.95);
\draw[postaction={decorate},style=thick, red] (3,-.95) to[out=180,in=270] (2.05,0);

\end{scope}

\draw[radius=.08, fill=black](1,0)circle;
\draw[radius=.08, fill=black](3,-1)circle;
\draw[radius=.08, fill=black](4,-1)circle;
\draw[radius=.08, fill=black](3,-2)circle;
\draw[radius=.08, fill=black](4,-2)circle;
\draw[radius=.08, fill=black](2,0)circle;
\draw[radius=.08, fill=black](5,0)circle;
\draw[radius=.08, fill=black](6,0)circle;

    \end{tikzpicture}}  
    \hspace{.25in}
   \raisebox{14pt}{\begin{tikzpicture}[scale=.75]

\draw[style=dashed, <->] (0,0)--(7,0);

\begin{scope}[thick,decoration={
    markings,
    mark=at position 0.6 with {\arrow{<}}}
    ] 
\draw[postaction={decorate},style=thick, violet] (5,0) to[out=270,in=0] (4,-1);
\end{scope}

\begin{scope}[thick,decoration={
    markings,
    mark=at position 0.4 with {\arrow{>}}}
    ] 
\draw[postaction={decorate},style=thick, violet] (1,0) to[out=270,in=180] (3,-2);
\draw[postaction={decorate},style=thick, violet] (4,-2)--(4,-1);
\draw[postaction={decorate},style=thick, violet] (6,0) to[out=270,in=0] (4,-2);
\draw[postaction={decorate},style=thick, violet] (3,-2)--(3,-1);
\draw[postaction={decorate},style=thick, violet] (3,-1) to[out=180,in=270] (2,0);
\end{scope}

\draw[radius=.08, fill=black](1,0)circle;
\draw[radius=.08, fill=black](3,-1)circle;
\draw[radius=.08, fill=black](4,-1)circle;
\draw[radius=.08, fill=black](3,-2)circle;
\draw[radius=.08, fill=black](4,-2)circle;
\draw[radius=.08, fill=black](2,0)circle;
\draw[radius=.08, fill=black](5,0)circle;
\draw[radius=.08, fill=black](6,0)circle;

    \end{tikzpicture}}   
    \end{center}

    \caption{The $(2,4)$-strands determined by a valid stranding}\label{fig: ex strands}
\end{figure}
\end{example}

\begin{remark}
     Our $(1,3)$-strands taken with opposite orientation are the same as the flow curves for  $\mathfrak{sl}_3$ webs in \cite{KK}. The states of colored webs associated to a bicolor $\{i,j\}$ defined in \cite{Robert} are the same as our $(i,j)$ strands taken with opposite orientation. The band diagram for $\mathfrak{sl}_3$ webs in \cite{RTshadow} is the collection of our $(1,3)$ strands without orientation. Morrison uses the related concept of flow labels for webs in his thesis work though the setup is different \cite{MorrisonThesis}.
\end{remark}

\subsection{Edge flips}

Recall from Definition~\ref{definition: flip edge in web graph} that if $\mathcal{E}\subseteq E(G)$, then $G_{\varphi(\mathcal{E})}$ denotes the web obtained by flipping all edges in $\mathcal{E}$.

\begin{lemma}\label{lemma: same combo strandings on web graphs with edges flipped}
Let $G$ be a web graph and let $\mathcal{E}\subseteq E(G)$. Then
$$
\mathcal{S}tr(G)=\mathcal{S}tr\bigl(G_{\varphi(\mathcal{E})}\bigr).
$$
\end{lemma}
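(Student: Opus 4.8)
The plan is to prove the stronger claim that the two sets coincide as sets: a collection $S$ of colored directed paths is a valid stranding of $G$ if and only if it is a valid stranding of $G_{\varphi(\mathcal{E})}$. First I would observe that $G$ and $G_{\varphi(\mathcal{E})}$ have the same underlying undirected graph, the same vertex set, and---by Lemma~\ref{lemma: flipping edges gives another web graph}---the same boundary; and since a strand is by definition a directed path that may run with or against the edges it traverses, the data of ``a colored directed path that is closed or has both endpoints on the boundary, with no two same-colored paths meeting'' depends only on this underlying undirected graph and its boundary. Hence every stranding of $G$ is a stranding of $G_{\varphi(\mathcal{E})}$ and conversely, and it remains only to compare the two validity conditions of Definition~\ref{definition: combinatorial stranding} edge by edge.

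The key local observation I would record is the effect of a single edge flip. If $e = u \stackrel{\ell}{\mapsto} v$ is replaced by $\varphi(e) = v \stackrel{n-\ell}{\longmapsto} u$, then the orientations of the strands along $e$ are unchanged while the orientation of $e$ itself is reversed, so the partition $S(e) = S_+(e) \sqcup S_-(e)$ for $G$ becomes $S(e) = S_-(e) \sqcup S_+(e)$ for $G_{\varphi(\mathcal{E})}$; equivalently, $\alpha^\vee_c(e)$ is replaced by $-\alpha^\vee_c(e)$ on this one edge and is unchanged on every edge outside $\mathcal{E}$. Validity condition (1) is then immediate, since the product $\alpha^\vee_{c_i}(e)\alpha^\vee_{c_{i+1}}(e)$ is invariant under negating both factors. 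For validity condition (2), the alternating sum $\sum_c \alpha^\vee_c(e)\,c$ changes sign, the weight changes from $\ell$ to $n-\ell$, and $c_{max}$ passes from $S_+(e)$ to $S_-(e)$ (or vice versa); I would then check the two cases directly. If $c_{max} \in S_+(e)$ in $G$, so the sum equals $\ell$, then in $G_{\varphi(\mathcal{E})}$ we have $c_{max} \in S_-(e)$ and the required value $(n-\ell)-n = -\ell$ matches the negated sum; symmetrically, if $c_{max} \in S_-(e)$ in $G$ with sum $\ell - n$, then in $G_{\varphi(\mathcal{E})}$ the required value $n-\ell = -(\ell-n)$ again matches. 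So condition (2) on $e$ holds in $G$ if and only if it holds in $G_{\varphi(\mathcal{E})}$.

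Putting these together: the validity conditions are imposed edge-by-edge, flipping the edges of $\mathcal{E}$ transforms the local data at those edges exactly as above and leaves all other edges untouched, and therefore $S$ is valid for $G$ precisely when it is valid for $G_{\varphi(\mathcal{E})}$. With the first paragraph, this yields $\mathcal{S}tr(G) = \mathcal{S}tr(G_{\varphi(\mathcal{E})})$. (If one preferred to run the argument through Theorem~\ref{thm:net zero strand contribution at each vertex}, one would note additionally that flipping $e$ negates both $\sigma_v(e)$ and $\alpha^\vee_c(e)$, so the vertex sums $\sum_i \sigma_v(e_i)\alpha^\vee_c(e_i)$ are unchanged; but this is not needed once one has observed the underlying undirected graph is the same.) The only step requiring genuine care is the case analysis in condition (2)---making sure the branches $\ell$ and $\ell - n$ swap correctly under $\ell \mapsto n-\ell$ together with the exchange of $S_+(e)$ and $S_-(e)$---which is where I would guard most carefully against sign errors; everything else is essentially bookkeeping.
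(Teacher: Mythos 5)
Your proof is correct and follows essentially the same route as the paper's: identify strandings via the common underlying undirected graph, observe that flipping $e$ negates each $\alpha^\vee_c(e)$ and swaps $S_+(e)$ with $S_-(e)$, and check that the strand-sum condition transforms consistently with the weight change $\ell \mapsto n-\ell$. Your write-up is in fact slightly more complete than the paper's, which treats only one case of condition (2) "without loss of generality" and leaves the alternation condition (1) implicit.
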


\begin{proof}
The graphs $G$ and $G_{\varphi(\mathcal{E})}$ have the same underlying unoriented graph, so any stranding of one is a stranding of the other. It therefore suffices to compare the validity conditions on a flipped edge.

Let $e=u\stackrel{\ell}{\mapsto}v$ be an edge of $\mathcal{E}$, and let $\varphi(e)=v\stackrel{n-\ell}{\longmapsto}u$. Flipping reverses which simple strands are directed with and against the edge, so each $\alpha_c^\vee$ changes sign. Hence
$$
\sum_{c=1}^{n-1}\alpha_c^\vee(\varphi(e))\,c
=
-\sum_{c=1}^{n-1}\alpha_c^\vee(e)\,c.
$$
If the largest-colored simple strand is directed with $e$, then it is directed against $\varphi(e)$. Whereas the validity condition on the original edge $e$ requires a sum of $\ell$, the condition on the flipped edge $\varphi(e)$ requires a sum of $(n-\ell)-n=-\ell$. Thus, validity is preserved on each flipped edge.
\end{proof}

\begin{corollary}
For a fixed stranding, the underlying set of $(i,j)$-strands on an edge is unchanged by edge flipping:
$$
L(e)=L(\varphi(e)).
$$
\end{corollary}

\begin{remark}
If $e$ is flipped, then $S_+(e)$ and $S_-(e)$ are exchanged, and similarly $L_+(e)$ and $L_-(e)$ are exchanged.
\end{remark}

\subsection{Binary labelings}

We now give an equivalent local description of valid strandings.

\begin{definition}
A \emph{binary labeling} of a web graph $G$ is an assignment of an $n$-bit binary vector $b(e)\in\{0,1\}^n$ to each edge $e$ such that:
\begin{itemize}
    \item if $e$ has weight $\ell$, then $b(e)$ has exactly $\ell$ ones;
    \item at each interior vertex $v$ with incident edges $e_1,e_2,e_3$,
    $$
    \sum_{i=1}^3 \sigma_v(e_i)b(e_i)\in \mathbb{Z}\vec{1},
    $$
    where $\vec{1}=(1,\ldots,1)$.
\end{itemize}
We denote the set of binary labelings of $G$ by $\mathcal{B}in(G)$.
\end{definition}

\begin{remark}
We use the terms \emph{weight}, \emph{color}, and \emph{binary label} distinctly: weights belong to edges of the web, colors belong to simple strands, and binary labels are vectors in $\{0,1\}^n$ assigned to edges.
\end{remark}

For $1\le i\le n-1$, let $\vec{\lambda}_i$ be the binary vector with $1$ in the first $i$ positions and $0$ elsewhere. For a binary vector $\vec{b}=(b_1,\ldots,b_n)$, define
$$
\alpha_i^\vee(\vec{b})=b_i-b_{i+1}.
$$
These functions are dual to the vectors $\vec{\lambda}_i$ in the sense that
$$
\alpha_i^\vee(\vec{\lambda}_j)=
\begin{cases}
1 & \textup{if } i=j,\\
0 & \textup{if } i\neq j.
\end{cases}
$$

\begin{lemma}\label{lem:second validity condition for strandings in terms of binary labels}
Let $\vec{b}$ be a binary vector with exactly $\ell$ ones, where $1\le \ell\le n-1$. Then
$$
\sum_{i=1}^{n-1}\alpha_i^\vee(\vec{b})\,i=
\begin{cases}
\ell & \textup{if } b_n=0,\\
\ell-n & \textup{if } b_n=1.
\end{cases}
$$
\end{lemma}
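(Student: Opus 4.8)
The plan is a direct computation, and the lemma admits two equally short routes; I would present the second since it reuses the identity displayed immediately before the statement and makes transparent why the constant $n$ (rather than just $\ell$) appears.

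\textbf{Route 1 (telescoping).} Substitute $\alpha^\vee_i(\vec b)=b_i-b_{i+1}$ to write
\[
\sum_{i=1}^{n-1}\alpha^\vee_i(\vec b)\,i=\sum_{i=1}^{n-1} i\,b_i-\sum_{i=1}^{n-1} i\,b_{i+1},
\]
and reindex the second sum by $j=i+1$ to get $\sum_{j=2}^{n}(j-1)b_j$. Subtracting coefficient by coefficient, the coefficient of $b_k$ for $1\le k\le n-1$ is $k-(k-1)=1$, while the coefficient of $b_n$ is $-(n-1)$. Hence the sum equals $\sum_{k=1}^{n-1}b_k-(n-1)b_n$. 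Since $\vec b$ has exactly $\ell$ ones, $\sum_{k=1}^{n}b_k=\ell$, so $\sum_{k=1}^{n-1}b_k=\ell-b_n$ and the sum equals $\ell-n\,b_n$. Splitting on $b_n\in\{0,1\}$ gives the two cases.

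\textbf{Route 2 (apply the coordinate-sum functional).} Apply the linear functional $\vec v\mapsto\sum_{k=1}^{n}v_k$ to both sides of the identity $\vec b=\sum_{i=1}^{n-1}\alpha^\vee_i(\vec b)\vec\lambda_i$ (respectively $\vec b=\vec 1+\sum_{i=1}^{n-1}\alpha^\vee_i(\vec b)\vec\lambda_i$) stated just before the lemma. This functional sends $\vec\lambda_i$ to $i$, sends $\vec 1$ to $n$, and sends $\vec b$ to $\ell$ (the number of ones). Thus if $b_n=0$ we obtain $\ell=\sum_{i=1}^{n-1}\alpha^\vee_i(\vec b)\,i$, and if $b_n=1$ we obtain $\ell=n+\sum_{i=1}^{n-1}\alpha^\vee_i(\vec b)\,i$, i.e. $\sum_{i=1}^{n-1}\alpha^\vee_i(\vec b)\,i=\ell-n$. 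There is essentially no obstacle here; the only point requiring care is the index bookkeeping in Route 1 (tracking the two boundary terms $b_1$ and $b_n$ correctly when the telescoping does not fully cancel), which Route 2 sidesteps entirely.
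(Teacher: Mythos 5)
Both routes are correct, and Route 2 is precisely the paper's own (implicit) argument: the paper derives the lemma directly from the displayed decomposition $\vec{b}=\sum_i \alpha^\vee_i(\vec{b})\vec{\lambda}_i$ (plus $\vec{1}$ when $b_n=1$) by summing coordinates, which is exactly your coordinate-sum functional. Route 1's telescoping computation, giving $\sum_i \alpha^\vee_i(\vec{b})\,i=\ell-nb_n$, is also correct and is a fine self-contained alternative.
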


\begin{proof}
A short calculation verifies the formula below from which the lemma follows:
\[\vec{b} = \begin{cases}\sum_{i=1}^{n-1} \alpha^\vee_i(\vec{b}) \vec{\lambda}_i & \textup{if } b_n=0 \\\\
\vec{1}+\sum_{i=1}^{n-1} \alpha^\vee_i(\vec{b}) \vec{\lambda}_i & \textup{if } b_n=1.
\end{cases}\]
\end{proof}

\begin{remark}
The binary vectors $\vec{\lambda}_i$ are coset representatives for the fundamental weights in Lie theory, which form a basis for the quotient $\mathbb{Z}^n/I$ where $I$ is the principal ideal consisting of multiples of the constant vector.

The $\alpha^\vee_i$ are the simple coroots in Lie theory; they are dual to the fundamental weights and form a basis for the dual space of $\mathbb{Z}^n/I$. 
\end{remark}

Next, we relate valid strandings and binary labelings.
\begin{definition}\label{def:bijection between strandings and binary labelings}
Let $G$ be an untagged web and let $e\in E(G)$.

Given $b\in\mathcal{B}in(G)$, define a stranding $S_b$ by
$$
(S_b)_+(e)=\{c:\alpha_c^\vee(b(e))=1\},
\qquad
(S_b)_-(e)=\{c:\alpha_c^\vee(b(e))=-1\}.
$$

Conversely, given $S\in\mathcal{S}tr(G)$ with $c_{max}$ the largest simple strand color appearing on $e$, define a binary vector $b_S(e)$ by setting
$$
b_S(e)_n=
\begin{cases}
0 & \textup{if } \alpha_{c_{max}}^{\vee}(e)=1\\
1 & \textup{if } \alpha_{c_{max}}^{\vee}(e)=-1
\end{cases}
$$
and requiring
\begin{equation}\label{equation: defining b_S(e)}
\alpha_c^\vee(b_S(e))=b_S(e)_c-b_S(e)_{c+1}=\alpha_c^\vee(e)
\qquad\textup{for } 1\le c\le n-1.
\end{equation}
\end{definition}

\begin{example}\label{ex:stranding and binary labeling bijection}
    On the left in Figure \ref{fig: ex stranding and binary labeling bijection} is the stranding $S$ from Example \ref{ex:stranding example}, and on the right is the binary labeling $b_S$ described in Definition \ref{def:bijection between strandings and binary labelings}. For instance, consider the rightmost boundary edge $e$ of the web which has weight 1 and is oriented out of the boundary. Note that $S_{+}(e)=\{2\}$ and $S_{-}(e)=\{1\}$, so $\alpha^\vee_2(e)=1$ while $\alpha^\vee_1(e)=-1$. By definition, we have $b_S(e)_4= 0$ and so $b_S(e)=0100$.

    \begin{figure}[h]
    \begin{center}
         \raisebox{12pt}{\begin{tikzpicture}[scale=.75]

\draw[style=dashed, <->] (0,0)--(7,0);

\begin{scope}[thick,decoration={
    markings,
    mark=at position 0.6 with {\arrow{<}}}
    ] 

\draw[postaction={decorate},style=thick, blue] (3.05,-2)--(3.05,-1);
\draw[postaction={decorate},style=thick, green] (4,-1.05)--(3,-1.05);
\draw[postaction={decorate},style=thick, blue] (3,-1.05) to[out=180,in=270] (1.95,0);
\draw[postaction={decorate},style=thick, blue] (6.05,0) to[out=270,in=0] (4,-2.05);

\draw[postaction={decorate},style=thick, green] (5,0) to[out=270,in=0] (4,-1);

\end{scope}

\begin{scope}[thick,decoration={
    markings,
    mark=at position 0.4 with {\arrow{>}}}
    ] 
\draw[postaction={decorate},style=thick, green] (1,0) to[out=270,in=180] (3,-2);
\draw[postaction={decorate},style=thick, blue] (3,-2)--(4,-2);
\draw[postaction={decorate},style=thick, red] (4,-2)--(4,-1);
\draw[postaction={decorate},style=thick, red] (5.95,0) to[out=270,in=0] (4,-1.95);
\draw[postaction={decorate},style=thick, green] (2.95,-2)--(2.95,-1);
\draw[postaction={decorate},style=thick, red] (4,-.95)--(3,-.95);
\draw[postaction={decorate},style=thick, red] (3,-.95) to[out=180,in=270] (2.05,0);

\end{scope}

\draw[radius=.08, fill=black](1,0)circle;
\draw[radius=.08, fill=black](3,-1)circle;
\draw[radius=.08, fill=black](4,-1)circle;
\draw[radius=.08, fill=black](3,-2)circle;
\draw[radius=.08, fill=black](4,-2)circle;
\draw[radius=.08, fill=black](2,0)circle;
\draw[radius=.08, fill=black](5,0)circle;
\draw[radius=.08, fill=black](6,0)circle;

    \end{tikzpicture}} 
    \hspace{.25in}
       \raisebox{3pt}{\begin{tikzpicture}[scale=.75]

\draw[style=dashed, <->] (0,0)--(7,0);

\begin{scope}[thick,decoration={
    markings,
    mark=at position 0.5 with {\arrow{<}}}
    ] 

\draw[postaction={decorate},style=thick] (1,0) to[out=270,in=180] (3,-2);
\draw[postaction={decorate},style=thick] (3,-2)--(3,-1);
\draw[postaction={decorate},style=thick] (3,-2)--(4,-2);
\draw[postaction={decorate},style=thick] (5,0) to[out=270,in=0] (4,-1);
\draw[postaction={decorate},style=thick] (4,-1)--(3,-1);
\draw[postaction={decorate},style=thick] (3,-1) to[out=180,in=270] (2,0);
\end{scope}

\begin{scope}[thick,decoration={
    markings,
    mark=at position 0.5 with {\arrow{>}}}
    ] 

\draw[postaction={decorate},style=thick] (6,0) to[out=270,in=0] (4,-2);
\draw[postaction={decorate},style=thick] (4,-2)--(4,-1);
\end{scope}

\draw[radius=.05, fill=black](1,0)circle;
\draw[radius=.05, fill=black](3,-1)circle;
\draw[radius=.05, fill=black](4,-1)circle;
\draw[radius=.05, fill=black](3,-2)circle;
\draw[radius=.05, fill=black](4,-2)circle;
\draw[radius=.05, fill=black](2,0)circle;
\draw[radius=.05, fill=black](5,0)circle;
\draw[radius=.05, fill=black](6,0)circle;

\node at (1.7,-.25) {\tiny{$1011$}};
\node at (.8,-1) {\tiny{$0001$}};
\node at (3.5,-.75) {\tiny{$0010$}};
\node at (3.5,-2.25) {\tiny{$0111$}};
\node at (2.6, -1.5) {\tiny{$1001$}};
\node at (4.4, -1.5) {\tiny{$1100$}};
\node at (6.2, -1) {\tiny{$0100$}};
\node at (5.3, -.25) {\tiny{$1110$}};
  
    \end{tikzpicture}}
    \end{center}
    \caption{A stranding $S$ and its corresponding binary labeling $b_S$}\label{fig: ex stranding and binary labeling bijection}
    \end{figure}
\end{example}

\begin{theorem}\label{thm: strandings are bijective with binary labelings}
For any $\mathfrak{sl}_n$ web graph $G$, the assignments
$$
b\longmapsto S_b,
\qquad
S\longmapsto b_S
$$
define a bijection
$$
\mathcal{B}in(G)\cong \mathcal{S}tr(G).
$$
\end{theorem}

\begin{proof}
We first check the constructions edgewise. Let $e$ be an edge of $G$.

Let $b\in\mathcal{B}in(G)$. By definition, the simple strands of $S_b(e)$ are determined by the signs of the differences $\alpha_c^\vee(b(e))=b_c-b_{c+1}$. These nonzero values necessarily alternate in sign, so the first validity condition holds. The second validity condition follows from Lemma~\ref{lem:second validity condition for strandings in terms of binary labels}.

Conversely, let $S\in\mathcal{S}tr(G)$. Equation~\eqref{equation: defining b_S(e)} determines $b_S(e)$ uniquely from the values $\alpha_c^\vee(e)$ together with the value of the last entry. Because the nonzero values $\alpha_c^\vee(e)$ alternate in sign, the vector $b_S(e)$ is binary. The second validity condition for $S$ implies that $b_S(e)$ has exactly as many ones as the weight of $e$.

It remains to check the vertex conditions. Let $v$ be an interior vertex with incident edges $e_1,e_2,e_3$. 

Suppose $S\in\mathcal{S}tr(G)$. By Theorem~\ref{thm:net zero strand contribution at each vertex},
$$
\sum_{i=1}^3 \sigma_v(e_i)\alpha_c^\vee(e_i)=0
\qquad\textup{for all } c.
$$
Applying the coroots $\alpha_c^\vee$ to the vector
$$
\sum_{i=1}^3 \sigma_v(e_i)b_S(e_i)
$$
therefore gives zero for every $c$, so this vector is a multiple of $\vec{1}$. Hence $b_S$ is a binary labeling.

Now let $b\in\mathcal{B}in(G)$.  By the definition of binary labeling, we have
$$
\sum_{i=1}^3 \sigma_v(e_i)b(e_i)\in \mathbb{Z}\vec{1}.
$$
 Therefore, applying $\alpha_c^\vee$ to this expression yields
$$
\sum_{i=1}^3 \sigma_v(e_i)\alpha_c^\vee(b(e_i))=0
\qquad\textup{for all } c.
$$
Since $\alpha_c^\vee(b(e_i))=\alpha_c^\vee(e_i)$ by construction of the stranding $S_b$, Theorem~\ref{thm:net zero strand contribution at each vertex} shows that the edgewise strand data glue to a valid stranding. Thus $S_b\in\mathcal{S}tr(G)$.

Finally, a straightforward calculation shows $(b_{S_b}(e))_n=b(e)_n$ for each edge $e$ of $G$. Since the two constructions recover the same values of all $\alpha_c^\vee$ on every edge, we conclude they are inverse to one another.
\end{proof}

The bijection gives a convenient description of $(i,j)$-strands in terms of binary labels.

\begin{corollary}\label{cor:strands and binary labeling}
Let $S\in\mathcal{S}tr(G)$ and let $b_S(e)=b_1\cdots b_n$ be the binary label on an edge $e$. Then
$$
L(e)=\{(i,j): i<j \textup{ and } b_i\neq b_j\}.
$$
Moreover,
$$
L_+(e)=\{(i,j)\in L(e): b_i-b_j=1\},
\qquad
L_-(e)=\{(i,j)\in L(e): b_i-b_j=-1\}.
$$
\end{corollary}

\begin{proof}
For $1\le i<j\le n$,
$$
b_i-b_j
=
\sum_{c=i}^{j-1}(b_c-b_{c+1})
=
\sum_{c=i}^{j-1}\alpha_c^\vee(e).
$$
The claim follows directly from the definition of $L(e)$, $L_+(e)$, and $L_-(e)$.
\end{proof}

\begin{remark}\label{rem:two alphas agree}
The symbol $\alpha_c^\vee$ is used in two places: as the coroot
$\alpha_c^\vee(\vec{b})=b_c-b_{c+1}$ on binary vectors, and as the edgewise
quantity $\alpha_c^\vee(e)$ attached to a stranding. Theorem~\ref{thm:
strandings are bijective with binary labelings} shows that, under the
bijection $\mathcal{B}in(G)\cong\mathcal{S}tr(G)$, the two agree:
$\alpha_c^\vee(e)=\alpha_c^\vee(b_S(e))$ for every edge $e$. In this sense,
the function $\alpha_c^\vee$ on stranded graphs is the map induced on $G$
by the coroot, and the fundamental weights $\vec{\lambda}_c$ assemble into
a coherent global structure across the entire web.

Binary labels are convenient for local, edgewise computations, but they
are sensitive to edge flipping and other notational conventions: under a
flip, $b(e)$ is replaced by $\vec{1}-b(e)$
(Corollary~\ref{cor:binary labels under flips}), reflecting the fact that, in the language of the previous remark on Lie theory, binary vectors represent cosets rather than weights.
By contrast, strands directly depict the fundamental weights and are
unchanged under edge flips
(Lemma~\ref{lemma: same combo strandings on web graphs with edges flipped}).
\end{remark}

\begin{corollary}\label{cor:binary labels under flips}
Let $G$ be a web graph and let $\mathcal{E}\subseteq E(G)$. Then $\mathcal{B}in(G)$ and $\mathcal{B}in(G_{\varphi(\mathcal{E})})$ are in bijection. Under this bijection, a binary labeling $b$ maps to the labeling $b'$ defined by
$$
b'(e)=b(e)\quad\textup{for } e\notin\mathcal{E},
\qquad
b'(\varphi(e))=\vec{1}-b(e)\quad\textup{for } e\in\mathcal{E}.
$$
\end{corollary}

\begin{proof}
By Lemma~\ref{lemma: same combo strandings on web graphs with edges flipped}, a valid stranding on $G$ is the same thing as a valid stranding on $G_{\varphi(\mathcal{E})}$. Passing through the bijection of Theorem~\ref{thm: strandings are bijective with binary labelings} gives the corresponding bijection of binary labelings.

If $e$ is not flipped, its simple strand data are unchanged, so its binary label is unchanged. If $e$ is flipped, then $S_+(e)$ and $S_-(e)$ are exchanged, so every value $\alpha_c^\vee(e)$ changes sign. This replaces the binary vector $b(e)$ by its complement $\vec{1}-b(e)$.
\end{proof} 

 \section{Web vectors from strandings}\label{section:main construction}
 
We now construct, for each untagged web graph $G$, a vector $f(G)$ in the tensor product $V(\vec{k})$. The construction is a state-sum over the valid strandings of $G$, with no choice of decomposition or isotopy required. In this section, we prove $f(G)$ is $\uq$-invariant, so $f$ maps $\mathcal{F}(\vec{k})$ to $\textup{Inv}(\vec{k})$.
 
\subsection{The state-sum formula}
 
Let $G\in \mathcal{F}(\vec{k})$ have boundary vertices $v_1, \ldots, v_m$ ordered left to right, with corresponding boundary edges $e_j$ of weight $\ell_j$. Recall that
$$k_j=\begin{cases}
    \ell_j & \textup{ if } \sigma_{v_j}(e_j)=1, \\
    n-\ell_j & \textup{ if } \sigma_{v_j}(e_j)=-1,
\end{cases}.$$
For each $S\in\mathcal{S}tr(G)$, set
$$\hat{b}(e_j)=\begin{cases}
    b_S(e_j) & \textup{ if } \sigma_{v_j}(e_j)=1, \\
    b_S(\varphi(e_j)) & \textup{ if } \sigma_{v_j}(e_j)=-1,
\end{cases}$$
and let $x_S=x_{\hat{b}(e_1)}\otimes \cdots \otimes x_{\hat{b}(e_m)}$. Each $\hat{b}(e_j)$ has exactly $k_j$ ones, so $x_S\in V(\vec{k})$. For instance, the stranding in Example~\ref{ex:stranding and binary labeling bijection} has associated monomial
$$x_{0001}\otimes x_{0100}\otimes x_{1110}\otimes x_{1011}  \in V_1\otimes V_1\otimes V_3\otimes V_3.$$
 
The coefficients in $f(G)$ track clockwise and counterclockwise strands. With the planar embedding inherited from $G$, each $(i,j)$ strand is either clockwise or counterclockwise. For $S\in\mathcal{S}tr(G)$, let $x_{(i,j)}(S)$ be the number of \emph{closed} clockwise $(i,j)$ strands and let $y_{(i,j)}(S)$ be the \emph{total number} of counterclockwise $(i,j)$ strands, both closed and open. Set
$$x(S)=\sum_{1\leq i<j\leq n} x_{(i,j)}(S) \hspace{.25in} \textup{ and } \hspace{.25in} y(S)=\sum_{1\leq i<j\leq n} y_{(i,j)}(S).$$
Remark~\ref{rem:symmetry} discusses the asymmetry between closed and open components.
 
We define
\begin{equation}\label{equation: definition of invariant vector} f(G)=\sum_{S\in\mathcal{S}tr(G)} (-q)^{x(S)-y(S)} x_S\end{equation}
and call $x(S)-y(S)$ the \emph{strand exponent} of $S$. Extending linearly gives a map $f: \mathcal{F}(\vec{k})\rightarrow V(\vec{k})$.
 
\begin{remark}
When $n=3$, the map $f$ coincides with the one defined by Khovanov and Kuperberg in \cite{KK}, up to a global change of variable replacing our $q$ with $q^{1/2}$. The notational translations, listing theirs first, are $V^+=V_1$, $V^-=V_2$, $e^+_1=x_1$, $e^+_0=x_2$, $e^+_{-1}=x_3$, $e^-_1=x_2\wedge x_1$, $e^-_0=x_3\wedge x_1$, and $e^-_{-1}=x_3\wedge x_2$. The coefficient in their state-sum formula tracks local contributions coming from a decomposition of the web.
\end{remark}
 
\begin{remark}
Robert proves a formula for $f(G)$ on closed MOY graphs in \cite{Robert}. Since all strands in closed graphs are closed, his coefficients are symmetric in their treatment of clockwise and counterclockwise components.
\end{remark}
 
\begin{remark}\label{rem:symmetry}
While the coefficient $x(S)-y(S)$ in the formula for $f(G)$ appears asymmetrical, the formula is in fact symmetric in the broader setting of web graphs between a top and a bottom horizontal boundary axis. In that setting, let $x(S)$ count the clockwise top-anchored strands plus the clockwise closed strands, and let $y(S)$ count the counterclockwise bottom-anchored strands plus the counterclockwise closed strands. The single-axis formula~\eqref{equation: definition of invariant vector} is the special case in which all boundary edges meet a single axis (so no top-anchored strands arise), and the cap map of this section already implements this two-axis convention at the boundary it glues.

All constructions of Section~\ref{section:tagless vs tagged} --- the comparison map $\psi$, the surjectivity of $f$, and the generating relations for $\ker(f)$ --- extend without change to the two-axis setting. The applications of Section~\ref{section: applications} use boundary-face combinatorics specific to the single-axis setting, so we work there for the body of the paper.
\end{remark}
 
A first easy property of $f$ is that flipping edges leaves it unchanged.
 
\begin{lemma}\label{lem:invt and flips}
For any web graph $G$ and any $\mathcal{E}\subseteq E(G)$, $f(G)=f(G_{\varphi(\mathcal{E})})$.
\end{lemma}
\begin{proof}
By Lemma~\ref{lemma: same combo strandings on web graphs with edges flipped}, $\mathcal{S}tr(G)=\mathcal{S}tr(G_{\varphi(\mathcal{E})})$, and the boundary weight vector $\vec{k}$ is also unchanged. Each stranding therefore contributes the same monomial with the same coefficient to both sums.
\end{proof}
 
\subsection{Invariance}
 
We show $\textup{Im}(f)\subseteq \textup{Inv}(\vec{k})$ by considering webs with at most one interior vertex, then arguing inductively. If $G$ has no boundary, $f(G)\in\mathbb{C}(q)$ is trivially invariant, so the work is in the cup, tripod, and cap cases. The proofs of these are given in Appendix~\ref{appendix:invariance calculations}. 
 
\begin{lemma}\label{lem:cup invariant}
Let $G$ be a connected $\mathfrak{sl}_n$ web that is a cup graph. Then $f(G)$ is $\uq$-invariant.
\end{lemma}
\begin{proof}
See Appendix~\ref{appendix:invariance calculations}.
\end{proof}

\begin{lemma}\label{lem:tripod invariant}
Let $G$ be a connected $\mathfrak{sl}_n$ web that is a tripod graph. Then $f(G)$ is $\uq$-invariant.
\end{lemma}
\begin{proof}
See Appendix~\ref{appendix:invariance calculations}.
\end{proof}
 
A cap is not itself a web graph in our conventions, but it encodes a $\uq$-equivariant evaluation map that reduces the number of tensor factors in the ambient space.
 
Consider adjacent boundary vertices $v_1$ and $v_2$ with edges $e_1$ and $e_2$ of opposite orientations and weights $k$ and $n-k$ (read left to right). A stranding on $\{e_1, e_2\}$ \emph{glues at the boundary} if $S_+(e_1)=S_-(e_2)$ and $S_-(e_1)=S_+(e_2)$, equivalently if the binary labels satisfy $\hat{{b}}(e_2)=\vec{1}-\hat{{b}}(e_1)$. In this case, $x_{\hat{{b}}(e_1)}\otimes x_{\hat{{b}}(e_2)}\in V_k\otimes V_{n-k}$ \emph{induces a stranding} on the arc joining $v_1$ and $v_2$ above the boundary axis.
The induced stranding depends only on $\hat{b}(e_1)$; for any binary vector $\vec{b}$ with exactly $k$ ones, write $S_{\vec{b}}$ for this stranding and $z(S_{\vec{b}})$ for its number of clockwise $(i,j)$ strands. The cap map $C_k:V_k\otimes V_{n-k}\rightarrow \mathbb{C}(q)$ is
$$C_k(x_{\vec{b}_1}\otimes x_{\vec{b}_2})=\begin{cases}
    (-q)^{z(S_{\vec{b}_1})} & \textup{ if } \vec{b}_2=\vec{1}-\vec{b}_1, \\
    0 & \textup{ otherwise.}
\end{cases}$$
 
\begin{lemma}\label{lem:cap equivariant}
The cap map $C_k$ is $\uq$-equivariant.
\end{lemma}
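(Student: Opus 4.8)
The strategy is to write the cap map $C_k: V_k \otimes V_{n-k} \rightarrow \mathbb{C}(q)$ as an explicit composition of CKM maps from Figure~\ref{fig:CKMmaps}, which are $\uq$-equivariant by \cite{CKM}, so that $C_k$ is equivariant as a composition. The natural candidate is to factor $C_k$ through one of the dual maps $D_j$ followed by an evaluation pairing $C^{L,j}$ or $C^{R,j}$. Concretely, on the first tensor factor we want to convert $x_{\vec{b}_1} \in V_k$ into a covector in $(V_{n-k})^*$ using $D_k$, since $D_k(x_{\vec{b}}) = (-q)^{\ell(\vec{b}, \vec{1}-\vec{b})} x_{\vec{1}-\vec{b}}^*$; then pair it against $x_{\vec{b}_2} \in V_{n-k}$ using the evaluation map $C^{L, n-k}: (V_{n-k})^* \otimes V_{n-k} \rightarrow \mathbb{C}(q)$, which is $1$ exactly when $\vec{1}-\vec{b}_1 = \vec{b}_2$ and $0$ otherwise. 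So the proposed factorization is
\[
C_k = \pm\, C^{L, n-k} \circ (D_k \otimes Id),
\]
possibly with a global sign $(-1)^{k(n-k)}$ depending on which tagged version of $D_k$ appears, and the content of the proof is checking that the resulting scalar matches $(-q)^{z(S_{\vec{b}_1})}$.

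\textbf{Key steps.} First I would compute the composition on a basis vector: $(C^{L,n-k} \circ (D_k \otimes Id))(x_{\vec{b}_1} \otimes x_{\vec{b}_2})$ is zero unless $\vec{b}_2 = \vec{1}-\vec{b}_1$, in which case it equals $(-q)^{\ell(\vec{b}_1, \vec{1}-\vec{b}_1)}$. This already reproduces the support condition $\vec{b}_2 = \vec{1}-\vec{b}_1$ in the definition of $C_k$. Second, and this is the crux, I would identify the exponent $z(S_{\vec{b}_1})$ --- the number of clockwise $(i,j)$ flow components of the cap stranding $S_{\vec{b}_1}$ --- with $\ell(\vec{b}_1, \vec{1}-\vec{b}_1)$ (up to the possible sign adjustment $(-1)^{k(n-k)}$, which accounts for orientation conventions on the cap versus the CKM cap). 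By Corollary~\ref{cor:flow and binary labeling}, applied to the single edge underlying the cap with binary label $\vec{b}_1 = b_1 \cdots b_n$, the flows present are exactly the pairs $(i,j)$ with $b_i \neq b_j$, and $(i,j) \in L_+$ iff $b_i - b_j = 1$, i.e. $b_i = 1, b_j = 0$. Each such flow, when the two ends of the edge are joined by an arc above the axis to form the cap, is a single closed loop; I must check that this loop is clockwise precisely when $i < j$ with $b_i = 1, b_j = 0$, i.e. when the pair is counted by $\ell(\vec{b}_1, \vec{1}-\vec{b}_1) = |\{i<j: (\vec{b}_1)_i = 1, (\vec{1}-\vec{b}_1)_j = 1\}| = |\{i<j: b_i = 1, b_j = 0\}|$. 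This is a direct geometric inspection of the planar embedding: a flow that enters the cap arc at position $i$ on the left and exits at position $j$ on the right and is routed over the top is clockwise exactly when the relative orientation works out, which I would verify by drawing the cap with its left-to-right orientation and checking one representative flow.

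\textbf{Main obstacle.} The routine part (the composition of maps on basis vectors) is straightforward bookkeeping. The real work is the orientation/sign matching in the second step: confirming that ``clockwise closed flow on the cap'' corresponds exactly to the inversion statistic $\ell(\vec{b}_1, \vec{1}-\vec{b}_1)$, and pinning down whether the correct factor is $D_k$ or $(-1)^{k(n-k)} D_k$ given the left-to-right orientation convention fixed just before the lemma statement. I expect this to require carefully tracing through the definition of ``clockwise'' for the induced planar embedding of $L_{(i,j)}(S_{\vec{b}_1})$ and comparing against the conventions used to derive the formula for $D_k$ in \cite{CKM}; once that orientation dictionary is fixed, the identity $z(S_{\vec{b}_1}) = \ell(\vec{b}_1, \vec{1}-\vec{b}_1)$ (with the appropriate sign) follows and the lemma is proved. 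An alternative, fully self-contained route --- computing $C_k(E_i(w)) = C_k(F_i(w)) = 0$ and $C_k(K_i(w)) = C_k(w)$ directly using the explicit action on $V_k \otimes V_{n-k}$ as in Lemma~\ref{lem:cup invariant} --- is available as a fallback if the CKM factorization proves awkward to state cleanly, but the composition argument is shorter and is the one I would present.
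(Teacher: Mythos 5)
Your proposal is correct and follows essentially the same route as the paper: the paper's proof also identifies $z(S_{\vec{b}_1})=\ell(\vec{b}_1,\vec{1}-\vec{b}_1)$ via the observation (from Lemma~\ref{lem: flow graph}/Corollary~\ref{cor:flow and binary labeling}) that a clockwise $(i,j)$ flow on the cap occurs exactly when $b_i=1$ and $b_j=0$, and then decomposes $C_k=C^{L,n-k}\circ(D_k\otimes Id)$ with no extra sign. Your hedge about a possible $(-1)^{k(n-k)}$ factor resolves in favor of the untagged $D_k$, matching the paper.
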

\begin{proof}
See Appendix~\ref{appendix:invariance calculations}.
\end{proof}
 
Tensoring with identities, $C_k$ extends to act on any $\uq$-representation containing $V_k\otimes V_{n-k}$ as a tensor factor. Abusing notation, we continue to write $C_k$ for any such extension and call it a cap map.
 
\begin{corollary}\label{cor:capmap}
Let $W_1$ and $W_2$ be $\uq$-representations and $1\leq k<n$. Then $Id_{W_1}\otimes C_k\otimes Id_{W_2}$ is $\uq$-equivariant.
\end{corollary}
 
It suffices to prove $f(G)$ is invariant for connected $G$; for disconnected $G$, the components contribute invariant factors that tensor into a fixed position within an ambient invariant vector.
 
\begin{lemma}\label{lem:disjoint union invariant}
Let $G$ be a web graph with connected components $G_1, \ldots, G_m$ such that each $f(G_i)$ is invariant. Then $f(G)$ is invariant.
\end{lemma}
\begin{proof}
Induct on $m$. Some component of $G$ has consecutive endpoints; call it $G_1$, and write $G = G_1 \sqcup G'$. By induction, $f(G')$ is invariant. Each stranding $S$ of $G$ is a stranding $S'$ of $G'$ together with a stranding $S_1$ of $G_1$, so $x_S=x_{S',L}\otimes x_{S_1} \otimes x_{S',R}$ where $x_{S'}=x_{S',L} \otimes x_{S',R}$. Then
\begin{align*}
    f(G) &= \sum_{\substack{S'\in\mathcal{S}tr(G')\\S_1\in\mathcal{S}tr(G_1)}}(-q)^{x(S')+x(S_1)-(y(S')+y(S_1))}x_{S',L}\otimes x_{S_1} \otimes x_{S',R}\\
    &= \sum_{S'\in\mathcal{S}tr(G')}(-q)^{x(S')-y(S')}x_{S',L} \otimes f(G_1) \otimes x_{S',R}.
\end{align*}
Using $E_i(f(G_1))=0$ and $K_i(f(G_1))=f(G_1)$,
\begin{align*}
E_i(f(G))&= \hspace{-.2in}\sum_{S'\in\mathcal{S}tr(G')}(-q)^{x(S')-y(S')} \Bigl( E_i(x_{S',L}) \otimes f(G_1) \otimes K_i(x_{S',R})\\
&\hspace{.1in}
+x_{S',L} \otimes f(G_1) \otimes E_i(x_{S',R}) \Bigr).
\end{align*}
This is $f(G_1)$ tensored into each term of $E_i(f(G'))=0$, so $E_i(f(G))=0$. Analogous arguments give $F_i(f(G))=0$ and $K_i(f(G))=f(G)$.
\end{proof}
 
We now prove the main theorem of the section. The proof uses a tripod decomposition, but the result demonstrates the web vector is independent of choice of decomposition.
 
\begin{theorem}\label{thm:invariant vector flow formula}
For each $G\in F(\vec{k})$,
$$f(G)=\sum_{S\in\mathcal{S}tr(G)} (-q)^{x(S)-y(S)} x_S\in\textup{Inv}(\vec{k}),$$
and $f$ extends linearly to a map $\mathcal{F}(\vec{k})\rightarrow \textup{Inv}(\vec{k})$.
\end{theorem}
\begin{proof}
Let $G'$ be a tripod decomposition of $G$, and induct on the number of caps. If $G'$ has no caps, it is a disjoint union of cups and tripods, and Lemmas~\ref{lem:cup invariant}, \ref{lem:tripod invariant}, and~\ref{lem:disjoint union invariant} give the result.
 
Now suppose $G'$ has $t+1$ caps. After an isotopy that moves a topmost cap above all others (and does not change $f$), insert a horizontal axis below the boundary axis separating this cap from the rest, and let $\bar{G}$ be the web below the axis. Say $G'$ has boundary $v_1,\ldots,v_m$ with weight vector $\vec{k}=(k_1,\ldots,k_m)$. Then $\bar{G}$ has boundary $\bar{v}_1,\ldots,\bar{v}_i,\bar{v}_{\alpha},\bar{v}_{\beta},\bar{v}_{i+1},\ldots,\bar{v}_m$ with weight vector $\vec{\bar{k}}=(k_1,\ldots,k_i,k_{\alpha},k_{\beta},k_{i+1},\ldots,k_m)$; see Figure~\ref{fig:cut top cap}. The cap map $C_{k_{\alpha}}:V(\vec{\bar{k}})\rightarrow V(\vec{k})$ is $\uq$-equivariant by Corollary~\ref{cor:capmap}, and $f(\bar{G})\in\textup{Inv}(\vec{\bar{k}})$ by induction, so $C_{k_{\alpha}}(f(\bar{G}))\in\textup{Inv}(\vec{k})$. It remains to show $f(G')=C_{k_{\alpha}}(f(\bar{G}))$.
 
\begin{figure}[h]
\raisebox{-30pt}{\scalebox{.9}{\begin{tikzpicture}[scale=.65]
  \draw[style=dashed, <->] (0,4)--(13,4);
    \draw[style=dashed, <->] (0,2.6)--(13,2.6);
  \draw[style=dashed, red] (0,-.6)--(13,-.6);
   \draw[thick, black] (1,4)--(1,1.8);
 
   \draw[thick, black] (.5,.2)--(.5,-1);
   \draw[thick, black] (12.5,.2)--(12.5,-1);
 
      \node at (6.5,-.3) {$\cdots \hspace{.25in} \cdots \hspace{.25in} \cdots \hspace{.25in} \cdots \hspace{.25in} \cdots$};
 
   \node at (2.5,3) {$\cdots$};
   \node at (10.5,3) {$\cdots$};
    \draw[thick, black] (4,4)--(4,1.8);
    \draw[thick, black] (9,4)--(9,1.8);
         \draw[thick, black] (12,4)--(12,1.8);
    \draw[thick, black] (5.5,1.8) --(5.5,2.6) to[out=90,in=180] (6.5,3.25) to[out=0,in=90] (7.5,2.6)--(7.5,1.8);
    \draw[fill=gray!25, thick] (0,1.8)--(13,1.8)--(13,.1)--(0,.1)--(0,1.8);
    \node at (6.5, 1.1) {Caps};
 
     \draw[fill=gray!25, thick] (0,-1)--(13,-1)--(13,-3)--(0,-3)--(0,-1);
    \node at (6.5, -2) {Cups and tripods};
 
    \draw[radius=.08, fill=black](5.5,2.6)circle;
\draw[radius=.08, fill=black](7.5,2.6)circle;
 
\draw[radius=.08, fill=black](1,2.6)circle;
\draw[radius=.08, fill=black](4,2.6)circle;
\draw[radius=.08, fill=black](9,2.6)circle;
\draw[radius=.08, fill=black](12,2.6)circle;
 
\draw[radius=.08, fill=black](1,4)circle;
 
\node at (.75, 2.2) {\tiny{$\bar{v}_1$}};
\node at (3.75, 2.2) {\tiny{$\bar{v}_i$}};
\node at (5.25, 2.2) {\tiny{$\bar{v}_{\alpha}$}};
\node at (7.85, 2.2) {\tiny{$\bar{v}_{\beta}$}};
\node at (9.5, 2.2) {\tiny{$\bar{v}_{i+1}$}};
\node at (12.35, 2.2) {\tiny{$\bar{v}_m$}};
 
\node at (1, 4.3) {\tiny{$v_1$}};
\node at (3.75, 4.3) {\tiny{$v_i$}};
\node at (9.5, 4.3) {\tiny{$v_{i+1}$}};
\node at (12.35, 4.3) {\tiny{$v_m$}};
\draw[radius=.08, fill=black](4,4)circle;
\draw[radius=.08, fill=black](9,4)circle;
\draw[radius=.08, fill=black](12,4)circle;
 
\draw (13.5,2.6)--(14,2.6)--(14,-3)--(13.5,-3);
 
\node at (15.5,-.2) {$\bar{G}\in F(\vec{\bar{k}})$};
 
\draw (-.5,4)--(-1,4)--(-1,-3)--(-.5,-3);
 
\node at (-2.5,.5) {$G'\in F(\vec{k})$};
 
    \end{tikzpicture}}}
 
    \caption{Forming $\bar{G}$ from $G'$}\label{fig:cut top cap}
\end{figure}
 
Write $W_1=V_{k_1}\otimes \cdots \otimes V_{k_i}$ and $W_2=V_{k_{i+1}}\otimes \cdots \otimes V_{k_m}$, so $f(\bar{G})\in \textup{Inv}(W_1\otimes V_{k_{\alpha}}\otimes V_{k_{\beta}}\otimes W_2)$. For $\bar{S}\in\mathcal{S}tr(\bar{G})$, write $\hat{b}_{\alpha}=\hat{b}(\bar{e}_{\alpha})$ and $\hat{b}_{\beta}=\hat{b}(\bar{e}_{\beta})$, so $x_{\bar{S}}=x_{\bar{S}_1}\otimes x_{\hat{b}_\alpha}\otimes x_{\hat{b}_\beta}\otimes x_{\bar{S}_2}$ with $x_{\bar{S}_i}\in W_i$. The cap map kills any term whose stranding does not extend across the cap:
$$C_{k_{\alpha}}(x_{\bar{S}})= \begin{cases}
(-q)^{z(S_{\hat{b}_\alpha})} x_{\bar{S}_1}\otimes x_{\bar{S}_2} & \textup{ if } \hat{b}_\beta = \vec{1}-\hat{b}_\alpha, \\
0 & \textup{ otherwise. }
\end{cases}$$
Writing $S$ for the stranding of $G'$ obtained from $\bar{S}$ by gluing across the cap,
$$C_{k_{\alpha}}(f(\bar{G}))= \hspace{-.1in}\sum_{\substack{\bar{S}\in\mathcal{S}tr(\bar{G})\\\hat{b}_\beta=\vec{1}-\hat{b}_\alpha}}
 \hspace{-.1in}(-q)^{x(\bar{S})-y(\bar{S})+z(S_{\hat{b}_\alpha})} x_{\bar{S}_1}\otimes x_{\bar{S}_2}
 = \hspace{-.1in} \sum_{S\in\mathcal{S}tr(G')}
 \hspace{-.1in}(-q)^{x(\bar{S})-y(\bar{S})+z(S_{\hat{b}_\alpha})} x_{S}.$$
 
It remains to verify $x(\bar{S})-y(\bar{S})+z(S_{\hat{b}_\alpha})=x(S)-y(S)$ for each such $\bar{S}$. Set $z_{(i,j)}(S_{\hat{b}_\alpha})=1$ if $S_{\hat{b}_\alpha}$ has a clockwise $(i,j)$ strand across the cap and $0$ otherwise; then $z(S_{\hat{b}_\alpha})=\sum_{i<j}z_{(i,j)}(S_{\hat{b}_\alpha})$. We compare $(i,j)$-strand counts in $S$ and $\bar{S}$ case by case.
 
If the cap carries no $(i,j)$ strand, then $z_{(i,j)}(S_{\hat{b}_\alpha})=0$ and the strand counts in $S$ and $\bar{S}$ agree.
 
If the cap carries a counterclockwise $(i,j)$ strand, $z_{(i,j)}(S_{\hat{b}_\alpha})=0$ and the strand enters $\bar{v}_{\beta}$ and exits $\bar{v}_{\alpha}$ in $\bar{G}$. Noncrossingness forces the configuration to be one of those in Figure~\ref{fig:invariant counter cap}, and in each case $x_{(i,j)}(S)-y_{(i,j)}(S)=x_{(i,j)}(\bar{S})-y_{(i,j)}(\bar{S})$.
\begin{figure}[h]
 
\raisebox{14pt}{\begin{tikzpicture}[scale=.5]
  \draw[style=dashed, <->] (2,0)--(6,0);
 
  \draw[radius=.08, fill=black](3,0)circle;
   \draw[radius=.08, fill=black](5,0)circle;
      \node at (2.75, -.5) {\tiny{$\bar{v}_{\alpha}$}};
     \node at (5.5, -.5) {\tiny{$\bar{v}_{\beta}$}};
   \begin{scope}[thick,decoration={
    markings,
    mark=at position 0.5 with {\arrow{>}}}
    ]
   \draw[postaction={decorate}, red] (5,0) to[out=90, in=0] (4,1) to[out=180,in=90] (3,0);
    \draw[postaction={decorate}, dashed, red] (3,0) to[out=270, in=180] (4,-1) to[out=0,in=270] (5,0);
\end{scope}
  \end{tikzpicture}}
  \hspace{.25in}
  \begin{tikzpicture}[scale=.5]
  \draw[style=dashed, <->] (-2,0)--(6,0);
 
  \draw[radius=.08, fill=black](3,0)circle;
   \draw[radius=.08, fill=black](5,0)circle;
      \node at (3.5, -.5) {\tiny{$\bar{v}_{\alpha}$}};
     \node at (5.5, -.5) {\tiny{$\bar{v}_{\beta}$}};
   \begin{scope}[thick,decoration={
    markings,
    mark=at position 0.5 with {\arrow{>}}}
    ]
   \draw[postaction={decorate}, red] (5,0) to[out=90, in=0] (4,1) to[out=180,in=90] (3,0);
    \draw[postaction={decorate}, dashed, red] (3,0) to[out=270, in=0] (2,-1) to[out=180,in=270] (1,0);
    \draw[postaction={decorate}, dashed, red] (-1,0) to[out=270, in=180] (2,-2) to[out=0,in=270] (5,0);
\end{scope}
  \end{tikzpicture}
   \hspace{.25in}
 \raisebox{14pt}{\begin{tikzpicture}[scale=.5]
  \draw[style=dashed, <->] (0,0)--(8,0);
 
  \draw[radius=.08, fill=black](3,0)circle;
   \draw[radius=.08, fill=black](5,0)circle;
        \node at (3.4, -.5) {\tiny{$\bar{v}_{\alpha}$}};
     \node at (4.75, -.5) {\tiny{$\bar{v}_{\beta}$}};
   \begin{scope}[thick,decoration={
    markings,
    mark=at position 0.5 with {\arrow{>}}}
    ]
   \draw[postaction={decorate}, red] (5,0) to[out=90, in=0] (4,1) to[out=180,in=90] (3,0);
    \draw[postaction={decorate}, dashed, red] (3,0) to[out=270, in=0] (2,-1) to[out=180,in=270] (1,0);
    \draw[postaction={decorate}, dashed, red] (7,0) to[out=270, in=0] (6,-1) to[out=180,in=270] (5,0);
\end{scope}
  \end{tikzpicture}}
     \hspace{.25in}
  \begin{tikzpicture}[scale=.5]
  \draw[style=dashed, <->] (2,0)--(10,0);
 
  \draw[radius=.08, fill=black](3,0)circle;
   \draw[radius=.08, fill=black](5,0)circle;
        \node at (2.75, -.5) {\tiny{$\bar{v}_{\alpha}$}};
     \node at (4.75, -.5) {\tiny{$\bar{v}_{\beta}$}};
   \begin{scope}[thick,decoration={
    markings,
    mark=at position 0.5 with {\arrow{>}}}
    ]
   \draw[postaction={decorate}, red] (5,0) to[out=90, in=0] (4,1) to[out=180,in=90] (3,0);
    \draw[postaction={decorate}, dashed, red] (3,0) to[out=270, in=180] (6,-2) to[out=0,in=270] (9,0);
    \draw[postaction={decorate}, dashed, red] (7,0) to[out=270, in=0] (6,-1) to[out=180,in=270] (5,0);
\end{scope}
  \end{tikzpicture}
    \caption{For counterclockwise $(i,j)$ strands across the cap, $x_{(i,j)}(S)=x_{(i,j)}(\bar{S})$ and $y_{(i,j)}(S)=y_{(i,j)}(\bar{S})$.}\label{fig:invariant counter cap}
\end{figure}
 
If $z_{(i,j)}(S_{\hat{b}_\alpha})=1$, then $\bar{S}$ has an $(i,j)$ strand out of $\bar{v}_{\beta}$ and into $\bar{v}_{\alpha}$ in $\bar{G}$. The possible configurations are shown in Figure~\ref{fig:invariant clock cap}, and in each case $x_{(i,j)}(S)-y_{(i,j)}(S)=x_{(i,j)}(\bar{S})-y_{(i,j)}(\bar{S})+1$. 
 
\begin{figure}[h]
 
\raisebox{14pt}{\begin{tikzpicture}[scale=.5]
  \draw[style=dashed, <->] (2,0)--(6,0);
 
  \draw[radius=.08, fill=black](3,0)circle;
   \draw[radius=.08, fill=black](5,0)circle;
   \node at (2.75, -.5) {\tiny{$\bar{v}_{\alpha}$}};
     \node at (5.5, -.5) {\tiny{$\bar{v}_{\beta}$}};
   \begin{scope}[thick,decoration={
    markings,
    mark=at position 0.5 with {\arrow{<}}}
    ]
   \draw[postaction={decorate}, red] (5,0) to[out=90, in=0] (4,1) to[out=180,in=90] (3,0);
    \draw[postaction={decorate}, dashed, red] (3,0) to[out=270, in=180] (4,-1) to[out=0,in=270] (5,0);
\end{scope}
  \end{tikzpicture}}
  \hspace{.25in}
  \begin{tikzpicture}[scale=.5]
  \draw[style=dashed, <->] (-2,0)--(6,0);
 
  \draw[radius=.08, fill=black](3,0)circle;
   \draw[radius=.08, fill=black](5,0)circle;
     \node at (3.5, -.5) {\tiny{$\bar{v}_{\alpha}$}};
     \node at (5.5, -.5) {\tiny{$\bar{v}_{\beta}$}};
   \begin{scope}[thick,decoration={
    markings,
    mark=at position 0.5 with {\arrow{<}}}
    ]
   \draw[postaction={decorate}, red] (5,0) to[out=90, in=0] (4,1) to[out=180,in=90] (3,0);
    \draw[postaction={decorate}, dashed, red] (3,0) to[out=270, in=0] (2,-1) to[out=180,in=270] (1,0);
    \draw[postaction={decorate}, dashed, red] (-1,0) to[out=270, in=180] (2,-2) to[out=0,in=270] (5,0);
\end{scope}
  \end{tikzpicture}
   \hspace{.25in}
 \raisebox{14pt}{\begin{tikzpicture}[scale=.5]
  \draw[style=dashed, <->] (0,0)--(8,0);
 
  \draw[radius=.08, fill=black](3,0)circle;
   \draw[radius=.08, fill=black](5,0)circle;
     \node at (3.4, -.5) {\tiny{$\bar{v}_{\alpha}$}};
     \node at (4.75, -.5) {\tiny{$\bar{v}_{\beta}$}};
   \begin{scope}[thick,decoration={
    markings,
    mark=at position 0.5 with {\arrow{<}}}
    ]
   \draw[postaction={decorate}, red] (5,0) to[out=90, in=0] (4,1) to[out=180,in=90] (3,0);
    \draw[postaction={decorate}, dashed, red] (3,0) to[out=270, in=0] (2,-1) to[out=180,in=270] (1,0);
    \draw[postaction={decorate}, dashed, red] (7,0) to[out=270, in=0] (6,-1) to[out=180,in=270] (5,0);
\end{scope}
  \end{tikzpicture}}
     \hspace{.25in}
  \begin{tikzpicture}[scale=.5]
  \draw[style=dashed, <->] (2,0)--(10,0);
 
  \draw[radius=.08, fill=black](3,0)circle;
   \draw[radius=.08, fill=black](5,0)circle;
          \node at (2.75, -.5) {\tiny{$\bar{v}_{\alpha}$}};
     \node at (4.75, -.5) {\tiny{$\bar{v}_{\beta}$}};
   \begin{scope}[thick,decoration={
    markings,
    mark=at position 0.5 with {\arrow{<}}}
    ]
   \draw[postaction={decorate}, red] (5,0) to[out=90, in=0] (4,1) to[out=180,in=90] (3,0);
    \draw[postaction={decorate}, dashed, red] (3,0) to[out=270, in=180] (6,-2) to[out=0,in=270] (9,0);
    \draw[postaction={decorate}, dashed, red] (7,0) to[out=270, in=0] (6,-1) to[out=180,in=270] (5,0);
\end{scope}
  \end{tikzpicture}
    \caption{For clockwise $(i,j)$ strands across the cap, $x_{(i,j)}(S)-y_{(i,j)}(S)=x_{(i,j)}(\bar{S})-y_{(i,j)}(\bar{S})+1$.}\label{fig:invariant clock cap}
\end{figure}
Summing over $i<j$ gives $x(\bar{S})-y(\bar{S})+z(S_{\hat{b}_\alpha})=x(S)-y(S)$, as required.
\end{proof}
  
 \section{Applications of stranding}\label{section: applications}

We collect several applications of the stranding framework. The first two --- a nonvanishing criterion and the existence of a canonical base stranding --- together establish that every web vector $f(G)$ is nonzero, with a common nonzero monomial. The remaining subsections sketch how stranding interacts with web bases from standard Young tableaux and with Springer fibers; each draws on results developed elsewhere.
 
\subsection{A combinatorial condition for nonvanishing of terms in an invariant vector}\label{section: nonvanishing terms in web vector}
 
While a web may have multiple strandings with the same monomial, the corresponding terms in $f(G)$ cannot cancel. This gives a combinatorial criterion for which monomials appear in a web's invariant vector which is useful, in particular, for constructing web bases.
 
\begin{theorem}\label{thm: nonzero coefficient}
For any $S\in\mathcal{S}tr(G)$, the monomial $x_S$ has a nonzero coefficient in $f(G)$.
\end{theorem}
\begin{proof}
Let $S=S_1,\ldots,S_t$ be the strandings of $G$ whose associated monomial is $x_S$. The coefficient of $x_S$ in $f(G)$ is therefore
$$
\sum_{i=1}^t (-q)^{x(S_i)-y(S_i)}.
$$
This is a sum of Laurent monomials in $q$, each with coefficient $1$ or $-1$. Terms with different exponents are distinct Laurent monomials, so they cannot cancel, while terms with the same exponent are identical and hence combine to a nonzero multiple of that monomial. Therefore the coefficient of $x_S$ is nonzero.
\end{proof}
 
\subsection{A base stranding}\label{section: base stranding}
 
Every web graph $G\in F(\vec{k})$ has a canonical valid stranding, which we call the \emph{base stranding} of $G$ and denote $S_0^G$. We construct it now and record several consequences.
 
Let $G^*$ be the planar dual of $G$, with each $e^*\in E(G^*)$ assigned the weight of $e$ and oriented so that $e^*$ followed by $e$ satisfies the right-hand rule. For $A, B\in V(G^*)$ and a directed path $P$ from $A$ to $B$, define the \emph{modulo $n$ distance} $dist_n(A,B,P)$ to be the signed sum modulo $n$ of weights of edges along $P$, adding a weight if the edge runs forward in $P$ and subtracting if backward.
 
\begin{lemma}
For $A, B\in V(G^*)$, $dist_n(A,B,P)$ depends only on $A$ and $B$, not on $P$. We therefore write $dist_n(A,B)$.
\end{lemma}
\begin{proof}
First, $dist_n(A,A,P)=0$ for any closed path $P$: conservation of flow modulo $n$ at web vertices in $G$ forces this for paths enclosing a single face, and induction on the number of enclosed faces gives the general case.
 
Now let $P_1, P_2$ be paths from $A$ to $B$. Let $C_1$ be the first vertex where they diverge and $C_2$ the next vertex after $C_1$ at which they reunite. Concatenating the segment of $P_1$ from $C_1$ to $C_2$ with the reverse of the corresponding segment of $P_2$ gives a closed loop on which the modulo $n$ distance is zero. Hence $dist_n(A,C_2,P_1)=dist_n(A,C_2,P_2)$. Iterating to $B$ gives $dist_n(A,B,P_1)=dist_n(A,B,P_2)$.
\end{proof}
 
Let $U\in V(G^*)$ be the vertex of the unbounded face. The \emph{base stranding $S_0^G$} is defined as follows: for each face $A$ of $G$ with $c=dist_n(U,A)\neq 0$, insert a clockwise simple strand of color $c$ around the boundary of $A$. If $A$ is partly bounded by the horizontal axis, erase the portion of the strand on the axis. Faces with $dist_n(U,A)=0$ contribute no strands.
 
\begin{example}\label{ex:canonicalstranding example}
On the left in Figure~\ref{fig: ex canonical stranding} is the untagged web $G$ from Example~\ref{ex:untagged web}; on the right is the base stranding $S_0^G$, with simple strand colors $1,2,3$ in blue, red, green, and modulo $4$ distances labeled in violet. The associated monomial is
$$x_{S_0^G}=x_{1000}\otimes x_{0100}\otimes x_{1011}\otimes  x_{0111}.$$
 
\begin{figure}[h]
 \begin{center}
\raisebox{3pt}{\begin{tikzpicture}[scale=.75]
 
\draw[style=dashed, <->] (0,0)--(7,0);
 
\begin{scope}[thick,decoration={
    markings,
    mark=at position 0.5 with {\arrow{<}}}
    ]
\draw[postaction={decorate},style=thick] (1,0) to[out=270,in=180] (3,-2);
\draw[postaction={decorate},style=thick] (3,-2)--(3,-1);
\draw[postaction={decorate},style=thick] (3,-2)--(4,-2);
\draw[postaction={decorate},style=thick] (5,0) to[out=270,in=0] (4,-1);
\draw[postaction={decorate},style=thick] (4,-1)--(3,-1);
\draw[postaction={decorate},style=thick] (3,-1) to[out=180,in=270] (2,0);
\end{scope}
 
\begin{scope}[thick,decoration={
    markings,
    mark=at position 0.5 with {\arrow{>}}}
    ]
\draw[postaction={decorate},style=thick] (6,0) to[out=270,in=0] (4,-2);
\draw[postaction={decorate},style=thick] (4,-2)--(4,-1);
\end{scope}
 
\draw[radius=.08, fill=black](1,0)circle;
\draw[radius=.08, fill=black](3,-1)circle;
\draw[radius=.08, fill=black](4,-1)circle;
\draw[radius=.08, fill=black](3,-2)circle;
\draw[radius=.08, fill=black](4,-2)circle;
\draw[radius=.08, fill=black](2,0)circle;
\draw[radius=.08, fill=black](5,0)circle;
\draw[radius=.08, fill=black](6,0)circle;
 
\node at (.75,-.25) {\tiny{$1$}};
\node at (1.75,-.25) {\tiny{$3$}};
\node at (3.5,-.75) {\tiny{$1$}};
\node at (3.5,-2.25) {\tiny{$3$}};
\node at (2.75, -1.5) {\tiny{$2$}};
\node at (4.25, -1.5) {\tiny{$2$}};
\node at (6.25, -.25) {\tiny{$1$}};
\node at (5.25, -.25) {\tiny{$3$}};
 
    \end{tikzpicture}}
    \hspace{.25in}
 \raisebox{-5pt}{\begin{tikzpicture}[scale=.75]
 
\draw[style=dashed, <->] (0,0)--(7,0);
 
\begin{scope}[thick,decoration={
    markings,
    mark=at position 0.6 with {\arrow{>}}}
    ]
 \draw[thick, postaction={decorate}, blue]   (1.95,0) to[out=270, in=180] (2.95, -1.05);
  \draw[thick, postaction={decorate}, blue]  (2.95, -1.05)--(2.95,-2);
    \draw[thick, postaction={decorate}, blue]  (2.95,-2) to[out=180, in=270] (1,0);
 \draw[thick, postaction={decorate}, red]  (4.95, 0) to[out=270, in=0] (4,-.95);
  \draw[thick, postaction={decorate}, red] (4,-.95)--(3,-.95);
   \draw[thick, postaction={decorate}, red] (3,-.95) to[out=180, in=270] (2.05,0);
 
   \draw[thick, postaction={decorate}, blue] (6,0) to[out=270, in=0] (4.05,-2);
    \draw[thick, postaction={decorate}, blue](4.05,-2)--(4.05,-1.05);
    \draw[thick, postaction={decorate}, blue] (4.05,-1.05) to[out=0, in=270] (5.05,0);
\end{scope}
\begin{scope}[thick,decoration={
    markings,
    mark=at position 0.4 with {\arrow{<}}}
    ]
\draw[thick, postaction={decorate}, green]  (3.05,-1.05)--(3.05,-2);
\draw[thick, postaction={decorate}, green] (3.05,-2)--(3.95,-2);
\draw[thick, postaction={decorate}, green] (3.95, -2)--(3.95,-1.05);
\draw[thick, postaction={decorate}, green](3.95,-1.05)--(3.05,-1.05);
\end{scope}
 
\draw[radius=.08, fill=black](1,0)circle;
\draw[radius=.08, fill=black](3,-1)circle;
\draw[radius=.08, fill=black](4,-1)circle;
\draw[radius=.08, fill=black](3,-2)circle;
\draw[radius=.08, fill=black](4,-2)circle;
\draw[radius=.08, fill=black](2,0)circle;
\draw[radius=.08, fill=black](5,0)circle;
\draw[radius=.08, fill=black](6,0)circle;
 
\node at (2,-2.5) {\textcolor{violet}{$0$}};
\node at (2.5,-1.5) {\textcolor{violet}{$1$}};
\node at (4.5,-1.5) {\textcolor{violet}{$1$}};
\node at (3.5,-.5) {\textcolor{violet}{$2$}};
\node at (3.5,-1.5) {\textcolor{violet}{$3$}};
 
    \end{tikzpicture}}
    \end{center}
    \caption{The base stranding for an untagged web}\label{fig: ex canonical stranding}
\end{figure}
\end{example}
 
\begin{theorem}\label{thm: every web has a stranding}
$S_0^G$ is a valid stranding.
\end{theorem}
\begin{proof}
By construction, simple strands in $S_0^G$ are either closed or have endpoints on the axis. If $A, B$ are adjacent faces of $G$, then $dist_n(U,A)\neq dist_n(U,B)$, so the simple strands enclosing $A$ and $B$ (if any) have different colors. Each edge of $G$ carries either one strand or two oppositely oriented strands, so the first validity condition of Definition~\ref{definition: combinatorial stranding} holds.
 
For an edge $e$ of weight $\ell$, let $A$ and $B$ be the faces to the left and right (with $e$ oriented ``up''); then $dist_n(U,B)=dist_n(U,A)+\ell \mod n$, and a direct calculation verifies the second validity condition.
\end{proof}
 
Fix $G\in F(\vec{k})$ and read the boundary faces of $G$ left to right as $U=A_0, A_1, \ldots, A_{m-1}, A_m=U$. (Faces may repeat if $G$ is disconnected.) Setting $c_j=dist_n(U,A_j)$, we have $c_0=0$, and since $A_0, A_1, \ldots, A_j$ is a path in $G^*$ from $U$ to $A_j$,
$$c_j=\sum_{t=1}^j k_t \mod n.$$
In particular, we have $\sum_{t=1}^m k_t\equiv 0\pmod n$. Combined with the surjectivity of $f$ (proved in Section~\ref{section:tagless vs tagged}), this recovers the following representation-theoretic fact (also part of Theorem~\ref{thm: hom space dimension}).
 
\begin{corollary}\label{cor: numerical invariant space condition}
For fixed $n$ and $\vec{k}$, $\textup{Inv}(\vec{k})$ is nontrivial if and only if $n$ divides $\sum_{t=1}^m k_t$.
\end{corollary}
 
The base stranding has an explicit associated monomial. Define
$$x_{S_0}=x_{\vec{b}_1}\otimes\cdots\otimes x_{\vec{b}_m},\qquad \vec{b}_j=\begin{cases}
\vec{\lambda}_{c_j}-\vec{\lambda}_{c_{j-1}} & \textup{if } c_{j-1}<c_j,\\
\vec{1}+\vec{\lambda}_{c_j}-\vec{\lambda}_{c_{j-1}} & \textup{if } c_j<c_{j-1},
\end{cases}$$
where $\vec{\lambda}_0=\vec{0}$ by convention. This depends only on $\vec{k}$, so $x_{S_0}$ is unambiguous across $ F(\vec{k})$.
 
\begin{lemma}\label{lem:canonical monomial}
For each $G\in F(\vec{k})$, the base stranding $S_0^G$ has monomial $x_{S_0^G}=x_{S_0}$.
\end{lemma}
 
Concretely, $x_{S_0}$ is a scalar multiple of the monomial formed from the standard basis vectors $x_1,\ldots,x_n$ in ascending order, repeated in blocks of length $n$ and grouped into tensor factors according to $\vec{k}$. The scalar arises because our basis lists wedge factors in descending order. As an example, for $n=3$ and $\vec{k}=(1,2,2,1)$,
$$x_{S_0}=q^{-2}\,x_1\otimes x_2\wedge_q x_3\otimes x_1\wedge_q x_2\otimes x_3.$$
 
Combining Theorems~\ref{thm: nonzero coefficient} and~\ref{thm: every web has a stranding}:
 
\begin{corollary}\label{cor:terms can't cancel}
For every $G\in F(\vec{k})$, $f(G)\neq 0$; in particular, $x_{S_0}$ has nonzero coefficient in $f(G)$.
\end{corollary}
 
\subsection{Basis webs from standard rectangular Young tableaux}\label{section: basis webs}
 
Recall from Theorem~\ref{thm: hom space dimension} the number of standard Young tableaux on an $n$-row rectangle is the dimension of the $\uq$-invariant space $\textup{Inv}(\vec{1})$. When $n=2$ and $n=3$ there are bijections from standard Young tableaux to particular bases of reduced web graphs using noncrossing matchings. This extends straightforwardly to a basis of $\mathfrak{sl}_n$ webs though the resulting graphs are not generally reduced in any obvious sense.
 
\subsubsection{Constructing stranded web graphs from standard Young tableaux}\label{section: constructing basis webs from tableaux}
 
Let $T$ be an $n$-row rectangular standard Young tableau. The following four-step process generalizes the algorithms from \cite{RTSpringerRep} and \cite{TSimpleBij} to build an $\mathfrak{sl}_n$ web $G_T$ together with a stranding $S_T$.
\begin{enumerate}
\item Place boundary vertices at $1, 2, \ldots, m$. Mark vertex $i$ with $\ell$ if entry $i$ appears in row $\ell$ of the tableau.
 
\item For each $\ell\in\{1,\ldots,n-1\}$, build a noncrossing matching on the vertices marked $\ell$ or $\ell+1$: pair $i$ (marked $\ell$) with $j$ (marked $\ell+1$) whenever $i<j$ and no unpaired vertex marked $\ell$ or $\ell+1$ lies strictly between them; draw the arc pairing $i$ and $j$ below the axis and color it $\ell$. This produces a \emph{multicolored noncrossing matching}. Arcs of a single color are noncrossing by construction, but arcs of different colors may cross.
 
\item Perturb the arcs so each intersection point lies on exactly two arcs and any two arcs meet at most once.
 
\item Resolve the matching into an $\mathfrak{sl}_n$ web graph $G_T$ stranded by $S_T$:
\begin{enumerate}
\item Direct each arc clockwise.
\item At each boundary vertex $i$ lying on two arcs, insert a trivalent interior vertex below $i$ with an edge into the boundary of weight $1$. The boundary edge carries simple strands with the colors and directions of the incident arcs, and we extend the strands through the new interior vertex as in Figure~\ref{figure: resolving webs}.
  \begin{figure}[h]
\begin{tikzpicture}[scale=.75]
\draw[style=dashed, <->] (1,0)--(3,0);
\draw[style=thick, blue] (2,0) to[out=270,in=0] (1,-1);
\draw[style=thick, red] (3,-1) to[out=180,in=270] (2,0);
\draw[style=dashed, <->] (4,0)--(6,0);
\begin{scope}[thick,decoration={
    markings,
    mark=at position 0.5 with {\arrow{>}}}
    ]
\draw[style=thick, postaction={decorate}] (5,-.75) -- (5,0);
\draw[postaction={decorate}, blue] (4.9,-.65) -- (4,-.9);
\draw[blue] (4.9,-.65) -- (4.9,0);
\draw[postaction={decorate}, red] (6,-.9) -- (5.1,-.65);
\draw[red] (5.1,-.65) -- (5.1,0);
\end{scope}
\draw[style=thick] (5,-.75) -- (6,-1);
\draw[style=thick] (5,-.75) -- (4,-1);
\draw[radius=.05, fill=black](5,0)circle;
\draw[radius=.05, fill=black](5,-.75)circle;
\node at (5.3,-.4) {\tiny $1$};
\end{tikzpicture}
\hspace{.5in}
\begin{tikzpicture}[scale=.75]
\draw[style=thick, blue] (2,0.25) to[out=250,in=40] (1,-1);
\draw[style=thick, red] (2,-1) to[out=110,in=320] (1,0.25);
\begin{scope}[thick,decoration={
    markings,
    mark=at position 0.5 with {\arrow{>}}}
    ]
\draw[style=thick, postaction={decorate}] (5,-.75) -- (5,0);
\draw[postaction={decorate}, blue] (4.9,-.65) -- (4,-.9);
\draw[postaction={decorate}, red] (4.9,-.1)--(4,0.15);
\draw[blue] (4.9,-.65) -- (5.1,-.1);
\draw[postaction={decorate}, red] (6,-.9) -- (5.1,-.65);
\draw[postaction={decorate}, blue] (6,0.15)--(5.1,-.1);
\draw[red] (5.1,-.65) -- (4.9,-.1);
\end{scope}
\draw[style=thick] (5,-.75) -- (6,-1);
\draw[style=thick] (5,-.75) -- (4,-1);
\draw[radius=.05, fill=black](5,0)circle;
\draw[radius=.05, fill=black](5,-.75)circle;
\draw[style=thick] (5,0) -- (6,0.25);
\draw[style=thick] (5,0) -- (4,0.25);
\node at (3,-.3) {\large $\rightsquigarrow$};
\node at (5.6,-.4) {\tiny ${\color{red} \ell'} - {\color{blue} \ell}$};
\end{tikzpicture}
    \caption{Resolving multicolored noncrossing matchings into web graphs, with new interior edge directed and labeled for the case $\ell' > \ell$} \label{figure: resolving webs}
        \end{figure}
 
\item Replace each crossing of arcs of colors $\ell$ and $\ell'$ with two interior vertices joined by an edge directed with the larger color, weighted and stranded as in Figure~\ref{figure: resolving webs} (case $\ell'>\ell$).
\item Each remaining arc segment becomes an edge of the web graph that is weighted, directed, and stranded according to its arc.
\end{enumerate}
\end{enumerate}
Figure~\ref{figure: example of tableau and web from tableau} gives an example.
\begin{remark}
The web $G_T$ is not unique as it depends on the perturbation choices made in Step (3). One convenient convention: draw each arc as a polygonal curve with one minimum, with left slope $-h$ and right slope $h$ for some fixed $h>0$.

For any $G_T$ constructed as above, Theorem \ref{thm: nonzero coefficient} guarantees the monomial $x_{S_T}$, which has $x_{\ell}$ in the $i^{th}$ factor exactly when $i$ is in row $\ell$ of $T$, has a nonzero coefficient in the web vector $f(G_T)$.
\end{remark}

\begin{figure}[h]
\begin{center} $\begin{array}{|c|c|c|} \cline{1-3} 1 & 3 & 8 \\ \cline{1-3} 2 & 6 & 11 \\ \cline{1-3} 4 & 7 & 12 \\ \cline{1-3} 5 & 10 & 13 \\ \cline{1-3} 9 & 14 & 15 \\ \hline \end{array}$ \hspace{0.2in}
\begin{tikzpicture}[xscale=1/2]
\draw[style=thick, blue] (1,0) to[out=280,in=180] (1.5,-.4);
\draw[style=thick, blue] (1.5,-.4) to[out=0,in=260] (2,0);
\draw[style=thick, blue] (3,0) to[out=280,in=180] (4.5,-.8);
\draw[style=thick, blue] (4.5,-.8) to[out=0,in=260] (6,0);
\draw[style=thick, blue] (8,0) to[out=280,in=180] (9.5,-.8);
\draw[style=thick, blue] (9.5,-.8) to[out=0,in=260] (11,0);
\draw[style=thick, red] (2,0) to[out=280,in=180] (3,-.6);
\draw[style=thick, red] (3,-.6) to[out=0,in=260] (4,0);
\draw[style=thick, red] (6,0) to[out=280,in=180] (6.5,-.4);
\draw[style=thick, red] (6.5,-.4) to[out=0,in=260] (7,0);
\draw[style=thick, red] (11,0) to[out=280,in=180] (11.5,-.4);
\draw[style=thick, red] (11.5,-.4) to[out=0,in=260] (12,0);
\draw[style=thick, green] (4,0) to[out=280,in=180] (4.5,-.4);
\draw[style=thick, green] (4.5,-.4) to[out=0,in=260] (5,0);
\draw[style=thick, green] (7,0) to[out=280,in=180] (8.5,-.9);
\draw[style=thick, green] (8.5,-.9) to[out=0,in=260] (10,0);
\draw[style=thick, green] (12,0) to[out=280,in=180] (12.5,-.4);
\draw[style=thick, green] (12.5,-.4) to[out=0,in=260] (13,0);
\draw[style=thick, orange] (5,0) to[out=290,in=180] (7,-.8);
\draw[style=thick, orange] (7,-.8) to[out=0,in=250] (9,0);
\draw[style=thick, orange] (10,0) to[out=300,in=180] (12.5,-1);
\draw[style=thick, orange] (12.5,-1) to[out=0,in=240] (15,0);
\draw[style=thick, orange] (13,0) to[out=280,in=180] (13.5,-.4);
\draw[style=thick, orange] (13.5,-.4) to[out=0,in=260] (14,0);
\end{tikzpicture}
\raisebox{-1in}{
\begin{tikzpicture}[xscale=1/2]
\draw[style=dashed, <->] (0,0)--(16,0);
\begin{scope}[thick,decoration={
    markings,
    mark=at position 0.5 with {\arrow{<}}}
    ]
\draw[thick, postaction={decorate}] (1,0) to[out=270,in=180] (2,-.5);
\draw[thick, postaction={decorate}] (2,0) -- (2,-.5);
\draw[thick, postaction={decorate}] (4,0) -- (4,-.5);
\draw[thick, postaction={decorate}] (5,0) -- (5,-.5);
\draw[thick, postaction={decorate}] (6,0) -- (6,-.5);
\draw[thick, postaction={decorate}] (7,0) -- (7,-.5);
\draw[thick, postaction={decorate}] (10,0) -- (10,-.5);
\draw[thick, postaction={decorate}] (11,0) -- (11,-.5);
\draw[thick, postaction={decorate}] (12,0) -- (12,-.5);
\draw[thick, postaction={decorate}] (13,0) -- (13,-.5);
\draw[thick, postaction={decorate}] (3,0) to[out=270,in=160] (3.5,-.5);
\draw[thick, postaction={decorate}] (8,0) to[out=270,in=180] (8.5,-.5);
\draw[thick, postaction={decorate}] (2,-.5) to[out=270,in=240] (3.5,-1);
\draw[thick, postaction={decorate}] (3.5,-1) to[out=270,in=270] (5.5,-1);
\draw[thick, postaction={decorate}] (5.5,-1) to[out=270,in=240] (7.5,-1.5);
\draw[thick, postaction={decorate}] (7.5,-1.5) to[out=270,in=270] (9.5,-1.5);
\draw[thick, postaction={decorate}] (9.5,-1.5) to[out=290,in=270] (10.5,-1);
\draw[thick, postaction={decorate}] (10.5,-1) to[out=270,in=270] (15,0);
\draw[thick, postaction={decorate}] (3.5,-.5) to[out=0,in=180] (4,-.5);
\draw[thick, postaction={decorate}] (4,-.5) to[out=270,in=270] (5,-.5);
\draw[thick] (5.5,-.5) -- (5.5,-1);
\draw[thick, postaction={decorate}] (5.5,-.75) -- (5.5,-1);
\draw[thick] (10.5,-.75) -- (10.5,-1);
\draw[thick, postaction={decorate}] (10.5,-.5) -- (10.5,-1);
\draw[thick, postaction={decorate}] (8.5,-.5) to[out=0,in=270] (9,0);
\draw[thick, postaction={decorate}] (13,-.5) to[out=0,in=270] (14,0);
\draw[thick, postaction={decorate}] (4,-.5) to[out=270,in=270] (5,-.5);
\draw[thick, postaction={decorate}] (5,-.5) to[out=0,in=180] (5.5,-.5);
\draw[thick, postaction={decorate}] (5.5,-.5) -- (6,-.5);
\draw[thick, postaction={decorate}] (6,-.5) to[out=270,in=270] (7,-.5);
\draw[thick, postaction={decorate}] (7,-.5) to[out=270,in=180] (7.5,-1);
\draw[thick, postaction={decorate}] (7.5,-1) to[out=0,in=180] (8.5,-1);
\draw[thick, postaction={decorate}] (8.5,-1) to[out=0,in=180] (9.5,-1);
\draw[thick, postaction={decorate}] (9.5,-1) to[out=0,in=270] (10,-.5);
\draw[thick, postaction={decorate}] (10.25,-.5) -- (10.5,-.5);
\draw[thick, postaction={decorate}] (10.75,-.5) -- (11,-.5);
\draw[thick, postaction={decorate}] (11,-.5) to[out=0,in=180] (12,-.5);
\draw[thick, postaction={decorate}] (12,-.5) to[out=0,in=180] (13,-.5);
\end{scope}
\draw[thick] (10,-.5) to[out=0,in=180] (13,-.5);
\begin{scope}[thick,decoration={
    markings,
    mark=at position 0.5 with {\arrow{>}}}
    ]
\draw[thick] (3.5,-.5) -- (3.5,-1);
\draw[thick, postaction={decorate}] (3.5,-.75) -- (3.5,-1);
\draw[thick] (7.5,-1) -- (7.5,-1.5);
\draw[thick, postaction={decorate}] (7.5,-1.25) -- (7.5,-1.5);
\draw[thick] (8.5,-.5) -- (8.5,-1);
\draw[thick, postaction={decorate}] (8.5,-.75) -- (8.5,-1);
\draw[thick] (9.5,-1) -- (9.5,-1.5);
\draw[thick, postaction={decorate}] (9.5,-1.25) -- (9.5,-1.5);
\end{scope}
\node at (.8,-.5) {\small $\color{blue} 1$};
\node at (1.8,-1) {\small $\color{red} 2$};
\node at (3.75,-1.6) {\small $\color{blue} 1$};
\node at (6.5,-2.1) {\small $\color{orange} 4$};
\node at (7.9,-2.2) {\small $\color{green} 3$};
\node at (10.35,-1.75) {\small $\color{blue} 1$};
\node at (14,-1.8) {\small $\color{orange} 4$};
\node at (9.55,-.65) {\tiny $\color{green} 3$};
\node at (8.9,-.75) {\tiny $\color{blue} 1$};
\node at (7.9,-.75) {\tiny $\color{orange} 4$};
\node at (7.05,-1.05) {\tiny $\color{green} 3$};
\node at (6.5,-1.05) {\tiny $\color{red} 2$};
\node at (4.5,-1.05) {\tiny $\color{green} 3$};
\node at (5.25,-.3) {\tiny $\color{orange} 4$};
\node at (5.65,-.3) {\tiny $\color{blue} 1$};
\node at (3.65,-.3) {\tiny $\color{red} 2$};
\node at (2.7,-.4) {\small $\color{blue} 1$};
\node at (7.7,-.3) {\small $\color{blue} 1$};
\node at (8.7,-.15) {\tiny $\color{orange} 4$};
\node at (10.25,-.3) {\tiny $\color{orange} 4$};
\node at (10.65,-.3) {\tiny $\color{blue} 1$};
\node at (11.5,-.8) {\small $\color{red} 2$};
\node at (12.5,-.8) {\small $\color{green} 3$};
\node at (13.5,-.7) {\small $\color{orange} 4$};
\end{tikzpicture}
}
\end{center}
\caption{A standard Young tableau, its multicolored noncrossing matching, and a web graph, with edge weights colored according to the arcs that created them (unlabeled boundary edges weighted $1$)} \label{figure: example of tableau and web from tableau}
\end{figure}
 
\subsubsection{Coherent webs and a result of Fontaine}
 
Following Section~\ref{section: base stranding}, let $G^*$ be the planar dual of $G$ with $U$ the vertex of the unbounded face. In what follows, we work with fundamental weights $\lambda_j$ rather than coset representatives $\vec{\lambda}_j\in\mathbb{Z}^n$. Dominant weights are nonnegative integral linear combinations of fundamental weights, and carry a standard partial order (see, e.g., \cite{STEM}). Fontaine defines coherence as follows.
 
\begin{definition}\label{definition: coherent}
Suppose that $P$ is a path in the planar dual $G^*$ to the web graph $G$ with edges $e_1, e_2, \ldots, e_j$ of weights $\ell_1, \ell_2, \ldots, \ell_j$. The \emph{weight of $P$} is the dominant weight $wt(P)=\sum_{i=1}^j \lambda_{\widetilde{\ell_i}}$, where $\widetilde{\ell_i}=\ell_i$ if $e_i$ is directed towards $e_{i+1}$ and $n-\ell_i$ otherwise. A web graph $G$ is \emph{coherent} if:
\begin{enumerate}
\item For each $A\in V(G^*)$, any two minimal-length paths $P_1, P_2$ from $U$ to $A$ in the dominant-weight poset satisfy $wt(P_1)=wt(P_2)$.
\item For each $A\in V(G^*)$, at least one minimal-length path in $G^*$ from $U$ passes through $A$ to the boundary.
\item For each directed edge $B\mapsto A$ in $G^*$ of weight $\ell$, $wt(P_A)-wt(P_B)$ lies in the Weyl orbit of $\lambda_\ell$ for any minimal-length paths $P_A, P_B$ from $U$ to $A, B$.
\end{enumerate}
\end{definition}
 
For any coherent web, we therefore have a well-defined function $wt(A)$ on $V(G^*)$ given by  the common weight of any minimal-length path from $U$ to $A$.
 
For a web $G_T$ constructed as above, all minimal-weight paths in $G_T^*$ are directed in the graph-theoretic sense (each $e_i$ directed from $e_{i-1}$ to $e_{i+1}$). We exploit this property to show $G_T$ is coherent.
 
\begin{lemma}\label{lemma: web graph from tableau is coherent}
Let $G_T$ and $S_T$ be the web graph and stranding constructed from a rectangular standard Young tableau $T$ as above. Then $G_T$ is a coherent web. Moreover, for each face $A$ of $G_T$, if the simple strands of $S_T$ enclosing $A$ are colored $c_{j_1}, c_{j_2}, \ldots, c_{j_i}$, then
\[wt(A) = \lambda_{c_{j_1}} + \lambda_{c_{j_2}} + \cdots + \lambda_{c_{j_i}}.\]
\end{lemma}
\begin{proof}
Let $A$ be an interior face enclosed by simple strands of colors $c_{j_1},\ldots,c_{j_i}$ (with multiplicity). By construction, every path from $U$ to $A$ in $G_T^*$ has edges of these weights with multiplicity, and at least one path uses exactly these. Hence the unique minimal weight of a path from $U$ to $A$ is $\lambda_{c_{j_1}}+\cdots+\lambda_{c_{j_i}}$, establishing the first two coherence conditions and the formula for $wt(A)$.
 
For the third condition, suppose $A$ and $B$ share an edge. If the edge has one simple strand in $S_T$, the formula above gives the result. If the edge has two simple strands --- with color $i$ enclosing $A$ and $j$ enclosing $B$ --- all other simple strands enclose both $A$ and $B$ or neither, so $wt(A)-\lambda_i=wt(B)-\lambda_j$. Assuming $j>i$, $\lambda_j-\lambda_i$ lies in the Weyl orbit of $\lambda_{j-i}$.
\end{proof}
 
Fontaine, Kamnitzer, and Kuperberg \cite{FKK} construct a map from webs to invariants in the classical ($q=1$) setting via the geometric Satake correspondence. For each web, their map agrees up to nonzero scalar with the specialization of $f$ at $q=1$. Thus, we can rephrase the following basis theorem due to Fontaine \cite[Corollary 2.9]{FON} as follows.
 
\begin{proposition}[Fontaine]\label{prop: Fontaine basis}
Let $\mathcal{G}\subset F(\vec{k})$ be a set of coherent $\mathfrak{sl}_n$ web graphs. For $G\in\mathcal{G}$ with boundary faces $U=A_0, A_1, \ldots, A_{m-1}, A_m=U$ read left to right, set $wt(G)=(wt(A_0), \ldots, wt(A_{m}))$. Suppose the set $\{wt(G):G\in\mathcal{G}\}$ is in bijection with the set of row-strict $n$-row rectangular Young tableaux with content $\{1^{k_1},\ldots,m^{k_m}\}$ via the following rule. For each $i$, let $R_i \subseteq \{1, \ldots, n\}$ denote the unique $k_i$-element subset with
\[wt(A_i)-wt(A_{i-1}) = \sum_{\ell \in R_i}(\lambda_{\ell}-\lambda_{\ell-1}),\]
where $\lambda_0=\lambda_n=0$ by convention. Then $T_G$ has $i$ in row $\ell$ iff $\ell\in R_i$. Under this bijection, $\{f(G):G\in\mathcal{G}\}$ is a basis of $\textup{Inv}(\vec{k})$ at $q=1$.
\end{proposition}
 
By Lemma~\ref{lemma: web graph from tableau is coherent}, the graphs $G_T$ are coherent. The vector $wt(G_T)$ is determined by the colors of simple strands in $S_T$ enclosing each boundary face. Moreover, the tableau Proposition~\ref{prop: Fontaine basis} assigns to $G_T$ is precisely $T$. Since the coefficients of our web vectors are Laurent polynomials in $q$ and the classical and quantum invariant spaces have the same dimension, the basis property at $q=1$ in Proposition~\ref{prop: Fontaine basis} lifts to a basis over $\mathbb{C}(q)$.
 
\begin{theorem}\label{theorem: using Fontaine to prove basis}
Let $\mathcal{G}$ be any set of web graphs constructed from the set of standard Young tableaux of a fixed rectangular shape as in Section~\ref{section: constructing basis webs from tableaux}. Then $\{f(G):G\in\mathcal{G}\}$ is a basis of $\textup{Inv}(\vec{1})$.
\end{theorem}
 
In \cite{BBCMMMMRSW} we use stranding to prove a basis result that does not rely on coherence. The monomial basis of $V(\vec{k})$ for $\vec{k}=(k_1,\ldots,k_m)$ is in bijection with strictly row-increasing tableaux of shape $n\times\frac{\sum k_i}{n}$ and content $\{1^{k_1},\ldots,m^{k_m}\}$, via the map sending entry $i$ in row $\ell$ to $x_\ell$ in the $i^{th}$ tensor factor. Place $q$-wedge terms in descending order (which merely rescales by a nonzero element of $\mathbb{C}(q)$), and order the resulting monomials lexicographically. The \emph{lex leading term} of a web is its lexicographically smallest term, denoted $lt(G)$.
 
\begin{theorem}[Bo, Burns, Chen, Marsho, Martin, Mawn, Mohren, Russell, Sales, Wong]\label{theorem: web basis without coherence}
The lex leading term for every web corresponds to a row-strict tableau. Moreover, any set of webs $\mathcal{G}\subset F(\vec{k})$ for which $\{lt(G):G\in\mathcal{G}\}$ is in bijection with the row-strict tableaux of shape $n\times\frac{\sum k_i}{n}$ and content $\{1^{k_1},\ldots,m^{k_m}\}$ yields a basis of $\textup{Inv}(\vec{k})$.
\end{theorem}
 
The construction of Section~\ref{section: constructing basis webs from tableaux} also generalizes to webs with arbitrary $\vec{k}$ in \cite{BBCMMMMRSW}, and the resulting webs satisfy Theorem~\ref{theorem: web basis without coherence}. The lex leading term is realized by the stranding $S_T$. 
 
\subsection{Stranding and tableau combinatorics}\label{section: stranding tableau combo}
 
The tableau-to-web construction of the previous subsection produces both a web graph and a stranding from each standard Young tableau. Two further applications use stranding to refine this picture: the first identifies lex leading terms for $\mathfrak{sl}_3$ webs via a notion of depth in the dual graph, and the second shows that stranding intertwines evacuation of tableaux with reflection of web graphs.
 
\subsubsection{Leading terms and depth for $\mathfrak{sl}_3$}
 
Define the \emph{depth} of a face $A$ of a web graph $G$ to be the number of edges in a shortest undirected path from $U$ to $A$ in $G^*$, equivalently the minimum number of edges of $G$ crossed by a path in the plane from $U$ to $A$ that avoids vertices of $G$. Denote this by $d(A)$. The following result from \cite{BBCMMMMRSW} uses stranding and depth to explicitly compute $lt(G)$ for a large class of $\mathfrak{sl}_3$ webs, including but not limited to coherent webs.
 
\begin{theorem}[Bo, Burns, Chen, Marsho, Martin, Mawn, Mohren, Russell, Sales, Wong]\label{thm: sl3 leading term stranding}
Let $G$ be a web graph for $\mathfrak{sl}_3$ such that two distinct depths occur around each trivalent vertex. After edge flips if needed, assume every edge of $G$ has weight $1$. For each oriented edge $e$ of $G$ with face $A$ to the left and $B$ to the right, set:
\begin{itemize}
\item $b_S(e)=100$ if $d(A)<d(B)$,
\item $b_S(e)=010$ if $d(A)=d(B)$,
\item $b_S(e)=001$ if $d(A)>d(B)$.
\end{itemize}
Then $S$ is a valid stranding, and $x_S$ is the monomial in the lex leading term of $f(G)$.
\end{theorem}
 
Figure~\ref{figure: example of sl3 stranding by depth} gives an example with all edges weighted $1$, as in the theorem.

\begin{figure}[h]
\begin{tikzpicture}
    \draw[style=dashed, <->] (0,0)--(10,0);

\draw (1,0) -- (1,-1);
\draw (1,-1) -- (2,-1);
\draw (2,0) -- (2,-3);
\draw (2,-3) -- (9,-3);
\draw (9,-3) -- (9, 0);

\draw (3,0) -- (3,-3);
\draw (3,-1) -- (4,-1);

\draw (4,0) -- (4,-1);
\draw (5,0) -- (5,-1);
\draw (6,0) -- (6,-1);
\draw (7,0) -- (7,-1);
\draw (8,0) -- (8,-1);

\draw (4,-1) -- (4,-2);
\draw (7.5,-1) -- (7.5,-2);
\draw (6,-1) -- (6,-2);

\draw (4,-2) -- (7.5,-2);

\draw (3.5,-1) -- (3.5,-3);

\draw (5.75,-1) -- (5.75,-2);

\draw (5.5,-2) -- (5.5,-3);

\draw (5,-1) -- (6,-1);
\draw (7,-1) -- (8,-1);

\draw[blue, thick] (2.05,0) -- (2.05,-2.9);
\draw[blue, thick] (2.05,-2.9) -- (3.05,-2.9);
\draw[blue, thick] (3.05,-2.9) -- (3.05,-.9);
\draw[blue, thick] (3.05,-.9) -- (3.45,-.9);
\draw[blue, thick] (3.45,-.9) -- (3.45,-2.9);
\draw[blue, thick] (3.45,-2.9) -- (5.3,-2.9);
\draw[blue, thick] (5.3,-2.9) -- (5.3,-1.9);
\draw[blue, thick] (5.3,-1.9) -- (5.45,-1.9);
\draw[blue, thick] (5.45,-1.9) -- (5.45,-2.9);
\draw[blue, thick] (5.45,-2.9) -- (9.05,-2.9);
\draw[blue, thick] (9.05,-2.9) -- (9.05,0);

\draw[blue, thick] (4.05,0) -- (4.05,-1.9);
\draw[blue, thick] (4.05,-1.9) -- (5.05,-1.9);
\draw[blue, thick] (5.05,-1.9) -- (5.05,-0.9);
\draw[blue, thick] (5.05,-0.9) -- (5.8,-0.9);
\draw[blue, thick] (5.8,-0.9) -- (5.8,-1.9);
\draw[blue, thick] (5.8,-1.9) -- (5.95,-1.9);
\draw[blue, thick] (5.95,-1.9) -- (5.95,0);

\draw[blue, thick] (7.05,0) -- (7.05,-.9);
\draw[blue, thick] (7.05,-.9) -- (7.95,-.9);
\draw[blue, thick] (7.95,-.9) -- (7.95,0);

\draw[red, thick] (.95,0) -- (.95,-1.1);
\draw[red, thick] (.95,-1.1) -- (1.95,-1.1);
\draw[red, thick] (1.95,-1.1) -- (1.95,0);

\draw[red, thick] (2.95,0) -- (2.95,-3.1);
\draw[red, thick] (2.95,-3.1) -- (3.55,-3.1);
\draw[red, thick] (3.55,-3.1) -- (3.55,-1.1);
\draw[red, thick] (3.55,-1.1) -- (3.95,-1.1);
\draw[red, thick] (3.95,-1.1) -- (3.95,0);

\draw[red, thick] (4.9,0) -- (4.9,-2.1);
\draw[red, thick] (4.9,-2.1) -- (5.2,-2.1);
\draw[red, thick] (5.2,-2.1) -- (5.2,-3.1);
\draw[red, thick] (5.2,-3.1) -- (5.55,-3.1);
\draw[red, thick] (5.55,-3.1) -- (5.55,-2.1);
\draw[red, thick] (5.55,-2.1) -- (5.7,-2.1);
\draw[red, thick] (5.7,-2.1) -- (5.7,-1.1);
\draw[red, thick] (5.7,-1.1) -- (6.05,-1.1);
\draw[red, thick] (6.05,-1.1) -- (6.05,-2.1);
\draw[red, thick] (6.05,-2.1) -- (7.45,-2.1);
\draw[red, thick] (7.45,-2.1) -- (7.45,-1.1);
\draw[red, thick] (7.45,-1.1) -- (6.95,-1.1);
\draw[red, thick] (6.95,-1.1) -- (6.95,0);

\begin{scope}[decoration={
    markings,
    mark=at position 0.5 with {\arrow{<}}}
    ] 
   \draw[postaction={decorate}] (3,-1) -- (3,-3);
   \draw[postaction={decorate}] (2,-1) -- (2,-3);
   \draw[postaction={decorate}] (4,-1) -- (4,-2);
   \draw[postaction={decorate}] (5,-1) -- (5,-2);
   \draw[postaction={decorate}] (5.25,-2) -- (5.25,-3);
   \draw[postaction={decorate}] (6,-1) -- (6,-2);
   \draw[postaction={decorate}] (7.5,-1) -- (7.5,-2);
 
   \draw[postaction={decorate}] (3,-1) -- (3.5,-1);
  \draw[postaction={decorate}] (5,-1) -- (5.75,-1);

   \draw[postaction={decorate}] (5.35,-2) -- (5.5,-2);
   \draw[postaction={decorate}] (5.85,-2) -- (6,-2);

  \draw[postaction={decorate}] (3.5,-3) -- (5.25,-3);

\end{scope}

\begin{scope}[decoration={
    markings,
    mark=at position 0.5 with {\arrow{>}}}
    ] 

    \draw[postaction={decorate}] (1,0) -- (1,-1);
    \draw[postaction={decorate}] (2,0) -- (2,-1);
    \draw[postaction={decorate}] (3,0) -- (3,-1);
    \draw[postaction={decorate}] (4,0) -- (4,-1);
    \draw[postaction={decorate}] (5,0) -- (5,-1);
    \draw[postaction={decorate}] (6,0) -- (6,-1);
    \draw[postaction={decorate}] (7,0) -- (7,-1);
    \draw[postaction={decorate}] (8,0) -- (8,-1);
    \draw[postaction={decorate}] (9,0) -- (9,-3);

   \draw[postaction={decorate}] (3.5,-1) -- (3.5,-3);
  \draw[postaction={decorate}] (5.5,-2) -- (5.5,-3);
   \draw[postaction={decorate}] (5.75,-1) -- (5.75,-2);

   \draw[postaction={decorate}] (3.5,-1) -- (4,-1);
   \draw[postaction={decorate}] (5.85,-1) -- (6,-1);

   \draw[postaction={decorate}] (5.10,-2) -- (5.25,-2);
   \draw[postaction={decorate}] (5.6,-2) -- (5.75,-2);

   \draw[postaction={decorate}] (3,-3) -- (3.5,-3);
  \draw[postaction={decorate}] (5.35,-3) -- (5.5,-3);

\end{scope}

\node at (1.5,-.5) {\small \color{gray} $1$};
\node at (2.5,-.5) {\small \color{gray} $1$};
\node at (3.5,-.5) {\small \color{gray} $2$};
\node at (4.5,-.5) {\small \color{gray} $2$};
\node at (5.5,-.5) {\small \color{gray} $3$};
\node at (6.5,-.5) {\small \color{gray} $2$};
\node at (7.5,-.5) {\small \color{gray} $2$};
\node at (8.5,-.5) {\small \color{gray} $1$};

\node at (3.2,-2) {\small \color{gray} $1$};
\node at (4.5,-2.5) {\small \color{gray} $1$};
\node at (5.35,-1.5) {\small \color{gray} $2$};
\node at (6,-2.5) {\color{gray} $\hookleftarrow 1$};
\node at (6.5,-1.5) {\color{gray} $\hookleftarrow 2$};
    
\end{tikzpicture}
\caption{The leading term stranding for a non-reduced $\mathfrak{sl}_3$ web obtained from depth} \label{figure: example of sl3 stranding by depth}
\end{figure}

\subsubsection{Stranding and evacuation}
 
Strandings also analyze combinatorial operations on tableaux. \emph{Promotion} of a tableau $T$ erases the top-left entry and repeatedly slides in the smaller of the right- or below-neighbor of the empty box until no further slides are possible. For $n=2$ and $n=3$, the tableau-to-web bijection satisfies
\[G_{P(T)}=\rho(G_T),\]
where $P$ is promotion and $\rho$ is rotation of the web (identifying $\pm\infty$ on the boundary line). Comparing promotion of tableaux to rotation of webs has driven considerable work, with a rotation-invariant basis known for $n=4$ \cite{GaetzetalRotation}; the general case is an open problem.
 
\emph{Evacuation} iterates promotion: at each step, promote and freeze one entry. Evacuation is an involution \cite{Sch63, Hai92, MalReu94} and appears across the RSK algorithm, symmetric-group representation theory, Schubert calculus, and symmetric functions \cite{Sta09}.
 
Using strandings, we show \cite{CEST} that evacuation of tableaux corresponds to reflection of web graphs (after edge flips); see Figure~\ref{figure: evacuation and reflection}. This generalizes the $\mathfrak{sl}_2$ and $\mathfrak{sl}_3$ results of Patrias and Pechenik \cite{PatPech}.
 
\begin{theorem}[Adams Cowan, Eilfort, Seekamp, Tymoczko]
Let $T$ be an $n\times\frac{m}{n}$ tableau, $E(T)$ its evacuation, and $\theta$ the reflection across the line $x=\frac{m-1}{2}$. Let $G_{E(T)}$ be the web graph from $E(T)$ via Section~\ref{section: constructing basis webs from tableaux}. Then $\theta(G_{E(T)})$ is obtained from $T$ via the same construction with all edges flipped.
\end{theorem}

\begin{figure}[h]
\begin{center}
$\begin{array}{|c|c|c|} \cline{1-3} 1 & 2 & 7 \\ \cline{1-3} 3 & 6 & 11 \\ \cline{1-3} 4 & 9 & 12 \\ \cline{1-3} 5 & 10 & 14 \\ \cline{1-3} 8 & 13 & 15 \\ \hline \end{array}$ \quad
 \quad
\scalebox{.9}{\raisebox{-.7in}{\begin{tikzpicture}[scale=.9]
\draw[style=dashed, <->] (16,0)--(0,0);
\begin{scope}[thick,decoration={
    markings,
    mark=at position 0.5 with {\arrow{<}}}
    ] 
\draw[thick, postaction={decorate}] (15,0) to[out=270,in=0] (14,-.5);
\draw[thick, postaction={decorate}] (14,0) -- (14,-.5);
\draw[thick, postaction={decorate}] (12,0) -- (12,-.5);
\draw[thick, postaction={decorate}] (11,0) -- (11,-.5);
\draw[thick, postaction={decorate}] (10,0) -- (10,-.5);
\draw[thick, postaction={decorate}] (9,0) -- (9,-.5);
\draw[thick, postaction={decorate}] (6,0) -- (6,-.5);
\draw[thick, postaction={decorate}] (5,0) -- (5,-.5);
\draw[thick, postaction={decorate}] (4,0) -- (4,-.5);
\draw[thick, postaction={decorate}] (3,0) -- (3,-.5);
\draw[thick, postaction={decorate}] (13,0) to[out=270,in=20] (12.5,-.5);
\draw[thick, postaction={decorate}] (8,0) to[out=270,in=0] (7.5,-.5);
\draw[thick, postaction={decorate}] (14,-.5) to[out=270,in=300] (12.5,-1);
\draw[thick, postaction={decorate}] (12.5,-1) to[out=270,in=270] (10.5,-1);
\draw[thick, postaction={decorate}] (10.5,-1) to[out=270,in=300] (8.5,-1.5);
\draw[thick, postaction={decorate}] (8.5,-1.5) to[out=270,in=270] (6.5,-1.5);
\draw[thick, postaction={decorate}] (6.5,-1.5) to[out=250,in=270] (5.5,-1);
\draw[thick, postaction={decorate}] (5.5,-1) to[out=270,in=270] (1,0);
\draw[thick, postaction={decorate}] (12.5,-.5) to[out=180,in=0] (12,-.5);
\draw[thick, postaction={decorate}] (12,-.5) to[out=270,in=270] (11,-.5);
\draw[thick] (10.5,-.5) -- (10.5,-1);
\draw[thick, postaction={decorate}] (10.5,-.75) -- (10.5,-1);
\draw[thick] (5.5,-.75) -- (5.5,-1);
\draw[thick, postaction={decorate}] (5.5,-.5) -- (5.5,-1);
\draw[thick, postaction={decorate}] (7.5,-.5) to[out=180,in=270] (7,0);
\draw[thick, postaction={decorate}] (3,-.5) to[out=180,in=270] (2,0);
\draw[thick, postaction={decorate}] (12,-.5) to[out=270,in=270] (11,-.5);
\draw[thick, postaction={decorate}] (11,-.5) -- (10.5,-.5);
\draw[thick, postaction={decorate}] (10.5,-.5) -- (10,-.5);
\draw[thick, postaction={decorate}] (10,-.5) -- (9,-.5);
\draw[thick, postaction={decorate}] (9,-.5) to[out=270,in=0] (8.5,-1);
\draw[thick, postaction={decorate}] (8.5,-1) -- (7.5,-1);
\draw[thick, postaction={decorate}] (7.5,-1) -- (6.5,-1);
\draw[thick, postaction={decorate}] (6.5,-1) to[out=180,in=270] (6,-.5);
\draw[thick, postaction={decorate}] (5.75,-.5) -- (5.5,-.5);
\draw[thick, postaction={decorate}] (5.25,-.5) -- (5,-.5);
\draw[thick, postaction={decorate}] (5,-.5) -- (4,-.5);
\draw[thick, postaction={decorate}] (4,-.5) -- (3,-.5);
\end{scope}

\draw[thick] (6,-.5) to[out=180,in=0] (3,-.5);

\begin{scope}[thick,decoration={
    markings,
    mark=at position 0.5 with {\arrow{>}}}
    ] 
\draw[thick] (12.5,-.5) -- (12.5,-1);
\draw[thick, postaction={decorate}] (12.5,-.75) -- (12.5,-1);
\draw[thick] (8.5,-1) -- (8.5,-1.5);
\draw[thick, postaction={decorate}] (8.5,-1.25) -- (8.5,-1.5);
\draw[thick] (7.5,-.5) -- (7.5,-1);
\draw[thick, postaction={decorate}] (7.5,-.75) -- (7.5,-1);
\draw[thick] (6.5,-1) -- (6.5,-1.5);
\draw[thick, postaction={decorate}] (6.5,-1.25) -- (6.5,-1.5);
\end{scope}

\node at (15.2,-.5) {\small $\color{blue} 1$};
\node at (14.2,-1) {\small $\color{red} 2$};
\node at (12.25,-1.6) {\small $\color{blue} 1$};
\node at (9.5,-2.1) {\small $\color{orange} 4$};
\node at (8.1,-2.2) {\small $\color{green} 3$};
\node at (5.65,-1.75) {\small $\color{blue} 1$};
\node at (2,-1.8) {\small $\color{orange} 4$};

\node at (6.45,-.65) {\tiny $\color{green} 3$};
\node at (7.1,-.75) {\tiny $\color{blue} 1$};
\node at (8.1,-.75) {\tiny $\color{orange} 4$};
\node at (8.95,-1.05) {\tiny $\color{green} 3$};
\node at (9.5,-1.05) {\tiny $\color{red} 2$};
\node at (11.5,-1.05) {\tiny $\color{green} 3$};
\node at (10.75,-.3) {\tiny $\color{orange} 4$};
\node at (10.35,-.3) {\tiny $\color{blue} 1$};
\node at (12.35,-.3) {\tiny $\color{red} 2$};
\node at (13.3,-.4) {\small $\color{blue} 1$};
\node at (8.3,-.3) {\small $\color{blue} 1$};
\node at (7.3,-.15) {\tiny $\color{orange} 4$};

\node at (5.75,-.3) {\tiny $\color{orange} 4$};
\node at (5.35,-.3) {\tiny $\color{blue} 1$};
\node at (4.5,-.8) {\small $\color{red} 2$};
\node at (3.5,-.8) {\small $\color{green} 3$};
\node at (2.5,-.7) {\small $\color{orange} 4$};
\end{tikzpicture}}}
\end{center}
\caption{The evacuation of the tableau in Figure~\ref{figure: example of tableau and web from tableau} and its associated web graph with all edges flipped} \label{figure: evacuation and reflection}
\end{figure}
 
\subsection{Webs and Springer fibers}\label{subsec:springer}
 
Springer fibers are subvarieties of the flag variety that play a fundamental role in the representation theory of the symmetric group $S_m$.
 
The flag variety of type $A_{m-1}$ is $GL_m(\mathbb{C})/B$, where $B$ is the Borel subgroup of upper-triangular invertible matrices. Each coset $gB$ has a canonical representative $g$ satisfying:
\begin{itemize}
\item the lowest nonzero entry in each column is $1$ (a \emph{pivot}), and
\item every entry to the right of a pivot in its row is $0$.
\end{itemize}
The pivots of $g$ form a permutation matrix $\sigma$, and the \emph{Schubert cell} $\mathcal{C}_\sigma$ is the set of representatives with pivot pattern $\sigma$; it is a complete set of representatives for $B\sigma B/B$.
 
The \emph{Springer fiber} of an $m\times m$ matrix $X$ is
\[\mathcal{S}_X=\{gB: g^{-1}Xg \text{ is upper triangular}\},\]
informally the flags ``fixed'' by $X$. The cohomology $H^*(\mathcal{S}_X)$ carries a graded $S_m$-representation, with $H^{top}(\mathcal{S}_X)$ irreducible and corresponding to the partition $\lambda$ of $m$ given by the Jordan-block structure of $X$. Varying $X$ over its conjugacy class recovers each irreducible $S_m$-representation; see \cite{Tym17} for a survey.
 
The intersections $\mathcal{S}_X\cap\mathcal{C}_\sigma$ are generally complicated, even for top-dimensional cells, and describing them carries information about the corresponding representation \cite{Fung, Shi80, Spa76, Tym17}. Since $\mathcal{S}_X\cong\mathcal{S}_{g^{-1}Xg}$, one can choose $X$ in its conjugacy class so that the intersections are well-behaved --- for instance, forming an affine paving \cite{Pre13, Tym06}. Stranded web graphs turn out to be particularly useful here.
 
In \cite{HLTT}, we construct a family of stranded $\mathfrak{sl}_3$ web graphs called \emph{Springer web graphs}. Built from standard Young tableaux but generally non-reduced (some interior faces are $4$-cycles), they arise from an algorithm that telescopically inserts squares into a reduced web graph to eliminate instances of a red strand passing below a blue strand. Each square moves the blue strand down and the red strand up. For $i=1,\ldots,m/3$, we assign free variables $a_i$ to the $i^{th}$ blue strand, $b_i$ to the $i^{th}$ red strand, and $c_i$ to the boundary face just right of the $i^{th}$ blue strand's start. We then assign a polynomial $p(A)$ in these variables to every other bounded face, in such a way that two faces $A, B$ separated only by edges with two strands have $p(A)-p(B)$ depending only on the $a_i, b_i$. Our main result is the following; see \cite{HLTT} for details.
 
\begin{theorem}[Hafken, Lang, Tashman, Tymoczko]
Let $X$ be the Jordan form of a nilpotent matrix with three Jordan blocks each of dimension $\frac{m}{3}$, and let $\mathcal{S}_X$ be the associated Springer fiber. For each standard Young tableau $T$ of shape $3\times\frac{m}{3}$, let $\sigma(T)$ be the permutation matrix whose $i^{th}$ column is $\vec{e}_{m-\frac{jm}{3}+k}$ if $i$ lies in entry $(j,k)$ of $T$. Then the nonzero entries of matrices in $\mathcal{S}_X\cap\mathcal{C}_{\sigma(T)}$ are computed from paths through the Springer web graph, recording variables on blue arcs in one block of the matrix, red arcs in another, and faces in a third.
\end{theorem}
 
\begin{figure}[h]
\begin{tikzpicture}
    \draw[style=dashed, <->] (0,0)--(10,0);

\draw (1,0) -- (1,-1);
\draw (1,-1) -- (2,-1);
\draw (2,0) -- (2,-3);
\draw (2,-3) -- (9,-3);
\draw (9,-3) -- (9, 0);

\draw (3,0) -- (3,-3);
\draw (3,-1) -- (4,-1);

\draw (4,0) -- (4,-1);
\draw (5,0) -- (5,-1);
\draw (6,0) -- (6,-1);
\draw (7,0) -- (7,-1);
\draw (8,0) -- (8,-1);

\draw (4,-1) -- (4,-2);
\draw (7.5,-1) -- (7.5,-2);
\draw (6,-1) -- (6,-2);

\draw (4,-2) -- (7.5,-2);

\draw (3.5,-1) -- (3.5,-3);

\draw (5.75,-1) -- (5.75,-2);

\draw (5.5,-2) -- (5.5,-3);

\draw (5,-1) -- (6,-1);
\draw (7,-1) -- (8,-1);

\draw[red, thick] (2.05,0) -- (2.05,-2.9);
\draw[red, thick] (2.05,-2.9) -- (3.05,-2.9);
\draw[red, thick] (3.05,-2.9) -- (3.05,-.9);
\draw[red, thick] (3.05,-.9) -- (3.45,-.9);
\draw[red, thick] (3.45,-.9) -- (3.45,-2.9);
\draw[red, thick] (3.45,-2.9) -- (5.3,-2.9);
\draw[red, thick] (5.3,-2.9) -- (5.3,-1.9);
\draw[red, thick] (5.3,-1.9) -- (5.45,-1.9);
\draw[red, thick] (5.45,-1.9) -- (5.45,-2.9);
\draw[red, thick] (5.45,-2.9) -- (9.05,-2.9);
\draw[red, thick] (9.05,-2.9) -- (9.05,0);

\draw[red, thick] (4.05,0) -- (4.05,-1.9);
\draw[red, thick] (4.05,-1.9) -- (5.05,-1.9);
\draw[red, thick] (5.05,-1.9) -- (5.05,-0.9);
\draw[red, thick] (5.05,-0.9) -- (5.8,-0.9);
\draw[red, thick] (5.8,-0.9) -- (5.8,-1.9);
\draw[red, thick] (5.8,-1.9) -- (5.95,-1.9);
\draw[red, thick] (5.95,-1.9) -- (5.95,0);

\draw[red, thick] (7.05,0) -- (7.05,-.9);
\draw[red, thick] (7.05,-.9) -- (7.95,-.9);
\draw[red, thick] (7.95,-.9) -- (7.95,0);

\draw[blue, thick] (.95,0) -- (.95,-1.1);
\draw[blue, thick] (.95,-1.1) -- (1.95,-1.1);
\draw[blue, thick] (1.95,-1.1) -- (1.95,0);

\draw[blue, thick] (2.95,0) -- (2.95,-3.1);
\draw[blue, thick] (2.95,-3.1) -- (3.55,-3.1);
\draw[blue, thick] (3.55,-3.1) -- (3.55,-1.1);
\draw[blue, thick] (3.55,-1.1) -- (3.95,-1.1);
\draw[blue, thick] (3.95,-1.1) -- (3.95,0);

\draw[blue, thick] (4.9,0) -- (4.9,-2.1);
\draw[blue, thick] (4.9,-2.1) -- (5.2,-2.1);
\draw[blue, thick] (5.2,-2.1) -- (5.2,-3.1);
\draw[blue, thick] (5.2,-3.1) -- (5.55,-3.1);
\draw[blue, thick] (5.55,-3.1) -- (5.55,-2.1);
\draw[blue, thick] (5.55,-2.1) -- (5.7,-2.1);
\draw[blue, thick] (5.7,-2.1) -- (5.7,-1.1);
\draw[blue, thick] (5.7,-1.1) -- (6.05,-1.1);
\draw[blue, thick] (6.05,-1.1) -- (6.05,-2.1);
\draw[blue, thick] (6.05,-2.1) -- (7.45,-2.1);
\draw[blue, thick] (7.45,-2.1) -- (7.45,-1.1);
\draw[blue, thick] (7.45,-1.1) -- (6.95,-1.1);
\draw[blue, thick] (6.95,-1.1) -- (6.95,0);

\begin{scope}[decoration={
    markings,
    mark=at position 0.5 with {\arrow{<}}}
    ] 
   \draw[postaction={decorate}] (3,-1) -- (3,-3);
   \draw[postaction={decorate}] (2,-1) -- (2,-3);
   \draw[postaction={decorate}] (4,-1) -- (4,-2);
   \draw[postaction={decorate}] (5,-1) -- (5,-2);
   \draw[postaction={decorate}] (5.25,-2) -- (5.25,-3);
   \draw[postaction={decorate}] (6,-1) -- (6,-2);
   \draw[postaction={decorate}] (7.5,-1) -- (7.5,-2);
 
   \draw[postaction={decorate}] (3,-1) -- (3.5,-1);
  \draw[postaction={decorate}] (5,-1) -- (5.75,-1);

   \draw[postaction={decorate}] (5.35,-2) -- (5.5,-2);
   \draw[postaction={decorate}] (5.85,-2) -- (6,-2);

  \draw[postaction={decorate}] (3.5,-3) -- (5.25,-3);

\end{scope}

\begin{scope}[decoration={
    markings,
    mark=at position 0.5 with {\arrow{>}}}
    ] 

    \draw[postaction={decorate}] (1,0) -- (1,-1);
    \draw[postaction={decorate}] (2,0) -- (2,-1);
    \draw[postaction={decorate}] (3,0) -- (3,-1);
    \draw[postaction={decorate}] (4,0) -- (4,-1);
    \draw[postaction={decorate}] (5,0) -- (5,-1);
    \draw[postaction={decorate}] (6,0) -- (6,-1);
    \draw[postaction={decorate}] (7,0) -- (7,-1);
    \draw[postaction={decorate}] (8,0) -- (8,-1);
    \draw[postaction={decorate}] (9,0) -- (9,-3);

   \draw[postaction={decorate}] (3.5,-1) -- (3.5,-3);
  \draw[postaction={decorate}] (5.5,-2) -- (5.5,-3);
   \draw[postaction={decorate}] (5.75,-1) -- (5.75,-2);

   \draw[postaction={decorate}] (3.5,-1) -- (4,-1);
   \draw[postaction={decorate}] (5.85,-1) -- (6,-1);

   \draw[postaction={decorate}] (5.10,-2) -- (5.25,-2);
   \draw[postaction={decorate}] (5.6,-2) -- (5.75,-2);

   \draw[postaction={decorate}] (3,-3) -- (3.5,-3);
  \draw[postaction={decorate}] (5.35,-3) -- (5.5,-3);

\end{scope}

\node at (0.75,-0.75) {\small \color{blue} $a_1$};
\node at (2.75,-0.5) {\small \color{blue} $a_2$};
\node at (7.8,-2.2) {\small \color{blue} $a_3$};

    \node at (1.75,-2.3) {\small \color{red} $b_1$};
\node at (4.3,-0.3) {\small \color{red} $b_2$};
\node at (7.7,-.6) {\small \color{red} $b_3$};

\node at (1.5,-.5) {\small \color{violet} $c_1$};
\node at (3.3,-2.35) {\small \color{violet} $p_1$};
\node at (3.5,-.5) {\small \color{violet} $c_2$};
\node at (5.5,-.5) {\small \color{violet} $c_3$};
\node at (5.4,-1.4) {\small \color{violet} $p_3$};
\node at (5.8,-2.3) {\small \color{violet} $\longleftarrow p_2$};

    \node at (-.45, -1.6) {${\color{violet} p_1} = {\small {\color{violet} c_1} - {\color{red} b_1}({\color{blue} a_2-a_1})}$};
      \node at (-.45, -2.1) {${\color{violet} p_2} =  {\small {\color{violet} c_1} - {\color{red} b_1}({\color{blue} a_3-a_1})}$};
          \node at (-.45, -2.6) {${\color{violet} p_3} = {\small {\color{violet} c_2} - {\color{red} b_2}({\color{blue} a_3-a_2})} $};
\end{tikzpicture} \hspace{0.25in}
\raisebox{0.6in}{\scalebox{0.8}{$\left(\begin{array}{ccccccccc} {\color{violet} c_1} & {\color{red} b_1} & {\color{violet} c_2} & {\color{red} b_2} &  {\color{violet} c_3} & 1 & 0 & 0 & 0 \\
0 & 0 &{\color{violet} p_1} & {\color{red} b_1} & {\color{violet} p_3}&0 & {\color{red} b_3} & 1&0 \\ 
0 & 0 & 0 & 0 & {{\color{violet} p_2}} &0 & {\color{red} b_1} & 0&1  \\ 
{\color{blue} a_1} & 1 & 0&0&0&0&0&0&0 \\ 
0 & 0 & {\color{blue} a_2} &1&0&0&0&0&0 \\ 
0 & 0 & 0 &0&{\color{blue} a_3}&0&1&0&0 \\ 
1 & 0 & 0 &0&0&0&0&0&0 \\
0 & 0 & 1 &0&0&0&0&0&0 \\
0 & 0 & 0 &0&1&0&0&0&0 \\
\end{array}\right)$}}
\caption{Example of a Springer web graph and Springer Schubert cell, with all arc labels shown in blue and red, and some face labels shown in violet} \label{figure: example of Springer web graph and Schubert cell}
\end{figure}

This generalizes the $n=2$ result of \cite{GNST}; in that case, complete information about all Springer Schubert cells (and their closures) is also available.

 \section{Tagged webs and relations}\label{section:tagless vs tagged}
 
We now relate our untagged web framework to the work of CKM \cite{CKM} on tagged webs. Their construction provides a surjective $\uq$-equivariant map $g:\mathcal{C}(\vec{k})\to\textup{Inv}(\vec{k})$ together with a generating set of relations for $\ker(g)$. We will give an explicit translation between untagged and tagged web graphs under which $f$ and $g$ produce the same invariant vector up to sign, and use this comparison to verify the surjectivity of $f$ and to transfer the CKM relations to untagged webs.
 
\subsection{Tagged webs}\label{section: tagged webs}
 
We recall the tagged web model of CKM \cite{CKM}; see also \cite{MorrisonThesis}. The terminology \emph{tagged web} distinguishes these from the untagged webs that are the focus of this paper.
 
\begin{definition}\label{definition: tagged web}
A \emph{tagged web} is an $\mathfrak{sl}_n$ web graph whose interior vertices are either bivalent or trivalent. At each trivalent vertex $v$ with incident edges $e_1, e_2, e_3$ of weights $\ell_1, \ell_2, \ell_3$, edge weights satisfy the following \emph{integral conservation of flow} condition: 
$$\sum_{i=1}^3 \sigma_v(e_i)\ell_i = 0.$$
At each bivalent vertex $v$ with incident edges $e_1, e_2$ of weights $\ell_1, \ell_2$,
$$\ell_1+\ell_2=n \qquad\textup{and}\qquad \sigma_v(e_1)=\sigma_v(e_2).$$
Bivalent vertices carry \emph{tags}, drawn as short segments indicating one side of the edge.
 
For a tagged web with boundary vertices $v_1,\ldots,v_m$ (left to right) and corresponding boundary edges $e_i$ of weight $\ell_i$, the \emph{boundary weight vector} is $\vec{k}=(k_1,\ldots,k_m)$ where
$$k_i=\begin{cases}\ell_i & \textup{if } \sigma_{v_i}(e_i)=1,\\ -\ell_i & \textup{if } \sigma_{v_i}(e_i)=-1.\end{cases}$$
Let $C(\vec{k})$ be the set of tagged $\mathfrak{sl}_n$ webs with boundary weight vector $\vec{k}$, and $\mathcal{C}(\vec{k})$ the free $\mathbb{C}(q)$-vector space on $C(\vec{k})$.
\end{definition}

Note the sign convention for $\vec{k}$ here differs from the untagged case (Definition~\ref{definition: untagged web}). The CKM convention distinguishes $V^k$ from its isomorphic dual $V_{n-k}^*$ via signed boundary weights, while our untagged convention implicitly incorporates this isomorphism. 
 
For tagged webs, the integral conservation-of-flow condition at trivalent vertices forces at least one edge directed into the vertex and at least one directed out. As with untagged webs, we call $v$ a \emph{split vertex} when exactly one incident edge is directed inward and a \emph{merge vertex} when exactly two are.
 
\begin{example}\label{ex:tagged web}
Let $n=4$ and $\vec{k}=(1,1,3,3)$. The tagged web $H\in C(\vec{k})$ in Figure~\ref{fig: ex tagged web} has the same underlying graph as the untagged web of Example~\ref{ex:untagged web}. As in that example, the two left interior vertices are split vertices and the two right interior vertices are merge vertices.
 
\begin{figure}[h]
 \begin{center}
\raisebox{3pt}{\begin{tikzpicture}[scale=1]
 
\draw[style=dashed, <->] (0,0)--(7,0);
 
\begin{scope}[thick,decoration={
    markings,
    mark=at position 0.5 with {\arrow{<}}}
    ]
\draw[postaction={decorate},style=thick] (1,0) to[out=270,in=180] (3,-2);
\draw[postaction={decorate},style=thick] (3,-2)--(3,-1);
\draw[postaction={decorate},style=thick] (5,0) to[out=270,in=0] (4,-1);
\draw[postaction={decorate},style=thick] (6,0) to[out=270,in=0] (4,-2);
\draw[postaction={decorate},style=thick] (4,-2)--(3,-2);
\end{scope}
 
\begin{scope}[thick,decoration={
    markings,
    mark=at position 0.5 with {\arrow{>}}}
    ]
\draw[postaction={decorate},style=thick] (3.5,-1)--(3,-1);
\draw[postaction={decorate},style=thick] (3.5,-1)--(4,-1);
\draw[postaction={decorate},style=thick] (3,-1) to[out=180,in=270] (2,0);
\draw[postaction={decorate},style=thick] (4,-1.5)--(4,-1);
\draw[postaction={decorate},style=thick] (4,-1.5)--(4,-2);
\end{scope}
 
\draw[radius=.05, fill=black](1,0)circle;
\draw[radius=.05, fill=black](3,-1)circle;
\draw[radius=.05, fill=black](4,-1)circle;
\draw[radius=.05, fill=black](3,-2)circle;
\draw[radius=.05, fill=black](4,-2)circle;
\draw[radius=.05, fill=black](2,0)circle;
\draw[radius=.05, fill=black](5,0)circle;
\draw[radius=.05, fill=black](6,0)circle;
 
\draw[thick] (3.5,-1)--(3.5,-.75);
\draw[thick] (4,-1.5)--(3.75,-1.5);
 
\node at (.75,-.25) {\tiny{$1$}};
\node at (1.75,-.25) {\tiny{$1$}};
\node at (3.25,-.75) {\tiny{$3$}};
\node at (3.75,-.75) {\tiny{$1$}};
\node at (3.5,-2.25) {\tiny{$1$}};
\node at (2.75, -1.5) {\tiny{$2$}};
\node at (4.25, -1.25) {\tiny{$2$}};
\node at (4.25, -1.75) {\tiny{$2$}};
\node at (6.25, -.25) {\tiny{$3$}};
\node at (5.25, -.25) {\tiny{$3$}};
    \end{tikzpicture}}
    \end{center}
    \caption{A tagged $\mathfrak{sl}_4$ web graph}\label{fig: ex tagged web}
\end{figure}
\end{example}
 
\begin{remark}\label{rem:ignore bivalent}
We often ignore the bivalent vertices of a tagged web, treating each maximal chain of edges through bivalent vertices as a single edge equipped with a sequence of tags. The orientation and weight alternate between $\stackrel{\ell}{\leftarrow}$ and $\stackrel{n-\ell}{\longrightarrow}$ at each tag.
\end{remark}
 
CKM construct a $\uq$-equivariant map $g:\mathcal{C}(\vec{k})\to\textup{Inv}(\vec{k})$ via a Morse-style decomposition of tagged webs into local pieces (cups, caps, $\lambda$-pieces, Y-pieces). The full construction of $g$, including the explicit formulas for the local maps, is recorded in Appendix~\ref{appendix:tagged ckm background}. For the purposes of this section, we use only the following two facts from \cite{CKM}: $g$ is surjective, and $\ker(g)$ is generated by an explicit finite set of relations.
 
\subsection{Relating untagged and tagged web graphs}\label{section: untagged tagged relation}
 
Given a tripod decomposition $G'$ of an untagged web $G$, the proof of Theorem~\ref{thm:invariant vector flow formula} expresses $f(G)=f(G')$ as a composition of cap maps applied to the tensor product of invariants coming from the cups and tripods in $G'$. We now construct a parallel translation to tagged webs: for each cup, cap, and tripod, there is a tagged web fragment whose CKM vector equals the corresponding piece of $f$ up to sign. Stitching these fragments together and tracking the sign, we obtain a global comparison between $f$ and $g$.
 
Each cup, cap, and tripod of $G'$ is equivalent under edge flips to exactly one of the local pieces in the first column of Figure~\ref{fig:replaceCKM}. The corresponding tagged web fragments and signs appear in the second and third columns. See Appendix~\ref{appendix:tagged ckm background} for the composition of CKM maps corresponding to each fragment.

 \begin{figure}[h]
\scalebox{.85}{\begin{tabular}{|c|c|c|}
\hline
&& \\
$G'$ & $H'$ & $sgn(G')$   \\
&& \\ \hline
    &&\\
\raisebox{3pt}{\begin{tikzpicture}[scale=.9]
        \draw[dashed, red, <->] (0,0)--(3,0);
    \node at(.8,-.65) {\tiny{$k$}};
        \begin{scope}[thick,decoration={
    markings,
    mark=at position 0.5 with {\arrow{>}}}
    ] 
\draw[postaction={decorate}, thick] (1,0) to[out=270,in=180] (1.5,-1) to[out=0,in=270] (2,0);
    \end{scope}
    \end{tikzpicture}} &
\begin{tikzpicture}[scale=.9]
        \draw[dashed, red, <->] (0,0)--(3,0);
    \node at(.5,-.65) {\tiny{$n-k$}};
    \node at(2.2,-.65) {\tiny{$k$}};
    
      \draw[style=thick] (1.5,-1)--(1.5,-1.25);
        \begin{scope}[thick,decoration={
    markings,
    mark=at position 0.5 with {\arrow{>}}}
    ] 
\draw[postaction={decorate}, thick]  (1.5,-1) to[out=0,in=270] (2,0);
\draw[postaction={decorate}, thick] (1.5,-1) to[out=180,in=270] (1,0);
    \end{scope}
    \end{tikzpicture} & \raisebox{10pt}{1} \\
        && \\ \hline
&&\\
\raisebox{-10pt}{\scalebox{.9}{\begin{tikzpicture}[scale=1]
        \draw[dashed, red, <->] (0,0)--(4,0);
        \draw[radius=.08, fill=black](2,-1)circle;
         \begin{scope}[thick,decoration={
    markings,
    mark=at position 0.5 with {\arrow{>}}}
    ] 
\draw[postaction={decorate}, thick] (2,-1) to[out=180,in=270] (1,0);
\draw[postaction={decorate}, thick] (2,-1)--(2,0);
\draw[postaction={decorate}, thick] (2,-1) to[out=0,in=270](3,0);
    \end{scope}
    \node at (.9,-.5) {\tiny{$k$}};
     \node at (1.7,-.5) {\tiny{$l$}};
     \node at (3.2,-.5) {\tiny{$m$}};
    \end{tikzpicture}}} &
\raisebox{-20pt}{\scalebox{.9}{\begin{tikzpicture}[scale=1]
        \draw[dashed, red, <->] (0,0)--(4,0);
        \draw[radius=.08, fill=black](2,-1)circle;
\begin{scope}[thick,decoration={
    markings,
    mark=at position 0.5 with {\arrow{>}}}
    ] 
\draw[postaction={decorate}, thick] (2,-1) to[out=180,in=270] (1,0);
\draw[postaction={decorate}, thick] (2,-1)--(2,0);
\end{scope}
\begin{scope}[thick,decoration={
    markings,
    mark=at position 0.33 with {\arrow{<}}}
    ] 
\draw[postaction={decorate}, thick] (2,-1) to[out=0,in=270](3,0);
    \end{scope}
    \begin{scope}[thick,decoration={
    markings,
    mark=at position 0.7 with {\arrow{>}}}
    ] 
\draw[postaction={decorate}, thick] (2,-1) to[out=0,in=270](3,0);
    \end{scope}
    \node at (.9,-.5) {\tiny{$k$}};
     \node at (1.7,-.5) {\tiny{$l$}};
     \node at (2.4,-1.25) {\tiny{$n-m$}};
       \node at (3.2,-.5) {\tiny{$m$}};
 \draw[style=thick] (2.65,-.75)--(2.8,-.9);
    \end{tikzpicture}}} & $(-1)^{kl}$ \\
    ($k+l+m=n$)&& \\ \hline
    &&\\
\scalebox{.9}{\raisebox{11pt}{\begin{tikzpicture}[scale=1]
        \draw[dashed, red, <->] (0,0)--(4,0);
        \draw[radius=.08, fill=black](2,-1)circle;
         \begin{scope}[thick,decoration={
    markings,
    mark=at position 0.5 with {\arrow{>}}}
    ] 
\draw[postaction={decorate}, thick] (2,-1) to[out=180,in=270] (1,0);
\draw[postaction={decorate}, thick] (2,-1)--(2,0);
\draw[postaction={decorate}, thick] (2,-1) to[out=0,in=270](3,0);
    \end{scope}
    \node at (.6,-.3) {\tiny{$k$}};
     \node at (1.6,-.3) {\tiny{$l$}};
     \node at (3.3,-.3) {\tiny{$m$}};
    \end{tikzpicture}}} &
\scalebox{.9}{\begin{tikzpicture}[scale=1]
        \draw[dashed, red, <->] (0,0)--(4,0);
        \draw[radius=.08, fill=black](2,-1)circle;
\begin{scope}[thick,decoration={
    markings,
    mark=at position 0.5 with {\arrow{>}}}
    ] 
\draw[postaction={decorate}, thick] (2,-1) to[out=180,in=270] (1,0);
\end{scope}
\begin{scope}[thick,decoration={
    markings,
    mark=at position 0.3 with {\arrow{<}}}
    ] 
\draw[postaction={decorate}, thick] (2,-1) to[out=0,in=270](3,0);
\draw[postaction={decorate}, thick] (2,-1)--(2,0);
    \end{scope}
    \begin{scope}[thick,decoration={
    markings,
    mark=at position 0.7 with {\arrow{>}}}
    ] 
\draw[postaction={decorate}, thick] (2,-1) to[out=0,in=270](3,0);
\draw[postaction={decorate}, thick] (2,-1)--(2,0);
    \end{scope}
    \node at (.6,-.3) {\tiny{$k$}};
     \node at (1.55,-.25) {\tiny{$l$}};
     \node at (3.3,-.3) {\tiny{$m$}};
          \node at (1.55,-.55) {\tiny{$n-l$}};
     \node at (2.4,-1.25) {\tiny{$n-m$}};
 \draw[style=thick] (2.65,-.75)--(2.8,-.9);
  \draw[style=thick] (2,-.55)--(2.2,-.55);
    \end{tikzpicture}}& \raisebox{10pt}{1} \\
    ($k+l+m=2n$)&& \\ \hline
    &&\\
\begin{tikzpicture}[scale=.9]
        \draw[dashed, red, <->] (0,0)--(3,0);
        \begin{scope}[thick,decoration={
    markings,
    mark=at position 0.5 with {\arrow{>}}}
    ] 
\draw[postaction={decorate}, thick] (1,0) to[out=90,in=180] (1.5,1) to[out=0,in=90] (2,0);
    \end{scope}
    \node at(.8,.65) {\tiny{$k$}};
    \end{tikzpicture} &
\begin{tikzpicture}[scale=.9]
        \draw[dashed, red, <->] (0,0)--(3,0);
        \begin{scope}[thick,decoration={
    markings,
    mark=at position 0.5 with {\arrow{>}}}
    ] 
\draw[postaction={decorate}, thick] (1,0) to[out=90,in=180] (1.5,1);
\draw[postaction={decorate}, thick] (2,0) to[out=90,in=0] (1.5,1); 
    \end{scope}
    \node at(.8,.65) {\tiny{$k$}};
    \node at(2.5,.65) {\tiny{$n-k$}};
      \draw[style=thick] (1.5,1)--(1.5,1.25);
    \end{tikzpicture}& \raisebox{10pt}{1} \\
        && \\ \hline
    \end{tabular}}
    \caption{Local untagged pieces, corresponding tagged web fragments, and signs}\label{fig:replaceCKM}
\end{figure}
 
\begin{definition}\label{def: eta map}
Let $G\in F(\vec{k})$ with tripod decomposition $G'$. The \emph{tagged translation} $\eta(G')\in C(\vec{k})$ is the tagged web obtained from $G'$ by replacing each cup, cap, and tripod (locally flipped if necessary) with the corresponding tagged web fragment from Figure~\ref{fig:replaceCKM}. The \emph{sign} $\textup{sgn}(G')\in\{\pm 1\}$ is the product, over the local pieces of $G'$, of the signs in the third column of Figure~\ref{fig:replaceCKM}.
\end{definition}
 
\begin{example}\label{ex:etamap}
Figure~\ref{fig: ex etamap} shows a tripod decomposition $G'$ of the web from Example~\ref{ex:untagged web} (using the structure of Example~\ref{ex:websplit}) alongside its tagged translation $\eta(G')$. The decomposition has exactly two split vertices, so $\textup{sgn}(G')=(-1)^{1\cdot 2}\cdot(-1)^{1\cdot 2}=1$.
 
  \begin{figure}[h]
    \begin{center}
    \raisebox{-30pt}{\scalebox{.65}{\begin{tikzpicture}[scale=.65]
  \draw[style=dashed, <->] (0,4)--(13,4);
  \draw[style=dashed, red] (0,0)--(13,0);

\begin{scope}[thick,decoration={
    markings,
    mark=at position 0.5 with {\arrow{>}}}
    ] 

\draw[style=thick, postaction={decorate}] (5,0) to[out=90,in=180] (5.5, 1) to[out=0,in=90] (6,0);
\node at (5.5, 1.5) {$2$};

  \draw[style=thick, postaction={decorate}] (6,0) to[out=270,in=180] (7,-2)--(7,0);

  \draw[style=thick, postaction={decorate}] (7,-2) to[out=0,in=270] (11,0)--(11,4);
  \node at (11.25, 2) {$3$};
\end{scope}

\begin{scope}[thick,decoration={
    markings,
    mark=at position 0.5 with {\arrow{<}}}
    ] 
\draw[style=thick, postaction={decorate}] (2,0) to[out=90,in=180] (5.5, 2.5) to[out=0,in=90] (9,0);
\node at (5.5, 3) {$2$};

 \draw[style=thick, postaction={decorate}] (3,0) to[out=90,in=180] (3.5, 1) to[out=0,in=90] (4,0);
 \node at (3.5, 1.5) {$3$};
 
 \draw[style=thick, postaction={decorate}] (7,0) to[out=90,in=180] (7.5, 1) to[out=0,in=90] (8,0);
\node at (7.5, 1.5) {$1$};

\draw[style=thick, postaction={decorate}] (1,4)--(1,0) to[out=270,in=180] (2,-1)--(2,0);
\node at (.75,2) {$1$};

\draw[style=thick, postaction={decorate}] (2,-1) to[out=0,in=270] (3,0);

\draw[style=thick, postaction={decorate}] (4,0) to[out=270,in=180] (5,-3)--(5,0);

 \draw[style=thick, postaction={decorate}] (5,-3) to[out=0,in=270] (12,0)--(12,4);
 \node at (12.25, 2) {$1$};

\draw[style=thick, postaction={decorate}] (8,0) to[out=270,in=180] (9,-1)--(9,0);

\draw[style=thick, postaction={decorate}] (9,-1) to[out=0,in=270] (10,0)--(10,4);
  \node at (10.25, 2) {$3$};
\end{scope}
    
\draw[radius=.08, fill=black](1,4)circle;
\draw[radius=.08, fill=black](2,-1)circle;
\draw[radius=.08, fill=black](5,-3)circle;
\draw[radius=.08, fill=black](7,-2)circle;
\draw[radius=.08, fill=black](9,-1)circle;
\draw[radius=.08, fill=black](10,4)circle;
\draw[radius=.08, fill=black](11,4)circle;
\draw[radius=.08, fill=black](12,4)circle;
  
    \end{tikzpicture}}}
    \hspace{.25in}
    \raisebox{-36pt}{\scalebox{.65}{\begin{tikzpicture}[scale=.65]
  \draw[style=dashed, <->] (0,4)--(13,4);
  \draw[style=dashed, red] (0,0)--(13,0);

\begin{scope}[thick,decoration={
    markings,
    mark=at position 0.5 with {\arrow{>}}}
    ] 

\draw[style=thick, postaction={decorate}] (5,0) to[out=90,in=180] (5.5, 1);
\draw[style=thick, postaction={decorate}](6,0) to[out=90,in=0](5.5, 1) ;
\node at (5, 1.15) {$2$};
\node at (6, 1.15) {$2$};
\draw[style=thick] (5.5,1)--(5.5,1.25);

\draw[style=thick, postaction={decorate}] (3,0) to[out=90,in=180] (3.5, 1);
\draw[style=thick, postaction={decorate}](4,0)to[out=90,in=0](3.5, 1);
\node at (3, 1.15) {$1$};
\node at (4, 1.15) {$3$};
\draw[style=thick] (3.5,1)--(3.5,1.25);

\draw[style=thick, postaction={decorate}] (7,0) to[out=90,in=180] (7.5, 1);
\draw[style=thick, postaction={decorate}](8,0)to[out=90,in=0](7.5, 1);
\node at (7, 1.15) {$3$};
\node at (8, 1.15) {$1$};
\draw[style=thick] (7.5,1)--(7.5,1.25);

\draw[style=thick, postaction={decorate}] (2,0) to[out=90,in=180] (5.5, 2.5);
\draw[style=thick, postaction={decorate}] (9,0) to[out=90,in=0] (5.5, 2.5);
\node at (3.5, 2.5) {$2$};
\node at (7.5, 2.5) {$2$};
\draw[style=thick] (5.5,2.5)--(5.5,2.75);

 \draw[style=thick, postaction={decorate}] (12,0)--(12,4);
 \node at (12.35, 2) {$3$};

  \draw[style=thick, postaction={decorate}] (6.5, -3) to[out=0,in=270] (12,0);

 \draw[style=thick, postaction={decorate}](11,0)--(11,4);
  \node at (11.35, 2) {$3$};

  \draw[style=thick, postaction={decorate}] (10,0)--(10,4);
  \node at (10.3, 2) {$1$};
\end{scope}

\begin{scope}[thick,decoration={
    markings,
    mark=at position 0.5 with {\arrow{<}}}
    ] 

  \draw[style=thick, postaction={decorate}] (6,0) to[out=270,in=180] (7,-2);
   \draw[style=thick, postaction={decorate}] (7,0)--(7,-1);
  \draw[style=thick, postaction={decorate}] (7,-2)--(7,-1);
  \draw[thick] (7,-1)--(7.25,-1);
  \node at (6.65,-1.5) {$1$};

\draw[style=thick, postaction={decorate}] (1,4)--(1,0) to[out=270,in=180] (2,-1)--(2,0);
 \node at (.75,2) {$1$};

\draw[style=thick, postaction={decorate}] (2,-1) to[out=0,in=270] (3,0);
\draw[thick] (2.85,-.5)--(3.1,-.7);
\node at (2.35,-1.25) {$3$};

\draw[style=thick, postaction={decorate}] (4,0) to[out=270,in=180] (5,-3);
\draw[style=thick, postaction={decorate}](5,0)--(5,-1.5);
\draw[style=thick, postaction={decorate}](5,-3)--(5,-1.5);
\draw[thick] (5,-1.5)--(5.25,-1.5);
\node at (4.7,-2.5) {$2$};

 \draw[style=thick, postaction={decorate}] (5,-3) -- (6.5, -3);
 \draw[thick] (7.5,-2.95)--(7.55, -3.3);
 \node at (5.5, -3.35) {$1$};

   \draw[style=thick, postaction={decorate}] (7,-2)--(8.5, -2);
  \draw[style=thick, postaction={decorate}] (11,0)  to[out=270,in=0] (8.5, -2); 
 \draw[thick] (8.75,-1.95)--(8.8, -2.3);
 \node at (7.5, -2.35) {$1$};

\draw[style=thick, postaction={decorate}] (8,0) to[out=270,in=180] (9,-1)--(9,0);

\draw[style=thick, postaction={decorate}] (9,-1) to[out=0,in=270] (10,0);
\draw[thick] (9.85,-.55)--(10.1,-.8);
\node at (9.5, -1.25) {$3$};

\end{scope}
    
\draw[radius=.08, fill=black](1,4)circle;
\draw[radius=.08, fill=black](2,-1)circle;
\draw[radius=.08, fill=black](5,-3)circle;
\draw[radius=.08, fill=black](7,-2)circle;
\draw[radius=.08, fill=black](9,-1)circle;
\draw[radius=.08, fill=black](10,4)circle;
\draw[radius=.08, fill=black](11,4)circle;
\draw[radius=.08, fill=black](12,4)circle;
  
    \end{tikzpicture}}}
    
    \end{center}
    \caption{Constructing $\eta(G')$ in our running example using a tripod decomposition $G'$}\label{fig: ex etamap}
    \end{figure}
\end{example}
 
Note that $\eta$ and $\textup{sgn}$ depend on the choice of tripod decomposition, so neither is individually well-defined on $F(\vec{k})$. Their combined data, however, behaves well, as the next two lemmas show.
 
\begin{lemma}\label{lem:edge flip eta}
Let $G\in F(\vec{k})$ with tripod decomposition $G'$, and let $\mathcal{E}\subseteq E(G)=E(G')$. Then $G'_{\varphi(\mathcal{E})}$ is a tripod decomposition of $G_{\varphi(\mathcal{E})}$,
$$\eta(G')=\eta\bigl(G'_{\varphi(\mathcal{E})}\bigr),  \textup{ and }  \textup{sgn}(G')=\textup{sgn}\bigl(G'_{\varphi(\mathcal{E})}\bigr).$$
\end{lemma}
 
\begin{lemma}\label{lemma: f and g agree}
Let $G\in F(\vec{k})$ with tripod decomposition $G'$. Then
$$f(G)=\textup{sgn}(G')\,g(\eta(G')).$$
\end{lemma}
\begin{proof}
By Figure~\ref{fig:replaceCKM}, the CKM compositions recorded in Appendix~\ref{appendix:tagged ckm background}, and the local invariance proofs in Appendix~\ref{appendix:invariance calculations}, each local piece of $G'$ satisfies the lemma in the local form. In other words, for each cup, cap, and tripod, the value of $f$ equals the indicated sign times $g$ of the corresponding tagged fragment. Both $f$ and $g$ compose local pieces of a decomposition in the same way, so the global identity follows.
\end{proof}

\subsection{Quotient spaces, comparison, and surjectivity}\label{section: quotients}
 
To compare $f$ and $g$, we pass to quotients on each side by relations that preserve the underlying unweighted graph. On the untagged side, the relations are edge flips; on the tagged side, we have the CKM tag relations depicted in Figure~\ref{fig:tagrelations}. A linear isomorphism $\psi$ identifies the two quotient spaces. This is summarized by the commutative diagram below, whose vertical arrows are quotient maps and whose horizontal arrow $\psi$ is the isomorphism we construct.
 
\begin{center}
\begin{tikzcd}
\mathcal{F}(\vec{k}) \arrow[swap, dr, twoheadrightarrow, "f"] \arrow[dd]
& & \mathcal{C}(\vec{k}) \arrow[dl, twoheadrightarrow, "g"] \arrow[dd]  \\
& \textup{Inv}(\vec{k}) &\\
\widetilde{\mathcal{F}}(\vec{k}) \arrow[rr, Rightarrow, "\psi"] \arrow[swap, ur, twoheadrightarrow, "\tilde{f}"]
& & \widetilde{\mathcal{C}}(\vec{k}) \arrow[ul, twoheadrightarrow, "\tilde{g}"]
\end{tikzcd}
\end{center}
 
We follow the standard practice of expressing relations as equations: a \emph{relation in $\ker(f)$} is an equation $\mathbf{v}=\mathbf{w}$ with $f(\mathbf{v})=f(\mathbf{w})$, and a set of relations \emph{generates} $\ker(f)$ if the corresponding vectors $\mathbf{v}_i-\mathbf{w}_i$ span $\ker(f)$. The same terminology applies to $g$.
 
\subsubsection{Quotient spaces}
 
For $G\in F(\vec{k})$ and $\mathcal{E}\subseteq E(G)$, recall that $G_{\varphi(\mathcal{E})}$ is the web obtained by flipping each edge in $\mathcal{E}$. Let $\widetilde{\mathcal{F}}(\vec{k})$ be the quotient of $\mathcal{F}(\vec{k})$ by all \emph{edge-flip relations} $G=G_{\varphi(\mathcal{E})}$. Lemma~\ref{lem:invt and flips} gives:
 
\begin{lemma}
Edge-flip relations lie in $\ker(f)$, and $\tilde{f}:\widetilde{\mathcal{F}}(\vec{k})\to\textup{Inv}(\vec{k})$ given by $\tilde{f}([G])=f(G)$ is well-defined.
\end{lemma}
 
CKM prove that the \emph{tag switch}, \emph{cancellation}, and \emph{migration} relations in Figure~\ref{fig:tagrelations} lie in $\ker(g)$ \cite[Lemma 2.2.1, Theorem 3.2.1]{CKM}. Let $\widetilde{\mathcal{C}}(\vec{k})$ be the quotient of $\mathcal{C}(\vec{k})$ by these relations; then $\tilde{g}:\widetilde{\mathcal{C}}(\vec{k})\to\textup{Inv}(\vec{k})$ is well-defined.

\begin{figure}[h]
\begin{equation}\label{eqn:tag switch}
\raisebox{-15pt}{  \begin{tikzpicture}[scale=.65]
\node at (-3.5,2.5){\tiny{$k$}};
\node at (-3.7,3.5){\tiny{$n-k$}};
\begin{scope}[thick,decoration={
    markings,
    mark=at position 0.5 with {\arrow{>}}}
    ] 
\draw[style=thick, postaction={decorate}] (-3,2)--(-3,3);
\draw[style=thick, postaction={decorate}] (-3,4)--(-3,3);
\end{scope}
\draw[style=thick] (-3,3)--(-2.8,3);
\end{tikzpicture}} 
=
 \raisebox{-15pt}{ \begin{tikzpicture}[scale=.65]
\node at (-1.5,3){\tiny{$(-1)^{k(n-k)}$}};
\node at (.5,2.5){\tiny{$k$}};
\node at (.7,3.5){\tiny{$n-k$}};
\begin{scope}[thick,decoration={
    markings,
    mark=at position 0.5 with {\arrow{>}}}
    ] 
\draw[style=thick, postaction={decorate}] (0,2)--(0,3);
\draw[style=thick, postaction={decorate}] (0,4)--(0,3);
\end{scope}
 \draw[style=thick] (-.2,3)--(0,3);
\end{tikzpicture} }
\end{equation}
\begin{equation}\label{eqn:tag cancel}
\raisebox{-25pt}{  \begin{tikzpicture}[scale=.65]
\node at (-3.5,2.5){\tiny{$k$}};
\node at (-3.7,3.5){\tiny{$n-k$}};
\node at (-3.5,4.5){\tiny{$k$}};
\begin{scope}[thick,decoration={
    markings,
    mark=at position 0.5 with {\arrow{>}}}
    ]

\draw[style=thick, postaction={decorate}] (-3,2)--(-3,3);
\draw[style=thick, postaction={decorate}] (-3,4)--(-3,3);
\draw[style=thick,postaction={decorate}] (-3,4)--(-3,5);

\end{scope}
\draw[style=thick] (-3,3)--(-2.8,3);

\draw[style=thick] (-3,4)--(-3.2,4);
\end{tikzpicture}} 
=
 \raisebox{-25pt}{ \begin{tikzpicture}[scale=.65]
\node at (-1.5,3.5){\tiny{$k$}};
\begin{scope}[thick,decoration={
    markings,
    mark=at position 0.5 with {\arrow{>}}}
    ] 
\draw[style=thick, postaction={decorate}] (-1,2)--(-1,5);
\end{scope}
\end{tikzpicture}}
\end{equation}
\begin{equation}\label{eqn:tag migrate}
\raisebox{-25pt}{  
\begin{tikzpicture}[scale=.45]
\begin{scope}[thick,decoration={
    markings,
    mark=at position 0.5 with {\arrow{>}}}
    ] 
\draw[style=thick, postaction={decorate}] (0,0)--(0,1);
\draw[style=thick, postaction={decorate}] (0,2)--(0,1);
\draw[style=thick, postaction={decorate}] (-2,-2)--(0,0);
\draw[style=thick,postaction={decorate}] (2,-2)--(0,0);
\end{scope}

\draw[style=thick] (0,1)--(.2,1);
\node at (-1.75,1.75) {\tiny{$n-k-l$}};
\node at (-1.25,.5) {\tiny{$k+l$}};
\node at (-2,-1) {\tiny{$k$}};
\node at (1.8,-1) {\tiny{$l$}};
\end{tikzpicture}} 
=
 \raisebox{-25pt}{ 
 \begin{tikzpicture}[scale=.45]
\begin{scope}[thick,decoration={
    markings,
    mark=at position 0.5 with {\arrow{>}}}
    ] 
\draw[style=thick, postaction={decorate}] (6,2)--(6,0);
\draw[style=thick, postaction={decorate}] (4,-2)--(6,0);
\draw[style=thick,postaction={decorate}] (6,0)--(7,-1);
\draw[style=thick,postaction={decorate}] (8,-2)--(7,-1);

\end{scope}

\draw[style=thick] (7,-1)--(7.2,-.8);
\node at (7.75,1.25) {\tiny{$n-k-l$}};
\node at (4,-1) {\tiny{$k$}};
\node at (7.5,-.25) {\tiny{$n-l$}};
\node at (8,-1.25) {\tiny{$l$}};
\end{tikzpicture}}\hspace{.25in}
\raisebox{-25pt}{  
\begin{tikzpicture}[scale=.45]
\begin{scope}[thick,decoration={
    markings,
    mark=at position 0.5 with {\arrow{>}}}
    ] 
\draw[style=thick, postaction={decorate}] (0,0)--(0,1);
\draw[style=thick, postaction={decorate}] (0,2)--(0,1);
\draw[style=thick, postaction={decorate}] (-2,-2)--(0,0);
\draw[style=thick,postaction={decorate}] (0,0)--(2,-2);
\end{scope}
\node at (-1.25,1.75) {\tiny{$n-k$}};
\node at (-.75,.5) {\tiny{$k$}};
\node at (-2.25,-1) {\tiny{$k+l$}};
\node at (1.8,-1) {\tiny{$l$}};
\draw[style=thick] (0,1)--(-.2,1);
\end{tikzpicture}
} 
=
 \raisebox{-25pt}{ 
\begin{tikzpicture}[scale=.45]
\begin{scope}[thick,decoration={
    markings,
    mark=at position 0.5 with {\arrow{>}}}
    ] 
\draw[style=thick, postaction={decorate}] (6,2)--(6,0);
\draw[style=thick, postaction={decorate}] (4,-2)--(5,-1);
\draw[style=thick, postaction={decorate}] (6,0)--(5,-1);
\draw[style=thick,postaction={decorate}] (6,0)--(8,-2);

\end{scope}
\node at (7.25,1.25) {\tiny{$n-k$}};
\node at (3.25,-1.5) {\tiny{$k+l$}};
\node at (3.85,-.5) {\tiny{$n-k-l$}};
\node at (7.5,-1) {\tiny{$l$}};
\draw[style=thick] (5,-1)--(4.8,-.8);
\end{tikzpicture}}
\end{equation}

\begin{equation}\label{eqn:tag migrate weird}
\raisebox{-30pt}{ \begin{tikzpicture}[scale=.45]
\begin{scope}[thick,decoration={
    markings,
    mark=at position 0.5 with {\arrow{<}}}
    ] 
\draw[style=thick, postaction={decorate}] (0,0)--(0,1);
\draw[style=thick, postaction={decorate}] (0,2)--(0,1);
\draw[style=thick, postaction={decorate}] (-2,-2)--(0,0);
\draw[style=thick,postaction={decorate}] (2,-2)--(0,0);
\end{scope}

\draw[style=thick] (0,1)--(.2,1);
\node at (-1.75,1.75) {\tiny{$n-k-l$}};
\node at (-1.25,.5) {\tiny{$k+l$}};
\node at (-2,-1) {\tiny{$k$}};
\node at (1.8,-1) {\tiny{$l$}};
\end{tikzpicture}} 
=
 \raisebox{-25pt}{ 
\begin{tikzpicture}[scale=.45]
\begin{scope}[thick,decoration={
    markings,
    mark=at position 0.5 with {\arrow{<}}}
    ] 
\draw[style=thick, postaction={decorate}] (6,2)--(6,0);
\draw[style=thick, postaction={decorate}] (4,-2)--(5,-1);
\draw[style=thick, postaction={decorate}] (6,0)--(5,-1);
\draw[style=thick,postaction={decorate}] (8,-2)--(6,0);
\end{scope}
\node at (7.5,1.25) {\tiny{$n-k-l$}};
\node at (3.5,-1.5) {\tiny{$k$}};
\node at (4.25,-.5) {\tiny{$n-k$}};
\node at (7.5,-1) {\tiny{$l$}};
\draw[style=thick] (5,-1)--(5.2,-1.2);
\end{tikzpicture}}
\hspace{.25in}
\raisebox{-25pt}{  
\begin{tikzpicture}[scale=.45]
\begin{scope}[thick,decoration={
    markings,
    mark=at position 0.5 with {\arrow{<}}}
    ] 
\draw[style=thick, postaction={decorate}] (0,0)--(0,1);
\draw[style=thick, postaction={decorate}] (0,2)--(0,1);
\draw[style=thick, postaction={decorate}] (-2,-2)--(0,0);
\draw[style=thick,postaction={decorate}] (0,0)--(2,-2);
\end{scope}
\node at (-1.5,1.75) {\tiny{$n-k$}};
\node at (-1,.5) {\tiny{$k$}};
\node at (-2.25,-1) {\tiny{$k+l$}};
\node at (1.8,-1) {\tiny{$l$}};
\draw[style=thick] (0,1)--(.2,1);
\end{tikzpicture}
} 
=
 \raisebox{-25pt}{ 
\begin{tikzpicture}[scale=.45]
\begin{scope}[thick,decoration={
    markings,
    mark=at position 0.5 with {\arrow{<}}}
    ] 
\draw[style=thick, postaction={decorate}] (6,2)--(6,0);
\draw[style=thick, postaction={decorate}] (4,-2)--(5,-1);
\draw[style=thick, postaction={decorate}] (6,0)--(5,-1);
\draw[style=thick,postaction={decorate}] (6,0)--(8,-2);
\end{scope}
\node at (7.25,1.25) {\tiny{$n-k$}};
\node at (3.5,-1.5) {\tiny{$k+l$}};
\node at (3.75,-.5) {\tiny{$n-k-l$}};
\node at (7.5,-1) {\tiny{$l$}};
\draw[style=thick] (5,-1)--(5.2,-1.2);
\end{tikzpicture}}
\end{equation}
\caption{The tag switch (\ref{eqn:tag switch}), tag cancellation (\ref{eqn:tag cancel}), and tag migration (\ref{eqn:tag migrate}, \ref{eqn:tag migrate weird}) relations on $\mathcal{C}(\vec{k})$.}\label{fig:tagrelations}
\end{figure}
 
\subsubsection{Forgetting and adding tags}
 
\begin{definition}
For $H\in C(\vec{k})$ and $G\in F(\vec{k})$, we say $G$ \emph{forgets the tags in} $H$ (equivalently, $H$ \emph{adds tags to} $G$) if:
\begin{enumerate}
\item $G$ and $H$ have isotopic underlying unweighted, undirected plane graphs, and
\item under some isotopic identification, every portion of each tagged edge of $H$ agrees in orientation and label with either the corresponding edge $e$ of $G$ or $\varphi(e)$.
\end{enumerate}
\end{definition}
 Forgetting and adding tags preserves the split/merge type of each trivalent vertex.

 Given $H\in C(\vec{k})$, let $\pi(H)=[G]\in\widetilde{\mathcal{F}}(\vec{k})$, where $G$ is any untagged web that forgets the tags in $H$. By construction, $G_1$ and $G_2$ forget the tags in $H$ if and only if $[G_1]=[G_2]\in\widetilde{\mathcal{F}}(\vec{k})$, so $\pi$ is well-defined.
 
\begin{example}\label{ex:forget tags}
Figure~\ref{fig: ex forget tags} shows a tagged web (left) and an untagged web (right) that forgets its tags. These are the webs of Examples~\ref{ex:untagged web} and~\ref{ex:tagged web}.
 
\begin{figure}[h]
 \begin{center}
 \raisebox{3pt}{\begin{tikzpicture}[scale=.75]

\draw[style=dashed, <->] (0,0)--(7,0);

\begin{scope}[thick,decoration={
    markings,
    mark=at position 0.5 with {\arrow{<}}}
    ] 

\draw[postaction={decorate},style=thick] (1,0) to[out=270,in=180] (3,-2);
\draw[postaction={decorate},style=thick] (3,-2)--(3,-1);
\draw[postaction={decorate},style=thick] (5,0) to[out=270,in=0] (4,-1);

\draw[postaction={decorate},style=thick] (6,0) to[out=270,in=0] (4,-2);
\draw[postaction={decorate},style=thick] (4,-2)--(3,-2);
\end{scope}

\begin{scope}[thick,decoration={
    markings,
    mark=at position 0.5 with {\arrow{>}}}
    ] 

\draw[postaction={decorate},style=thick] (3.5,-1)--(3,-1);
\draw[postaction={decorate},style=thick] (3.5,-1)--(4,-1);
\draw[postaction={decorate},style=thick] (3,-1) to[out=180,in=270] (2,0);

\draw[postaction={decorate},style=thick] (4,-1.5)--(4,-1);
\draw[postaction={decorate},style=thick] (4,-1.5)--(4,-2);
\end{scope}

\draw[radius=.05, fill=black](1,0)circle;
\draw[radius=.05, fill=black](3,-1)circle;
\draw[radius=.05, fill=black](4,-1)circle;
\draw[radius=.05, fill=black](3,-2)circle;
\draw[radius=.05, fill=black](4,-2)circle;
\draw[radius=.05, fill=black](2,0)circle;
\draw[radius=.05, fill=black](5,0)circle;
\draw[radius=.05, fill=black](6,0)circle;

\draw[thick] (3.5,-1)--(3.5,-.75);
\draw[thick] (4,-1.5)--(3.75,-1.5);

\node at (.75,-.25) {\tiny{$1$}};
\node at (1.75,-.25) {\tiny{$1$}};
\node at (3.25,-.75) {\tiny{$3$}};
\node at (3.75,-.75) {\tiny{$1$}};
\node at (3.5,-2.25) {\tiny{$1$}};
\node at (2.75, -1.5) {\tiny{$2$}};
\node at (4.25, -1.25) {\tiny{$2$}};
\node at (4.25, -1.75) {\tiny{$2$}};
\node at (6.25, -.25) {\tiny{$3$}};
\node at (5.25, -.25) {\tiny{$3$}};
  
    \end{tikzpicture}}
\hspace{.15in} \raisebox{35pt}{\large{$\stackrel{\pi}{\longmapsto}$}}
\raisebox{3pt}{\begin{tikzpicture}[scale=.75]

\draw[style=dashed, <->] (0,0)--(7,0);

\begin{scope}[thick,decoration={
    markings,
    mark=at position 0.5 with {\arrow{<}}}
    ] 

\draw[postaction={decorate},style=thick] (1,0) to[out=270,in=180] (3,-2);
\draw[postaction={decorate},style=thick] (3,-2)--(3,-1);
\draw[postaction={decorate},style=thick] (3,-2)--(4,-2);
\draw[postaction={decorate},style=thick] (5,0) to[out=270,in=0] (4,-1);
\draw[postaction={decorate},style=thick] (4,-1)--(3,-1);
\draw[postaction={decorate},style=thick] (3,-1) to[out=180,in=270] (2,0);
\end{scope}

\begin{scope}[thick,decoration={
    markings,
    mark=at position 0.5 with {\arrow{>}}}
    ] 

\draw[postaction={decorate},style=thick] (6,0) to[out=270,in=0] (4,-2);
\draw[postaction={decorate},style=thick] (4,-2)--(4,-1);
\end{scope}

\draw[radius=.05, fill=black](1,0)circle;
\draw[radius=.05, fill=black](3,-1)circle;
\draw[radius=.05, fill=black](4,-1)circle;
\draw[radius=.05, fill=black](3,-2)circle;
\draw[radius=.05, fill=black](4,-2)circle;
\draw[radius=.05, fill=black](2,0)circle;
\draw[radius=.05, fill=black](5,0)circle;
\draw[radius=.05, fill=black](6,0)circle;

\node at (.75,-.25) {\tiny{$1$}};
\node at (1.75,-.25) {\tiny{$3$}};
\node at (3.5,-.75) {\tiny{$1$}};
\node at (3.5,-2.25) {\tiny{$3$}};
\node at (2.75, -1.5) {\tiny{$2$}};
\node at (4.25, -1.5) {\tiny{$2$}};
\node at (6.25, -.25) {\tiny{$1$}};
\node at (5.25, -.25) {\tiny{$3$}};
  
    \end{tikzpicture}}
    \end{center}

    \caption{Forgetting tags in a tagged web}\label{fig: ex forget tags}
\end{figure} 
\end{example}

The next lemma shows $\pi$ identifies tagged webs up to sign with their untagged counterparts in the quotient.

\begin{lemma}\label{lem:pi map}
Let $H, H'\in C(\vec{k})$. Then $[H]=\pm[H']\in\widetilde{\mathcal{C}}(\vec{k})$ if and only if $\pi(H)=\pi(H')\in\widetilde{\mathcal{F}}(\vec{k})$.
\end{lemma}
\begin{proof}
The forward direction is direct from Figure~\ref{fig:tagrelations} since each CKM tag relation preserves $\pi$. The converse requires showing that if $\pi(H)=\pi(H')$ then $H$ and $H'$ differ (up to sign) by tag relations. A vertex-by-vertex argument using tag cancellation and migration handles this. The details are in Appendix~\ref{appendix:relation arguments}.
\end{proof}
 
\subsubsection{The comparison isomorphism}
 
\begin{lemma}\label{lem:adding tags with sign is well-defined}
Let $G\in F(\vec{k})$ and let $G'_1, G'_2$ be tripod decompositions of $G$. Then
$$\textup{sgn}(G'_1)[\eta(G'_1)] = \textup{sgn}(G'_2)[\eta(G'_2)] \in \widetilde{\mathcal{C}}(\vec{k}).$$
\end{lemma}
\begin{proof}
By Lemma~\ref{lemma: f and g agree}, $\textup{sgn}(G'_1)g(\eta(G'_1)) = \textup{sgn}(G'_2)g(\eta(G'_2))$. Both $\eta(G'_1)$ and $\eta(G'_2)$ add tags to $G$, so $\pi(\eta(G'_1)) = [G] = \pi(\eta(G'_2))$, and Lemma~\ref{lem:pi map} gives $[\eta(G'_1)] = \pm[\eta(G'_2)]$. These observations together with the nonvanishing of $g(\eta(G'_1))$ shows  $\textup{sgn}(G'_1)[\eta(G'_1)] = \textup{sgn}(G'_2)[\eta(G'_2)]$.
\end{proof}
 
Lemmas~\ref{lem:edge flip eta} and~\ref{lem:adding tags with sign is well-defined} together show that $\textup{sgn}(G')[\eta(G')]\in\widetilde{\mathcal{C}}(\vec{k})$ depends only on $[G]\in\widetilde{\mathcal{F}}(\vec{k})$. We therefore have a well-defined linear map
$$\psi:\widetilde{\mathcal{F}}(\vec{k})\to\widetilde{\mathcal{C}}(\vec{k}), \qquad \psi([G]) = \textup{sgn}(G')[\eta(G')].$$
 
\begin{theorem}\label{thm:psi isomorphism}
The map $\psi:\widetilde{\mathcal{F}}(\vec{k})\to\widetilde{\mathcal{C}}(\vec{k})$ is an isomorphism, and $\tilde{f}=\tilde{g}\circ\psi$.
\end{theorem}
\begin{proof}
The identity $\tilde{f}=\tilde{g}\circ\psi$ is immediate from Lemma~\ref{lemma: f and g agree}.
 
Given $H\in C(\vec{k})$, let $G$ forget the tags in $H$ and let $G'$ be a tripod decomposition of $G$. Both $\eta(G')$ and $H$ add tags to $G$, so $\pi(\eta(G')) = [G] = \pi(H)$. Lemma~\ref{lem:pi map} gives $[H]=\psi(\pm[G])$, so $\psi$ is surjective. By Lemma~\ref{lem:pi map}, linearly independent classes $[G_i]\in\widetilde{\mathcal{F}}(\vec{k})$ map to linearly independent classes  $\pm[\eta(G_i')] \in \widetilde{\mathcal{C}}(\vec{k})$, so $\psi$ is injective.
\end{proof}
 
\begin{corollary}\label{cor:surjective psi}
The map $\tilde{f}:\widetilde{\mathcal{F}}(\vec{k})\to\textup{Inv}(\vec{k})$ is surjective, and $\ker(\tilde{f})=\psi^{-1}(\ker(\tilde{g}))$. Consequently, $f:\mathcal{F}(\vec{k})\to\textup{Inv}(\vec{k})$ is surjective.
\end{corollary}
\begin{proof}
Since $\tilde{g}$ is surjective (\cite{CKM}) and $\psi$ is an isomorphism, we have $\tilde{f}=\tilde{g}\circ\psi$ is surjective. Surjectivity of $f$ follows because the quotient map $\mathcal{F}(\vec{k})\twoheadrightarrow\widetilde{\mathcal{F}}(\vec{k})$ is surjective.
\end{proof}
 
\subsection{Relations on untagged webs}\label{section: untagged relations}

We can now transfer the CKM relations through $\psi$ to obtain a generating set of relations for $\ker(f)$.
 
CKM prove that the relations in Figure~\ref{fig:CKM relations}, together with the tag relations of Figure~\ref{fig:tagrelations}, generate $\ker(g)$ \cite{CKM}. The relations in Figure~\ref{fig:CKM relations} hold for all integer edge weights, with the conventions that an edge of weight $0$ or $n$ is erased and a trivalent vertex incident to an edge of weight $n$ is replaced by a tag (as illustrated in Figure~\ref{fig:CKM n to tag}) and any graph with an edge weight outside of the interval $[0,n]$ is deleted.
 
  \begin{figure}[h]

    \begin{tikzpicture}[scale=.75]
\draw[radius=.08, fill=black](0,0)circle;
\begin{scope}[thick,decoration={
    markings,
    mark=at position 0.5 with {\arrow{>}}}
    ] 
\draw[postaction={decorate}, thick] (0,-1)--(0,0);
\draw[postaction={decorate}, thick] (0,0)--(-1,1);
\draw[postaction={decorate}, thick] (0,0)--(1,1);

\draw[postaction={decorate}, thick] (4,-.5) to[out=180,in=270] (3,.5);
\draw[postaction={decorate}, thick] (4,-.5) to[out=0,in=270] (5,.5);
    \end{scope}
\draw[thick] (4,-.5)--(4,-.25);
    \node at (-.3,-.5) {\tiny{$n$}};

    \node at (2,0) {$=$};

    \end{tikzpicture}
    \hspace{1in}
       \begin{tikzpicture}[scale=.75]
\draw[radius=.08, fill=black](0,0)circle;
\begin{scope}[thick,decoration={
    markings,
    mark=at position 0.5 with {\arrow{<}}}
    ] 
\draw[postaction={decorate}, thick] (0,1)--(0,0);
\draw[postaction={decorate}, thick] (0,0)--(-1,-1);
\draw[postaction={decorate}, thick] (0,0)--(1,-1);
   \end{scope}
   \begin{scope}[thick,decoration={
    markings,
    mark=at position 0.5 with {\arrow{>}}}
    ] 
\draw[postaction={decorate}, thick] (3,-.5) to[out=90,in=180] (4,.5);
\draw[postaction={decorate}, thick] (5,-.5) to[out=90,in=0] (4,.5);
    \end{scope}
\draw[thick] (4,.5)--(4,.75);
    \node at (-.4,.5) {\tiny{$n$}};

    \node at (2,0) {$=$};

    \end{tikzpicture}
    
        \caption{Converting edges of weight $n$ to tags in CKM relations.}\label{fig:CKM n to tag}
    \end{figure}

\begin{theorem}[Cautis-Kamnitzer-Morrison]\label{thm:CKM relations}
The CKM relations in Figure~\ref{fig:CKM relations} (together with their mirror images and arrow reversals) and the tag relations of Figure~\ref{fig:tagrelations} generate $\ker(g)$.
\end{theorem}
 
\begin{figure}[h]

\begin{equation}\label{eqn:CKM bigon}
    \raisebox{-40pt}{\begin{tikzpicture}[scale=.5]
    \draw[radius=.12, fill=black](1,2)circle;
     \draw[radius=.12, fill=black](2,4)circle;
     \node at (-.75,1) {\tiny{$k+l$}};
     \node at (1.25,5) {\tiny{$k+l$}};
       \node at (0,3.75) {\tiny{$k$}};
     \node at (3,2.25) {\tiny{$l$}};
    \begin{scope}[thick,decoration={
    markings,
    mark=at position 0.5 with {\arrow{>}}}
    ] 
        \draw[postaction={decorate}, thick] (0,0)--(1,2);
        \draw[postaction={decorate}, thick] (2,4)--(3,6);
        \draw[postaction={decorate}, thick] (1,2)to[out=150, in=240] (.5, 3.5) to[out=60, in=165] (2,4);
        \draw[postaction={decorate}, thick] (1,2)to[out=-30, in=240] (2.5, 2.5) to[out=60, in=-15] (2,4);
        
        \end{scope}
    \end{tikzpicture}} = \genfrac[]{0pt}{0}{k+l}{l}_q \raisebox{-40pt}{\begin{tikzpicture}[scale=.5]
    \node at (1,4) {\tiny{$k+l$}};
      \begin{scope}[thick,decoration={
    markings,
    mark=at position 0.5 with {\arrow{>}}}
    ] 
        \draw[postaction={decorate}, thick] (0,0)--(3,6);
        \end{scope}
    \end{tikzpicture}}
\end{equation}

\begin{equation}\label{eqn:CKM IH}
  \raisebox{-30pt}{\begin{tikzpicture}[scale=.75]
    \draw[radius=.08, fill=black](1,2)circle;
     \draw[radius=.08, fill=black](.5,1)circle;
     \node at (0,-.25) {\tiny{$k$}};
     \node at (1,-.25) {\tiny{$l$}};
      \node at (2,-.25) {\tiny{$m$}};
       \node at (.2,1.65) {\tiny{$k+l$}};
     \node at (1,3) {\tiny{$k+l+m$}};
    \begin{scope}[thick,decoration={
    markings,
    mark=at position 0.5 with {\arrow{>}}}
    ] 
        \draw[postaction={decorate}, thick] (0,0)--(.5,1);
        \draw[postaction={decorate}, thick] (1,0)--(.5,1);
        \draw[postaction={decorate}, thick] (.5,1)--(1,2);
        \draw[postaction={decorate}, thick] (2,0)--(1,2);
        \draw[postaction={decorate}, thick] (1,2)--(1,2.75);
        
        \end{scope}
    \end{tikzpicture}} = 
      \raisebox{-30pt}{\begin{tikzpicture}[scale=.75]
    \draw[radius=.08, fill=black](1,2)circle;
     \draw[radius=.08, fill=black](1.5,1)circle;
     \node at (0,-.25) {\tiny{$k$}};
     \node at (1,-.25) {\tiny{$l$}};
      \node at (2,-.25) {\tiny{$m$}};
       \node at (1.8,1.65) {\tiny{$l+m$}};
     \node at (1,3) {\tiny{$k+l+m$}};
    \begin{scope}[thick,decoration={
    markings,
    mark=at position 0.5 with {\arrow{>}}}
    ] 
        \draw[postaction={decorate}, thick] (0,0)--(1,2);
        \draw[postaction={decorate}, thick] (1,0)--(1.5,1);
        \draw[postaction={decorate}, thick] (1.5,1)--(1,2);
        \draw[postaction={decorate}, thick] (2,0)--(1.5,1);
        \draw[postaction={decorate}, thick] (1,2)--(1,2.75);
        
        \end{scope}
    \end{tikzpicture}}
\end{equation}

\begin{equation}\label{eqn:CKM square removal}
  \raisebox{-50pt}{\begin{tikzpicture}[scale=.75]
    \draw[radius=.08, fill=black](0,0)circle;
     \draw[radius=.08, fill=black](0,2)circle;
        \draw[radius=.08, fill=black](2,0)circle;
     \draw[radius=.08, fill=black](2,2)circle;
     \node at (0,-1.25) {\tiny{$k$}};
     \node at (2,-1.25) {\tiny{$l$}};
      \node at (1,-.25) {\tiny{$s$}};
      \node at (1,2.25) {\tiny{$r$}};
       \node at (-.75,1) {\tiny{$k-s$}};
       \node at (2.75,1) {\tiny{$l+s$}};
  \node at (0,3.25) {\tiny{$k-r-s$}};
     \node at (2,3.25) {\tiny{$l+r+s$}};
    \begin{scope}[thick,decoration={
    markings,
    mark=at position 0.5 with {\arrow{>}}}
    ] 
        \draw[postaction={decorate}, thick] (0,-1)--(0,0);
        \draw[postaction={decorate}, thick] (0,0)--(0,2);
        \draw[postaction={decorate}, thick] (0,2)--(0,3);
         \draw[postaction={decorate}, thick] (2,-1)--(2,0);
        \draw[postaction={decorate}, thick] (2,0)--(2,2);
        \draw[postaction={decorate}, thick] (2,2)--(2,3);
        \draw[postaction={decorate}, thick] (0,0)--(2,0);
        \draw[postaction={decorate}, thick] (0,2)--(2,2);
        
        \end{scope}
    \end{tikzpicture}} = \genfrac[]{0pt}{0}{r+s}{r}_q \raisebox{-30pt}{\begin{tikzpicture}[scale=.75]
    \draw[radius=.08, fill=black](0,0)circle;
     \draw[radius=.08, fill=black](2,0)circle;
     \node at (0,-1.25) {\tiny{$k$}};
     \node at (2,-1.25) {\tiny{$l$}};
      \node at (1,-.25) {\tiny{$r+s$}};
  \node at (0,1.25) {\tiny{$k-r-s$}};
     \node at (2,1.25) {\tiny{$l+r+s$}};
    \begin{scope}[thick,decoration={
    markings,
    mark=at position 0.5 with {\arrow{>}}}
    ] 
        \draw[postaction={decorate}, thick] (0,-1)--(0,0);
        \draw[postaction={decorate}, thick] (0,0)--(0,1);
         \draw[postaction={decorate}, thick] (2,-1)--(2,0);
        \draw[postaction={decorate}, thick] (2,0)--(2,1);
        \draw[postaction={decorate}, thick] (0,0)--(2,0);
        
        \end{scope}
    \end{tikzpicture}}
\end{equation}

\begin{equation}\label{eqn:CKM square switch r=s=1}
\raisebox{-50pt}{\begin{tikzpicture}[scale=.75]
    \draw[radius=.08, fill=black](0,0)circle;
     \draw[radius=.08, fill=black](0,2)circle;
        \draw[radius=.08, fill=black](2,0)circle;
     \draw[radius=.08, fill=black](2,2)circle;
     \node at (0,-1.25) {\tiny{$k$}};
     \node at (2,-1.25) {\tiny{$l$}};
      \node at (1,-.25) {\tiny{$1$}};
      \node at (1,2.25) {\tiny{$1$}};
       \node at (-.75,1) {\tiny{$k-1$}};
       \node at (2.75,1) {\tiny{$l+1$}};
  \node at (0,3.25) {\tiny{$k$}};
     \node at (2,3.25) {\tiny{$l$}};
    \begin{scope}[thick,decoration={
    markings,
    mark=at position 0.5 with {\arrow{>}}}
    ] 
        \draw[postaction={decorate}, thick] (0,-1)--(0,0);
        \draw[postaction={decorate}, thick] (0,0)--(0,2);
        \draw[postaction={decorate}, thick] (0,2)--(0,3);
         \draw[postaction={decorate}, thick] (2,-1)--(2,0);
        \draw[postaction={decorate}, thick] (2,0)--(2,2);
        \draw[postaction={decorate}, thick] (2,2)--(2,3);
        \draw[postaction={decorate}, thick] (0,0)--(2,0);
        \draw[postaction={decorate}, thick] (2,2)--(0,2);
        
        \end{scope}
    \end{tikzpicture}} = 
        \raisebox{-50pt}{\begin{tikzpicture}[scale=.75]
    \draw[radius=.08, fill=black](0,0)circle;
     \draw[radius=.08, fill=black](0,2)circle;
        \draw[radius=.08, fill=black](2,0)circle;
     \draw[radius=.08, fill=black](2,2)circle;
     \node at (0,-1.25) {\tiny{$k$}};
     \node at (2,-1.25) {\tiny{$l$}};
      \node at (1,-.25) {\tiny{$1$}};
      \node at (1,2.25) {\tiny{$1$}};
       \node at (-.75,1) {\tiny{$k+1$}};
       \node at (2.75,1) {\tiny{$l-1$}};
  \node at (0,3.25) {\tiny{$k$}};
     \node at (2,3.25) {\tiny{$l$}};
    \begin{scope}[thick,decoration={
    markings,
    mark=at position 0.5 with {\arrow{>}}}
    ] 
        \draw[postaction={decorate}, thick] (0,-1)--(0,0);
        \draw[postaction={decorate}, thick] (0,0)--(0,2);
        \draw[postaction={decorate}, thick] (0,2)--(0,3);
         \draw[postaction={decorate}, thick] (2,-1)--(2,0);
        \draw[postaction={decorate}, thick] (2,0)--(2,2);
        \draw[postaction={decorate}, thick] (2,2)--(2,3);
        \draw[postaction={decorate}, thick] (2,0)--(0,0);
        \draw[postaction={decorate}, thick] (0,2)--(2,2);
        
        \end{scope}
    \end{tikzpicture}} + [k-l]_q \raisebox{-40pt}{\begin{tikzpicture}[scale=.75]
       \node at (0,3.25) {\tiny{$k$}};
       \node at (2,3.25) {\tiny{$l$}};

    \begin{scope}[thick,decoration={
    markings,
    mark=at position 0.5 with {\arrow{>}}}
    ] 
        \draw[postaction={decorate}, thick] (0,-1)--(0,3);
         \draw[postaction={decorate}, thick] (2,-1)--(2,3);
        \end{scope}
    \end{tikzpicture}}
\end{equation}

    \caption{CKM Bigon removal (\ref{eqn:CKM bigon}), `$I = H$' (\ref{eqn:CKM IH}), square removal (\ref{eqn:CKM square removal}), and square switch (\ref{eqn:CKM square switch r=s=1}) relations.  Relations for untagged webs are obtained by replacing $q$ by $(-q)$ in the quantum binomial coefficients, and including the edge flip relation $u \stackrel{\ell}{\rightarrow} v = u \stackrel{n-\ell}{\leftarrow} v$}\label{fig:CKM relations}
\end{figure}
 
\begin{remark}
This generating set differs slightly from \cite{CKM}: we have added tag cancellation, removed one of two bigon relations, and stated a restricted square switch (with horizontal edge weights $1$) from which the general form follows inductively. The generalized square switch on untagged webs appears in Corollary~\ref{cor:generalized square switch}.
\end{remark}
 
Applying the quotient map $\mathcal{C}(\vec{k})\twoheadrightarrow\widetilde{\mathcal{C}}(\vec{k})$ to the CKM relations gives a generating set for $\ker(\tilde{g})$. By Corollary~\ref{cor:surjective psi}, $\ker(\tilde{f}) = \psi^{-1}(\ker(\tilde{g}))$, and $\ker(f)$ is generated by edge-flip relations together with any choice of representatives in $\mathcal{F}(\vec{k})$ for the relations generating $\ker(\tilde{f})$.
 
\begin{theorem}\label{thm:Fontaine relations}
Edge-flip relations together with the diagrammatic relations of Figure~\ref{fig:CKM relations}, modified as follows, generate $\ker(f)$.
\begin{enumerate}
\item Quantum integers in the bigon, square removal, and square switch relations are evaluated at $-q$.
\item Edges of weight $0$ or $n$ are erased; a vertex incident to such an edge is removed, and the remaining two incident edges merge into a single edge whose orientation and weight match the original two (one of them if the erased edge had weight $n$, both if weight $0$).
\end{enumerate}
\end{theorem}
\begin{proof}
For the bigon, $I$=$H$, and square removal relations, there is a bijection between strandings of the two graphs that preserves directed strands up to isotopy, thereby confirming the corresponding relations.. This is proven in Appendix~\ref{appendix:relation arguments}. The square switch relation combines Lemma~\ref{lemma: overall square switch argument} (for summands where the parallel edges admit a stranding) with Lemma~\ref{lemma: bijection for relations with no interior faces} and the loop substitution~\eqref{eqn:Fontaine circle} (for the remaining summands). The statements and proofs of these lemmas are in Appendix~\ref{appendix:relation arguments}. When edges of weight $0$ or $n$ appear the arguments still apply, with the relevant strand contributions vacuous. Mirror images and arrow reversals follow by composing the listed relations with edge flips.
\end{proof}
 
For instance, when $k+l=n$ the bigon relation on untagged webs reduces (up to edge flipping) to the following \emph{circle removal} relation:
\begin{equation}\label{eqn:Fontaine circle}
\raisebox{-12pt}{\begin{tikzpicture}[scale=.5]
\node at (0,.85) {\tiny{$k$}};
\begin{scope}[thick,decoration={
    markings,
    mark=at position 0.5 with {\arrow{>}}}
    ]
\draw[postaction={decorate}, thick] (0,0) to[out=90, in=180] (1,1) to[out=0, in=90] (2,0) to[out=270, in=0] (1,-1) to[out=180,in=270] (0,0);
\end{scope}
\end{tikzpicture}}
= \genfrac[]{0pt}{1}{n}{k}_q\bigg\vert_{-q} = (-1)^{k(n-k)}\genfrac[]{0pt}{1}{n}{k}_q.
\end{equation}
 
The generalized square switch on untagged webs follows by induction from Theorem~\ref{thm:Fontaine relations}. It matches the CKM formula in the tagged setting except that the quantum integers are evaluated at $-q$, introducing a factor of $(-1)^{(k-l+r-s-1)t}$ in each summand.
 
\begin{corollary}\label{cor:generalized square switch}
The generalized square switch relation
\begin{equation}\label{eqn:Fontaine square switch general}
\raisebox{-50pt}{\begin{tikzpicture}[scale=.6]
    \draw[radius=.08, fill=black](0,0)circle;
     \draw[radius=.08, fill=black](0,2)circle;
        \draw[radius=.08, fill=black](2,0)circle;
     \draw[radius=.08, fill=black](2,2)circle;
     \node at (0,-1.25) {\tiny{$k$}};
     \node at (2,-1.25) {\tiny{$l$}};
      \node at (1,-.25) {\tiny{$s$}};
      \node at (1,2.25) {\tiny{$r$}};
       \node at (-.75,1) {\tiny{$k-s$}};
       \node at (2.75,1) {\tiny{$l+s$}};
  \node at (0,3.25) {\tiny{$k+r-s$}};
     \node at (2,3.25) {\tiny{$l-r+s$}};
    \begin{scope}[thick,decoration={
    markings,
    mark=at position 0.5 with {\arrow{>}}}
    ] 
        \draw[postaction={decorate}, thick] (0,-1)--(0,0);
        \draw[postaction={decorate}, thick] (0,0)--(0,2);
        \draw[postaction={decorate}, thick] (0,2)--(0,3);
         \draw[postaction={decorate}, thick] (2,-1)--(2,0);
        \draw[postaction={decorate}, thick] (2,0)--(2,2);
        \draw[postaction={decorate}, thick] (2,2)--(2,3);
        \draw[postaction={decorate}, thick] (0,0)--(2,0);
        \draw[postaction={decorate}, thick] (2,2)--(0,2);
        
        \end{scope}
    \end{tikzpicture}} = \sum_t (-1)^{(k-l+r-s-1)t}\genfrac[]{0pt}{0}{k-l+r-s}{t}_q
        \raisebox{-50pt}{\begin{tikzpicture}[scale=.6]
    \draw[radius=.08, fill=black](0,0)circle;
     \draw[radius=.08, fill=black](0,2)circle;
        \draw[radius=.08, fill=black](2,0)circle;
     \draw[radius=.08, fill=black](2,2)circle;
     \node at (0,-1.25) {\tiny{$k$}};
     \node at (2,-1.25) {\tiny{$l$}};
      \node at (1,-.25) {\tiny{$r-t$}};
      \node at (1,2.25) {\tiny{$s-t$}};
       \node at (-1,1) {\tiny{$k+r-t$}};
       \node at (3,1) {\tiny{$l-r+t$}};
  \node at (0,3.25) {\tiny{$k+r-s$}};
     \node at (2,3.25) {\tiny{$l-r+s$}};
    \begin{scope}[thick,decoration={
    markings,
    mark=at position 0.5 with {\arrow{>}}}
    ] 
        \draw[postaction={decorate}, thick] (0,-1)--(0,0);
        \draw[postaction={decorate}, thick] (0,0)--(0,2);
        \draw[postaction={decorate}, thick] (0,2)--(0,3);
         \draw[postaction={decorate}, thick] (2,-1)--(2,0);
        \draw[postaction={decorate}, thick] (2,0)--(2,2);
        \draw[postaction={decorate}, thick] (2,2)--(2,3);
        \draw[postaction={decorate}, thick] (2,0)--(0,0);
        \draw[postaction={decorate}, thick] (0,2)--(2,2);
        
        \end{scope}
    \end{tikzpicture}}
\end{equation}
lies in $\ker(f)$.
\end{corollary}
\begin{proof}
By induction on $r+s$, using the bigon, $I$=$H$, square removal, and ($r=s=1$) square switch relations of Theorem~\ref{thm:Fontaine relations}. The full inductive computation is in Appendix~\ref{appendix:relation arguments}.
\end{proof}

 \section{Open Questions}
We end this article with a list of open questions, many of which have appeared elsewhere.
\begin{enumerate}
    \item Is there an $\mathfrak{sl}_n$ web basis $\mathcal{B}$ that satisfies the \emph{identifiability property} (there is a simple deterministic test to see if an arbitrary web graph $G$ is in $\mathcal{B}$ or not)?
    \item Is there an $\mathfrak{sl}_n$ web basis $\mathcal{B}$ that satisfies the \emph{decomposition property} (there is a simple deterministic process for any web graph $G$ to find $\mathcal{G} \subseteq \mathcal{B}$ such that the web vectors satisfy $w_G = \sum_{G' \in \mathcal{G}} c_{G'} w_{G'}$ for some scalars $c_{G'} \in \mathbb{C}(q)$)?
    \item \emph{Rotation invariance}: Is there an $\mathfrak{sl}_n$ web basis that is invariant under the natural rotation action on bounded planar graphs?
    \item The \emph{nonvanishing} question: Given an $\mathfrak{sl}_n$ web graph $G$ and a standard basis vector $x_I$ in the tensor product $\otimes_i V_{k_i}$, is there a valid stranding $S$ of $G$ with $x_S = x_I$?  (By Theorem~\ref{thm: nonzero coefficient}, this implies the coefficient of $x_I$ in $w_G$ is nonvanishing.)  Given a web basis $\mathcal{B}$, the nonvanishing question is a classical question in combinatorial representation theory.
    \item The \emph{coefficient question}: Given a web vector $w_G$ and a valid stranding $S$ on $G$, what is a combinatorial formula for the coefficient of $x_S$ in $w_G$?  In other words, identify the $c_{G'}$ in the decomposition property.  Answering this question for each web graph $G$ in a web basis $\mathcal{B}$ is another classical problem in combinatorial representation theory.
    \item The \emph{matching} question: Given an $\mathfrak{sl}_n$ web graph $G$ and a (directed) multicolored noncrossing matching $\mathcal{M}$, is there a valid stranding $S_\mathcal{M}$ of $G$ whose non-closed strands agree with $\mathcal{M}$? Is there an $\mathfrak{sl}_n$ web basis $\mathcal{B}$ that is upper-triangular with respect to the partial order on multicolored noncrossing matchings, in the sense that the basis $\mathcal{B}$ can be indexed by multicolored noncrossing matchings $\mathcal{B} = \Big\{ G_\mathcal{M} = \sum_{\mathcal{M}'} c_{\mathcal{M}'}^{\mathcal{M}} x_{\mathcal{M}'}\Big\}$ so that if $\mathcal{M}' \prec \mathcal{M}$ then coefficient $c_{\mathcal{M}'}^{\mathcal{M}} = 0$ while the coefficient $c_{\mathcal{M}}^{\mathcal{M}} = 1$?
    \item The \emph{relations} question: Theorem~\ref{thm:Fontaine relations} proves a complete set of relations on untagged webs.  Other relations exist, for instance: the Kekule relations from Morrison's thesis \cite{MorrisonThesis}. How do we write these in terms of the relations presented in this paper?  Are there other useful relations?  This is related to a classical problem in commutative algebra.
\end{enumerate}

Of course, each of these questions is interesting in various special cases (for $\mathfrak{sl}_4$ or $\mathfrak{sl}_5$ webs, for certain classes of matchings $\mathcal{M}$ or coefficients $c_\mathcal{I}$, etc.) as well as in full generality.

% Appendices 

 \appendix \renewcommand{\thesection}{\Alph{section}} 

 \section{Tagged webs and CKM technical background}\label{appendix:tagged ckm background}
 
This appendix collects the technical CKM machinery referenced in Section~\ref{section:tagless vs tagged} and Appendix~\ref{appendix:invariance calculations}: the Morse-style decomposition of a tagged web, the CKM maps on local pieces, the construction of $g:\mathcal{C}(\vec{k})\to\textup{Inv}(\vec{k})$, and the precise composition of CKM maps corresponding to the tagged web fragments of the tag translation map $\eta$.
 
\subsection{Morse-style decompositions}\label{appendix: morse}
 
A Morse-style decomposition slices a tagged web by horizontal axes so that each slice contains exactly one local piece (cup, cap, $\lambda$-piece, or Y-piece).
 
\begin{definition}\label{def:Morse decomp}
Let $G$ be a plane graph with univalent boundary vertices and trivalent internal vertices, embedded below its boundary axis. A graph $G''$ is a \emph{Morse-style decomposition} of $G$ if $G''$ is isotopic to $G$ relative to the boundary and a collection of horizontal axes can be superimposed on $G''$ so that between each pair of adjacent axes $G''$ consists of vertical segments together with exactly one cup, cap, $\lambda$-piece, or Y-piece. We draw these axes as red dashed lines; points where $G''$ meets these axes are not vertices.
\end{definition}

The cup and cap pieces are as in Figure~\ref{fig:basic pieces}. The two trivalent local pieces, $\lambda$ and Y, are shown in Figure~\ref{fig:morse pieces}.
 
\begin{figure}[h]
\begin{subfigure}[t]{0.3\textwidth}
\centering
\begin{tikzpicture}[scale=.5]
    \draw[red, dashed] (0,0)--(3,0);
    \draw[red, dashed] (0,-2)--(3,-2);
    \draw[radius=.12, fill=black](1.5,-1)circle;
    \draw[thick] (.5,-2)--(1.5,-1)--(2.5,-2);
    \draw[thick](1.5,-1)--(1.5,0);
\end{tikzpicture}
\subcaption{$\lambda$-piece}
\end{subfigure}
\hspace{.5in}
\begin{subfigure}[t]{0.3\textwidth}
\centering
\begin{tikzpicture}[scale=.5]
    \draw[dashed, red] (0,-2)--(3,-2);
    \draw[red, dashed] (0,0)--(3,0);
    \draw[radius=.12, fill=black](1.5,-1)circle;
    \draw[thick] (.5,0)--(1.5,-1)--(2.5,0);
    \draw[thick](1.5,-1)--(1.5,-2);
\end{tikzpicture}
\subcaption{Y-piece}
\end{subfigure}
\caption{Trivalent vertices in a Morse-style decomposition.}\label{fig:morse pieces}
\end{figure}
 
\begin{lemma}\label{lem:tripod to Morse}
Every tripod decomposition admits an associated Morse-style decomposition, obtained by isotoping each tripod into a $\lambda$-piece or Y-piece with two or one auxiliary cups, then vertically adjusting so that each cup, cap, $\lambda$-piece, and Y-piece sits at a distinct critical level.
\end{lemma}
 
\begin{example}\label{ex:tripodtoMorse}
Figure~\ref{fig: ex tripod to Morse} illustrates Lemma~\ref{lem:tripod to Morse}, producing a Morse-style decomposition from the tripod decomposition of Example~\ref{ex:websplit}.
 
 \begin{figure}[h]
\begin{center}
\scalebox{.7}{\begin{tikzpicture}[scale=.65]

\draw[style=dashed, <->] (0,4.5)--(13,4.5);
\draw[style=dashed, red] (0,0)--(13,0);
\draw[style=dashed, red] (0,2.5)--(13,2.5);
\draw[style=dashed, red] (0,1)--(13,1);
\draw[style=dashed, red] (0,1.75)--(13,1.75);
\draw[style=dashed, red] (0,-1)--(13,-1);
\draw[style=dashed, red] (0,-1.75)--(13,-1.75);

\draw[style=dashed, red] (0,-2.75)--(13,-2.75);
\draw[style=dashed, red] (0,-3.5)--(13,-3.5);
\draw[style=dashed, red] (0,-4.5)--(13,-4.5);
\draw[style=dashed, red] (0,-5)--(13,-5);
\draw[style=dashed, red] (0,-5.75)--(13,-5.75);
\draw[style=dashed, red] (0,-6.75)--(13,-6.75);
\draw[style=dashed, red] (0,-7.5)--(13,-7.5);

  \draw[style=thick] (2,0)--(2,2.5) to[out=90,in=180] (5.5, 4) to[out=0,in=90] (9,2.5)--(9,0);
 
 \draw[style=thick] (3,0) to[out=90,in=180] (3.5, .75) to[out=0,in=90] (4,0);
 
  \draw[style=thick] (5,0)--(5,1) to[out=90,in=180] (5.5, 1.5) to[out=0,in=90] (6,1)--(6,0);
  
    \draw[style=thick] (7,0) --(7,1.75) to[out=90,in=180] (7.5, 2.25) to[out=0,in=90] (8,1.75)--(8,0);

\draw[style=thick] (1,4.5)--(1,0)--(1.5,-.5)--(2,0);
\draw[style=thick] (1.5, -.5)--(1.5,-1) to[out=270,in=180] (2.25,-1.5) to[out=0, in=270] (3,-1)--(3,0);

    \draw[style=thick] (4,0)--(4,-6.25)--(3.5,-6.75) to[out=270,in=180] (7.75,-8.25) to[out=0,in=270] (12,-6.75)--(12,4.5);
 
    \draw[style=thick] (4,-6.25)--(4.5, -6.75) to[out=315,in=180] (4.75,-7) to[out=0, in=270] (5,-6.75)--(5,0);

    \draw[style=thick] (6,0)--(6,-4)--(5.5,-4.5) to[out=270,in=180] (8.25,-5.5) to[out=0,in=270] (11,-4.5)--(11,4.5);
    \draw[style=thick] (6,-4)--(6.5, -4.5) to[out=315,in=180] (6.75,-4.75) to[out=0, in=270] (7,-4.5)--(7,0);

        \draw[style=thick] (8,0)--(8,-1.75)--(8.5,-2.25)--(9,-1.75)--(9,0); 
        \draw[style=thick] (8.5, -2.25)--(8.5, -2.75) to[out=270,in=180] (9.25,-3.25) to[out=0, in=270] (10,-2.75)--(10,4.5);

\draw[radius=.08, fill=black](1,4.5)circle;
\draw[radius=.08, fill=black](1.5,-.5)circle;
\draw[radius=.08, fill=black](4,-6.25)circle;
\draw[radius=.08, fill=black](8.5,-2.25)circle;
\draw[radius=.08, fill=black](6,-4)circle;
\draw[radius=.08, fill=black](10,4.5)circle;
\draw[radius=.08, fill=black](11,4.5)circle;
\draw[radius=.08, fill=black](12,4.5)circle;
  
    \end{tikzpicture}}
    \caption{A Morse-style decomposition for a trivalent graph with boundary}\label{fig: ex tripod to Morse}
        \end{center}
    \end{figure}
\end{example}
 
\subsection{CKM maps}\label{appendix: ckm maps}
 
In the CKM construction, each elementary piece in the Morse-style decomposition of a tagged web prescribes one of the equivariant maps in Figure~\ref{fig:CKMmaps} \cite{CKM}. We call these CKM maps. Cups and caps correspond to co-evaluation and evaluation, a $\lambda$-piece to multiplication, a Y-piece to co-multiplication. Tags induce maps between $V_k$ and $V_{n-k}^*$, with the side of the edge determining a sign.

Coefficients for the CKM maps use the following function on binary vectors $\vec{b}=b_1\cdots b_n$ and $\vec{b}'=b'_1\cdots b'_n$.
$$\ell(\vec{b},\vec{b}') = |\{i<j : b_i = b'_j = 1\}|$$

\begin{figure}[!htbp]
\begin{center}
\scalebox{.75}{\begin{tabular}{ | c | c | c | } 
  \hline
  && \\
  Web & Map & CKM Map \\
  && \\
  \hline
    \begin{tikzpicture}[scale=.65]   
    \draw[radius=.08, fill=black](1,0)circle;
    \draw[radius=.08, fill=black](1,1)circle;
    \draw[radius=.08, fill=black](0,2)circle;
    \draw[radius=.08, fill=black](2,2)circle;

    \begin{scope}[thick,decoration={
        markings,
        mark=at position 0.5 with {\arrow{>}}}
        ] 
       \draw[postaction={decorate}] (1,0)--(1,1);
       \draw[postaction={decorate}] (1,1)--(0,2);
       \draw[postaction={decorate}] (1,1)--(2,2);

    \end{scope}

     \node at (1,-.5) {\small{\textcolor{black}{$k+l$}}};

   \node at (0,2.5) {\small{\textcolor{black}{$k$}}};

   \node at (2,2.5) {\small{\textcolor{black}{$l$}}};

    \end{tikzpicture}
   & \raisebox{.5in}{$M'_{k,l}: V_{k+l} \rightarrow V_{k}\otimes V_{l}$} & \raisebox{.5in}{$M'_{k, l}(x_{\vec{b}})=(-1)^{kl} \displaystyle{\sum_{{\vec{b}_1+\vec{b}_2=\vec{b}}\atop {|\vec{b}_1|=k, |\vec{b}_2|=l}}} (-q)^{-\ell(\vec{b}_2, \vec{b_1})} x_{\vec{b}_1} \otimes x_{\vec{b}_2}$} 
    \\ 
  \hline
  \begin{tikzpicture}[scale=.65]

    \draw[radius=.08, fill=black](0,0)circle;
    \draw[radius=.08, fill=black](2,0)circle;
    \draw[radius=.08, fill=black](1,1)circle;
    \draw[radius=.08, fill=black](1,2)circle;

    \begin{scope}[thick,decoration={
        markings,
        mark=at position 0.5 with {\arrow{>}}}
        ] 
       \draw[postaction={decorate}] (0,0)--(1,1);
       \draw[postaction={decorate}] (2,0)--(1,1);
       \draw[postaction={decorate}] (1,1)--(1,2);

    \end{scope}

    \node at (0,-0.3) {\small{\textcolor{black}{$k$}}};

    \node at (2,-0.3) {\small{\textcolor{black}{$l$}}};

    \node at (1,2.5) {\small{\textcolor{black}{$k+l$}}};

    \end{tikzpicture}
   & \raisebox{.5in}{$M_{k,l}:  V_{k}\otimes V_{l} \rightarrow V_{k+l} $} & \raisebox{.5in}{$M_{k, l}(x_{\vec{b}_1} \otimes x_{\vec{b}_2}) = 
    \begin{cases} 
      (-q)^{\ell(\vec{b}_1,\vec{b}_2)} x_{\vec{b}_1+\vec{b}_2} & \vec{b}_1 \cdot \vec{b}_2=0 \\
      0 & \text{otherwise}
   \end{cases}$}
   \\ 
  \hline
  \begin{tikzpicture}[scale=.65]

    \draw[radius=.08, fill=black](1,0)circle;
    \draw[radius=.08, fill=black](1,2)circle;
    \draw[thick] (0.8,1)--(1,1);

    \begin{scope}[thick,decoration={
        markings,
        mark=at position 0.5 with {\arrow{>}}}
        ] 
       \draw[postaction={decorate}] (1,0)--(1,1);
       \draw[postaction={decorate}] (1,2)--(1,1);

    \end{scope}

    \node at (1,-.5) {\small{\textcolor{black}{$k$}}};

    \node at (1,2.5) {\small{\textcolor{black}{$n-k$}}};

    \end{tikzpicture}
    & \raisebox{.5in}{$D_k: V_k \rightarrow (V_{n-k})^*$}
    & \raisebox{.5in}{$D_k(x_{\vec{b}}) = (-q)^{\ell(\vec{b},\vec{1}-\vec{b})}x_{\vec{1}-\vec{b}}^*$} \\
\hline
  \begin{tikzpicture}[scale=.65]

    \draw[radius=.08, fill=black](1,0)circle;
    \draw[radius=.08, fill=black](1,2)circle;
    \draw[thick] (1,1)--(1.2,1);

    \begin{scope}[thick,decoration={
        markings,
        mark=at position 0.5 with {\arrow{>}}}
        ] 
       \draw[postaction={decorate}] (1,0)--(1,1);
       \draw[postaction={decorate}] (1,2)--(1,1);

    \end{scope}

    \node at (1,-.5) {\small{\textcolor{black}{$k$}}};

    \node at (1,2.5) {\small{\textcolor{black}{$n-k$}}};

    \end{tikzpicture}
  & \raisebox{.5in}{$(-1)^{k(n-k)}D_k: V_k \rightarrow (V_{n-k})^*$}
    & \raisebox{.5in}{$(-1)^{k(n-k)}D_k(x_{\vec{b}})$} \\
\hline
  \begin{tikzpicture}[scale=.65]

    \draw[radius=.08, fill=black](1,0)circle;
    \draw[radius=.08, fill=black](1,2)circle;
    \draw[thick] (1,1)--(1.2,1);

    \begin{scope}[thick,decoration={
        markings,
        mark=at position 0.5 with {\arrow{>}}}
        ] 
       \draw[postaction={decorate}] (1,1)--(1,0);
       \draw[postaction={decorate}] (1,1)--(1,2);

    \end{scope}

     \node at (1,-.5) {\small{\textcolor{black}{$k$}}};

    \node at (1,2.5) {\small{\textcolor{black}{$n-k$}}};

    \end{tikzpicture}
    & \raisebox{.5in}{$D_{n-k}^{-1}: (V_k)^* \rightarrow V_{n-k}$}
    & \raisebox{.5in}{$D_{n-k}^{-1}(x_{\vec{b}}^*) = (-q)^{-\ell(\vec{1}-\vec{b},\vec{b})}x_{\vec{1}-\vec{b}}$} \\
\hline
  \begin{tikzpicture}[scale=.65]

    \draw[radius=.08, fill=black](1,0)circle;
    \draw[radius=.08, fill=black](1,2)circle;
    \draw[thick] (0.8,1)--(1,1);

    \begin{scope}[thick,decoration={
        markings,
        mark=at position 0.5 with {\arrow{>}}}
        ] 
       \draw[postaction={decorate}] (1,1)--(1,0);
       \draw[postaction={decorate}] (1,1)--(1,2);

    \end{scope}
      \node at (1,-.5) {\small{\textcolor{black}{$k$}}};

    \node at (1,2.5) {\small{\textcolor{black}{$n-k$}}};

    \end{tikzpicture}
    & \raisebox{.5in}{$(-1)^{k(n-k)}D_{n-k}^{-1}: (V_k)^* \rightarrow V_{n-k}$}
    & \raisebox{.5in}{$(-1)^{k(n-k)}D_{n-k}^{-1}(x_{\vec{b}}^*)$} \\
\hline
  \begin{tikzpicture}[scale=.65]

    \draw[radius=.08, fill=black](0,0)circle;
    \draw[radius=.08, fill=black](2,0)circle;

    \begin{scope}[thick,decoration={
        markings,
        mark=at position 0.5 with {\arrow{>}}}
        ] 
       \draw[postaction={decorate}] (2,0) to[out=90,in=0] (1,1) to[out=180,in=90] (0,0);

    \end{scope}

    \node at (1,1.4) {\small{\textcolor{black}{$k$}}};

    \end{tikzpicture}
    & \raisebox{.2in}{$C^{L,k}:(V_k)^*\otimes V_k \rightarrow \mathbb{C}(q)$}
    & \raisebox{.2in}{$C^{L,k}(x_{\vec{b}_1}^*\otimes x_{\vec{b}_2})=
    \begin{cases} 
      1 & \vec{b}_1=\vec{b}_2 \\
      0 & \text{otherwise}
   \end{cases}$} \\
\hline
&&\\
\begin{tikzpicture}[scale=.65]   

    \draw[radius=.08, fill=black](0,0)circle;
    \draw[radius=.08, fill=black](2,0)circle;

    \begin{scope}[thick,decoration={
        markings,
        mark=at position 0.5 with {\arrow{>}}}
        ] 
       \draw[postaction={decorate}] (0,0) to[out=270,in=180] (1,-1) to[out=0,in=270] (2,0);

    \end{scope}

    \node at (1,-1.4) {\small{\textcolor{black}{$k$}}};

    \end{tikzpicture}
    & \raisebox{.3in}{$C_{R,k}:\mathbb{C}(q)\rightarrow (V_k)^*\otimes V_k$}
    & \raisebox{.3in}{$C_{R,k}(1)=\displaystyle{\sum_{|\vec{b}| = k}} q^{k(n-k) - 2\ell(\vec{b}, \vec{1}-\vec{b})}x_{\vec{b}}^*\otimes x_{\vec{b}}$} \\
\hline
  \begin{tikzpicture}[scale=.65]

    \draw[radius=.08, fill=black](0,0)circle;
    \draw[radius=.08, fill=black](2,0)circle;

    \begin{scope}[thick,decoration={
        markings,
        mark=at position 0.5 with {\arrow{>}}}
        ] 
       \draw[postaction={decorate}] (0,0) to[out=90,in=180] (1,1) to[out=0,in=90] (2,0);

    \end{scope}

    \node at (1,1.4) {\small{\textcolor{black}{$k$}}};

    \end{tikzpicture}
    & \raisebox{.2in}{$C^{R,k}:V_k\otimes (V_k)^* \rightarrow \mathbb{C}(q)$}
    & \raisebox{.2in}{$C^{R,k}(x_{\vec{b}_1}\otimes x_{\vec{b}_2}^*)=
    \begin{cases} 
      q^{2\ell(\vec{b}_1,\vec{1}-\vec{b}_1)-k(n-k)} & \vec{b}_1=\vec{b}_2 \\
      0 & \text{otherwise}
   \end{cases}$} \\
\hline
&&\\
\begin{tikzpicture}[scale=.65]   

    \draw[radius=.08, fill=black](0,0)circle;
    \draw[radius=.08, fill=black](2,0)circle;

    \begin{scope}[thick,decoration={
        markings,
        mark=at position 0.5 with {\arrow{>}}}
        ] 
       \draw[postaction={decorate}] (2,0) to[out=270,in=0] (1,-1) to[out=180,in=270] (0,0);

    \end{scope}

    \node at (1,-1.4) {\small{\textcolor{black}{$k$}}};

    \end{tikzpicture}
    & \raisebox{.3in}{$C_{L,k}:\mathbb{C}(q)\rightarrow V_k\otimes (V_k)^*$}
    & \raisebox{.3in}{$C_{L,k}(1)=\displaystyle{\sum_{|\vec{b}| = k}} x_{\vec{b}}\otimes x_{\vec{b}}^*$} \\
\hline

\end{tabular}}
\end{center}
\caption{Maps from CKM webs to maps of representations, where for each pair $\vec{b}=b_{1}\cdots b_{n}$ and $\vec{b}'=b'_1\cdots b'_n$ the function $\ell(\vec{b}, \vec{b}')=|\{i<j: b_i=b'_j=1\}|$}\label{fig:CKMmaps}
\end{figure}

\begin{definition}\label{def: g via morse}
For $H\in C(\vec{k})$, choose a Morse-style decomposition $H''$ of $H$ in which:
\begin{itemize}
\item split and merge vertices appear as Y-pieces and $\lambda$-pieces respectively,
\item at each Y-piece and $\lambda$-piece the edges are oriented upward, and
\item tags occur on vertical segments at heights distinct from the Y-pieces and $\lambda$-pieces.
\end{itemize}
Then $g(H)\in\textup{Inv}(\vec{k})$ is the composition of the CKM maps from Figure~\ref{fig:CKMmaps} prescribed by the critical levels of $H''$, applied to $1\in\mathbb{C}(q)$.
\end{definition}
 
\begin{theorem}[Cautis-Kamnitzer-Morrison \cite{CKM}]\label{thm: g well-defined and surjective}
The map $g:\mathcal{C}(\vec{k})\to\textup{Inv}(\vec{k})$ is well-defined, $\uq$-equivariant, and surjective. Its kernel is generated by the relations of Figures~\ref{fig:tagrelations} and~\ref{fig:CKM relations}.
\end{theorem}
 
\begin{example}\label{ex:taggedex}
Figure~\ref{fig:taggedex} gives a Morse-style decomposition of a tagged tripod web and the corresponding composition of CKM maps.

\begin{figure}[h]
\raisebox{50pt}{\begin{large}$H'':$\end{large}}\hspace{.5in}
      \begin{tikzpicture}

        \draw[dashed, <->] (0,0)--(6,0);
        \draw[red, dashed] (0,-1.5)--(6,-1.5);
        \draw[red, dashed] (0,-2.5)--(6,-2.5);

        \node at (5,-3) {$C_{L, k+l}$};
        \node at (5, -2) {$Id\otimes D_{n-k-l}^{-1}$};
        \node at (5, -1) {$M_{k,l}'\otimes Id$};
        \draw[radius=.08, fill=black](1.5,-1)circle;
        \draw[radius=.08, fill=black](2,0)circle;
        \draw[radius=.08, fill=black](1,0)circle;
         \draw[radius=.08, fill=black](3,0)circle;
\begin{scope}[thick,decoration={
    markings,
    mark=at position 0.5 with {\arrow{>}}}
    ] 
\draw[postaction={decorate}, thick] (1.5,-1)--(1,0);
\draw[postaction={decorate}, thick] (1.5,-1)--(2,0);
\end{scope}
\begin{scope}[thick,decoration={
    markings,
    mark=at position 0.33 with {\arrow{<}}}
    ] 
\draw[postaction={decorate}, thick] (1.5,-1)--(1.5,-2.25); 
\draw[postaction={decorate}, thick] (1.5,-2.25) to[out=270,in=180](2.25, -3) to[out=0,in=270] (3,-2.25);
    \end{scope}
    \begin{scope}[thick,decoration={
    markings,
    mark=at position 0.7 with {\arrow{>}}}
    ] 
\draw[postaction={decorate}, thick] (3,-2.25) to[out=90,in=270](3,0);
    \end{scope}
    \node at (.9,-.5) {\tiny{$k$}};
     \node at (1.9,-.5) {\tiny{$l$}};
     \node at (1.9,-2) {\tiny{$k+l$}};
       \node at (3.7,-.5) {\tiny{$n-k-l$}};
 \draw[style=thick] (3,-2)--(3.2,-2);
    \end{tikzpicture}

     $$g(H'')=((M_{k,l}'\otimes Id)\;\circ (Id \; \otimes D_{n-k-l}^{-1})\; \circ (C_{L,k+l}))(1)\in \textup{Inv}(k,l,n-k-l)$$ 
    \caption{Splitting a tagged web into critical levels to compute its invariant vector}\label{fig:taggedex}
\end{figure}
\end{example}

\subsection{Tagged web fragments from the map $\eta$}\label{appendix: tagged fragments}

Below, we record the precise composition of CKM maps corresponding to each of the tagged web fragments from Figure \ref{fig:replaceCKM}. In Appendix~\ref{appendix:invariance calculations}, we show these composite maps align (up to a predictable sign) with the invariants/equivariants given by cups, caps, and tripods in our untagged framework. 
 
\begin{lemma}\label{lem:g on local pieces}
The map $g$ sends each of the following tagged web fragments to the indicated composition of CKM maps.

\smallskip
\noindent\textbf{Cup:}
\[\vcenter{\hbox{\begin{tikzpicture}[scale=.8]
\draw[dashed, red, <->] (0,0)--(3,0);
\node at(.5,-.65) {\tiny{$n-k$}};
\node at(2.2,-.65) {\tiny{$k$}};
\draw[style=thick] (1.5,-1)--(1.5,-1.25);
\begin{scope}[thick,decoration={
    markings,
    mark=at position 0.5 with {\arrow{>}}}]
\draw[postaction={decorate}, thick] (1.5,-1) to[out=0,in=270] (2,0);
\draw[postaction={decorate}, thick] (1.5,-1) to[out=180,in=270] (1,0);
\end{scope}
\end{tikzpicture}}} \quad\stackrel{g}{\longmapsto}\quad \bigl((Id\otimes D^{-1}_{k})\circ C_{L,n-k}\bigr)(1).\]

\smallskip
\noindent\textbf{Split vertex} $(k+l+m=n)$:
\[\vcenter{\hbox{\begin{tikzpicture}[scale=.8]
\draw[dashed, red, <->] (0,0)--(4,0);
\draw[radius=.08, fill=black](2,-1)circle;
\begin{scope}[thick,decoration={
    markings,
    mark=at position 0.5 with {\arrow{>}}}]
\draw[postaction={decorate}, thick] (2,-1) to[out=180,in=270] (1,0);
\draw[postaction={decorate}, thick] (2,-1)--(2,0);
\end{scope}
\begin{scope}[thick,decoration={
    markings,
    mark=at position 0.33 with {\arrow{<}}}]
\draw[postaction={decorate}, thick] (2,-1) to[out=0,in=270](3,0);
\end{scope}
\begin{scope}[thick,decoration={
    markings,
    mark=at position 0.7 with {\arrow{>}}}]
\draw[postaction={decorate}, thick] (2,-1) to[out=0,in=270](3,0);
\end{scope}
\node at (.9,-.5) {\tiny{$k$}};
\node at (1.7,-.5) {\tiny{$l$}};
\node at (2.4,-1.25) {\tiny{$n-m$}};
\node at (3.2,-.5) {\tiny{$m$}};
\draw[style=thick] (2.65,-.75)--(2.8,-.9);
\end{tikzpicture}}} \quad\stackrel{g}{\longmapsto}\quad \bigl((M'_{k,l}\otimes Id)\circ (Id\otimes D^{-1}_{m})\circ C_{L,k+l}\bigr)(1).\]

\smallskip
\noindent\textbf{Merge vertex} $(k+l+m=2n)$:
\begin{multline*}
\vcenter{\hbox{\begin{tikzpicture}[scale=.8]
\draw[dashed, red, <->] (0,0)--(4,0);
\draw[radius=.08, fill=black](2,-1)circle;
\begin{scope}[thick,decoration={
    markings,
    mark=at position 0.5 with {\arrow{>}}}]
\draw[postaction={decorate}, thick] (2,-1) to[out=180,in=270] (1,0);
\end{scope}
\begin{scope}[thick,decoration={
    markings,
    mark=at position 0.3 with {\arrow{<}}}]
\draw[postaction={decorate}, thick] (2,-1) to[out=0,in=270](3,0);
\draw[postaction={decorate}, thick] (2,-1)--(2,0);
\end{scope}
\begin{scope}[thick,decoration={
    markings,
    mark=at position 0.7 with {\arrow{>}}}]
\draw[postaction={decorate}, thick] (2,-1) to[out=0,in=270](3,0);
\draw[postaction={decorate}, thick] (2,-1)--(2,0);
\end{scope}
\node at (.6,-.3) {\tiny{$k$}};
\node at (1.55,-.25) {\tiny{$l$}};
\node at (3.3,-.3) {\tiny{$m$}};
\node at (1.55,-.55) {\tiny{$n-l$}};
\node at (2.4,-1.25) {\tiny{$n-m$}};
\draw[style=thick] (2.65,-.75)--(2.8,-.9);
\draw[style=thick] (2,-.55)--(2.2,-.55);
\end{tikzpicture}}} \quad\stackrel{g}{\longmapsto}\quad \bigl((M_{n-m,n-l}\otimes Id^{\otimes 2})\circ (Id^{\otimes 2}\otimes D^{-1}_l\otimes Id) \\
\circ (Id\otimes C_{L,n-l}\otimes Id)\circ (Id\otimes D^{-1}_m)\circ C_{L,n-m}\bigr)(1).
\end{multline*}

\smallskip
\noindent\textbf{Cap:}
\[\vcenter{\hbox{\begin{tikzpicture}[scale=.8]
\draw[dashed, red, <->] (0,0)--(3,0);
\begin{scope}[thick,decoration={
    markings,
    mark=at position 0.5 with {\arrow{>}}}]
\draw[postaction={decorate}, thick] (1,0) to[out=90,in=180] (1.5,1);
\draw[postaction={decorate}, thick] (2,0) to[out=90,in=0] (1.5,1);
\end{scope}
\node at(.8,.65) {\tiny{$k$}};
\node at(2.5,.65) {\tiny{$n-k$}};
\draw[style=thick] (1.5,1)--(1.5,1.25);
\end{tikzpicture}}} \quad\stackrel{g}{\longmapsto}\quad C^{L,n-k}\circ (D_k\otimes Id).\]
\end{lemma} 
\section{Local invariance calculations}\label{appendix:invariance calculations}
 
We prove Lemmas~\ref{lem:cup invariant},~\ref{lem:tripod invariant},  and ~\ref{lem:cap equivariant} by showing the composite maps in Lemma \ref{lem:g on local pieces} agree with the map $f$ on untagged webs with at most one vertex.

\begin{proof}[Proof of Lemma~\ref{lem:cup invariant}]
Say $G$ has vertices $v_1$ and $v_2$ ordered left to right with edge $e=v_1\stackrel{k}{\mapsto} v_2$. By Lemma~\ref{lem:invt and flips}, it suffices to show $f(G)\in\textup{Inv}(n-k,k)$.
 
Fix $1\leq i\leq n-1$. Since there are no closed strands,
    $$f(G) = \sum_{S\in\mathcal{S}tr(G)} (-q)^{-y(S)}x_S
    =\sum_{|\vec{b}|=k} (-q)^{-\ell(\vec{b}, \vec{1}-\vec{b})}x_{\vec{1}-\vec{b}}\otimes x_{\vec{b}}$$
where the second equality uses Corollary~\ref{cor:strands and binary labeling} (strands appear exactly when two entries of a binary label differ) together with the observation that each instance of a $1$ before a $0$ in $\vec{b}$ contributes a counterclockwise strand.

On the other hand, using the CKM maps from Figure \ref{fig:CKMmaps}, we have
\begin{align*}
    ((Id \otimes D_k^{-1})\circ C_{L,n-k})(1) &=(Id \otimes D_k^{-1})\left(\sum_{|\vec{b}|=k} x_{\vec{1}-\vec{b}}\otimes x_{\vec{1}-\vec{b}}^*\right) \\
    &=\sum_{|\vec{b}|=k} (-q)^{-\ell(\vec{b}, \vec{1}-\vec{b})}  x_{\vec{1}-\vec{b}}\otimes x_{\vec{b}}.  
\end{align*}
Since $f(G)$ is the image of the composition of CKM maps, it is $\uq$-invariant.
\end{proof}

The next lemma uses the structural properties of stranding at a vertex to put a condition on the binary labels that occur at that vertex, depending on whether it is a split or merge. 

\begin{lemma} \label{lemma: sink vertex split or merge and binary}
Suppose $S$ is a valid stranding on a web graph $G$ and $v$ is an interior vertex of $G$ incident to edges $e_1, e_2, e_3$ all directed out of $v$. Then the sum 
\[b_S(e_1)_j + b_S(e_2)_j+ b_S(e_3)_j\]
is the same for all $1\leq j\leq n$.  Moreover, this sum is $1$ if $v$ is a split vertex and $2$ if $v$ is a merge vertex.
\end{lemma}

\begin{proof}
Since $b_S$ is a binary labeling, the definition states that the vector $b_S(e_1)+b_S(e_2)+b_S(e_3) \in \mathbb{Z}^n$ is a nonnegative integer multiple of $\vec{1}$.  This proves the first part. The number of ones in each $b_S(e_i)$ is the same as the weight on edge $e_i$.  Weights are in the set $\{1, 2, \ldots, n-1\}$ so the sum of the entries in $b_S(e_1)+b_S(e_2)+b_S(e_3)$ is nonzero and at most $3n-1$.  Thus the vector sum is either $\vec{1}$ or $2 \cdot \vec{1}$ and the final claim follows by definition of split and merge vertices.
\end{proof}

\begin{proof}[Proof of Lemma~\ref{lem:tripod invariant}]
Let $G$ have boundary vertices $v_1, v_2, v_3$ (left to right) and interior vertex $v$. By Lemma~\ref{lem:invt and flips}, we may assume $G$ has edge set $\{e_1=v\stackrel{k}{\mapsto} v_1,\ e_2=v\stackrel{l}{\mapsto}v_2,\ e_3=v\stackrel{m}{\mapsto} v_3\}$. Fix $S\in\mathcal{S}tr(G)$ with associated binary labeling $b_S$, and write $\vec{b}_i=b_S(e_i)$. There are no closed faces in $G$, so $x(S)=0$. By Lemma~\ref{lemma: sink vertex split or merge and binary}, $\vec{b}_1 + \vec{b}_2 + \vec{b}_3$ equals $\vec{1}$ if $v$ is a split vertex and $\vec{2}$ if $v$ is a merge vertex.
 
If $v$ is a split vertex, $\vec{b}_3=\vec{1}-\vec{b}_1-\vec{b}_2$ and $\vec{b}_1\cdot \vec{b}_2=0$. Each counterclockwise strand is counted in exactly one of
\[\ell(\vec{b}_2, \vec{b}_1), \hspace{0.25in} \ell(\vec{1}-\vec{b}_1-\vec{b}_2, \vec{b}_1), \hspace{0.25in} \ell(\vec{1}-\vec{b}_1-\vec{b}_2, \vec{b}_2),\]
depending on which two edges it runs along, so $y(S)$ is their sum and
$$f(G)=\sum_{\substack{|\vec{b}_1|=k,\ |\vec{b}_2|=l\\\vec{b}_1\cdot \vec{b}_2=0}} (-q)^{-\ell(\vec{b}_2, \vec{b}_1)-\ell(\vec{1}-\vec{b}_1-\vec{b}_2, \vec{b}_1)-\ell(\vec{1}-\vec{b}_1-\vec{b}_2, \vec{b}_2)}\,x_{\vec{b}_1}\otimes x_{\vec{b}_2}\otimes x_{\vec{1}-\vec{b}_1-\vec{b}_2}.$$
Expanding the CKM maps of Figure~\ref{fig:CKMmaps}, we can show:
$$f(G)=(-1)^{kl}\bigl((M'_{k, l}\otimes Id)\circ (Id \otimes D^{-1}_{n-k-l})\circ C_{L, k+l}\bigr)(1)\in\textup{Inv}(k,l,m).$$
 
If $v$ is a merge vertex, $\vec{1}-\vec{b}_1=\vec{b}_2+\vec{b}_3-\vec{1}$ and $(\vec{1}-\vec{b}_2)\cdot (\vec{1}-\vec{b}_3)=0$. Each counterclockwise strand is counted in exactly one of
\[\ell(\vec{b}_2+\vec{b}_3-\vec{1}, \vec{1}-\vec{b}_2), \hspace{0.25in} \ell(\vec{b}_2+\vec{b}_3-\vec{1}, \vec{1}-\vec{b}_3), \hspace{0.25in} \ell(\vec{1}-\vec{b}_2, \vec{1}-\vec{b}_3).\]
Therefore, we have
$$f(G)=\hspace{-.35in}\sum_{\substack{|\vec{b}_2|=l,\ |\vec{b}_3|=m\\(\vec{1}-\vec{b}_2)\cdot (\vec{1}-\vec{b}_3)=0}}\hspace{-.25in}(-q)^{-\ell(\vec{b}_2+\vec{b}_3-\vec{1}, \vec{1}-\vec{b}_2)-\ell(\vec{b}_2+\vec{b}_3-\vec{1}, \vec{1}-\vec{b}_3)-\ell(\vec{1}-\vec{b}_2, \vec{1}-\vec{b}_3)}\,x_{\vec{2}-\vec{b}_2-\vec{b}_3}\otimes x_{\vec{b}_2}\otimes x_{\vec{b}_3},$$
Once again, expanding the CKM maps of Figure~\ref{fig:CKMmaps}, we can show:
\begin{align*}
    f(G)&=\bigl((M_{n-m,n-l}\otimes Id^{\otimes 2})\circ (Id^{\otimes 2} \otimes D^{-1}_{l}\otimes Id) \circ(Id\otimes C_{L,n-l}\otimes Id) \\
    &\hspace{.5in}\circ (Id \otimes D^{-1}_{m})\circ C_{L,n-m}\bigr)(1)\in\textup{Inv}(k,l,m).
\end{align*}
CKM maps are $\uq$-equivariant, so $f(G)$ is a $\uq$-invariant vector for both split and merge tripods.
\end{proof}
 
\begin{proof}[Proof of Lemma~\ref{lem:cap equivariant}]
Suppose $x_{\vec{b}_1}\otimes x_{\vec{b}_2}\in V_k\otimes V_{n-k}$ induces a stranding of the cap. By Lemma~\ref{lem: strand graph}, $S_{\vec{b}_1}$ has a clockwise $(i,j)$ strand exactly when the $i^{th}$ and $j^{th}$ entries of $\vec{b}_1$ are $1$ and $0$, so $z(S_{\vec{b}_1})=\ell(\vec{b}_1, \vec{1}-\vec{b}_1)$. Expanding $C^{L,n-k}\circ (D_k\otimes Id)$ via the CKM maps of Figure~\ref{fig:CKMmaps}, we can show
$$C_k=C^{L,n-k}\circ (D_k\otimes Id),$$
which is $\uq$-equivariant.
\end{proof} 
 \section{Relation arguments}\label{appendix:relation arguments}

\subsection{Tag migration and forgetting tags}

\begin{proof}[Proof of Lemma~\ref{lem:pi map}]
    First, say $[H]=\pm [H']$.  We can observe from Figure \ref{fig:tagrelations} that $\pi$ is the same on any pair of graphs that differ locally by CKM tag relations. Hence $\pi(H)=\pi(H')$.
    
    Now assume $\pi(H)=\pi(H')$. It is sufficient to consider connected graphs, so say $H$ and $H'$, which have the same underlying graph, are connected. A straightforward calculation shows if $H$ and $H'$ have no internal vertices (i.e. $H$ and $H'$ are either circles or cups) they differ up to sign by tag switching and cancellation, so $[H]=\pm [H']$. Next, say $H$ and $H'$ have internal vertices.   
    Because we are showing $[H]=\pm[H']$, we can ignore the side on which each tag appears. Implicitly applying tag switches as needed, we will show that $H$ can be transformed to $H'$ via tag cancellation and migration relations around each internal vertex. 
    
    Let $v$ be an internal vertex of the underlying graph common to $H$ and $H'$ with adjacent edges $e_1, e_2, e_3$. We will say $e_i$ is oriented into $v$ in $H$ if the entire edge (when there are no tags) or the portion of the edge adjacent to $v$ (when there are tags) corresponding to $e_i$ in $H$ points into $v$.  In this case, we write  $\sigma_v(e_i)=1$; otherwise, write $\sigma_v(e_i)=-1$. Let $\ell_i$ be the weight of the (possibly tagged) edge corresponding to $e_i$ at $v$. Define the quantities $\sigma'_v(e_i)$ and $\ell'_i$ analogously for the graph $H'$.  Since $\pi(H)=\pi(H')$, it follows that $\sigma_v(e_i)=\sigma'_v(e_i)$ if and only if $\ell_i=\ell'_i$. Moreover, whenever $\ell_i\neq \ell'_i$ we have $\ell_i=n-\ell'_i$. 
    
    The tagged web condition at trivalent vertices guarantees that  $$\sum_{i=1}^3\sigma_v(e_i)\ell_i=\sum_{i=1}^3\sigma_v'(e_i)\ell'_i=0 $$ where $\{\sigma_v(e_i):1\leq i\leq 3\}=\{\sigma'_v(e_i):1\leq i\leq 3\}=\{\pm1\}$. This means $\sigma_v(e_i)=\sigma'_v(e_i)$ for all $i$ or exactly one $i$. If $\sigma_v(e_i)=\sigma'_v(e_i)$ for all $i$, we will say that $H$ and $H'$ agree at $v$, and we do nothing. On the other hand, say without loss of generality $\sigma_v(e_i)=-\sigma'_v(e_i)$ for $i=1,2$ and $\sigma_v(e_3)=\sigma'_v(e_3)$. We also must have $\sigma_v(e_1)=-\sigma_v(e_2)$ and $\sigma'_v(e_1)=-\sigma'_v(e_2)$. The image on the left in Figure \ref{fig:agreement proof} illustrates this setup. (We omit the orientation of the third edge as well as edge labels for simplicity.)

    \begin{figure}[h]

    \begin{center}
    \begin{tikzpicture}[scale=.65]
\begin{scope}[thick,decoration={
    markings,
    mark=at position 0.5 with {\arrow{>}}}
    ] 

\draw[style=thick] (0,0)--(0,1);
\draw[style=thick, postaction={decorate}] (-2,-2)--(0,0);
\draw[style=thick,postaction={decorate}] (0,0)--(2,-2);

  \draw[radius=.08, fill=black](0,0)circle;
  \node at (-.5, .25) {\tiny{$v$}};
  \node at (0, 1.25) {\tiny{$e_3$}};
    \node at (-2.25, -2.25) {\tiny{$e_1$}};
       \node at (2.25, -2.25) {\tiny{$e_2$}};

   \draw[radius=.08, fill=black](6,0)circle;
  \node at (5.5, .25) {\tiny{$v$}};
    \node at (6, 1.25) {\tiny{$e_3$}};
    \node at (3.75, -2.25) {\tiny{$e_1$}};
       \node at (8.25, -2.25) {\tiny{$e_2$}};

   \draw[radius=.08, fill=black](12,0)circle;
  \node at (11.5, .25) {\tiny{$v$}};
     \node at (12, 1.25) {\tiny{$e_3$}};
    \node at (9.75, -2.25) {\tiny{$e_1$}};
       \node at (14.25, -2.25) {\tiny{$e_2$}};

\draw[style=thick] (6,1)--(6,0);
\draw[style=thick, postaction={decorate}] (4,-2)--(6,0);
\draw[style=thick, postaction={decorate}] (6,0)--(6.5,-.5);
\draw[style=thick,postaction={decorate}] (7.5,-1.5)--(6.5,-.5);
\draw[style=thick,postaction={decorate}] (7.5,-1.5)--(8,-2);

\draw[style=thick] (12,1)--(12,0);
\draw[style=thick, postaction={decorate}] (10,-2)--(11,-1);
\draw[style=thick, postaction={decorate}] (12,0)--(11,-1);
\draw[style=thick, postaction={decorate}] (13,-1)--(12,0);
\draw[style=thick,postaction={decorate}] (13,-1)--(14,-2);

\end{scope}

  \draw[style=thick] (6.5,-.5)--(6.3,-.7);
  \draw[style=thick] (7.7,-1.3)--(7.5,-1.5);
  \draw[style=thick] (11.2,-1.2)--(11,-1);
  \draw[style=thick] (13.2,-.8)--(13,-1);
\node at (3,0) {$\rightarrow$};
\node at (9,0) {$\rightarrow$};
\end{tikzpicture}
\end{center}
\caption{Locally modifying $H$ to agree with $H'$.}\label{fig:agreement proof}
    \end{figure}

As Figure \ref{fig:agreement proof} shows, we can introduce a pair of tags in $H$ using cancellation and move one tag using migration to form a new web that agrees with $H'$ at $v$. Let $\overline{H}$ be the CKM web obtained from $H$ by performing this process at all internal vertices. Then $[H]=\pm[\overline{H}]$. At every vertex, including boundary vertices, $\overline{H}$ and $H'$ have edges that agree in orientation and label but the number of tags within their edges may differ. Since the edge orientations and labels agree at both endpoints, though, for each edge the numbers of tags in $\overline{H}$ and $H'$ have the same parity. Hence $\overline{H}$ and $H'$ differ by tag cancellation and $[\overline{H}]=\pm[H']$ so that we have $[H]=\pm[H']$ as desired. 
\end{proof}

\subsection{Relations via stranding bijections}

In the remainder of this appendix, we use the interplay between stranding and binary labeling to prove local relations on untagged webs. We start with the following definition. Recall that $\sigma_v(e_i)=1$ if and only if $e_i$ is directed into $v$ respectively $-1$ out of $v$.

\begin{definition}
    Suppose $S$ is a valid stranding on a web graph $G\in F(\vec{k})$ and $v$ is an interior vertex of $G$ incident to edges $e_1, e_2, e_3$ weighted $\ell_1, \ell_2, \ell_3$. We say the stranding $S$ on $G$ \emph{conserves binary flow} at $v$ if, as $\mathbb{Z}^n$ vectors, we have 
\[\sum_{\sigma_v(e_i)=1} b_S(e_i)  = \sum_{\sigma_v(e_i)=-1} b_S(e_i).\]
\end{definition}

This leads to the next corollary, which implies that if $v$ is a split vertex then, after performing \emph{any} edge-flips that leave exactly one edge directed into $v$, the edge-weights of the graph conserve integer flow at $v$ (in the sense of Definition \ref{definition: tagged web}) and the binary labeling $b_S$ for \emph{any} valid stranding $S$ conserves binary flow (respectively merge and two edges directed in).

\begin{corollary} \label{corollary: relative position of omitted edge for binary}
    Suppose $S$ is a valid stranding on a web graph $G$ and $v$ is an interior vertex of $G$ incident to edges $e_1, e_2, e_3$ weighted $\ell_1, \ell_2, \ell_3$. Suppose further that $v$ is neither a source nor a sink, namely at least one edge is directed into $v$ and at least one edge is directed out of $v$.   

Then $G$ conserves integer flow at $v$ if and only if $S$ conserves binary flow at $v$.
\end{corollary}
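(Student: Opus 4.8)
The plan is to reduce the whole biconditional to a single integer attached to the vertex $v$. Since $S$ is a valid stranding, Theorem~\ref{thm: strandings are bijective with binary labelings} guarantees that $b_S$ is a genuine binary labeling of $G$, so conservation of flow modulo $n$ for binary labelings holds at $v$: there is an integer $m$ with
\[
\sigma_v(e_1)b_S(e_1) + \sigma_v(e_2)b_S(e_2) + \sigma_v(e_3)b_S(e_3) = m\,\vec{1}.
\]
Regrouping the three summands according to the sign of $\sigma_v(e_i)$ rewrites the left-hand side as $\sum_{\sigma_v(e_i)=1} b_S(e_i) - \sum_{\sigma_v(e_i)=-1} b_S(e_i)$. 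Hence, by definition, $S$ conserves binary flow at $v$ if and only if $m\,\vec{1} = \vec{0}$, i.e.\ if and only if $m = 0$.

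Next I would extract a scalar identity by summing the $n$ coordinates of the displayed equation. By definition of a binary labeling, the coordinate sum of $b_S(e_i)$ is the number of ones it contains, namely the weight $\ell_i$; the coordinate sum of $m\,\vec{1}$ is $mn$. Thus $\sigma_v(e_1)\ell_1 + \sigma_v(e_2)\ell_2 + \sigma_v(e_3)\ell_3 = mn$, equivalently $\sum_{\sigma_v(e_i)=1}\ell_i - \sum_{\sigma_v(e_i)=-1}\ell_i = mn$. Since $n \geq 2$, this quantity is zero if and only if $m = 0$. Combining the two paragraphs, $G$ conserves integer flow at $v$ if and only if $m = 0$, if and only if $S$ conserves binary flow at $v$; this is the assertion of the corollary.

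I do not expect a genuine obstacle here: the only load-bearing facts are that $m$ is a well-defined integer (which is exactly the binary conservation-of-flow property, hence uses the validity of $S$) and that $n \geq 2$ so that $mn = 0$ forces $m = 0$. It is worth noting that the argument never uses the hypothesis that $v$ is neither a source nor a sink; that hypothesis only serves to make the statement non-vacuous, since for a source or sink the same coordinate-sum computation combined with Corollary~\ref{corollary: sink vertex type I or II and binary} would force $m \in \{1,2\}$, so neither side of the biconditional could hold. I would write the final proof as essentially the display above followed by these two short paragraphs of consequences.
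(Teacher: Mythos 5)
Your proof is correct and follows essentially the same route as the paper: both reduce the statement to comparing coordinate sums, using that $b_S(e_i)$ has exactly $\ell_i$ ones. Your write-up is in fact slightly more explicit than the paper's, since you spell out that the signed sum of binary labels equals $m\vec{1}$ (conservation of flow modulo $n$ for binary labelings) and that $mn=0$ forces $m=0$, which is the step needed for the direction ``integer flow conserved $\Rightarrow$ binary flow conserved'' that the paper's two-sentence proof leaves implicit.
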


\begin{proof}
The binary string $b_S(e_i)$ has exactly $\ell_i$ ones by definition of a binary labeling.  Thus the sum of the entries in the vector $\sum_{\sigma_v(e_i)=1} b_S(e_i)  - \sum_{\sigma_v(e_i)=-1} b_S(e_i)$ is precisely $\sum_{\sigma_v(e_i)=1} \ell_i  - \sum_{\sigma_v(e_i)=-1} \ell_i$.  This proves the claim.
\end{proof}  

In what follows, suppose $G_-$ and $G_+$ are web graphs that differ only in the web graph fragments shown in Figure~\ref{figure: equivalent stranding sets -- no interior faces}. Assume that the simple strands that enter and exit each fragment are fixed, or equivalently that the binary labels on the boundary edges are fixed.  We will explicitly provide natural bijections between the valid strandings on $G_-$ and $G_+$. The key observations are 1) that simple strands either pass through the graph or form a closed cycle inside the graph and 2) that these graphs conserve integer flow, which by Corollary~\ref{corollary: relative position of omitted edge for binary} is true if and only if they conserve binary flow.

\begin{figure}[h]
\scalebox{.8}{$\begin{array}{|c|c|l|}
\cline{1-3} G_- & G_+ & \textup{ Identities } \\

    \cline{1-3}   
     \raisebox{-50pt}{\begin{tikzpicture}[scale=.75]
    \draw[radius=.08, fill=black](1,2)circle;
     \draw[radius=.08, fill=black](.5,1)circle;
     \node[color=gray] at (0,-.25) {\tiny{$k$}};
     \node[color=gray] at (1,-.25) {\tiny{$l$}};
      \node[color=gray] at (2,-.25) {\tiny{$m$}};
       \node[color=gray] at (.3,1.95) {\tiny{$k+l$}};
     \node[color=gray] at (1,3) {\tiny{$k+l+m$}};

   \node at (0,-.8) {{$\vec{b}_1$}}; 
  \node at (1,-.8) {{$\vec{b}_2$}}; 
  \node at (2,-.8) {{$\vec{b}_3$}};  
\node at (1,3.5) {{$\vec{b}_4$}}; 
    \node at (0.1,1.5) {{$\vec{y}$}}; 
     
    \begin{scope}[thick,decoration={
    markings,
    mark=at position 0.5 with {\arrow{>}}}
    ] 
        \draw[postaction={decorate}, thick] (0,0)--(.5,1);
        \draw[postaction={decorate}, thick] (1,0)--(.5,1);
        \draw[postaction={decorate}, thick] (.5,1)--(1,2);
        \draw[postaction={decorate}, thick] (2,0)--(1,2);
        \draw[postaction={decorate}, thick] (1,2)--(1,2.75);
        
        \end{scope}
    \end{tikzpicture}} 
    &

      \raisebox{-50pt}{\begin{tikzpicture}[scale=.75]
    \draw[radius=.08, fill=black](1,2)circle;
     \draw[radius=.08, fill=black](1.5,1)circle;
     \node[color=gray] at (0,-.25) {\tiny{$k$}};
     \node[color=gray] at (1,-.25) {\tiny{$l$}};
      \node[color=gray] at (2,-.25) {\tiny{$m$}};
       \node[color=gray] at (1.75,1.85) {\tiny{$l+m$}};
     \node[color=gray] at (1,3) {\tiny{$k+l+m$}};

       \node at (0,-.8) {{$\vec{b}_1$}}; 
  \node at (1,-.8) {{$\vec{b}_2$}}; 
  \node at (2,-.8) {{$\vec{b}_3$}}; 
\node at (1,3.5) {{$\vec{b}_4$}}; 
    \node at (1.75,1.45) {{$\vec{x}$}}; 

    \begin{scope}[thick,decoration={
    markings,
    mark=at position 0.5 with {\arrow{>}}}
    ] 
        \draw[postaction={decorate}, thick] (0,0)--(1,2);
        \draw[postaction={decorate}, thick] (1,0)--(1.5,1);
        \draw[postaction={decorate}, thick] (1.5,1)--(1,2);
        \draw[postaction={decorate}, thick] (2,0)--(1.5,1);
        \draw[postaction={decorate}, thick] (1,2)--(1,2.75);
        
        \end{scope}
    \end{tikzpicture}} & 
     \begin{array}{lcl}
    \vec{0} &=& (\vec{b}_1 + \vec{b}_2 + \vec{b}_3)-\vec{b}_4 \\
    \vec{y} &=&\vec{b}_1+\vec{b}_2 \\
    \vec{x} &=&\vec{b}_2+\vec{b}_3 
    \end{array} \\
    
    \cline{1-3}
       
    \raisebox{-70pt}{\begin{tikzpicture}[scale=.75]
    \draw[radius=.08, fill=black](0,0)circle;
     \draw[radius=.08, fill=black](0,2)circle;
        \draw[radius=.08, fill=black](2,0)circle;
     \draw[radius=.08, fill=black](2,2)circle;
     \node[color=gray] at (0,-1.25) {\tiny{$k$}};
     \node[color=gray] at (2,-1.25) {\tiny{$l$}};
      \node[color=gray] at (1,-.25) {\tiny{$1$}};
      \node[color=gray] at (1,2.25) {\tiny{$1$}};
       \node[color=gray] at (-.75,1.25) {\tiny{$k-1$}};
       \node[color=gray] at (2.75,1.25) {\tiny{$l+1$}};
  \node[color=gray] at (0,3.25) {\tiny{$k$}};
     \node[color=gray] at (2,3.25) {\tiny{$l$}};

       \node at (0,-1.8) {{$\vec{b}_1$}}; 
\node at (2,-1.8) {{$\vec{b}_2$}}; 
       \node at (0,3.75) {{$\vec{b}_3$}}; 
\node at (2,3.75) {{$\vec{b}_4$}}; 
\node at (-.5,.65) {{$\vec{z}_1$}}; 
\node at (2.5,.65) {{$\vec{z}_2$}}; 
\node at (1.1,.45) {{$\vec{y}_1$}}; 
\node at (1.1,1.5) {{$\vec{y}_2$}}; 

    \begin{scope}[thick,decoration={
    markings,
    mark=at position 0.5 with {\arrow{>}}}
    ] 
        \draw[postaction={decorate}, thick] (0,-1)--(0,0);
        \draw[postaction={decorate}, thick] (0,0)--(0,2);
        \draw[postaction={decorate}, thick] (0,2)--(0,3);
         \draw[postaction={decorate}, thick] (2,-1)--(2,0);
        \draw[postaction={decorate}, thick] (2,0)--(2,2);
        \draw[postaction={decorate}, thick] (2,2)--(2,3);
        \draw[postaction={decorate}, thick] (0,0)--(2,0);
        \draw[postaction={decorate}, thick] (2,2)--(0,2);
        
        \end{scope}
    \end{tikzpicture}} 

    & 
           
    \raisebox{-70pt}{\begin{tikzpicture}[scale=.75]
    \draw[radius=.08, fill=black](0,0)circle;
     \draw[radius=.08, fill=black](0,2)circle;
        \draw[radius=.08, fill=black](2,0)circle;
     \draw[radius=.08, fill=black](2,2)circle;
     \node[color=gray] at (0,-1.25) {\tiny{$k$}};
     \node[color=gray] at (2,-1.25) {\tiny{$l$}};
      \node[color=gray] at (1,-.25) {\tiny{$1$}};
      \node[color=gray] at (1,2.25) {\tiny{$1$}};
       \node[color=gray] at (-.75,1.25) {\tiny{$k+1$}};
       \node[color=gray] at (2.75,1.25) {\tiny{$l-1$}};
  \node[color=gray] at (0,3.25) {\tiny{$k$}};
     \node[color=gray] at (2,3.25) {\tiny{$l$}};

       \node at (0,-1.8) {{$\vec{b}_1$}}; 
\node at (2,-1.8) {{$\vec{b}_2$}}; 
       \node at (0,3.75) {{$\vec{b}_3$}};  
\node at (2,3.75) {{$\vec{b}_4$}}; 
\node at (-.5,.65) {{$\vec{w}_1$}}; 
\node at (2.5,.65) {{$\vec{w}_2$}}; 
\node at (1.1,.45) {{$\vec{x}_1$}}; 
\node at (1.1,1.6) {{$\vec{x}_2$}}; 

    \begin{scope}[thick,decoration={
    markings,
    mark=at position 0.5 with {\arrow{>}}}
    ] 
        \draw[postaction={decorate}, thick] (0,-1)--(0,0);
        \draw[postaction={decorate}, thick] (0,0)--(0,2);
        \draw[postaction={decorate}, thick] (0,2)--(0,3);
         \draw[postaction={decorate}, thick] (2,-1)--(2,0);
        \draw[postaction={decorate}, thick] (2,0)--(2,2);
        \draw[postaction={decorate}, thick] (2,2)--(2,3);
        \draw[postaction={decorate}, thick] (0,2)--(2,2);
        \draw[postaction={decorate}, thick] (2,0)--(0,0);
        
        \end{scope}
    \end{tikzpicture}} 
    &      \begin{array}{lcl}
      \multicolumn{3}{c}{} \\
       \multicolumn{3}{c}{\textup{Assume: } \vec{b}_1 \neq \vec{b}_3} \\
    \multicolumn{3}{l}{} \\
    \vec{0} &=& (\vec{b}_1+\vec{b}_2) -(\vec{b}_3+\vec{b}_4) \\
   \vec{y}_1-\vec{y}_2 &=& \vec{b}_4 -\vec{b}_2 \\
    \vec{z}_1 + \vec{y}_1 &=& \vec{b}_1\\
    \vec{z}_2-\vec{y}_1 &=& \vec{b}_2 \\   
   \vec{x}_2-\vec{x}_1 &=& \vec{b}_4 -\vec{b}_2 \\
    \vec{w}_1 - \vec{x}_1 &=& \vec{b}_1\\
    \vec{w}_2+\vec{x}_1 &=& \vec{b}_2 \\ 
    \vec{y}_1-\vec{x}_2 &=& \vec{0} \\ 
    \vec{y}_2-\vec{x}_1 &=& \vec{0} \\ 
     & &  \vspace{0.05in}\\
    \end{array}
    \\
\hline
\end{array}$}
\caption{Constructing web graph fragment labelings compatible with a common choice of boundary labels} \label{figure: equivalent stranding sets -- no interior faces}
\end{figure}

\begin{lemma} \label{lemma: bijection for relations with no interior faces}
Suppose $G_-$ and $G_+$ are web graphs that differ only in the web graph fragments shown in Figure~\ref{figure: equivalent stranding sets -- no interior faces}, with boundary binary labels $\vec{b}_1, \vec{b}_2, \vec{b}_3, \vec{b}_4$ fixed. The web graphs $G_-$ and $G_+$ each have at most one valid stranding compatible with this choice of boundary.
Moreover there is a valid stranding $S_- \in \mathcal{S}tr(G_-)$ compatible with this boundary if and only if there is a valid stranding $S_+ \in \mathcal{S}tr(G_+)$ compatible with this boundary.  In that case, this pair of strandings shares the same set of $(i,j)$ strands for each $(i,j)$ , namely $L_{(i,j)}(S_-) = L_{(i,j)}(S_+)$.
\end{lemma}

\begin{proof}
For the graphs in the first row, we can see from the identities in the third column that strandings of $G_-$ and $G_+$ exist if and only if $\vec{b}_4=\vec{b}_1+\vec{b}_2+\vec{b}_3$, and in this case, the strandings compatible with these vectors are uniquely determined. Similarly for the second row, the identities in the third column imply that a stranding exists for $G_-$ if and only if one exists for $G_+$, and in this case, the strandings are uniquely determined. In particular, $\vec{y}_1=\vec{x}_2$, and $\vec{y}_2=\vec{x}_1$.

Now, assume a stranding compatible with the choice of boundary labels exists, and let $1\leq i<j\leq n$. Observe the graph fragments in the first row cannot have closed $(i,j)$ strands since they have no closed faces.  Any closed strand in the graph fragments on the second row of Figure~\ref{figure: equivalent stranding sets -- no interior faces} must pass over both horizontal edges. This implies $\vec{y}_1 + \vec{y}_2 = \vec{x}_1 + \vec{x}_2$ is nonzero precisely in entries $i$ and $j$. In this case, however, the $(i,j)$ strands along these two edges must point in the same direction (against one edge and with the other). This means the $(i,j)$ strands across these edges do not form a closed component. 

For each pair of graphs $G_-$ and $G_+$, the $(i,j)$ strands along the boundary edges are identical, and there can be no strands confined to the interior of the graph. In particular, if at most two edges of $G_-$ and $G_+$ carry $(i,j)$ strands then regardless of their path through each web graph fragment, the strands' orientation is the same in both graphs overall.  This case includes all $(i,j)$ strands in the graphs on the top row. The web graph fragments in the second row could support two $(i,j)$ strand components. In this case, since $\vec{y}_1 = \vec{x}_2$ and $\vec{y}_2 = \vec{x}_1$ either both or neither of $G_-$ and $G_+$ admit $(i,j)$ strands over the horizontal edges, respectively vertical. Thus $G_-$ and $G_+$ always have the same number and direction of $(i,j)$ strands for each pair $(i,j)$.
\end{proof}

The next examples are similar but include graphs with interior closed strands.  We treat the weight $l$ loop in the web graph fragments $G_+$ shown in  Figure~\ref{figure: strand-preserving bijection with loops} as  $\mathfrak{sl}_{k+l}$ web graphs. We define a map from valid strandings of this loop to the set of binary vectors of length $n$ that satisfy all constraints on $\vec{y}_1$ in $G_-$. 
We will show this map induces a bijection between the valid strandings of $G_-$ and $G_+$ that are consistent with the fixed choice of boundary vectors and moreover that this bijection preserves oriented closed strands.  

\begin{figure}[h]
\centering
\scalebox{.8}{
$\begin{array}{|c|c|l|}
\cline{1-3} G_- & G_+ & \textup{ Identities } \\
\cline{1-3}

 \raisebox{-45pt}{\begin{tikzpicture}[scale=.5]
    \draw[radius=.12, fill=black](1,2)circle;
     \draw[radius=.12, fill=black](2,4)circle;
     \node[color=gray] at (-.75,1) {\tiny{$k+l$}};
     \node[color=gray] at (1.25,5) {\tiny{$k+l$}};
       \node[color=gray] at (0,3.75) {\tiny{$k$}};
     \node[color=gray] at (3,2.25) {\tiny{$l$}};
     \node at (-.5,0) {{$\vec{b_1}$}}; 
     \node at (3.5,5.8) {{$\vec{b}_2$}}; 
    \node at (2.5,1.75) {{$\vec{y}_1$}}; 
    \node at (-.1,2.65) {{$\vec{y}_2$}}; 
  
    \begin{scope}[thick,decoration={
    markings,
    mark=at position 0.5 with {\arrow{>}}}
    ] 
        \draw[postaction={decorate}, thick] (0,0)--(1,2);
        \draw[postaction={decorate}, thick] (2,4)--(3,6);
        \draw[postaction={decorate}, thick] (1,2)to[out=150, in=240] (.5, 3.5) to[out=60, in=165] (2,4);
        \draw[postaction={decorate}, thick] (1,2)to[out=-30, in=240] (2.5, 2.5) to[out=60, in=-15] (2,4);
        
        \end{scope}
    \end{tikzpicture}} 
    & 
    \raisebox{-45pt}{\begin{tikzpicture}[scale=.5]
     \node[color=gray] at (1.85,2) {\tiny{$k+l$}};
     \node[color=gray] at (0.65,3.25) {\tiny{$l$}};
     \node at (-.5,0) {{$\vec{b}_1$}}; 
    \node at (-1,2.25) {{$\vec{x}$}}; 
  
    \begin{scope}[thick,decoration={
    markings,
    mark=at position 0.5 with {\arrow{>}}}
    ] 
        \draw[postaction={decorate}, thick] (0,0)--(3,6);
        \draw[dashed, postaction={decorate}, thick, radius=1] (0,3.5) circle;
     
        \end{scope}
    \end{tikzpicture}} 
    & 
    \begin{array}{lcl}
    \vec{b}_2 &=& \vec{b}_1 \\
    \vec{y}_2 &=& \vec{b}_1 - \vec{y}_1 \\
      \multicolumn{3}{c}{\hrulefill} \\
    \vec{y}_1 &=&\iota_{\vec{b}_1}(\vec{x}) \\
     & &\mathfrak{sl}_{k+l} \textup{ loop}
   \end{array} \\

    \cline{1-3}
\raisebox{-70pt}{\begin{tikzpicture}[scale=.75]
    \draw[radius=.08, fill=black](0,0)circle;
     \draw[radius=.08, fill=black](0,2)circle;
        \draw[radius=.08, fill=black](2,0)circle;
     \draw[radius=.08, fill=black](2,2)circle;
     \node[color=gray] at (0,-1.25) {\tiny{$r$}};
     \node[color=gray] at (2,-1.25) {\tiny{$s$}};
      \node[color=gray] at (1,-.25) {\tiny{$l$}};
      \node[color=gray] at (1,2.25) {\tiny{$k$}};
       \node[color=gray] at (-.75,1.25) {\tiny{$r-l$}};
       \node[color=gray] at (2.75,1.25) {\tiny{$s+l$}};
  \node[color=gray] at (0,3.25) {\tiny{$r-k-l$}};
     \node[color=gray] at (2,3.25) {\tiny{$s+k+l$}};

       \node at (0,-1.8) {{$\vec{b}_1$}}; 
\node at (2,-1.8) {{$\vec{b}_2$}}; 
\node at (-.5,.65) {{$\vec{z}_1$}}; 
\node at (2.5,.65) {{$\vec{z}_2$}}; 
\node at (1.1,.45) {{$\vec{y}_1$}}; 
\node at (1.1,1.6) {{$\vec{y}_2$}}; 
\node at (0,3.75) {{$\vec{b}_3$}}; 
\node at (2,3.75) {{$\vec{b}_4$}}; 

    \begin{scope}[thick,decoration={
    markings,
    mark=at position 0.5 with {\arrow{>}}}
    ] 
        \draw[postaction={decorate}, thick] (0,-1)--(0,0);
        \draw[postaction={decorate}, thick] (0,0)--(0,2);
        \draw[postaction={decorate}, thick] (0,2)--(0,3);
         \draw[postaction={decorate}, thick] (2,-1)--(2,0);
        \draw[postaction={decorate}, thick] (2,0)--(2,2);
        \draw[postaction={decorate}, thick] (2,2)--(2,3);
        \draw[postaction={decorate}, thick] (0,0)--(2,0);
        \draw[postaction={decorate}, thick] (0,2)--(2,2);
        
        \end{scope}
    \end{tikzpicture}}
    & 
\raisebox{-70pt}{\begin{tikzpicture}[scale=.75]
     \draw[radius=.08, fill=black](0,2)circle;
     \draw[radius=.08, fill=black](2,2)circle;
     \node[color=gray] at (0,-1.25) {\tiny{$r$}};
     \node[color=gray] at (2,-1.25) {\tiny{$s$}};
      \node[color=gray] at (1,2.25) {\tiny{$k+l$}};
   \node[color=gray] at (0,3.25) {\tiny{$r-k-l$}};
     \node[color=gray] at (2,3.25) {\tiny{$s+k+l$}};
     \node[color=gray] at (-.5,1) {\tiny{$l$}};

       \node at (0,-1.8) {{$\vec{b}_1$}}; 
\node at (2,-1.8) {{$\vec{b}_2$}}; 
\node at (1.1,1.6) {{$\vec{x}_1$}}; 
\node at (-1.5,.25) {{$\vec{x}$}}; 
\node at (0,3.75) {{$\vec{b}_3$}}; 
\node at (2,3.75) {{$\vec{b}_4$}}; 

    \begin{scope}[thick,decoration={
    markings,
    mark=at position 0.5 with {\arrow{>}}}
    ] 
        \draw[postaction={decorate}, thick] (0,-1)--(0,2);
        \draw[postaction={decorate}, thick] (0,2)--(0,3);
         \draw[postaction={decorate}, thick] (2,-1)--(2,2);
        \draw[postaction={decorate}, thick] (2,2)--(2,3);
        \draw[postaction={decorate}, thick] (0,2)--(2,2);

       \draw[dashed, postaction={decorate}, thick, radius=0.75] (-1,1.25) circle;

        \end{scope}
    \end{tikzpicture}}
    & 
     \begin{array}{lcl}
    \vec{b}_1 &=& \vec{b}_3 + \vec{x}_1 \\
    \vec{b}_4 &=& \vec{b}_2 + \vec{x}_1 \\
    \vec{z}_1 &=& \vec{b}_3 + \vec{y}_2 \\
    \vec{z}_1 &=& \vec{b}_1 + \vec{y}_1 \\
    \vec{z}_2 &=& \vec{b}_2 + \vec{y}_1 \\ 
    \vec{b}_4 &=& \vec{z}_2 + \vec{y}_2 \\
    \vec{x}_1 &=& \vec{y}_1 + \vec{y}_2 \\
      \multicolumn{3}{c}{\hrulefill} \\
    \vec{y}_1 &=&\iota_{\vec{x}_1}(\vec{x}) \\
    && \mathfrak{sl}_{k+l} \textup{ loop}
    \end{array} \\
\hline\end{array}$}
\caption{A strand-preserving bijection between strandings on $\mathfrak{sl}_n$ web graph fragments that extend fixed boundary edge labels, with dashed loops interpreted as $\mathfrak{sl}_{k+l}$ webs}\label{figure: strand-preserving bijection with loops}
\end{figure}

\begin{lemma} \label{lemma: easy relations}
Suppose $\vec{v} \in \{0,1\}^{n}$ has exactly $k+l$ ones.  Define the injection $\iota_{\vec{v}}: \{0, 1\}^{k+l} \rightarrow \{0,1\}^n$ by replacing the $k+l$ nonzero positions of $\vec{v}$ with the $k+l$ entries of $\vec{x} \in \{0,1\}^{k+l}$ inserted in order from left-to-right. If either 
    \begin{itemize}
        \item $\vec{v} = \vec{b}_1$ for $G_+$ in the top row, namely the map is $\vec{y}_1=\iota_{\vec{b}_1}(\vec{x})$ or
        \item $\vec{v} = \vec{x}_1$ for $G_+$  in the bottom row, namely the map is $\vec{y}_1=\iota_{\vec{x}_1}(\vec{x})$
    \end{itemize} 
    then the map $\iota_{\vec{v}}$ induces a bijection between valid strandings of the web graph $G_+$ and of $G_-$ in Figure~\ref{figure: strand-preserving bijection with loops}, both compatible with the given boundary edge labels.   
    
    Moreover, the map $\iota_{\vec{v}}$ preserves directions of strands in the following sense.  Suppose $S_+ \in \mathcal{S}tr(G_+)$, with $\vec{x}$ the associated binary label on the loop, and $S_- \in \mathcal{S}tr(G_-)$ is the stranding induced from $\vec{y}_1 = \iota_{\vec{v}}(\vec{x})$.   Then for each $(i,j)$, we have $L_{(i,j)}(G_-) = L_{(i,j)}(G_+)$ in the sense that strands enter and exit at the same boundary edges in $G_-$ as in $G_+$, and $G_-$ and $G_+$ have the same number of clockwise closed strands and counterclockwise strands. 
\end{lemma}

\begin{proof}
We mimic the argument in Lemma~\ref{lemma: bijection for relations with no interior faces}.  The identities in the third column  of Figure~\ref{figure: strand-preserving bijection with loops} show that a stranding exists in $G_-$ only if a stranding exists on the non-loop component in $G_+$ with $\vec{x}_1 = \vec{y}_1+\vec{y}_2$ for the graphs in the second row.  The non-loop component in $G_+$ can have at most one stranding compatible with the boundary labels $\vec{b}_1, \vec{b}_2, \vec{b}_3, \vec{b}_4$.  We now characterize the ways to choose $\vec{y}_1$ satisfying the identities in the third column.

Since the vector $\vec{x}$ has exactly $l$ ones, the vector $\iota_{\vec{v}}(\vec{x})$ has exactly $l$ ones also, all of which are in positions that are nonzero in $\vec{v}$.  So every vector $\vec{y}_1 \in Im(\iota_{\vec{v}})$ gives rise to a valid stranding of the graph $G_-$ compatible with the boundary labels $\vec{b}_1, \vec{b}_2, \vec{b}_3, \vec{b}_4$.  Conversely, fixing boundary labels on  $G_-$ determines $\vec{v}$.  All possible vectors with exactly $l$ ones amongst the $k+l$ nonzero positions of $\vec{v}$ are in $Im(\iota_{\vec{v}})$.  So the map $\iota_{\vec{v}}$ induces a bijection between strandings of the graphs $G_-$ and $G_+$ that are compatible with the boundary labels.

Finally, we prove that the map $\iota_{\vec{v}}$ induces a bijection of the individual directed strands within each stranding.  Suppose $S_+$ is a stranding of $G_+$ with loop edge labeled $\vec{x}$ and $S_-$ is the stranding induced from $\iota_{\vec{v}}(\vec{x}) = \vec{y}_1$.  Every closed strand in $G_-$ must run along the edge with binary label $\vec{y}_1$.  We consider the following cases:
\begin{itemize}
    \item Suppose entries $i,j$ are both nonzero in $\vec{v}$. Then there are no $(i,j)$ strands  along any boundary edges of the graph fragments and the $(i,j)$ strands in $G_-$ should match that of $G_+$. Moreover, entries $i,j$ in $\vec{y}_1$ contain the same values as the corresponding entries in $\vec{x}$, and so there is a closed strand in $L_{(i,j)}(G_-)$ along the edge with binary label $\vec{y}_1$ if and only if there is a corresponding strand in the same direction along the loop edge in $G_+$. 
    \item If  $i,j$ are both zero in $\vec{v}$ then by construction, we check there are no closed $(i,j)$ strands in $G_-$.  There are no $(i,j)$ strands in the examples in the first row of Figure~\ref{figure: strand-preserving bijection with loops}, and no $(i,j)$ strands over a horizontal edge in either $G_-$ or $G_+$ in the second row.  For the graphs in the second row, the $(i,j)$ strands run solely vertically compatible with the boundary edges, and thus have the same number and direction in $G_-$ as in $G_+$.
    \item Now suppose that exactly one of $i, j$ is zero in $\vec{v}$. Again, we confirm there are no closed $(i,j)$ strands in $G_-$. In this case, an $(i,j)$ strand in $G_-$ runs along exactly one of the edges labeled $\vec{y}_1$ and $\vec{y}_2$.  The $(i,j)$ strand is therefore not closed in the interior of $G_-$ so passes over both boundary edges for $G_-$ in the first row and at least two boundary edges for $G_-$ in the second row.  Since one of $i, j$ is nonzero in $\vec{v}$ there cannot be two $(i,j)$ strands passing through $G_-$.  Since strands are well-defined, however the $(i,j)$ strand runs through $G_-$ and $G_+$, it has the same endpoints (and thus direction) in both graphs.
\end{itemize}      
\end{proof}

\subsection{The square-switch relation}

Above, we proved relations on web graphs via explicit bijections between strandings of two different untagged web fragments that preserved the number and direction of strands. Now, we prove the square-switch relation which involves the web fragments shown in Figure~\ref{figure: strand switch relation}.  These graphs admit no strand-preserving bijection, so the approach is more complicated. The following lemma outlines our overall approach.

\begin{lemma} \label{lemma: overall square switch argument}
Suppose $G_-$, $G_+$, and $G_0$ are the web graph fragments shown in Figure~\ref{figure: strand switch relation} where the loop in $G_0$ is considered an $\mathfrak{sl}_{|k-l|}$ web and all other graph components are $\mathfrak{sl}_n$ webs. Fix boundary binary labels $\vec{b}_1$ and $\vec{b}_2$, and let $w_\partial(G_-)$ denote the expression
\[w_\partial(G_-) = \sum_{\footnotesize \begin{array}{c} S \in \mathcal{S}tr(G_-) \\ \partial(S)  =  \vec{b}_1 \sqcup \vec{b}_2 \end{array} } (-q)^{x_{G_-}(S) - y_{G_-}(S)}\]
and similarly for $G_+, G_0$. Then $w_\partial(G_-) = w_\partial(G_+)+w_\partial(G_0).$
\end{lemma}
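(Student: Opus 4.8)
The statement is an $\mathfrak{sl}_n$ analogue of the classical $\mathfrak{sl}_3$ square-switch (or ``H = I + loop'') relation, promoted to the level of stranded webs, and the natural approach is to reduce everything to a fixed common boundary and then set up a sign-correct bijection (with one exceptional orbit) between strandings of $G_-$ and strandings of $G_+ \sqcup \{\text{closed flows from }G_0\}$. First I would fix the boundary binary labels $\vec{b}_1, \vec{b}_2$ throughout, invoke Theorem~\ref{thm: strandings are bijective with binary labelings} to pass freely between strandings and binary labelings, and use Lemma~\ref{lemma: same combo strandings on web graphs with edges flipped} together with Corollary~\ref{corollary: relative position of omitted edge for binary} to normalize all three graph fragments so that integer flow (hence binary flow) is conserved at every interior vertex. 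The key combinatorial observation, exactly as in Lemma~\ref{lemma: bijection for relations with no interior faces} and Lemma~\ref{lemma: easy relations}, is that each color-$c$ strand either passes straight through the fragment or forms a closed loop confined inside it; the difference from those earlier lemmas is that here there genuinely are two distinct internal routings (the two ways to resolve the square), which is precisely why we get a \emph{sum} of two terms rather than a bijection with a single graph.

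The core of the argument is a case analysis on the binary labels, run $\alpha=(i,j)$-flow by $\alpha$-flow since the exponent $x(S) - y(S) = \sum_{i<j}\big(x_{(i,j)}(S) - y_{(i,j)}(S)\big)$ splits over pairs. For each pair $(i,j)$, the labels $\vec{b}_1, \vec{b}_2$ determine how much $(i,j)$-flow enters and leaves the fragment along the two boundary edges; Corollary~\ref{cor:flow and binary labeling} translates this into the entries of $\vec{b}_1, \vec{b}_2$ in positions $i$ and $j$. When no $(i,j)$-flow is forced across the fragment, I would show that the internal configuration either contributes no closed $(i,j)$-flow in $G_-$ and in $G_+$ (so those strandings match up with the first summand $w_\partial(G_+)$), or it forces a closed $(i,j)$-flow which lives inside the weight-$|k-l|$ loop region and is accounted for by $G_0$ (contributing to $w_\partial(G_0)$). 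When $(i,j)$-flow \emph{is} forced across, the two internal routings in $G_-$ correspond respectively to the through-routing of $G_+$ and to the through-routing of $G_0$, and the point is to check that the clockwise/counterclockwise bookkeeping of $x_{(i,j)}$ and $y_{(i,j)}$ matches on each side after the planar embedding is taken into account — this is the same kind of local picture-chasing as Figures~\ref{fig:invariant counter cap} and~\ref{fig:invariant clock cap} in the proof of Theorem~\ref{thm:invariant vector flow formula}. Finally I would assemble the per-$(i,j)$ contributions: summing $(-q)^{x_{(i,j)}(S)-y_{(i,j)}(S)}$ over the admissible internal labelings of $G_-$ compatible with $\vec{b}_1, \vec{b}_2$ factors, by Lemma~\ref{lemma: loop graph calculation} applied in the $\mathfrak{sl}_{|k-l|}$ loop, as the corresponding sum for $G_+$ plus the corresponding sum for $G_0$, and multiplying these factors over all $(i,j)$ yields $w_\partial(G_-) = w_\partial(G_+) + w_\partial(G_0)$.

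\textbf{Main obstacle.} The genuinely delicate point is the direction-of-flow bookkeeping in the case where $(i,j)$-flow is forced across the fragment: one must verify that the \emph{clockwise-closed-counts-once, all-counterclockwise-counts} asymmetry (Remark~\ref{rem:symmetry}) in the definition of $x(S) - y(S)$ interacts correctly with how a closed loop in $G_0$ is either clockwise or counterclockwise depending on the surrounding strandings, and that no spurious sign or power of $q$ appears when a strand's closed component in $G_-$ is ``cut'' into a through-strand plus a loop. I expect this to require drawing out the handful of planar configurations (analogous to Figures~\ref{fig:invariant counter cap}--\ref{fig:invariant clock cap}) and checking each against the exponent formula; the rest of the proof is a routine extension of the bijective techniques already developed in Section~\ref{section: easy relation examples}. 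A secondary subtlety is ensuring that the non-square-touching parts of the fragments contribute identically on both sides, which follows because outside the square region $G_-, G_+, G_0$ are literally the same web graph with the same boundary, so their stranding fragments and flow contributions there are shared verbatim.
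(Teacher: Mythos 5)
Your proposal captures the right qualitative picture (two-term decomposition, the loop contributing a quantum integer, flow-by-flow bookkeeping of clockwise versus counterclockwise components), but there are two genuine gaps. First, the final assembly step is algebraically invalid: you propose to show, for each pair $(i,j)$, that the $(i,j)$-contribution to $G_-$ splits as the $(i,j)$-contribution to $G_+$ plus that to $G_0$, and then to "multiply these factors over all $(i,j)$." But a product of two-term sums is not the sum of two products, so even if each per-$(i,j)$ identity held you could not conclude $w_\partial(G_-)=w_\partial(G_+)+w_\partial(G_0)$. The sum defining $w_\partial(G_-)$ is indexed not by independent per-$(i,j)$ choices but by a single global datum -- the color $i$ on the weight-$1$ horizontal edges, which must satisfy $(\vec{b}_1)_i=1\neq(\vec{b}_2)_i$ for $G_-$ and $(\vec{b}_2)_i=1\neq(\vec{b}_1)_i$ for $G_+$ -- and each such choice determines all the flows at once. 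The decomposition has to happen at the level of this index set, by matching the multiset of total flow exponents of $G_-$ against the union of those of $G_+$ and of the loop.

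Second, and more seriously, the quantitative heart of the argument is missing. The paper reduces the lemma to an explicit multiset identity: writing $\flex(i)$ for the difference between the flow exponent of the stranding $S_i$ of the square and that of the parallel-lines graph (Lemma~\ref{lemma: flex to relate flow exponent for square to straigh}), one must show
\[ \{\flex(i): (\vec{b}_1)_i=1\neq(\vec{b}_2)_i\} \;=\; \{\flex(i): (\vec{b}_2)_i=1\neq(\vec{b}_1)_i\} \;\sqcup\; \{|k-l|-1, |k-l|-3,\ldots, -(|k-l|-1)\}, \]
where the last multiset is exactly the exponent set of the $\mathfrak{sl}_{|k-l|}$ loop from Lemma~\ref{lemma: loop graph calculation}. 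This is proved in the paper by a base case for sorted vectors (Corollary~\ref{corollary: base case for flex induction in square switch}) followed by an induction on adjacent transpositions carried out via a Dyck-path argument (Lemma~\ref{lemma: inductive step of square switch}). Your "sign-correct bijection with one exceptional orbit" is precisely this identity in disguise, but your proposal offers no mechanism to establish it; the local picture-chasing in the style of Figures~\ref{fig:invariant counter cap}--\ref{fig:invariant clock cap} tells you how a single flow's contribution changes between routings (that is the content of Lemma~\ref{lemma: square switch is like straight lines and loop}), but it does not by itself show that the resulting exponent multisets match up across \emph{different} choices of $i$, which is where the real work lies.
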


\begin{figure}[h]
\[\begin{array}{|c|c|c|}
\cline{1-3} G_- & G_+ & G_0 \\

   \cline{1-3}

    \raisebox{-70pt}{\begin{tikzpicture}[scale=.75]
    \draw[radius=.08, fill=black](0,0)circle;
     \draw[radius=.08, fill=black](0,2)circle;
        \draw[radius=.08, fill=black](2,0)circle;
     \draw[radius=.08, fill=black](2,2)circle;
     \node[color=gray] at (0,-1.25) {\tiny{$k$}};
     \node[color=gray] at (2,-1.25) {\tiny{$l$}};
      \node[color=gray] at (1,-.25) {\tiny{$1$}};
      \node[color=gray] at (1,2.25) {\tiny{$1$}};
       \node[color=gray] at (-.75,1.25) {\tiny{$k-1$}};
       \node[color=gray] at (2.75,1.25) {\tiny{$l+1$}};
  \node[color=gray] at (0,3.25) {\tiny{$k$}};
     \node[color=gray] at (2,3.25) {\tiny{$l$}};

       \node at (0,-1.8) {{$\vec{b}_1$}}; 
\node at (2,-1.8) {{$\vec{b}_2$}}; 
       \node at (0,3.75) {{$\vec{b}_1$}}; 
\node at (2,3.75) {{$\vec{b}_2$}}; 
\node at (-.5,.65) {{$\vec{z}_1$}}; 
\node at (2.5,.65) {{$\vec{z}_2$}}; 
\node at (1.1,.45) {{$\vec{y}$}}; 
\node at (1.1,1.5) {{$\vec{y}$}}; 

    \begin{scope}[thick,decoration={
    markings,
    mark=at position 0.5 with {\arrow{>}}}
    ] 
        \draw[postaction={decorate}, thick] (0,-1)--(0,0);
        \draw[postaction={decorate}, thick] (0,0)--(0,2);
        \draw[postaction={decorate}, thick] (0,2)--(0,3);
         \draw[postaction={decorate}, thick] (2,-1)--(2,0);
        \draw[postaction={decorate}, thick] (2,0)--(2,2);
        \draw[postaction={decorate}, thick] (2,2)--(2,3);
        \draw[postaction={decorate}, thick] (0,0)--(2,0);
        \draw[postaction={decorate}, thick] (2,2)--(0,2);
        
        \end{scope}
    \end{tikzpicture}} 

 &
       \raisebox{-70pt}{\begin{tikzpicture}[scale=.75]
    \draw[radius=.08, fill=black](0,0)circle;
     \draw[radius=.08, fill=black](0,2)circle;
        \draw[radius=.08, fill=black](2,0)circle;
     \draw[radius=.08, fill=black](2,2)circle;
     \node[color=gray] at (0,-1.25) {\tiny{$k$}};
     \node[color=gray] at (2,-1.25) {\tiny{$l$}};
      \node[color=gray] at (1,-.25) {\tiny{$1$}};
      \node[color=gray] at (1,2.25) {\tiny{$1$}};
       \node[color=gray] at (-.75,1.25) {\tiny{$k+1$}};
       \node[color=gray] at (2.75,1.25) {\tiny{$l-1$}};
  \node[color=gray] at (0,3.25) {\tiny{$k$}};
     \node[color=gray] at (2,3.25) {\tiny{$l$}};

       \node at (0,-1.8) {{$\vec{b}_1$}}; 
\node at (2,-1.8) {{$\vec{b}_2$}}; 
       \node at (0,3.75) {{$\vec{b}_1$}}; 
\node at (2,3.75) {{$\vec{b}_2$}}; 
\node at (-.5,.65) {{$\vec{z}'_1$}}; 
\node at (2.5,.65) {{$\vec{z}'_2$}}; 
\node at (1.1,.45) {{$\vec{x}$}}; 
\node at (1.1,1.6) {{$\vec{x}$}}; 

    \begin{scope}[thick,decoration={
    markings,
    mark=at position 0.5 with {\arrow{>}}}
    ] 
        \draw[postaction={decorate}, thick] (0,-1)--(0,0);
        \draw[postaction={decorate}, thick] (0,0)--(0,2);
        \draw[postaction={decorate}, thick] (0,2)--(0,3);
         \draw[postaction={decorate}, thick] (2,-1)--(2,0);
        \draw[postaction={decorate}, thick] (2,0)--(2,2);
        \draw[postaction={decorate}, thick] (2,2)--(2,3);
        \draw[postaction={decorate}, thick] (0,2)--(2,2);
        \draw[postaction={decorate}, thick] (2,0)--(0,0);
        
        \end{scope}
    \end{tikzpicture}} 
      & 
    \raisebox{-70pt}{\begin{tikzpicture}[scale=.75]
    \node[color=gray] at (1,3.25) {\tiny{$k$}};
     \node[color=gray] at (2,3.25) {\tiny{$l$}};
   \node[color=gray] at (0,1) {\tiny{$1$}};

       \node at (1,-1.8) {{$\vec{b}_1$}}; 
\node at (2,-1.8) {{$\vec{b}_2$}}; 
\node at (-1,.25) {{$\vec{x}$}}; 

    \begin{scope}[thick,decoration={
    markings,
    mark=at position 0.5 with {\arrow{>}}}
    ] 
        \draw[postaction={decorate}, thick] (1,-1)--(1,3);
         \draw[postaction={decorate}, thick] (2,-1)--(2,3);

     \draw[dashed, postaction={decorate}, thick, radius=0.75] (-.5,1.25) circle;

        \end{scope}
    \end{tikzpicture}} 
    \\
\hline
\end{array}
\]
\caption{Square-switch example: the case when the same  simples boundary strands enter and leave each side, with loop graph read as $\mathfrak{sl}_{|k-l|}$ web} \label{figure: strand switch relation}
\end{figure}

First, we explicitly compute the invariant vector associated to a loop graph.

\begin{lemma} \label{lemma: loop graph calculation}
Suppose $G_{n,k}$ is the loop $\mathfrak{sl}_n$ web graph with weight $k$ shown below.  Then the associated invariant is a scalar $w(G_{n,k}) \in \mathbb{C}(q)$ that equals the quantum binomial coefficient $\genfrac[]{0pt}{0}{n}{k}_q$ evaluated at $-q$.  In the case $k=1$ we have $w(G_{n,1}) = (-q)^{n-1} + (-q)^{n-3} + \cdots + (-q)^{-n+3} + (-q)^{-n+1}$.
\begin{center} \raisebox{0pt}{\begin{tikzpicture}[scale=.75]
    \node[color=gray] at (0,1) {\tiny{$k$}};
\node at (-1,.25) {{$\vec{b}_S$}}; 

    \begin{scope}[thick,decoration={
    markings,
    mark=at position 0.5 with {\arrow{>}}}
    ] 
     \draw[postaction={decorate}, thick, radius=0.75] (-.5,1.25) circle;
        \end{scope}
    \end{tikzpicture}}
    \end{center}
\end{lemma}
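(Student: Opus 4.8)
The plan is to evaluate the state-sum formula~\eqref{equation: definition of invariant vector} directly on $G_{n,k}$. Since $G_{n,k}$ has no boundary vertices its boundary weight vector is $\emptyset$ and $V(\emptyset)=\mathbb{C}(q)$, so $x_S=1$ for every stranding and $w(G_{n,k})=f(G_{n,k})=\sum_{S\in\mathcal{S}tr(G_{n,k})}(-q)^{x(S)-y(S)}$. First I would enumerate the valid strandings: by Theorem~\ref{thm: strandings are bijective with binary labelings} they are in bijection with binary labelings, and because $G_{n,k}$ has no interior vertices the conservation-of-flow condition is vacuous, so $\mathcal{S}tr(G_{n,k})$ is in bijection with the $\binom{n}{k}$ binary vectors $\vec{b}\in\{0,1\}^n$ having exactly $k$ ones.

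Next I would compute the flow exponent $x(\vec{b})-y(\vec{b})$ for the stranding indexed by $\vec{b}$. Since $G_{n,k}$ has no boundary every strand is closed, hence every flow is closed: for each pair $i<j$ the flow graph $L_{(i,j)}$ is either empty or a single closed loop that runs once around the unique edge, and by Corollary~\ref{cor:flow and binary labeling} it is nonempty precisely when $b_i\neq b_j$, running with the edge exactly when $b_i=1,b_j=0$ and against it exactly when $b_i=0,b_j=1$. Relative to the embedding one of these is clockwise and the other counterclockwise, so $\{x(\vec{b}),y(\vec{b})\}=\{\ell(\vec{b},\vec{1}-\vec{b}),\ell(\vec{1}-\vec{b},\vec{b})\}$; using $\ell(\vec{b},\vec{1}-\vec{b})+\ell(\vec{1}-\vec{b},\vec{b})=k(n-k)$ I get $x(\vec{b})-y(\vec{b})=\pm\bigl(k(n-k)-2\ell(\vec{b},\vec{1}-\vec{b})\bigr)$. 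The choice of sign does not affect the total, since the sum over all $\vec{b}$ with $|\vec{b}|=k$ is invariant under $\vec{b}\mapsto\vec{1}-\vec{b}$, which swaps $\ell(\vec{b},\vec{1}-\vec{b})$ and $\ell(\vec{1}-\vec{b},\vec{b})$; this is consistent with the edge-flip invariance $f(G)=f(G_\varphi)$ of Lemma~\ref{lem:invt and flips}.

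It then remains to recognize $\sum_{|\vec{b}|=k}(-q)^{k(n-k)-2\ell(\vec{b},\vec{1}-\vec{b})}$ as the asserted $q$-binomial. The statistic $\ell(\vec{b},\vec{1}-\vec{b})$ on binary words with $k$ ones is the classical inversion statistic, with generating function the Gaussian binomial $\sum_{|\vec{b}|=k}t^{\ell(\vec{b},\vec{1}-\vec{b})}=\sum_{\lambda\subseteq k\times(n-k)}t^{|\lambda|}$. Substituting $t=(-q)^{-2}=q^{-2}$ and applying the symmetry of the Gaussian binomial rewrites the sum as $(-1)^{k(n-k)}q^{-k(n-k)}$ times the Gaussian binomial in the variable $q^2$; comparing this with the power of $q$ (namely $-k(n-k)$) that turns the balanced quantum binomial $\genfrac[]{0pt}{0}{n}{k}_q$ into the Gaussian binomial in $q^2$ identifies it as $\genfrac[]{0pt}{0}{n}{k}_q$ with $q$ replaced by $-q$. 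The case $k=1$ is then immediate from $\genfrac[]{0pt}{0}{n}{1}_q=[n]_q=q^{n-1}+q^{n-3}+\cdots+q^{-n+1}$.

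The hard part will be the bookkeeping, not the ideas: I will need to fix, for the embedding as drawn, which of the two closed $(i,j)$-flow loops is clockwise, and then keep careful track of the $(-1)$ factors inside the $(-q)$-powers so that the result equals $\genfrac[]{0pt}{0}{n}{k}_q$ evaluated at $-q$ exactly, rather than up to a sign or a power of $q$. As a cross-check that sidesteps the embedding entirely, I would also verify the answer by realizing $G_{n,k}$ as a weight-$k$ cup graph with its two (adjacent) boundary vertices capped off, so that $w(G_{n,k})=C_k\bigl(f(\mathrm{cup}_k)\bigr)$ by Lemma~\ref{lem:cap equivariant} and the capping argument of Theorem~\ref{thm:invariant vector flow formula}; expanding $f(\mathrm{cup}_k)$ and the cap map $C_k$ through the CKM maps $C_{L,k}$, $D_k$, and $C^{L,n-k}$ of Figure~\ref{fig:CKMmaps} collapses to the same scalar without any clockwise/counterclockwise ambiguity.
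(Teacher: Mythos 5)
Your proposal is correct in substance but takes a genuinely different route from the paper. The paper also reduces to a sum of $(-q)^{x(S)-y(S)}$ over binary vectors with $k$ ones, but then proves the identity by induction on $n$: splitting the sum according to the last bit yields the recurrence $w(G_{n,k})=(-q)^{k}w(G_{n-1,k})+(-q)^{-n+k}w(G_{n-1,k-1})$, which is matched against the $q$-Pascal recurrence for $\genfrac[]{0pt}{0}{n}{k}_q$ evaluated at $-q$, with the base case $k=1$ computed explicitly from $\vec{b}=\vec{e}_i$ giving exponent $2i-n-1$. You instead evaluate the sum in closed form, recognizing $\ell(\vec{b},\vec{1}-\vec{b})$ as the inversion statistic whose generating function is the Gaussian binomial, and then using $\genfrac[]{0pt}{0}{n}{k}_q=q^{k(n-k)}\binom{n}{k}_{q^{-2}}$. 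Both work; the paper's induction needs only the $q$-Pascal rule and sidesteps any normalization between the balanced and Gaussian forms of the binomial, while your closed-form computation is more direct and, together with your cup--cap cross-check via $C_{n-k}\circ f(\mathrm{cup}_k)$, avoids the clockwise/counterclockwise bookkeeping entirely. Your exponent $k(n-k)-2\ell(\vec{b},\vec{1}-\vec{b})$ does agree with the paper's convention ($x(S)$ counts $01$-pairs, $y(S)$ counts $10$-pairs), so the sign issue you defer is in fact already resolved.

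One small repair: your claim that the sign of the exponent is irrelevant because the sum is invariant under $\vec{b}\mapsto\vec{1}-\vec{b}$ is not quite right, since complementation sends vectors with $k$ ones to vectors with $n-k$ ones and so does not permute the index set of your sum (unless $2k=n$). The involution you want is string reversal $\vec{b}\mapsto\vec{b}^{\mathrm{rev}}$, which preserves $|\vec{b}|=k$ and swaps $\ell(\vec{b},\vec{1}-\vec{b})$ with $\ell(\vec{1}-\vec{b},\vec{b})$; equivalently, invoke the palindromicity of the Gaussian binomial. With that substitution your argument goes through.
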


\begin{proof}
The strandings of $G_{n,k}$ are bijective with $n$-bit binary strings with exactly $k$ ones.  All strands are closed so $x(S)$ counts the number of pairs $i<j$ for which the binary vector $\vec{b}_S$ has $(\vec{b}_S)_i (\vec{b}_S)_j = 01$ respectively $y(S)$ and $(\vec{b}_S)_i (\vec{b}_S)_j = 10$.  For instance, if $\vec{b}_S$ is the $n$-bit binary vector $\vec{e}_i$ with $1$ in entry $i$ and $0$ elsewhere then $x(S)-y(S) = (i-1)-(n-i) = 2i-n-1$.  This shows that for any $n$ and $k=1$ we have
\[w(G_{n,1}) = (-q)^{n-1} + (-q)^{n-3} + \cdots + (-q)^{-n+3} + (-q)^{-n+1}\]
which is $\genfrac[]{0pt}{0}{n}{1}_q$ evaluated at $-q$ as desired. Splitting the sum depending on the last bit of $\vec{b}_S$ gives the recurrence
\begin{align*}  
w(G_{n,k}) &= \sum_{S:(\vec{b}_S)_n=0} (-q)^{x(S)-y(S)} +\sum_{S:(\vec{b}_S)_n=1} (-q)^{x(S)-y(S)} \\
&= (-q)^{k} w(G_{n-1,k}) + (-q)^{-n+k}w(G_{n-1,k-1})
\end{align*}
A short calculation shows that quantum binomial coefficients satisfy the recurrence
$$
q^{k}\genfrac[]{0pt}{0}{n-1}{k}_q + q^{-n+k}\genfrac[]{0pt}{0}{n-1}{k-1}_q = \genfrac[]{0pt}{0}{n}{k}_q.
$$
Replacing $q$ by $-q$ gives the corresponding recurrence for the coefficients above.
We verified the base case $k=1$ above, so induction completes the proof.
\end{proof}

Our proof of Lemma~\ref{lemma: overall square switch argument} has three main steps.  First, Lemma~\ref{lemma: square switch is like straight lines and loop} shows that a strand in one of the web graph fragments with a square contributes the same as the strand in the web graph fragment with parallel lines together with the strand around a loop.  Then Lemma~\ref{lemma: flex to relate strand exponent for square to straigh} gives a combinatorial way to compute this strand contribution, Corollary~\ref{corollary: base case for flex induction in square switch} applies it to a specific pair of boundary vectors $\vec{b}_1, \vec{b}_2$, and Lemma~\ref{lemma: inductive step of square switch} extends combinatorially to all vectors via the natural $S_n$ action on $n$-bit binary vectors.  Finally, we combine these results to complete the proof.

Strands allow us to ignore extraneous details from many calculations.  For instance, if $G$ is a web graph with an interior square, we can analyze different ways that strands could run through the square, using properties of the strands in isolation from the rest of the web graph. Our next result shows that the strand contribution in these two cases differs by a closed strand around the interior square.

   \begin{lemma} \label{lemma: square switch is like straight lines and loop}
    Suppose a stranded web graph contains an interior square and that the $(i,j)$ strands alternate direction in/out along the four boundary edges (equivalently entries $i, j$ of the binary labels on the boundary alternate $10, 01$).  There are two possible ways these $(i,j)$ strands can pass through the square, shown in the schematics below.  Label the five regions around the square as shown below.  
\begin{center}
\begin{tikzpicture}
    \draw[thick] (0,0) -- (0,2);
    \draw[thick] (1,0) -- (1,2);
    \draw[thick, dotted] (0,0.5) -- (1,.5);
    \draw[thick, dotted] (0,1.5) -- (1,1.5);
    \node at (-2.5,1) {\shortstack{Vertical\\ strands}};
    \node at (0.5,1) {$A$};
    \node at (-0.5,1) {$A_1$};
    \node at (1.5,1) {$A_2$};
    \node at (0.5,2) {$B$};
    \node at (0.5,0) {$U$};
    
\end{tikzpicture} \hspace{.5in}
\begin{tikzpicture}
    \draw[thick] (0,0) -- (0,0.5);
    \draw[thick] (1,0) -- (1,0.5);
    \draw[thick] (0,1.5) -- (0,2);
    \draw[thick] (1,1.5) -- (1,2);
    \draw[thick] (0,0.5) -- (1,.5);
    \draw[thick] (0,1.5) -- (1,1.5);
    \draw[thick, dotted] (0,0.5) -- (0,1.5);
    \draw[thick, dotted] (1,0.5) -- (1,1.5);
       \node at (3.5,1) {\shortstack{Horizontal\\ strands}};
           \node at (0.5,1) {$A$};
    \node at (-0.5,1) {$A_1$};
    \node at (1.5,1) {$A_2$};
    \node at (0.5,2) {$B$};
    \node at (0.5,0) {$U$};

\end{tikzpicture}
\end{center}
Let $x_{i,j}(S_V)-y_{i,j}(S_V)$ denote the contribution of the vertical $(i,j)$ strands in this schematic, respectively $S_H$ and horizontal.   Consider the closed curve around region $A$ that agrees in orientation with the vertical strands around the square, and define $\mu_{(i,j), V}(A)$ to be $1$ if this curve is directed clockwise, respectively $-1$ and counterclockwise.   Then 
    \[x_{(i,j)}(S_V)-y_{(i,j)}(S_V) +\mu_{(i,j), V}(A)= x_{(i,j)}(S_H)-y_{(i,j)}(S_H)\]
    \end{lemma}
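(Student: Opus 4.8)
```latex
\begin{proof}[Proof proposal]
The plan is to analyze the $(i,j)$ flow locally, treating everything outside the square as a black box whose only relevant data are the in/out pattern on the four boundary edges and the way flow strands connect up outside. Since the $(i,j)$ flow forms a disjoint union of directed simple closed curves and directed paths (Lemma~\ref{lem: flow graph}), and since entries $i,j$ of the boundary binary labels alternate $10, 01$, exactly two of the four boundary edges of the square region carry $(i,j)$ flow pointing ``in'' toward the square and two point ``out.'' First I would enumerate, up to the symmetry of the schematic, the ways these two in-flows and two out-flows can be routed: through $G_-$ (the square with horizontal edges active in the vertical-flow picture) the flow must run along the two vertical sides, and through $G_+$ it runs along the two horizontal sides. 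In each case the strands leaving the square must reconnect outside, and because flows are noncrossing there are only finitely many topological types of reconnection; these are exactly the configurations drawn in Figures~\ref{fig:invariant counter cap} and~\ref{fig:invariant clock cap} (and their reflections), so I would reuse that casework rather than redoing it.

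The core computation is a bookkeeping comparison. Fix one such external reconnection pattern. Switching between the vertical routing $S_V$ and the horizontal routing $S_H$ inside the square does not change which boundary edges are joined to which outside the square, hence does not change the \emph{set} of connected components of the $(i,j)$ flow graph that are \emph{paths} (those terminating on the boundary of the ambient web): they contribute identically to both $x$ and $y$. What can change is: (a) whether a given component is a closed loop or a path, (b) the cyclic orientation (clockwise vs counterclockwise) of a closed loop, and (c) whether two closed loops merge into one or a single loop splits in two. I would show that in every case the \emph{net} change $\bigl(x_{(i,j)}(S_H)-y_{(i,j)}(S_H)\bigr) - \bigl(x_{(i,j)}(S_V)-y_{(i,j)}(S_V)\bigr)$ equals precisely $\mu_{(i,j),V}(A)$, the signed (clockwise $=+1$) orientation of the little square loop $A$ as inherited from the vertical flow. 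The key local identity is that the horizontal picture equals the vertical picture with the square loop $A$ ``spliced in'': cutting the two horizontal strands of $S_H$ and rejoining them along the square's vertical sides is the same operation as taking $S_V$ and doing an oriented connect-sum with the boundary cycle $\partial A$ oriented to match $S_V$ around the square. Since $x$ counts only \emph{closed} clockwise components while $y$ counts \emph{all} counterclockwise components, inserting one extra closed clockwise loop changes $x-y$ by $+1$ and inserting one extra closed counterclockwise loop changes it by $-1$; the merging/splitting and reorientation cases then collapse to exactly this, after checking the orientations propagate correctly around region $A$ versus regions $A_1, A_2, B, U$.

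The main obstacle I anticipate is the orientation bookkeeping in the merge/split cases — when inserting the loop $\partial A$ causes a path to become a loop, or two loops to fuse, one must verify that the resulting component's clockwise/counterclockwise status, read off from the ambient planar embedding, differs from the ``before'' status in exactly the way predicted by $\mu_{(i,j),V}(A)$, and that the regions $A_1$ and $A_2$ (which absorb the old horizontal/vertical strands) never contribute a spurious extra closed component. I would handle this by the same technique used in the proof of Theorem~\ref{thm:invariant vector flow formula}: draw each of the four connection types from Figures~\ref{fig:invariant counter cap}--\ref{fig:invariant clock cap} explicitly with both routings superimposed, and check the equality $x_{(i,j)}-y_{(i,j)}$ case by case. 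This is routine once the connect-sum reformulation above is in place, and the asymmetric treatment of clockwise closed loops versus all counterclockwise loops (Remark~\ref{rem:symmetry}) is exactly what makes the single term $\mu_{(i,j),V}(A)$ come out right rather than a more complicated correction.
\end{proof}
```
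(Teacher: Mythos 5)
Your overall strategy --- a finite planar case analysis of how the two incoming and two outgoing $(i,j)$ flow strands can reconnect outside the square --- is the same in spirit as the paper's, but two steps as written do not hold up. First, you cannot reuse the casework from Figures~\ref{fig:invariant counter cap} and~\ref{fig:invariant clock cap}: those enumerate the external configurations for a \emph{cap}, which has only two flow endpoints on the axis, whereas the square has four flow-carrying stubs (two per vertical strand), so the set of noncrossing external reconnection patterns is genuinely different and larger. You would have to redo this enumeration from scratch, and it must also account for where the point at infinity sits (which of $A_1, A_2, B, U$ is unbounded determines clockwise versus counterclockwise), something your sketch never addresses.

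Second, your ``key local identity'' --- that passing from $S_V$ to $S_H$ is an oriented connect-sum with $\partial A$ and therefore changes $x-y$ by $\mu_{(i,j),V}(A)$ --- is essentially a restatement of the lemma, not a reduction of it. Connect-summing a loop into a configuration of oriented planar curves can merge two components, split one, or turn a path into a loop, and the effect on $x-y$ in each of those events is exactly what has to be computed; you acknowledge this but defer it as ``routine.'' The paper does this work by a different organizing device: it labels the regions, observes that the orientations of the curves bounding $A_1$, $A_2$, and $A_1\cup A\cup A_2$ are all correlated (and opposite to those bounding $B$ and $A$), via winding-number comparisons, and then splits into the cases $A_1=A_2$ versus $A_1\neq A_2$ and $B$ bounded versus $B=U$. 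That correlation of orientations is precisely the content your ``after checking the orientations propagate correctly'' glosses over, and it is where the asymmetry between closed clockwise loops and all counterclockwise components actually gets used. Your plan is repairable, but as it stands the central verification is missing and the cited casework does not apply.
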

    
    \begin{proof}
Note that switching the roles of vertical and horizontal strands in the schematic reverses the direction of the closed curve bounding the square and so negates $\mu_{(i,j), V}(A)$. But $-\mu_{(i,j), V}(A) = \mu_{(i,j),H}(A)$. Thus the formula holds whichever configuration is designated ``vertical."  

At least one of the regions around the square contains the point at infinity.  Without loss of generality, say this is region $U$.  

We start with several observations about the regions and the direction of the strands bounding the regions.  First, since $U$ contains the point at infinity, both $A_1$ and $A_2$ are bounded (and possibly $A_1 = A_2$), while $B$ may either be bounded or have $B=U$. Second, one of the vertical strands points up while the other points down, by hypothesis.  So the vertical strand that encloses region $A_1$ runs clockwise if and only if the same is true for the vertical strand that encloses region $A_2$.  (This, like the following claims, can also be confirmed by comparing the winding number in the regions on either side of a strand.)  By the same argument, the curves in (1) are all clockwise or all counterclockwise, and the curves in (2) are also all clockwise or all counterclockwise:
\begin{enumerate}
    \item the vertical strand enclosing region $A_1$, the vertical strand enclosing region $A_2$, the horizontal strand enclosing region $A_1 \cup A \cup A_2$
    \item the horizontal strand enclosing region $B$, the curve around $A$ that extends the direction of the vertical strands around the square
\end{enumerate}
Moreover, the curves in (1) are clockwise if and only if  those in (2) are counterclockwise.

If $A_1 = A_2$ forms a single region then the vertical strands form a single connected component, which must bound $B$ from above since $U$ contains the point at infinity.  Thus the horizontal strands form two connected components, one enclosing region $A_1 \cup A \cup A_2$ and the other enclosing region $B$.  The region $A_1=A_2$ is on the boundary if and only if $A_1 \cup A \cup A_2$ is, so points (1) and (2) prove the claim in this case.

Now suppose the vertical strands form two components, equivalently $A_1$ and $A_2$ form two disjoint regions.  These two disjoint components do not contain $B$ so the horizontal strand bounding $B$ contributes to $x_{(i,j)}(S_H)-y_{(i,j)}(S_H)$ only if it starts and ends on the boundary and runs counterclockwise.  In that case, the vertical strands must also start and end on the boundary, but run clockwise --- as does the horizontal strand bounding $A_1 \cup A \cup A_2$.  Since $\mu_{(i,j), V}(A) = -1$ in this case, the claim follows. Finally, the horizontal strand bounding $B$ contributes nothing to  $x_{(i,j)}(S_H)-y_{(i,j)}(S_H)$ if either $B=U$ (and the horizontal strand is a single connected component) or $B$ is on the boundary and the strand runs clockwise around it.  In both cases, the horizontal strands merge regions $A_1$ and $A_2$ thus eliminating the contribution of one region.  In other words, the difference
   \[\big(x_{(i,j)}(S_V)-y_{(i,j)}(S_V)\big) - \big(x_{(i,j)}(S_H)-y_{(i,j)}(S_H)\big) = \mu_{(i,j), V}(A)\]
This completes the proof.
\end{proof}
    
We can use the previous lemma to obtain an explicit combinatorial formula to compute strand contributions for these web graph fragments.  

\begin{lemma} \label{lemma: flex to relate strand exponent for square to straigh}
Suppose $\vec{b}_1$ and $\vec{b}_2$ are two binary vectors with $(\vec{b}_1)_i \neq (\vec{b}_2)_i$.  Consider the web graph fragment in the center stranded by $S_0$.  Exactly one of the web graphs on the left and right can be stranded as shown, with vertical edges around each square completed in the unique way that conserves binary labels: 
\begin{center}
          
    \raisebox{-70pt}{\begin{tikzpicture}[scale=.5]
    \draw[radius=.08, fill=black](0,0)circle;
     \draw[radius=.08, fill=black](0,2)circle;
        \draw[radius=.08, fill=black](2,0)circle;
     \draw[radius=.08, fill=black](2,2)circle;
     \node[color=gray] at (0,-1.25) {\tiny{$k$}};
     \node[color=gray] at (2,-1.25) {\tiny{$l$}};
      \node[color=gray] at (1,-.25) {\tiny{$1$}};
      \node[color=gray] at (1,2.25) {\tiny{$1$}};
       \node[color=gray] at (-.75,1.25) {\tiny{$k-1$}};
       \node[color=gray] at (2.75,1.25) {\tiny{$l+1$}};
  \node[color=gray] at (0,3.25) {\tiny{$k$}};
     \node[color=gray] at (2,3.25) {\tiny{$l$}};
     \node at (-2.5,1.1) {$S_i^-$};
     \node at (-2.75, 0.4) {$(\vec{b}_1)_i=1$};

       \node at (0,-1.8) {{$\vec{b}_1$}}; 
\node at (2,-1.8) {{$\vec{b}_2$}}; 
       \node at (0,3.75) {{$\vec{b}_1$}}; 
\node at (2,3.75) {{$\vec{b}_2$}}; 
\node at (1.1,.45) {{$\vec{e}_i$}}; 
\node at (1.1,1.5) {{$\vec{e}_i$}}; 

    \begin{scope}[thick,decoration={
    markings,
    mark=at position 0.5 with {\arrow{>}}}
    ] 
        \draw[postaction={decorate}, thick] (0,-1)--(0,0);
        \draw[postaction={decorate}, thick] (0,0)--(0,2);
        \draw[postaction={decorate}, thick] (0,2)--(0,3);
         \draw[postaction={decorate}, thick] (2,-1)--(2,0);
        \draw[postaction={decorate}, thick] (2,0)--(2,2);
        \draw[postaction={decorate}, thick] (2,2)--(2,3);
        \draw[postaction={decorate}, thick] (0,0)--(2,0);
        \draw[postaction={decorate}, thick] (2,2)--(0,2);
        
        \end{scope}
    \end{tikzpicture}} 
      \hspace{.25in}
    \raisebox{-70pt}{\begin{tikzpicture}[scale=.5]
    \node[color=gray] at (1,3.25) {\tiny{$k$}};
     \node[color=gray] at (2,3.25) {\tiny{$l$}};

   \node at (-.5,1.1) {$S_0$};

       \node at (1,-1.8) {{$\vec{b}_1$}}; 
\node at (2,-1.8) {{$\vec{b}_2$}};

    \begin{scope}[thick,decoration={
    markings,
    mark=at position 0.5 with {\arrow{>}}}
    ] 
        \draw[postaction={decorate}, thick] (1,-1)--(1,3);
         \draw[postaction={decorate}, thick] (2,-1)--(2,3);

        \end{scope}
    \end{tikzpicture}} 
 \hspace{.25in}
       \raisebox{-70pt}{\begin{tikzpicture}[scale=.5]
    \draw[radius=.08, fill=black](0,0)circle;
     \draw[radius=.08, fill=black](0,2)circle;
        \draw[radius=.08, fill=black](2,0)circle;
     \draw[radius=.08, fill=black](2,2)circle;
     \node[color=gray] at (0,-1.25) {\tiny{$k$}};
     \node[color=gray] at (2,-1.25) {\tiny{$l$}};
      \node[color=gray] at (1,-.25) {\tiny{$1$}};
      \node[color=gray] at (1,2.25) {\tiny{$1$}};
       \node[color=gray] at (-.75,1.25) {\tiny{$k+1$}};
       \node[color=gray] at (2.75,1.25) {\tiny{$l-1$}};
           \node at (4,1.1) {$S_i^+$};
           \node at (4.2, 0.3) {$(\vec{b}_2)_i=1$}; 
  \node[color=gray] at (0,3.25) {\tiny{$k$}};
     \node[color=gray] at (2,3.25) {\tiny{$l$}};

       \node at (0,-1.8) {{$\vec{b}_1$}}; 
\node at (2,-1.8) {{$\vec{b}_2$}}; 
       \node at (0,3.75) {{$\vec{b}_1$}}; 
\node at (2,3.75) {{$\vec{b}_2$}}; 
\node at (1.1,.45) {{$\vec{e}_i$}}; 
\node at (1.1,1.6) {{$\vec{e}_i$}}; 

    \begin{scope}[thick,decoration={
    markings,
    mark=at position 0.5 with {\arrow{>}}}
    ] 
        \draw[postaction={decorate}, thick] (0,-1)--(0,0);
        \draw[postaction={decorate}, thick] (0,0)--(0,2);
        \draw[postaction={decorate}, thick] (0,2)--(0,3);
         \draw[postaction={decorate}, thick] (2,-1)--(2,0);
        \draw[postaction={decorate}, thick] (2,0)--(2,2);
        \draw[postaction={decorate}, thick] (2,2)--(2,3);
        \draw[postaction={decorate}, thick] (0,2)--(2,2);
        \draw[postaction={decorate}, thick] (2,0)--(0,0);
        
        \end{scope}
    \end{tikzpicture}} 
    \end{center}
Define the function $\flex(i)$ from $\vec{b}_1$ and $\vec{b}_2$ by the rule
\[\flex(i) = \hspace{-.1in} \begin{array}{rl} 
&  + |\{j < i :  (\vec{b}_1)_j = 1 \neq (\vec{b}_2)_j \}| - |\{j < i : (\vec{b}_1)_j = 0 \neq (\vec{b}_2)_j\}|  \\ &  - |\{j > i :  (\vec{b}_1)_j = 1 \neq (\vec{b}_2)_j \}| + |\{j>i:  (\vec{b}_1)_j = 0 \neq (\vec{b}_2)_j\}| \end{array} \]
Then
\begin{align*}
\big(x(S_i^-)-y(S_i^-)\big)  - \big( x(S_0)-y(S_0)\big) &= \flex(i)\\ 
&= \big(x(S_i^+)-y(S_i^+)\big) - \big( x(S_0)-y(S_0)\big).
\end{align*} 
\end{lemma}

\begin{proof}
Any strand that only passes over vertical edges in $S_i^-$ contributes the same amount to the strand exponent for $S_i^-$ as for $S_0$.  Thus the difference between the strand exponents is determined exactly by strands involving $i$ in $S_i^-$.  

Moreover, strands can't start or end at interior vertices.  This means that if an $(i,j)$ or $(j,i)$ strand is supported only on one of $\vec{b}_1$ and $\vec{b}_2$ then this strand makes the same contribution to the strand exponent in $S_i^-$ as in $S_0$ even if its path through the square uses horizontal edges.  By hypothesis $(\vec{b}_1)_i \neq (\vec{b}_2)_i$ because $\vec{e}_i$ labels the horizontal edges.  We conclude the strands that contribute differently to $S_i^-$ versus $S_0$ are those for which $j$ satisfies $(\vec{b}_1)_j \neq (\vec{b}_2)_j$. 

If the edges labeled $\vec{b}_1$ and $\vec{b}_2$ have no vertical strands, i.e. $(\vec{b}_1)_i = (\vec{b}_1)_j$ respectively $\vec{b}_2$, then $S_i^-$ has a closed $(i,j)$ strand around its square that is clockwise if $j<i$ and counterclockwise else.  This gives two summands of $\flex(i)$.

If instead all of the edges labeled $\vec{b}_1$ and $\vec{b}_2$ have vertical strands, then the strands must alternate direction, again because $(\vec{b}_1)_i=1 \neq (\vec{b}_2)_i$.  Thus Lemma~\ref{lemma: square switch is like straight lines and loop} applies and describes the other two terms.  This completes the proof.

A similar argument holds for $S_i^+$ but since left and right are exchanged, signs change.  We obtain: 
\[\begin{array}{rl} \big(x(S_i^+)-y(S_i^+)\big) &- \big( x(S_0)-y(S_0)\big) = \\
& \hspace{-.2in} - |\{j < i :  (\vec{b}_2)_j = 1 \neq (\vec{b}_1)_j \}| + |\{j < i : (\vec{b}_2)_j = 0 \neq (\vec{b}_1)_j\}| \\ & \hspace{-.2in}  + |\{j > i :  (\vec{b}_2)_j = 1 \neq (\vec{b}_1)_j \}| - |\{j>i:  (\vec{b}_2)_j = 0 \neq (\vec{b}_1)_j\}| \end{array}\]
Since $\vec{b}_1$ is binary, each entry is $0$ or $1$ so this quantity is $\flex(i)$.  This completes the proof.
\end{proof}

Observe that $\flex$ depends only on the entries $j$ for which $(\vec{b}_1)_j \neq (\vec{b}_2)_j$. This gives rise to the following notation for the symmetric difference.

\begin{definition} \label{definition: symmetric difference}
Fix two binary vectors $\vec{b}_1$ and $\vec{b}_2$.  The \emph{symmetric difference} $S_{\vec{b}_1 \Delta \vec{b}_2}$ is the set
\[S_{\vec{b}_1 \Delta \vec{b}_2} = \{j: (\vec{b}_1)_j \neq (\vec{b}_2)_j \}\]
Suppose $S_{\vec{b}_1 \Delta \vec{b}_2} = \{i_1, \ldots, i_\nu\}$.  The \emph{symmetric difference vector} $\vec{b}_1 \Delta \vec{b}_2 \in \{0, 1\}^{\nu}$ is defined for each $1 \leq j \leq \nu$ by
\[(\vec{b}_1 \Delta \vec{b}_2)_j = (\vec{b}_1)_{i_j}\]
and we denote the function $flex_{\vec{b}_1 \Delta \vec{b}_2, \vec{1} - \vec{b}_1 \Delta \vec{b}_2}$ by $flex_{\vec{b}_1 \Delta \vec{b}_2}$.   We also use the multisets
\[S_{\vec{b}_1 - \vec{b}_2} = \{\flex(i): i \in S_{\vec{b}_1 \Delta \vec{b}_2} \textup{ and } (\vec{b}_1)_i = 1 \} \textup{ and }\] \[ S_{\vec{b}_2 - \vec{b}_1} = \{\flex(i): i \in S_{\vec{b}_1 \Delta \vec{b}_2} \textup{ and } (\vec{b}_1)_i = 0\} \]
\end{definition}

\begin{example}
If $\vec{b}_1 = 1101101001$ and $\vec{b}_2 = 0100011010$ then $S_{\vec{b}_1 \Delta \vec{b}_2} = \{1,4,5,6, 9, 10\}$ and $\vec{b}_1 \Delta \vec{b}_2 = 111001$.  We have the sequences $$(flex_{\vec{b}_1 \Delta \vec{b}_2}(i)) = (-1, 1, 3, 3, 1, 1) \textup{ and } (\flex(i)) = (-1, 0, 0, 1, 3, 3, 2, 2, 1, 1).$$
\end{example}

Note that if $S_{\vec{b}_1 \Delta \vec{b}_2} = \{i_1 < i_2 < \ldots < i_\nu\}$ then $flex_{\vec{b}_1 \Delta \vec{b}_2}(j) = \flex(i_j)$.  

We obtain the following corollaries.  The first formalizes the observation that $\flex$ depends on the symmetric difference.

\begin{corollary} \label{corollary: flex only depends on symm diff}
Let $\vec{b}_1, \vec{b}_2, \vec{b}_1', \vec{b}_2'$ be four vectors with $S_{\vec{b}_1 \Delta \vec{b}_2} = \{i_1 < \ldots < i_\nu\}$ and $S_{\vec{b}'_1 \Delta \vec{b}'_2} = \{j_1 < \ldots < j_\nu\}$.  If the symmetric difference vectors $\vec{b}_1 \Delta \vec{b}_2 = \vec{b}'_1 \Delta \vec{b}'_2$
then $\flex(i_k) = flex_{\vec{b}'_1, \vec{b}'_2}(j_k)$ for all $1 \leq k \leq \nu$.
\end{corollary}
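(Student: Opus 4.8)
The statement is essentially a bookkeeping consequence of the explicit formula for $\flex$ from Lemma~\ref{lemma: flex to relate flow exponent for square to straigh}. The plan is to observe that $\flex(i)$, as defined there, only ever references indices $j$ with $(\vec{b}_1)_j \neq (\vec{b}_2)_j$, i.e. indices in $S_{\vec{b}_1 \Delta \vec{b}_2}$, and distinguishes them according to whether $(\vec{b}_1)_j = 1$ or $(\vec{b}_1)_j = 0$ and whether $j < i$ or $j > i$. So $\flex$ is really a function of the sequence of values $\big( (\vec{b}_1)_j : j \in S_{\vec{b}_1 \Delta \vec{b}_2} \big)$ together with the position of $i$ within that sequence --- and by Definition~\ref{definition: symmetric difference} that sequence is exactly the symmetric difference vector $\vec{b}_1 \Delta \vec{b}_2$.

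Concretely, I would write $S_{\vec{b}_1 \Delta \vec{b}_2} = \{i_1 < \cdots < i_\nu\}$, fix $k$, and set $i = i_k$. Then for the four counting quantities in the formula for $\flex(i)$:
\[ |\{j < i_k : (\vec{b}_1)_j = 1 \neq (\vec{b}_2)_j\}| = |\{m < k : (\vec{b}_1 \Delta \vec{b}_2)_m = 1\}|, \]
and similarly the other three become counts of positions $m \lessgtr k$ in $\vec{b}_1 \Delta \vec{b}_2$ with value $1$ or $0$. Since $flex_{\vec{b}_1 \Delta \vec{b}_2} = flex_{\vec{b}_1 \Delta \vec{b}_2, \vec{1} - \vec{b}_1 \Delta \vec{b}_2}$ is by definition the $\flex$ function attached to the pair $(\vec{b}_1 \Delta \vec{b}_2, \vec{1} - \vec{b}_1 \Delta \vec{b}_2)$ --- whose symmetric difference set is all of $\{1, \ldots, \nu\}$ and whose symmetric difference vector is $\vec{b}_1 \Delta \vec{b}_2$ itself --- the same four substitutions show $flex_{\vec{b}_1 \Delta \vec{b}_2}(k)$ equals the same expression in terms of $\vec{b}_1 \Delta \vec{b}_2$. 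Hence $\flex(i_k) = flex_{\vec{b}_1 \Delta \vec{b}_2}(k)$, which is the remark immediately preceding this corollary, and running the identical argument for $\vec{b}'_1, \vec{b}'_2$ gives $flex_{\vec{b}'_1, \vec{b}'_2}(j_k) = flex_{\vec{b}'_1 \Delta \vec{b}'_2}(k)$. Since by hypothesis $\vec{b}_1 \Delta \vec{b}_2 = \vec{b}'_1 \Delta \vec{b}'_2$, the two right-hand sides are literally the same function evaluated at the same $k$, so $\flex(i_k) = flex_{\vec{b}'_1, \vec{b}'_2}(j_k)$.

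There is essentially no obstacle here; the only thing requiring care is matching up indices correctly --- translating ``the $j$th index of $S_{\vec{b}_1 \Delta \vec{b}_2}$ in increasing order'' with ``the $j$th coordinate of $\vec{b}_1 \Delta \vec{b}_2$'' --- and checking the edge behavior when $i = i_k$ is itself among the indices being counted (it is excluded in all four terms since they require $j < i$ or $j > i$ strictly, matching the strict inequalities $m < k$, $m > k$). I would present this as a short paragraph of index translation rather than a computation, citing the formula in Lemma~\ref{lemma: flex to relate flow exponent for square to straigh} and Definition~\ref{definition: symmetric difference} for the definitions of $flex_{\vec{b}_1 \Delta \vec{b}_2}$ and the symmetric difference vector.
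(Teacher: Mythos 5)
Your proposal is correct and is exactly the argument the paper intends: the paper states this corollary without proof as "formalizing the observation" that $\flex$ only references indices in the symmetric difference, classified by the value of $(\vec{b}_1)_j$ and by $j \lessgtr i$, which is precisely the index translation you carry out. Your write-up just makes explicit what the paper leaves as an observation (including the identity $\flex(i_k) = flex_{\vec{b}_1 \Delta \vec{b}_2}(k)$, which the paper records in the remark following Definition~\ref{definition: symmetric difference}).
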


Analyzing more carefully, we have the following.

\begin{corollary} \label{corollary: base case for flex induction in square switch}
Suppose $|S_{\vec{b}_1 \Delta \vec{b}_2}|=\nu$ and $\vec{b}_1, \vec{b}_2$ have $\vec{b}_1 \Delta \vec{b}_2 = 1^k0^{\nu-k}$ for some $k$.   If $k \geq \nu-k$ then 
\[ S_{\vec{b}_1 - \vec{b}_2} = S_{\vec{b}_2 - \vec{b}_1} \cup \{2k-\nu+1-2j: 1 \leq j \leq 2k-\nu \} \]
is a disjoint union of sets; similarly if $\nu-k>k$ then the disjoint union is
\[ S_{\vec{b}_2 - \vec{b}_1} = S_{\vec{b}_1 - \vec{b}_2} \cup \{\nu-2k+1-2j: 1 \leq j \leq \nu-2k \} \]
\end{corollary}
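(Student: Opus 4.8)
The plan is to compute $\flex(i_k) = flex_{\vec{b}_1 \Delta \vec{b}_2}(k)$ directly from the definition in Lemma~\ref{lemma: flex to relate flow exponent for square to straigh} in the special case $\vec{b}_1 \Delta \vec{b}_2 = 1^k 0^{\nu-k}$, then read off the two multisets $S_{\vec{b}_1 - \vec{b}_2}$ and $S_{\vec{b}_2 - \vec{b}_1}$ from Definition~\ref{definition: symmetric difference} and compare them. By Corollary~\ref{corollary: flex only depends on symm diff}, it suffices to work with the symmetric difference vector itself, so I may as well assume $\vec{b}_1 = 1^k 0^{\nu-k}$ and $\vec{b}_2 = 0^k 1^{\nu-k}$ as length-$\nu$ vectors, meaning every position lies in the symmetric difference and $flex_{\vec{b}_1 \Delta \vec{b}_2}(j) = \flex(j)$ for all $j$.

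First I would evaluate $\flex(j)$ on this vector. Since $(\vec{b}_1)_j = 1$ exactly when $j \leq k$ and $(\vec{b}_1)_j = 0$ exactly when $j > k$, the four counts in the definition of $\flex$ simplify: for $j \leq k$ the positions $j' < j$ with $(\vec{b}_1)_{j'}=1$ number $j-1$, the positions $j' > j$ with $(\vec{b}_1)_{j'}=1$ number $k-j$, and the positions $j'>j$ with $(\vec{b}_1)_{j'}=0$ number $\nu - k$ (there are no $j' < j$ with $(\vec{b}_1)_{j'}=0$), giving $\flex(j) = (j-1) - (k-j) + (\nu-k) = 2j - k - 1 + \nu - k = 2j + \nu - 2k - 1$. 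Wait --- I should recompute signs carefully against the stated formula $\flex(i) = |\{j<i: (\vec{b}_1)_j = 1 \neq (\vec{b}_2)_j\}| - |\{j<i: (\vec{b}_1)_j = 0 \neq (\vec{b}_2)_j\}| - |\{j>i: (\vec{b}_1)_j = 1 \neq (\vec{b}_2)_j\}| + |\{j>i: (\vec{b}_1)_j = 0 \neq (\vec{b}_2)_j\}|$: for $j \leq k$ this is $(j-1) - 0 - (k-j) + (\nu-k)$, and for $j > k$ it is $k - (j-1-k) - 0 + (\nu - j)$. So the $\flex$ values on the ones of $\vec{b}_1$ (positions $1,\dots,k$) form the arithmetic progression $\{2j + \nu - 2k - 1 : 1 \leq j \leq k\}$, i.e. $\{\nu - 2k + 1, \nu - 2k + 3, \dots, \nu - 1\}$, and the values on the zeros (positions $k+1,\dots,\nu$) form $\{2k - 2j + \nu + 1 : k+1 \le j \le \nu\}$, i.e. $\{2k - \nu + 1, 2k - \nu + 3, \dots, 2k - 1\}$ read in decreasing order. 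Thus $S_{\vec{b}_1 - \vec{b}_2}$ and $S_{\vec{b}_2 - \vec{b}_1}$ are each arithmetic progressions with common difference $2$, of lengths $k$ and $\nu - k$ respectively, with the first centered at $\nu - k$ (its elements are $\nu - k - (k-1), \dots, \nu - k + (k-1)$ stepping by $2$) and the second centered at $k - \nu/2 \cdot \dots$ --- more cleanly, the second has elements $(2k-\nu+1), (2k-\nu+3), \dots, (2k-1)$.

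The final step is the set comparison. When $k \geq \nu - k$, I claim $S_{\vec{b}_2 - \vec{b}_1} \subseteq S_{\vec{b}_1 - \vec{b}_2}$: both are progressions of step $2$ with the same parity ($\equiv \nu - 1 \pmod 2$), the smaller one $S_{\vec{b}_1 - \vec{b}_2}$ runs from $\nu - 2k + 1$ up to $\nu - 1$, and $S_{\vec{b}_2 - \vec{b}_1}$ runs from $2k - \nu + 1$ up to $2k - 1$; since $k \geq \nu - k$ we have $2k - \nu + 1 \geq 1 - (\nu - 2k) \cdot 0 \geq \nu - 2k + 1$ and $2k - 1 \leq \nu - 1$, so $S_{\vec{b}_2 - \vec{b}_1}$ sits inside $S_{\vec{b}_1 - \vec{b}_2}$. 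The complement $S_{\vec{b}_1 - \vec{b}_2} \setminus S_{\vec{b}_2 - \vec{b}_1}$ is then the set of step-$2$ values in $\{\nu - 2k + 1, \dots, \nu - 1\}$ not lying in $\{2k - \nu + 1, \dots, 2k-1\}$; since $\nu - 2k + 1 \leq 2k - \nu + 1$ (equivalently $\nu \leq 2k$) and the top endpoints satisfy $\nu - 1 \geq 2k - 1$, the missing elements are exactly those just below $2k - \nu + 1$, namely $\nu - 2k + 1, \nu - 2k + 3, \dots, 2k - \nu - 1$, which has $2k - \nu$ terms and matches $\{2k - \nu + 1 - 2j : 1 \leq j \leq 2k - \nu\}$ after reindexing $j \mapsto$ (running index). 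The case $\nu - k > k$ is the mirror image, swapping the roles of the two multisets, and gives $S_{\vec{b}_1 - \vec{b}_2} \subseteq S_{\vec{b}_2 - \vec{b}_1}$ with complement $\{\nu - 2k + 1 - 2j : 1 \leq j \leq \nu - 2k\}$. Since all multiplicities are $1$ (the $\flex$ values in each progression are distinct), disjointness of the union is automatic. The main obstacle I anticipate is purely bookkeeping: pinning down the exact endpoints and the $\pm 1$ offsets in the complement progression so that it matches the stated indexing $2k - \nu + 1 - 2j$ exactly, rather than being off by a shift or a reversal; I would handle this by writing out the smallest cases ($\nu = 3, k = 2$ and $\nu = 4, k = 1$, matching the example in the text) as a sanity check before committing to the general indexing.
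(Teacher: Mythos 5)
Your approach is essentially the paper's: the paper derives the two arithmetic progressions from the recurrence in Equation~\eqref{eqn: flex recurrence} together with the value of $\flex(1)$, while you compute the same closed forms $\flex(j)=2j+\nu-2k-1$ for $j\le k$ and $\flex(j)=2k-2j+\nu+1$ for $j>k$ directly from the definition; these are equivalent routes and both give $S_{\vec{b}_1-\vec{b}_2}=\{\nu-2k+1,\nu-2k+3,\ldots,\nu-1\}$ with $k$ elements.

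However, there is an arithmetic slip when you list out the second progression. From your own (correct) formula $\{2k-2j+\nu+1 : k+1\le j\le\nu\}$, the largest element occurs at $j=k+1$ and equals $\nu-1$, not $2k-1$; so $S_{\vec{b}_2-\vec{b}_1}=\{2k-\nu+1,2k-\nu+3,\ldots,\nu-1\}$. This matters because your containment step as written relies on ``$2k-1\le\nu-1$,'' which is equivalent to $2k\le\nu$ and hence contradicts the standing hypothesis $k\ge\nu-k$ except in the boundary case $2k=\nu$; with your stated endpoints the claimed inclusion $S_{\vec{b}_2-\vec{b}_1}\subseteq S_{\vec{b}_1-\vec{b}_2}$ would actually fail. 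With the corrected endpoint the two progressions share the same top value $\nu-1$ and the same parity, $S_{\vec{b}_2-\vec{b}_1}$ starts higher (since $2k-\nu+1\ge\nu-2k+1$), the inclusion is immediate, and the complement is exactly $\{\nu-2k+1,\ldots,2k-\nu-1\}=\{2k-\nu+1-2j : 1\le j\le 2k-\nu\}$ as you conclude. So your final answer agrees with the paper's; only the intermediate endpoint, and the inequality used to justify the containment, need to be repaired.
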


\begin{proof}
By definition, if $i>1$ then $flex_{\vec{b}_1 \Delta \vec{b}_2}(i)$ satisfies the following recurrence (regardless of $\vec{b}_1$ and $\vec{b}_2$):
    \begin{equation} \label{eqn: flex recurrence}
        flex_{\vec{b}_1 \Delta \vec{b}_2}(i) =\begin{cases}     flex_{\vec{b}_1 \Delta \vec{b}_2}(i-1) & \textup{ if }   (flex_{\vec{b}_1 \Delta \vec{b}_2})_i \neq (flex_{\vec{b}_1 \Delta \vec{b}_2})_{i-1}\\
    flex_{\vec{b}_1 \Delta \vec{b}_2}(i-1)+2 & \textup{ else if } (flex_{\vec{b}_1 \Delta \vec{b}_2})_i 
    =1 \\
    flex_{\vec{b}_1 \Delta \vec{b}_2}(i-1)-2 & \textup{ else if } 
    (flex_{\vec{b}_1 \Delta \vec{b}_2})_i  
    =0 \\
    \end{cases}
    \end{equation}
    Moreover 
    \[flex_{\vec{b}_1 \Delta \vec{b}_2}(1) = - |\{j > 1 :  (\vec{b}_1)_j = 1 \neq (\vec{b}_2)_j \}| + |\{j>1:  (\vec{b}_1)_j = 0 \neq (\vec{b}_2)_j\}|\]
    So we have $k$ elements in the set
    \[ S_{\vec{b}_1 - \vec{b}_2} = \{ (\nu-k)-(k-1), (\nu-k)-(k-1)+2, (\nu-k)-(k-1)+4, \ldots, (\nu-k)+(k-1)\} \]
    and $\nu-k$ elements in
    \[ S_{\vec{b}_2 - \vec{b}_1} = \{ k-(\nu-k-1), k-(\nu-k-1)+2, k-(\nu-k-1)+4, \ldots, k+(\nu-k-1)\} \]
    This proves the claim.
\end{proof}

Now we prove that the decomposition in the previous corollary holds for all vectors $\vec{b}_1$ and $\vec{b}_2$ using the previous corollary as the base case and inducting by comparing to the vectors obtained from $\vec{b}_1, \vec{b}_2$ by exchanging entries $i$ and $i+1$. The proof is a routine combinatorial exercise: we construct a Dyck path from a binary string, relate the action of $s_i$ to replacing a peak in the Dyck path with a valley, and then use the definition of $\flex$. We include all details for the benefit of readers who have not seen this sort of proof.

\begin{lemma} \label{lemma: inductive step of square switch}
Fix any binary vectors $\vec{b}_1$ and $\vec{b}_2$ and denote $|S_{\vec{b}_1 \Delta \vec{b}_2}|=\nu$.  Let $k = |S_{\vec{b}_1 - \vec{b}_2}|$ and $\nu - k = |S_{\vec{b}_2 - \vec{b}_1}|$ be the cardinalities of these multisets. If $k \geq \nu-k$ 
then $S_{\vec{b}_1 - \vec{b}_2} = S_{\vec{b}_2 - \vec{b}_1} \cup \{2k-\nu+1-2j: 1 \leq j \leq 2k-\nu \}.$
Similarly if $k \leq \nu-k$ then 
$S_{\vec{b}_2 - \vec{b}_1} = S_{\vec{b}_1 - \vec{b}_2} \cup \{\nu-2k+1-2j: 1 \leq j \leq \nu-2k \}$.
\end{lemma}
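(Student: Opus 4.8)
The plan is to reduce the statement to a single binary string, encode that string as a lattice (Dyck-type) path, and then induct on the number of inversions of the string, observing that the adjacent transposition replacing a valley of the path by a peak edits the two relevant multisets identically.

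First I would use Corollary~\ref{corollary: flex only depends on symm diff} to reduce to the symmetric difference. Set $\vec{w} := \vec{b}_1 \Delta \vec{b}_2 \in \{0,1\}^\nu$ and write $S_+(\vec{w}) = \{flex_{\vec{w}}(i) : w_i = 1\}$ and $S_-(\vec{w}) = \{flex_{\vec{w}}(i) : w_i = 0\}$ as multisets; by the definitions these are exactly $S_{\vec{b}_1 - \vec{b}_2}$ and $S_{\vec{b}_2 - \vec{b}_1}$, and $k = |S_+(\vec{w})|$ equals the number of ones in $\vec{w}$. So it suffices to prove: for every $\vec{w} \in \{0,1\}^\nu$ with $k$ ones and $k \geq \nu - k$ one has $S_+(\vec{w}) = S_-(\vec{w}) \cup \{2k-\nu+1-2j : 1 \leq j \leq 2k-\nu\}$. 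Record the lattice path $a_0 = 0$, $a_i = a_{i-1} + 2w_i - 1$, so $a_\nu = 2k - \nu =: A$ (the path has $k$ up-steps and $\nu - k$ down-steps). Unwinding the definition of $flex$ from Lemma~\ref{lemma: flex to relate flow exponent for square to straigh} gives the clean formula $flex_{\vec{w}}(i) = a_{i-1} + a_i - A$; in particular the $i$-th step has $flex$ value $2h + 1 - A$, where $h = \min(a_{i-1}, a_i)$ is the lower of its two endpoints, regardless of whether the step goes up or down.

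Next I would induct on $\mathrm{inv}(\vec{w}) = |\{i < j : w_i = 0,\ w_j = 1\}|$. If $\mathrm{inv}(\vec{w}) = 0$ then $\vec{w} = 1^k 0^{\nu-k}$ and the claim is precisely Corollary~\ref{corollary: base case for flex induction in square switch}. Otherwise choose $i$ with $(w_i, w_{i+1}) = (0,1)$, a valley of the path with $a_{i-1} = h$, $a_i = h-1$, $a_{i+1} = h$; let $\vec{w}'$ be $\vec{w}$ with $w_i$ and $w_{i+1}$ exchanged. This turns the valley into a peak ($a_i$ becomes $h+1$, all other partial sums unchanged, since $(2w_i-1)+(2w_{i+1}-1)$ is unaffected by the swap) and decreases $\mathrm{inv}$ by one. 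By the $flex$ formula, steps $i$ and $i+1$ of $\vec{w}$ both have value $2h-1-A$, contributing one copy of $2h-1-A$ to $S_-(\vec{w})$ (step $i$, as $w_i = 0$) and one copy to $S_+(\vec{w})$ (step $i+1$, as $w_{i+1} = 1$); steps $i, i+1$ of $\vec{w}'$ both have value $2h+1-A$, contributing one copy each to $S_+(\vec{w}')$ and $S_-(\vec{w}')$; and every other step has the same $flex$ value in $\vec{w}$ as in $\vec{w}'$. Hence for every integer $v$, the number of occurrences of $v$ in $S_+$ minus the number in $S_-$ is the same for $\vec{w}$ and for $\vec{w}'$: the ``formal difference'' $S_+ \setminus S_-$ is unchanged by the move.

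Iterating down to the base case, this formal difference for $\vec{w}$ equals the one for $1^k 0^{\nu-k}$, which by Corollary~\ref{corollary: base case for flex induction in square switch} (in the regime $k \geq \nu - k$) is the indicator of the set $\{2k-\nu+1-2j : 1 \leq j \leq 2k-\nu\}$ and in particular is nonnegative at every $v$; therefore $S_-(\vec{w}) \subseteq S_+(\vec{w})$ as multisets and $S_+(\vec{w}) = S_-(\vec{w}) \cup \{2k-\nu+1-2j : 1 \leq j \leq 2k-\nu\}$, which is the $k \geq \nu-k$ case. The $k \leq \nu - k$ case follows by applying the proven case to $\vec{1} - \vec{w}$ (equivalently, interchanging $\vec{b}_1$ and $\vec{b}_2$): this exchanges $S_+ \leftrightarrow S_-$ and $k \leftrightarrow \nu - k$, and $flex_{\vec{1}-\vec{w}}(i) = flex_{\vec{w}}(i)$ straight from the definition, so the identity transfers. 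I do not expect a genuine obstacle here; the only points requiring care are the sign/indexing conventions that make $S_+(\vec{w})$ literally equal $S_{\vec{b}_1-\vec{b}_2}$ and the base-case multiset match Corollary~\ref{corollary: base case for flex induction in square switch} exactly, and the verification that the peak--valley swap leaves all partial sums $a_j$ with $j \notin \{i\}$ untouched. Both are routine bookkeeping.
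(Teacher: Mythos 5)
Your proof is correct and follows essentially the same route as the paper: reduce to the symmetric difference via Corollary~\ref{corollary: flex only depends on symm diff}, encode the string as a Dyck-type path, and induct via adjacent transpositions down to the base case of Corollary~\ref{corollary: base case for flex induction in square switch}; your closed formula $flex_{\vec{w}}(i)=a_{i-1}+a_i-A$ and the invariance of the signed multiset difference $S_+-S_-$ under a valley-to-peak swap is a cleaner way to execute the key step than the paper's intermediate-value-theorem count of intersections with horizontal lines. One small slip: from the definition one gets $flex_{\vec{1}-\vec{w}}(i)=-flex_{\vec{w}}(i)$, not $+flex_{\vec{w}}(i)$; this is harmless because the exceptional set $\{\nu-2k+1-2j\}$ is symmetric under negation (and in fact your induction already handles the $k\leq\nu-k$ case directly from its own base case, so the complementation step is not needed).
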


\begin{proof}
Let $s_i$ denote the simple transposition that exchanges positions $i$ and $i+1$.  We prove by induction, assuming the result holds for some $\vec{b}_1$ and $\vec{b}_2$ and then proving it holds for $s_i\vec{b}_1$ and $s_i \vec{b}_2$. 
 Corollary~\ref{corollary: base case for flex induction in square switch} proved the base case so the rest of this proof consists of the inductive step.

Corollary~\ref{corollary: flex only depends on symm diff} showed that the function $flex_{s_i\vec{b}_1, s_i\vec{b}_2}(i)$ only depends on the vector $s_i\vec{b}_1 \Delta s_i\vec{b}_2$.  So the result follows trivially unless both $i$ and $i+1$ are in the symmetric difference set $S_{\vec{b}_1 \Delta \vec{b}_2}$.  Moreover, if $(\vec{b}_1)_i=(\vec{b}_1)_{i+1}$ the claim is still trivial since $s_i\vec{b}_1 = \vec{b}_1$ and $s_{i+1}\vec{b}_2=\vec{b}_2$.  

We assume that $(\vec{b}_1)_i = 1$ and $(\vec{b}_1)_{i+1}=0$ and $i, i+1 \in S_{\vec{b}_1 \Delta \vec{b}_2}$.  Our proof is symmetric in the role of $1$ and $0$ so there is no loss of generality. Denote $S_{\vec{b}_1 \Delta \vec{b}_2} = \{i_1, \ldots, i_\nu \}$. We define a function $\dyk: [-1/2, \nu +1/2] \rightarrow \mathbb{R}$ that is translates the usual Dyck path created from a binary string, applied in our case to the symmetric difference.   Define $\dyk(1) = flex_{\vec{b}_1 \Delta \vec{b}_2}(1)$.  If $(\vec{b}_1 \Delta \vec{b}_2)_i=b$ then $\dyk$ has slope $(-1)^{b+1}2$ on interval $[i-1/2, i+1/2]$. Together, these two conditions give a continuous function.

By Equation \ref{eqn: flex recurrence} above, $\dyk(j)=flex_{\vec{b}_1 \Delta \vec{b}_2}(j)$ when $j \in \{1, 2, \ldots, \nu\}$. By the inductive hypothesis on $\vec{b}_1, \vec{b}_2$, the image of $\dyk$ contains the closed interval $[-|2k-\nu|+1,|2k-\nu|-1] \cap \mathbb{Z}$.  The intermediate value theorem applies to the intersection of the horizontal line $y=t_0$ with the image $Im(\dyk)$.  In particular, if $\exists j_0\in\mathbb{Z}$ such that $\dyk(j_0)=t_0$ then the horizontal line $y=t_0$ intersects the image $Im(\dyk)$ in an odd number of points when $-|2k-\nu|+1 \leq t_0 \leq |2k-\nu|-1$ and an even number of points otherwise.

Now compare $\dyk$ with $Dyck_{s_i\vec{b}_1 \Delta s_i\vec{b}_2}$.   For $j \in [i-1/2, i+3/2]$ the function $\dyk(j)$ either has slope $-2$ for $j \in [i-1/2, i+1/2]$ then slope $+2$ for $j \in [i+1/2, i+3/2]$ or vice versa. Acting by $s_i$ on $\vec{b}_1, \vec{b}_2$ negates these slopes in this interval:
    \[\frac{d}{dj} \dyk(j) = -\frac{d}{dj} Dyck_{s_i\vec{b}_1 \Delta s_i\vec{b}_2}(j)\]
    In other words, the $s_i$ action changes whether the function travels the top or bottom of a diamond, but does not change the endpoints:
        \[\dyk(i-1/2) = Dyck_{s_i\vec{b}_1 \Delta s_i\vec{b}_2}(i-1/2) \textup{  and   }\] \[\dyk(i+3/2) = Dyck_{s_i\vec{b}_1 \Delta s_i\vec{b}_2}(i+3/2)\]
    By construction $\dyk(j) = Dyck_{s_i\vec{b}_1 \Delta s_i\vec{b}_2}(j)$ unless $j \in \{i, i+1\}$. Moreover the continuous function $Dyck_{s_i\vec{b}_1 \Delta s_i\vec{b}_2}$ agrees with $\dyk$ on the endpoints of the interval $[1/2, \nu+1/2]$. Therefore the image of $Dyck_{s_i\vec{b}_1 \Delta s_i\vec{b}_2}$ also contains $[-|2k-\nu|+1, |2k-\nu|-1]$.

    At the same time $flex_{\vec{b}_1 \Delta \vec{b}_2}(j)$ differs from $flex_{s_i\vec{b}_1 \Delta s_i\vec{b}_2}(j)$ on exactly two inputs.  
    Indeed, we have
    \[flex_{s_i\vec{b}_1 \Delta s_i\vec{b}_2}(j) = \begin{cases} flex_{\vec{b}_1 \Delta \vec{b}_2}(j) & \textup{ if } j \neq i, i+1 \\
    flex_{\vec{b}_1 \Delta \vec{b}_2}(j)-2 & \textup{ if } j \in \{i, i+1\} \textup{ and } (\vec{b}_1)_i = 1 \\
    flex_{\vec{b}_1 \Delta \vec{b}_2}(j)+2 & \textup{ if } j \in \{i, i+1\} \textup{ and } (\vec{b}_1)_i = 0 \\
    \end{cases} \]
    So the interval in which $Dyck_{s_i\vec{b}_1 \Delta s_i\vec{b}_2}$ intersects the horizontal line $y=t_0$ an odd number of times is the same as that for $\dyk$.  This proves the result, and the rest of the claim follows by induction.
\end{proof}

\begin{example}
Continuing the previous example, we give the graph of $\dyk$ as well as $Dyck_{s_5\vec{b}_1 \Delta s_5\vec{b}_2} = Dyck_{111010}$ and $Dyck_{s_4s_5\vec{b}_1 \Delta s_4s_5\vec{b}_2} = Dyck_{111100}$. At each step, the previous Dyck path is shown, dotted.
\begin{center}
    \begin{tikzpicture}[scale=0.3]

\draw[thick] (.5,-2) -- (1.5,0);
\draw[thick] (1.5,0) -- (2.5,2);
\draw[thick] (2.5,2) -- (3.5,4);
\draw[thick] (3.5,4) -- (4.5,2);
\draw[thick] (4.5,2) -- (5.5,0);
\draw[thick] (5.5,0) -- (6.5,2);

\draw[radius=.15, fill=black](1,-1)circle;
\draw[radius=.15, fill=black](2,1)circle;
\draw[radius=.15, fill=black](3,3)circle;
\draw[radius=.15, fill=black](4,3)circle;
\draw[radius=.15, fill=black](5,1)circle;
\draw[radius=.15, fill=black](6,1)circle;

    \end{tikzpicture} \hspace{0.25in} \raisebox{0.5in}{\Large $\stackrel{s_5}{\rightsquigarrow}$} \hspace{0.25in}
        \begin{tikzpicture}[scale=0.3]

\draw[thick] (.5,-2) -- (1.5,0);
\draw[thick] (1.5,0) -- (2.5,2);
\draw[thick] (2.5,2) -- (3.5,4);
\draw[thick] (3.5,4) -- (4.5,2);
\draw[thick] (4.5,2) -- (5.5,4);
\draw[thick] (5.5,4) -- (6.5,2);
\draw[dotted, thick] (4.5,2) -- (5.5,0);
\draw[dotted, thick] (5.5,0) -- (6.5,2);

\draw[radius=.15, fill=black](1,-1)circle;
\draw[radius=.15, fill=black](2,1)circle;
\draw[radius=.15, fill=black](3,3)circle;
\draw[radius=.15, fill=black](4,3)circle;
\draw[radius=.15, fill=black](5,3)circle;
\draw[radius=.15, fill=black](6,3)circle;

    \end{tikzpicture} \hspace{0.25in} \raisebox{0.25in}{\Large $\stackrel{s_4}{\rightsquigarrow}$} \hspace{0.5in}
           \begin{tikzpicture}[scale=0.3]

\draw[thick] (.5,-2) -- (1.5,0);
\draw[thick] (1.5,0) -- (2.5,2);
\draw[thick] (2.5,2) -- (3.5,4);
\draw[thick] (3.5,4) -- (4.5,6);
\draw[thick] (4.5,6) -- (5.5,4);
\draw[thick] (5.5,4) -- (6.5,2);
 \draw[dotted, thick] (3.5,4) -- (4.5,2);
\draw[dotted, thick] (4.5,2) -- (5.5,4);

\draw[radius=.15, fill=black](1,-1)circle;
\draw[radius=.15, fill=black](2,1)circle;
\draw[radius=.15, fill=black](3,3)circle;
\draw[radius=.15, fill=black](4,5)circle;
\draw[radius=.15, fill=black](5,5)circle;
\draw[radius=.15, fill=black](6,3)circle;

    \end{tikzpicture}
\end{center}
\end{example}

\begin{proof}[Proof of Lemma~\ref{lemma: overall square switch argument}]
First note, as indicated in Figure~\ref{figure: strand switch relation}, the binary vectors on the horizontal edges of $G_-$ (resp. $G_+$) must be the same because these web graph fragments conserve integer flow, and thus conserve binary labels.  Moreover, since the horizontal edges have weight $1$ there is at most one stranding for each choice of standard basis vector $\vec{e}_i$ to label the horizontal edge; denote the corresponding stranding $S_i$.  The stranding $S_i$ is valid for $G_-$ if and only if $(\vec{b}_1)_i = 1 \neq (\vec{b}_2)_i$ and is valid for $G_+$  if and only if $(\vec{b}_2)_i = 1 \neq (\vec{b}_1)_i$.  Using the notation of the symmetric difference, we obtain
\[w_\partial(G_-) = \sum_{\footnotesize \begin{array}{c} i \in S_{\vec{b}_1 \Delta \vec{b}_2}, (\vec{b}_1)_i = 1  \\ S_i \in \mathcal{S}tr(G_-),  \partial(S_i) = \vec{b}_1 \sqcup \vec{b}_2 \end{array}}  (-q)^{x(S_i)-y(S_i)}\]
and similarly for $G_+$.  

Denote the strand coefficient on the straight lines labeled $\vec{b}_1, \vec{b}_2$ in $G_0$ by $II(q)$ so $w_{\partial}(G_0) = w\left(G_{|k-l|, 1}\right) \cdot II(q)$ where $G_{|k-l|, 1}$ is the loop $\mathfrak{sl}_{|k-l|}$ web subgraph of $G_0$ and $w\left(G_{|k-l|, 1}\right)$ is its associated scalar invariant computed in Lemma \ref{lemma: loop graph calculation}. By Lemma~\ref{lemma: flex to relate strand exponent for square to straigh} we know
\[w_\partial(G_-) = II(q) \cdot  \sum_{\footnotesize \begin{array}{c} i \in S_{\vec{b}_1 \Delta \vec{b}_2} , (\vec{b}_1)_i = 1  \\ S_i \in \mathcal{S}tr(G_-),  \partial(S_i)  = \vec{b}_1 \sqcup \vec{b}_2 \end{array}} (-q)^{\flex(i)} \]
and similarly for $w_\partial(G_+)$.  The multiset of exponents in this sum for $w_\partial(G_-)$ are $S_{\vec{b}_1 - \vec{b}_2}$ respectively $w_\partial(G_+)$ and $S_{\vec{b}_2-\vec{b}_1}$.  Thus from Lemma~\ref{lemma: inductive step of square switch} we conclude
\[w_\partial(G_-)-w_\partial(G_+) = II(q) \cdot \Bigg(\sum_{j=1}^{|k-l|} (-q)^{|k-l|+1-2j} \Bigg)  = w_\partial(G_0) \]
where the last equality follows from Lemma~\ref{lemma: loop graph calculation}.
\end{proof}

\subsection{The generalized square switch relation}

\begin{proof}[Proof of Corollary~\ref{cor:generalized square switch}]
    If either $r=0$ or $s=0$, the right hand side of Equation \ref{eqn:Fontaine square switch general} has only one term and the two graphs differ by edge flips. If $r=s=1$, this is exactly the square switch relation on untagged webs from Theorem \ref{thm:Fontaine relations}. Now assume for some $m\geq 3$ the relation holds for all $r+s< m$ (and anytime $r=0$ or $s=0$). Consider a case where $r+s=m$, $r\neq 0$, and $s\neq 0$. Say $s\geq r$, and observe $s\geq 2$. To save space in the calculations below, we suppress all but the horizontal edge weights and assume vertical edges are directed upwards.

    The square removal relation on untagged webs, the inductive hypothesis, and some careful quantum integer computations yield:
    
    \begin{eqnarray*}
    \scalebox{.75}{\raisebox{-30pt}{\begin{tikzpicture}[scale=.5]
    \draw[radius=.08, fill=black](0,0)circle;
     \draw[radius=.08, fill=black](0,2)circle;
        \draw[radius=.08, fill=black](2,0)circle;
     \draw[radius=.08, fill=black](2,2)circle;
      \node at (1,-.5) {\tiny{$s$}};
      \node at (1,2.5) {\tiny{$r$}};

             \draw[thick] (0,-1)--(0,3);
 \draw[thick] (2,-1)--(2,3);
    \begin{scope}[thick,decoration={
    markings,
    mark=at position 0.5 with {\arrow{>}}}
    ] 
        \draw[postaction={decorate}, thick] (0,0)--(2,0);
        \draw[postaction={decorate}, thick] (2,2)--(0,2);
        
        \end{scope}
    \end{tikzpicture}}}
    &=& \frac{(-1)^{s-1}}{[s]_q}\;\scalebox{.75}{\raisebox{-35pt}{\begin{tikzpicture}[scale=.5]
    \draw[radius=.08, fill=black](0,-.5)circle;
    \draw[radius=.08, fill=black](0,1)circle;
    \draw[radius=.08, fill=black](2,1)circle;
     \draw[radius=.08, fill=black](0,2.5)circle;
        \draw[radius=.08, fill=black](2,-.5)circle;
     \draw[radius=.08, fill=black](2,2.5)circle;
      \node at (1,-1) {\tiny{$1$}};
      \node at (1,.5) {\tiny{$s-1$}};
      \node at (1,3) {\tiny{$r$}};

            \draw[thick](0,-1)--(0,3);  \draw[thick](2,-1)--(2,3);       
    \begin{scope}[thick,decoration={
    markings,
    mark=at position 0.5 with {\arrow{>}}}
    ] 
        \draw[postaction={decorate}, thick] (0,-.5)--(2,-.5);
        \draw[postaction={decorate}, thick] (0,1)--(2,1);
        \draw[postaction={decorate}, thick] (2,2.5)--(0,2.5);
        
        \end{scope}
    \end{tikzpicture}}}= \frac{(-1)^{s-1}}{[s]_q}
    \sum_{t=0}^{s-1} (-1)^{(k-l+r-s)t}\genfrac[]{0pt}{1}{k-l+r-s-1}{t}_q
    \scalebox{.75}{\raisebox{-37pt}{\begin{tikzpicture}[scale=.5]
    \draw[radius=.08, fill=black](0,-.5)circle;
    \draw[radius=.08, fill=black](0,1)circle;
    \draw[radius=.08, fill=black](2,1)circle;
     \draw[radius=.08, fill=black](0,2.5)circle;
        \draw[radius=.08, fill=black](2,-.5)circle;
     \draw[radius=.08, fill=black](2,2.5)circle;
      \node at (1,-1) {\tiny{$1$}};
      \node at (1,.5) {\tiny{$r-t$}};
      \node at (1,3.25) {\tiny{$s-1-t$}};

            \draw[thick](0,-1)--(0,3);  \draw[thick](2,-1)--(2,3);       
    \begin{scope}[thick,decoration={
    markings,
    mark=at position 0.5 with {\arrow{>}}}
    ] 
        \draw[postaction={decorate}, thick] (0,-.5)--(2,-.5);
        \draw[postaction={decorate}, thick] (2,1)--(0,1);
        \draw[postaction={decorate}, thick] (0,2.5)--(2,2.5);
        
        \end{scope}
    \end{tikzpicture}}}\\
    &=&
    \sum_{t=0}^{s-1} \frac{(-1)^{s-1+(k-l+r-s)t}}{[s]_q}\genfrac[]{0pt}{1}{k-l+r-s-1}{t}_q
    \left(\scalebox{.75}{\raisebox{-37pt}{\begin{tikzpicture}[scale=.5]
    \draw[radius=.08, fill=black](0,-.5)circle;
    \draw[radius=.08, fill=black](0,1)circle;
    \draw[radius=.08, fill=black](2,1)circle;
     \draw[radius=.08, fill=black](0,2.5)circle;
        \draw[radius=.08, fill=black](2,-.5)circle;
     \draw[radius=.08, fill=black](2,2.5)circle;
      \node at (1,-1) {\tiny{$r-t$}};
      \node at (1,.5) {\tiny{$1$}};
      \node at (1,3.25) {\tiny{$s-1-t$}};

            \draw[thick](0,-1)--(0,3);  \draw[thick](2,-1)--(2,3);       
    \begin{scope}[thick,decoration={
    markings,
    mark=at position 0.5 with {\arrow{>}}}
    ] 
        \draw[postaction={decorate}, thick] (2,-.5)--(0,-.5);
        \draw[postaction={decorate}, thick] (0,1)--(2,1);
        \draw[postaction={decorate}, thick] (0,2.5)--(2,2.5);
        
        \end{scope}
    \end{tikzpicture}}}\right.\\
    && \hspace{1in} \left. +(-1)^{k-l+r-t}[k-l+r-t-1]_q \scalebox{.75}{\raisebox{-37pt}{\begin{tikzpicture}[scale=.5]
    \draw[radius=.08, fill=black](0,-.5)circle;
     \draw[radius=.08, fill=black](0,2.5)circle;
        \draw[radius=.08, fill=black](2,-.5)circle;
     \draw[radius=.08, fill=black](2,2.5)circle;
      \node at (1,-1.25) {\tiny{$r-1-t$}};
      \node at (1,3.25) {\tiny{$s-1-t$}};

            \draw[thick](0,-1)--(0,3);  \draw[thick](2,-1)--(2,3);       
    \begin{scope}[thick,decoration={
    markings,
    mark=at position 0.5 with {\arrow{>}}}
    ] 
        \draw[postaction={decorate}, thick] (2,-.5)--(0,-.5);
        \draw[postaction={decorate}, thick] (0,2.5)--(2,2.5);
        
        \end{scope}
    \end{tikzpicture}}}\right)\\
    &=&
    \sum_{t=0}^{s-1} \frac{(-1)^{s-1+(k-l+r-s)t}}{[s]_q}\genfrac[]{0pt}{1}{k-l+r-s-1}{t}_q
    \left((-1)^{s-1-t}[s-t]_q\scalebox{.75}{\raisebox{-37pt}{\begin{tikzpicture}[scale=.5]
    \draw[radius=.08, fill=black](0,-.5)circle;

     \draw[radius=.08, fill=black](0,2.5)circle;
        \draw[radius=.08, fill=black](2,-.5)circle;
     \draw[radius=.08, fill=black](2,2.5)circle;
      \node at (1,-1) {\tiny{$r-t$}};
      \node at (1,3.25) {\tiny{$s-t$}};

            \draw[thick](0,-1)--(0,3);  \draw[thick](2,-1)--(2,3);       
    \begin{scope}[thick,decoration={
    markings,
    mark=at position 0.5 with {\arrow{>}}}
    ] 
        \draw[postaction={decorate}, thick] (2,-.5)--(0,-.5);
        \draw[postaction={decorate}, thick] (0,2.5)--(2,2.5);
        
        \end{scope}
    \end{tikzpicture}}}\right.\\
    && \hspace{1in} \left.+(-1)^{k-l+r-t}[k-l+r-t-1]_q \scalebox{.75}{\raisebox{-37pt}{\begin{tikzpicture}[scale=.5]
    \draw[radius=.08, fill=black](0,-.5)circle;
     \draw[radius=.08, fill=black](0,2.5)circle;
        \draw[radius=.08, fill=black](2,-.5)circle;
     \draw[radius=.08, fill=black](2,2.5)circle;
      \node at (1,-1.25) {\tiny{$r-1-t$}};
      \node at (1,3.25) {\tiny{$s-1-t$}};

            \draw[thick](0,-1)--(0,3);  \draw[thick](2,-1)--(2,3);       
    \begin{scope}[thick,decoration={
    markings,
    mark=at position 0.5 with {\arrow{>}}}
    ] 
        \draw[postaction={decorate}, thick] (2,-.5)--(0,-.5);
        \draw[postaction={decorate}, thick] (0,2.5)--(2,2.5);
        
        \end{scope}
    \end{tikzpicture}}}\right)\\
    &=& \scalebox{.75}{\raisebox{-37pt}{\begin{tikzpicture}[scale=.5]
    \draw[radius=.08, fill=black](0,-.5)circle;
     \draw[radius=.08, fill=black](0,2.5)circle;
        \draw[radius=.08, fill=black](2,-.5)circle;
     \draw[radius=.08, fill=black](2,2.5)circle;
      \node at (1,-1.25) {\tiny{$r$}};
      \node at (1,3.25) {\tiny{$s$}};

            \draw[thick](0,-1)--(0,3);  \draw[thick](2,-1)--(2,3);       
    \begin{scope}[thick,decoration={
    markings,
    mark=at position 0.5 with {\arrow{>}}}
    ] 
        \draw[postaction={decorate}, thick] (2,-.5)--(0,-.5);
        \draw[postaction={decorate}, thick] (0,2.5)--(2,2.5);
        
        \end{scope}
    \end{tikzpicture}}}+(-1)^{(k-l+r-s-1)s} \genfrac[]{0pt}{1}{k-l+r-s}{s}_q\scalebox{.75}{\raisebox{-30pt}{\begin{tikzpicture}[scale=.5]
    \draw[radius=.08, fill=black](0,1)circle;
     \draw[radius=.08, fill=black](2,1)circle;

      \node at (1,.25) {\tiny{$r-s$}};

            \draw[thick](0,-1)--(0,3);  \draw[thick](2,-1)--(2,3);       
    \begin{scope}[thick,decoration={
    markings,
    mark=at position 0.5 with {\arrow{>}}}
    ] 
        \draw[postaction={decorate}, thick] (2,1)--(0,1);
        
        \end{scope}
    \end{tikzpicture}}} \\
    && + \sum_{t=1}^{s-1}(-1)^{(k-l+r-s-1)t}\scalebox{.8}{$\left(\frac{[s-t]_q\genfrac[]{0pt}{1}{k-l+r-s-1}{t}_q + \genfrac[]{0pt}{1}{k-l+r-s-1}{t-1}_q[k-l+r-t]_q}{[s]_q}\right)$}\scalebox{.75}{\raisebox{-37pt}{\begin{tikzpicture}[scale=.5]
    \draw[radius=.08, fill=black](0,-.5)circle;

     \draw[radius=.08, fill=black](0,2.5)circle;
        \draw[radius=.08, fill=black](2,-.5)circle;
     \draw[radius=.08, fill=black](2,2.5)circle;
      \node at (1,-1) {\tiny{$r-t$}};
      \node at (1,3.25) {\tiny{$s-t$}};

            \draw[thick](0,-1)--(0,3);  \draw[thick](2,-1)--(2,3);       
    \begin{scope}[thick,decoration={
    markings,
    mark=at position 0.5 with {\arrow{>}}}
    ] 
        \draw[postaction={decorate}, thick] (2,-.5)--(0,-.5);
        \draw[postaction={decorate}, thick] (0,2.5)--(2,2.5);
        
        \end{scope}
    \end{tikzpicture}}}\\
    &=& \sum_{t=0}^s (-1)^{(k-l+r-s-1)t}\genfrac[]{0pt}{0}{k-l+r-s}{t}_q
        \scalebox{.75}{\raisebox{-30pt}{\begin{tikzpicture}[scale=.5]
    \draw[radius=.08, fill=black](0,0)circle;
     \draw[radius=.08, fill=black](0,2)circle;
        \draw[radius=.08, fill=black](2,0)circle;
     \draw[radius=.08, fill=black](2,2)circle;
    
      \node at (1,-.5) {\tiny{$r-t$}};
      \node at (1,2.5) {\tiny{$s-t$}};
            \draw[postaction={decorate}, thick] (0,-1)--(0,3);
         \draw[postaction={decorate}, thick] (2,-1)--(2,3);
    \begin{scope}[thick,decoration={
    markings,
    mark=at position 0.5 with {\arrow{>}}}
    ] 
 
        \draw[postaction={decorate}, thick] (2,0)--(0,0);
        \draw[postaction={decorate}, thick] (0,2)--(2,2);
        
        \end{scope}
    \end{tikzpicture}}}
    \end{eqnarray*}
\end{proof}

\def\cprime{$'$}

\end{document}